\everydisplay\expandafter{\the\everydisplay\selectfont}
\def\multiset#1#2{\ensuremath{\left(\kern-.3em\left(\genfrac{}{}{0pt}{}{#1}{#2}\right)\kern-.3em\right)}}
\theoremstyle{plain}
\newtheorem{theorem}{Theorem}[section]
\newtheorem{lemma}[theorem]{Lemma}
\newtheorem{proposition}[theorem]{Proposition}
\newtheorem{corollary}[theorem]{Corollary}
\newtheorem{definition-Proposition}[theorem]{Definition-Proposition}
\theoremstyle{definition}
\newtheorem{definition}[theorem]{Definition}
\newtheorem{example}[theorem]{Example}
\newtheorem{algorithm}[theorem]{Algorithm}
\newtheorem{convention}[theorem]{Convention}
\newtheorem{remark}[theorem]{Remark}
\newtheorem{definition-proposition}[theorem]{Definition-Proposition}
\numberwithin{equation}{section} \numberwithin{figure}{section}
\numberwithin{table}{section}
\newenvironment{red}{\relax\color{red}}{\relax}
\newenvironment{blue}{\relax\color{blue}}{\hspace*{.5ex}\relax}
\newenvironment{magenta}{\relax\color{magenta}}{\hspace*{.5ex}\relax}
\newcommand{\nc}{\newcommand}
\def\seteq{\mathbin{:=}}
\nc{\matr}[2]{\left( \begin{matrix} #1 \\ #2 \end{matrix} \right)}
\nc{\matl}[2]{\left\langle \begin{matrix} #1 \\ #2 \end{matrix} \right\rangle_q}
\nc{\floor}[1]{\left\lfloor #1 \right\rfloor}
\nc{\inn}[1]{\left\langle #1 \right\rangle}
\nc{\udots}{\text{\reflectbox{$\ddots$}}}
\nc{\ra}{\rightarrow}
\nc{\sqact}{\sqbullet}
\nc{\bema}{\begin{magenta}}
\nc{\ema}{\end{magenta}}
\nc{\ber}{\begin{red}}
\nc{\er}{\end{red}}
\nc{\beb}{\begin{blue}}
\nc{\eb}{\end{blue}}
\nc{\blo}{ {\color{blue} 0} }
\nc{\redo}{{\color{red} 0}}
\nc{\Sym}{\mathfrak{S}}
\nc{\cmA}{\mathsf{A}}
\nc{\Afin}{\mathsf{C}}
\nc{\ev}{\mathsf{ev}}
\nc{\mx}{\mathrm{max}}
\nc{\fks}{\mathfrak{s}}
\nc{\La}{\Lambda}
\def\g{\mathfrak g}
\def\h{\mathfrak h}
\def\Z{\mathbb Z}
\def\C{\mathbb C}
\def\Q{\mathbb Q}
\nc{\cma}{\mathrm{a}}
\nc{\congN}{{\mathsf{N}}}
\nc{\stv}{\mathrm{e}}
\nc{\adj}{\mathrm{adj}}
\nc{\beads}{\mathsf{B}}
\nc{\necks}{\mathsf{N}}
\nc{\wordset}{\mathcal{W}}
\nc{\sfm}{\mathsf{m}}
\nc{\triT}{T}
\nc{\odd}{\mathrm{odd}}
\nc{\even}{\mathrm{even}}
\nc{\ttriT}{\widetilde{T}}
\nc{\DR}{{\tt{DR}}}
\nc{\bM}{\mathbf{M}}
\nc{\bnu}{\boldsymbol{\nu}}
\nc{\bm}{\mathbf{m}}
\nc{\bw}{\mathbf{w}}
\nc{\bx}{\mathbf{x}}
\nc{\scrN}{\mathscr{N}}
\nc{\scrS}{\mathscr{S}}
\nc{\upc}{\xi}
\nc{\svv}{ \textbf{\textit{s}}}
\nc{\svs}{ \textbf{\textit{S}}}
\nc{\wtsvv}{\widetilde{\svv}}
\nc{\whsvv}{\widehat{\svv}}
\nc{\oLa}{\overline{\Lambda}}
\nc{\oP}{\overline{P}}
\nc{\oQ}{\overline{Q}}
\nc{\affsr}{\alpha}
\nc{\finsr}{\overline{\alpha}}
\nc{\finfw}{\varpi}
\nc{\clfw}{\mathscr{F}}
\nc{\redr}{{\mathrm{red}_\congN}}
\nc{\dwt}{P^+}
\nc{\ellcov}{\ell \mathcal{C}_{{\rm af}} \cap (\overline{\Lambda}+\overline{Q})}
\nc{\Pclp}{P_{\mathrm{cl}}^+}
\nc{\pclp}[1]{P_{\mathrm{cl},#1}^+}
\nc{\bwm}{\mathbf{w}(\mathbf{m};d;\bnu)}
\nc{\DRpclp}{\DR(\pclp{\ell})}
\nc{\invA}{\widetilde{\Afin}}
\nc{\evS}{\ev_{{}_\svs}}
\nc{\evSp}{\ev_{{}_{\svs'}}}
\nc{\evfks}[1]{\ev_{\fks^{(#1)}}}
\nc{\invevfks}{\ev^{-1}_{\fks}}
\nc{\tpclp}{\tau \cdot \pclp}
\nc{\whfksone}{\widehat{\fks}^{(1)}}
\nc{\whfkstwo}{\widehat{\fks}^{(2)}}
\nc{\siggen}{\sigma}
\nc{\upsig}{\upsigma}
\nc{\funcMtoW}{\widehat{\Uppsi}}
\nc{\funcPcltoZ}{\phi_{\clfw}}
\newcommand*\mycirc[1]{%
   \begin{tikzpicture}
     \node[draw,circle,inner sep=0pt] {#1};
   \end{tikzpicture}}
\newcommand*\mydoublecirc[1]{%
   \begin{tikzpicture}
    \node[draw,circle,inner sep=0pt] {#1};
	\node[draw,circle,inner sep=0.6pt] {#1};
   \end{tikzpicture}}
\nc{\yh}[1]{\todo[size=\tiny,color=blue!20]{#1 \\ \hfill --- Young-Hun}}
\nc{\YH}[1]{\todo[size=\tiny,inline,color=blue!20]{#1
\\ \hfill --- Young-Hun}}
\nc{\sejin}[1]{\todo[size=\tiny,color=red!20]{#1 \\ \hfill --- Se-jin}}
\nc{\Sejin}[1]{\todo[size=\tiny,inline,color=red!20]{#1
\\ \hfill --- Se-jin}}
\tikzset{tab/.style={matrix of math nodes,column sep=-.4, row
sep=-.4,text height=8pt,text width=8pt,align=center}}
\title[Cyclic sieving phenomenon on dominant maximal weights]
{Cyclic sieving phenomenon on dominant maximal weights over affine Kac-Moody algebras}
\author[Y.-H. Kim]{Young-Hun Kim}
\thanks{The research of Y.-H. Kim was supported by the National Research Foundation of Korea (NRF) Grant funded by the Korean Government (NRF-2018R1D1A1B07051048).}
\address{Department of Mathematics, Sogang University, Seoul 121-742, Republic of Korea}
\email{yhkim14@sogang.ac.kr}
\author[S.-j.~Oh]{Se-jin Oh}
\address{Ewha Womans University Seoul, 52 Ewhayeodae-gil, Daehyeon-dong, Seodaemun-gu, Seoul, South Korea}
\thanks{ S.-j.\ Oh was supported by the Ministry of Education of the Republic of Korea and the National Research Foundation of Korea (NRF-2019R1A2C4069647).}
\email{sejin092@gmail.com}
\urladdr{https://sites.google.com/site/mathsejinoh/}
\author[Y.-T.Oh]{Young-Tak Oh}
\thanks{The research of Y.-T. Oh was supported by the National Research Foundation of Korea (NRF) Grant funded by the Korean Government (NRF-2018R1D1A1B07051048).}
\address{Department of Mathematics, Sogang University, Seoul 121-742, Republic of Korea \& Korea Institute for Advanced Study, Seoul 02455, Republic of Korea}
\email{ytoh@sogang.ac.kr}
\date{\today}
\subjclass[2010]{05E18, 05E10, 17B10, 17B67}
\keywords{affine Kac-Moody algebra, dominant maximal weight, cyclic sieving phenomenon}
\begin{document}

\maketitle

\begin{abstract}
We construct a (bi)cyclic sieving phenomenon on the union of dominant maximal weights for level $\ell$ highest weight modules over an affine Kac-Moody algebra with exactly one highest weight being taken for each equivalence class, in a way not depending on types, ranks and levels. 
In order to do that, we introduce $\svs$-evaluation on the set of dominant maximal weights for each highest modules, and
generalize Sagan's action in \cite{Sagan1} by considering the datum on each affine Kac-Moody algebra. As consequences, we obtain closed and recursive formulae 
for cardinality of the number of  dominant maximal weights for every highest weight module and observe level-rank duality on the cardinalities. 
\end{abstract}


\section*{Introduction}
Kac-Moody algebras were independently introduced by Kac \cite{Kac0} and Moody \cite{M}.
Among them, affine Kac-Moody algebras
have been particularly extensively studied for their beautiful representation theory as well as for their remarkable connections to
other areas such as mathematical physics, number theory, combinatorics, and so on.
Nevertheless, many basic questions are still unresolved.
For instance the behaviour of weight multiplicities and combinatorial features of dominant maximal weights are not fully understood (see \cite[Introduction]{KLO}).

Throughout this paper, $\g$ denotes an affine Kac-Moody algebra and $V(\La)$ the irreducible highest weight module with highest weight $\La \in P^+$,
where $\dwt$ denotes the set of dominant integral weights.
Due to Kac \cite{Kac}, all weights of $V(\La)$ are given by the disjoint union of $\delta$-strings attached to maximal weights and
every maximal weight is conjugate to a unique dominant maximal weight under Weyl group action.
So it would be quite natural to expect that better understanding of dominant maximal weights
makes a considerable contribution towards the study of representation theory of affine Kac-Moody algebras.

In \cite{Kac}, Kac established lots of fundamental properties concerned with ${\rm wt}(\La)$, the set of weights of $V(\La)$, using the orthogonal projection  $\bar{ \ } \colon \h^* \to \h_0^*$.
In particular, he showed that $\mx^+(\Lambda)$, the set of dominant maximal weights, is in bijection with $\ell \mathcal{C}_{{\rm af}} \cap (\overline{\Lambda}+\overline{Q})$
under this projection, thus it is finite. Here $\ell$ denotes the level of $\La$.
However, in the best knowledge of the authors, approachable combinatorial models, cardinality formulae and
structure on $\mx^+(\Lambda)$'s  have not been available up to now except for limited cases,
which motivates the present paper.

In 2014, Jayne and Misra \cite{JM} published noteworthy results about $\mx^+(\La)$ in $A_n^{(1)}$-case.
They give an explicitly parametrization of
$\mx^+((\ell - 1) \Lambda_0 + \Lambda_i)$ in terms of paths 
for $0 \le i \le n$ and $\ell \ge 2$,
and present the following conjecture:
\begin{align}\label{eq: conj}
|\mx^+(\ell\Lambda_0)| = \dfrac{1}{(n+1)+\ell} \sum_{d|(n+1,\ell)} \varphi(d) \matr{((n+1)+\ell)/d}{\ell/d},
\end{align}
where $\varphi$ is Euler's phi function. Notably this number gives the celebrated Catalan number when $\ell = n$. 
Soon after, this conjecture turned out to be affirmative in \cite{TW}. The proof therein largely depends on
Sagan's congruence on $q$-binomial coefficients \cite[Theorem 2.2]{Sagan1}.

The main purpose of this paper is to investigate $\mx^+(\Lambda)$ by constructing bijections with several combinatorial models and
a (bi)cyclic sieving phenomenon on the combinatorial models.  As applications, we can obtain closed 
formulae of  $\mx^+(\Lambda)$ for all affine types, and observe interesting symmetries by considering  $\mx^+(\Lambda)$ for all ranks and levels. 
\smallskip 

Set
\[\Pclp \seteq \dwt /  \Z \updelta  \quad \text{ and } \quad
\pclp{\ell} := \dwt_\ell / \Z \updelta  \quad \text{for } \ell \in \mathbb Z_{\ge 0}, \]
where $\dwt_\ell$  denotes the set of level $\ell$ dominant integral weights and $\updelta$ denotes the canonical null root of $\g$. 

Given a nonnegative integer $\ell$, we only consider classical dominant integral weights, that is, $\La$ in $\pclp{\ell}$
because there is a natural bijection between $\mx^+(\La)$ and $\mx^+(\La+k\delta)$ for every $k \in \Z$.
We begin with the observation that the set $\ellcov$ can be embedded into $\pclp{\ell}$ via the map
\begin{equation}\label{eq: embedding}
\begin{tikzcd}
&\iota_\Lambda: \ellcov \arrow[r]                          & \pclp{\ell}                                     \\[-4ex]
& \sum_{i=1}^n m_i \finfw_i \arrow[r, maps to] & m_0\Lambda_0 + \sum_{i=1}^n m_i\Lambda_i,
\end{tikzcd}
\end{equation}
where $Q$ denotes the root lattice, $\finfw_i:={\overline \La_i}$ and $m_0 = \ell - \sum_{i=1}^n a^\vee_i m_i$.

We then define an equivalence relation $\sim$ on $\pclp{\ell}$ by
$\Lambda \sim \Lambda' $ if and only if $\iota_\Lambda = \iota_{\Lambda'}$, equivalently $\ellcov = \ell \mathcal{C}_{{\rm af}} \cap (\overline{\Lambda'}+\overline{Q})$ (see Lemma~\ref{lem: sim equiv}).
By definition, if $\Lambda \sim \Lambda'$, then $|\mx^+(\La)|=|\mx^+(\La')|$. 
Note that this equivalence relation is defined in \cite{BFS} in a slightly different form.  We should remark that, in \cite{BFS}, the authors mainly investigated a membership condition of weights for highest weight module $V(\Lambda)$  modulo a certain lattice, while
we investigate $|\mx^+(\La)|$ and structures on the union of $\mx^+(\La)$'s. 

Under the relation $\sim$, it turns out that the image of $\iota_\La$ coincides with the equivalence class of $\Lambda$.
We provide a complete set of pairwise inequivalent representatives of the distinguished form $(\ell-1)\La_0 + \La_i$, denoted by ${\DR}(\pclp{\ell})$.
For instances, in case where $\g=A_{n}^{(1)}$, we have ${\DR}(\pclp{\ell})=\{(\ell-1)\La_0 + \La_i \ | \  0 \le i \le n\}$ and  in case where 
$\g=E_{6}^{(1)}$, we have ${\DR}(\pclp{\ell})=\{(\ell-1)\La_0 + \La_i \ | \   i=0,1,6\}$ (see Table \ref{table: PRep}). It follows that
$$    \bigsqcup_{ \Lambda \in  \DR (\pclp{\ell})} \pclp{\ell}(\Lambda)  = \pclp{\ell},$$ 
where $\pclp{\ell}(\Lambda)$ denotes the equivalence class of $\Lambda$ under $\sim$. It should be noticed that $|\pclp{\ell}(\Lambda)|=|\mx^+(\Lambda)|$.  

From this we derive a very significant consequence that the number of all equivalence classes is given by $\congN \seteq [\oP:\oQ]$, where 
$\oP/\oQ$ is isomorphic to the \emph{fundamental group} of the root system of $\g_0$ except for $\g=A^{(2)}_{2n}$  (see Table~\ref{table: oP/oQ}). Here $\g_0$ denotes the subalgebra of $\g$ which is of finite type.

Next, we introduce a new statistic $\evS$, called the \emph{$\svs$-evaluation}, on $\pclp{\ell}$.
Here $\svs$ is a certain set, called {\it a root sieving set}, which is characterized by a  minimal generating set of
the  $\Z_\congN$-kernel of the transpose of Cartan matrix associated $\g_0$ (see Convention~\ref{conv: Sieving} for details). 
In more detail, for all affine Kac-Moody algebras except for $D_n^{(1)}~(n\equiv_2 0)$, $\svs$ consists of a single element $(s_1, \ldots, s_n)$ and
$$\evS\left( \sum_{0 \le i \le n} m_i \Lambda_i\right):=\sum_{1 \le i \le n} s_im_i \qquad
 \text{ for } \Lambda =\sum_{0 \le i \le n}  m_i \Lambda_i.$$
In case where $\g=D_n^{(1)}~(n\equiv_2 0)$, we have $\svs=\{\svv^{(1)} = (0,0,\ldots,0,2,2), \svv^{(2)}= (2,0,2,0,\ldots,2,0,2,0)\}$.
For the $\svs$-evaluation of this type, see~\eqref{eq: S-evaluation}. 
Finally, exploiting this statistic, we characterize the equivalence class of $\La \in {\DR}(\pclp{\ell})$ 
in terms of $\svs$-evaluation (Theorem \ref{thm: eq rel and sieving}).

Quite interestingly, the \emph{$\svs$-evaluation} on $\pclp{\ell}$ leads us to construct a (bi)cyclic sieving phenomenon on it.
The {\it cyclic sieving phenomenon}, introduced by Reiner-Stanton-White in \cite{RSW}, are generalized and developed in various aspects including combinatorics and
representation theory (see \cite{ARR,BRS,EF,R,Sagan2} for examples).

Let us briefly recall the cyclic sieving phenomenon. 
Let $X$ be  a finite set, with an action of a cyclic group $C$ of order $m$, and $X(q) $ a polynomial in $q$ with nonnegative integer coefficients.
For $d \in \Z_{>0}$, let $\omega_d$ be a $d$th primitive root of the unity.
We say that $(X, C, X(q))$ exhibits the cyclic sieving phenomenon if, for all $g\in C$, we have
$|X^g| = X(\omega_{o(g)}),$
where $o(g)$ is the order of $g$ and $X^g$ is the fixed point set under the action of $g$.

Let us explain our initial motivation. It was shown in \cite[Theorem 1.1]{RSW} that $\left(\matr{[0,n]}{\ell}, C_{n+1}, \left[ \begin{matrix} n+\ell \\ \ell \end{matrix} \right]_q   \right) $ exhibits the cyclic sieving phenomenon. Here $\matr{[0,n]}{\ell}$ denotes the set of all $\ell$-multisets on $\{0,1,\ldots,n\}$, $C_{n+1}$ a fixed cyclic group of order ${n+1}$, and $\left[ \begin{matrix} n+\ell \\ \ell \end{matrix} \right]_q $ the $q$-binomial coefficient of $\matr{n+\ell}{\ell}$. We identify $\matr{[0,n]}{\ell}$ with $\pclp{\ell}$ in $A^{(1)}_{n}$-type as $C_{n+1}$-sets and let
$$
\pclp{\ell}(q) \seteq \left[ \begin{matrix} n+\ell \\ \ell \end{matrix} \right]_q.
$$
Then we observe that the generating function of $\pclp{\ell}(q)$ $(\ell \ge 0)$ can be expressed in terms of the root sieving set $\svs =\{(s_1,s_2,\ldots,s_n) = (1,2,\ldots,n)\}$ and the canonical center $c = h_0+h_1+h_2+ \ldots+h_n=\sum_{i=0}^n a_i^\vee h_i$ as follows:
\begin{align}\label{eq: q-binon series}
\sum_{\ell \ge 0} \pclp{\ell}(q)t^\ell  \seteq  \sum_{\ell \ge 0}  \left[ \begin{matrix} n+\ell \\ \ell \end{matrix} \right]_q  t^\ell 
= \displaystyle \prod_{0 \le i \le n} \dfrac{1}{1-q^{i}t^1}
=\displaystyle \prod_{0 \le i \le n} \dfrac{1}{1-q^{s_i}t^{a_i^\vee}},
\end{align}
where $s_0$ is set to be $0$. From this product identity it follows that $\pclp{\ell}(q) = \sum_{\Lambda \in \pclp{\ell}} q^{\evS(\Lambda)}$.
Furthermore, since $C_{n+1}$ is isomorphic to $\oP/\oQ$, we conclude that the triple $\left(\pclp{\ell}, \oP/\oQ, \pclp{\ell}(q) \right)$ also exhibits the cyclic sieving phenomenon.
 
 Then it is natural to ask whether 
there exists a triple for other affine Kac-Moody algebras \emph{exhibiting} the cyclic sieving phenomenon or not.
Canonically, one can construct the triple in uniform way for all affine Kac-Moody algebras as follows:  
We first take $\pclp{\ell}$ as the underlying set. 
Second, writing the canonical center as $c = \sum_{i=0}^n a_i^\vee h_i$,  
we take $\pclp{\ell}(q)$ from the following geometric series (by mimicking the $A_n^{(1)}$-case): 
 \begin{align*}
\begin{cases}
\sum_{\ell \ge 0} \pclp{\ell}(q)t^\ell \seteq \displaystyle \prod_{0 \le i \le n} \dfrac{1}{1-q^{s_i}t^{a_i^\vee}}, & \text{ if  $\g$ is not of type $D^{(1)}_{n}$ for even $n$}, \\
\sum_{\ell \ge 0} \pclp{\ell}(q_1,q_2)t^\ell \seteq \displaystyle \prod_{0 \le i \le n} \dfrac{1}{1-q_1^{\fks^{(1)}_i}q_2^{\fks^{(2)}_i}t^{a_i^\vee}}
& \text{ if  $\g$ is of type $D^{(1)}_{n}$  for even $n$},
\end{cases}
\end{align*}  
where $s_0$ is set to be $0$ (see~\eqref{eq: ge series not Dn1} and~\eqref{eq: ge series Dn1}).
Then we have  
$$
\begin{cases}
\pclp{\ell}(q) = \sum_{\Lambda \in \pclp{\ell}} q^{\evS(\Lambda)}   & \text{ if  $\g$ is not of type $D^{(1)}_{n}$  for even $n$}, \\
\pclp{\ell}(q_1,q_2) = \sum_{\Lambda \in \pclp{\ell}} q_1^{\ev_{\fks^{(1)}}(\Lambda)}q_2^{\ev_{\fks^{(2)}}(\Lambda)} 
& \text{ if  $\g$ is of type $D^{(1)}_{n}$  for even $n$}.
\end{cases}
$$
Finally, take $\oP/\oQ$ as the (bi)cyclic group, which completes the triple:
\begin{align}\label{eq: (bi)CSP triple}
(\pclp{\ell}, \oP/\oQ,  \pclp{\ell}(q)) \qquad \text{ (resp. } (\pclp{\ell}, \oP/\oQ,  \pclp{\ell}(q_1,q_2)) ).
\end{align}
We assign an appropriate $\oP/\oQ$-action on $\pclp{\ell}$ (see~\eqref{eq: CSP action} and~\eqref{eq: C2 times C2 action}),
and prove that the triple
exhibits the (bi)cyclic sieving phenomenon, which can be understood as a natural generalization of the cyclic sieving triple $\left( \matr{[0,n]}{\ell}, C_{n+1}, \left[ \begin{matrix} n+\ell \\ \ell \end{matrix} \right]_q   \right) $ in aspect of affine Kac-Moody algebras. 

For the proof, we employ the following strategy. 
For each divisor $d$ of $\congN$, we introduce a set $\bM_\ell(rd,d;\bnu,\bnu')$ equipped with a $C_d$-action obtained by generalizing Sagan's action on $(0,1)$-words in \cite{Sagan1}. Here, $r,\bnu,\bnu'$ are chosen so that $\bM_\ell(rd,d;\bnu,\bnu')$ can be identified with $\pclp{\ell}$ by permuting indices properly. Then we show that $|\bM_\ell(rd,d;\bnu,\bnu')^{C_d}| = \left|\left(\pclp{\ell}\right)^{g}\right|$ for all $g \in \oP/\oQ$ of order $d$.
We end the proof by showing $$|\bM_\ell(rd,d;\bnu,\bnu')^{C_d}| = \pclp{\ell}(\zeta_{\congN}^{\congN/d}).$$

From the above sieving phenomena, we derive closed formulae for $|\mx^+(\Lambda)|$ for all $\La\in \pclp{\ell}$ and for affine Kac-Moody algebras of arbitrary type. 
For the classical types, they are explicitly written as a sum of binomial coefficients (see Section~\ref{subsec: The number of dom max wts}).  For instance, in case where $A_n^{(1)}$ type, we obtain 
\begin{align}\label{eq: explicit formula for |mx| intro}
|\mx^+((\ell-1)\Lambda_0 + \Lambda_i)| =
\sum_{d \mid (n+1,\ell,i)} \dfrac{d}{(n+1)+\ell} \sum_{d'|(\frac{n+1}{d},\frac{\ell}{d})} \mu(d') \matr{((n+1)+\ell)/dd'}{\ell/dd'},
\end{align}
which is a vast generalization of~\eqref{eq: conj} (see also Theorem~\ref{thm: number of dom wt A}).  

Let us view $\{|\mx^+(\Lambda)|\}_{n,\ell}$ as a sequence expressed in terms of $n$ and $\ell$. Exploiting our closed formulae, we can also derive recursive formulae for $|\mx^+(\Lambda)|$ (except for type $A_n^{(1)}$) and their corresponding triangular arrays. It is quite interesting to observe that several triangular arrays are already known in different contexts. For example, when $\g$ is of affine $C$-type, our triangular arrays are known as \emph{Lozani\'{c}'s triangle} and its Pascal complement 
(see Subsection~\ref{subsubsec: inductive}).
Also, the triangular array for twisted affine  even $A$-type is Pascal triangle with duplicated diagonals 
(see Appendix~\ref{AppendiX: Recursive}).

Going further, we observe interesting interrelations among the triangular arrays of various affine Kac-Moody algebras (see Appendix~\ref{AppendiX: Recursive}). Surprisingly, 
all triangular arrays for classical affine type except for untwisted affine $C$-type can be constructed by {\it boundary conditions} and the triangular array of twisted affine  even $A$-type. Similarly, the triangular arrays for untwisted affine $C$-type can be constructed by boundary conditions and Pascal triangle. Considering that the triangular array of twisted affine even $A$-type can be obtained from Pascal triangle, we can conclude that all triangular arrays for classical affine types can be obtained from boundary conditions and Pascal triangle only.

As another byproduct of our closed formulae, we observe a symmetry which appears as level and rank are switched in a certain way.
For instance, 
 if $(n+1,\ell,i) = (\ell, n+1,j)$ for some $0 \le i \le n$ and $0 \le j \le \ell-1$, then
\begin{align*}
  \left|\mx_{A_n^{(1)}}^+((\ell-1)\Lambda_0 + \Lambda_i)\right| = \left|\mx_{A_{\ell-1}^{(1)}}^+(n\Lambda_0 + \Lambda_j)\right|.
\end{align*} 
This symmetry is \emph{compatible} with the classical level-rank duality for $A_n^{(1)}$ studied by Frenkel in \cite{F} (see Subsection~\ref{sec: LR-duality}). 
With the closed formulae of $\mx^+(\Lambda)$ in terms of binomial coefficients, we can observe interesting symmetries 
for all classical affine types. For instances, we have
\begin{align*}
\begin{cases}
\left|\mx_{B_n^{(1)}}^+(\ell\Lambda_0)\right| = \left|\mx_{B_{(\ell-1)/2}^{(1)}}^+((2n+1)\Lambda_0)\right|, & \text{ if $\ell$ is odd}, \\
\left|\mx_{B_n^{(1)}}^+((\ell-1)\Lambda_0+\Lambda_n)\right| = \left|\mx_{B_{\ell/2-1}^{(1)}}^+((2n+1)\Lambda_0+\Lambda_{\ell/2 -1 })\right|
& \text{ if $\ell$ is even},
\end{cases}
\end{align*}
by exchanging $n$ with $(\ell-1)/2$, and $n$ with $\ell/2-1$, respectively, since
\begin{align*}
\left|\mx_{B_n^{(1)}}^+(\ell\Lambda_0)\right| = \binom{n+\floor{\frac{\ell}{2}}}{n}+\binom{n+\floor{\frac{\ell-1}{2}}}{n},   \
\left|\mx_{B_n^{(1)}}^+((\ell-1)\Lambda_0 + \Lambda_n)\right| = \binom{n+\floor{\frac{\ell-1}{2}}}{n}+\binom{n+\floor{\frac{\ell}{2}}-1}{n}.
\end{align*}
 
This paper is organized as follows. In Section~\ref{Sec: pre}, we introduce necessary notations and backgrounds for affine Kac-Moody 
algebras, highest weight modules and classical results on dominant maximal weights. In Section~\ref{sec: Par param},
we define an equivalence relation $\sim$ on $\pclp{\ell}$
satisfying that the equivalence class of $\Lambda \in \pclp{\ell}$ has the same cardinality with $\mx^+(\Lambda)$. Then we provide the set ${\DR}(\pclp{\ell})$ of distinguished representatives, and characterize all equivalence classes in terms of $\svs$-evaluation with our sieving set $\svs$. 
In Section~\ref{sec: Sagan action and general}, we generalize Sagan's action with consideration on the result in Section~\ref{sec: Par param} and prove that the generalized action gives cyclic action on $\pclp{\ell}$
indeed.  In Section~\ref{Sec: CSP1}, we prove that our triple for affine Kac-Moody algebras except $D_{n}^{(1)}$ for even $n$
exhibits the cyclic sieving phenomenon. In Section~\ref{Sec: CSP2}, we prove the triple for $D^{(1)}_{n}$ for even $n$ exhibits bicyclic sieving phenomenon. In Section~\ref{Sec: application}, we derive closed formulae, recursive formulae, and level-rank duality for the sets of dominant maximal weights from the cyclic sieving phenomenon.
In Appendix~\ref{AppendiX: Recursive} and~\ref{AppendiX: Level-rank duality},
we list all triangular arrays and level-rank duality for affine Kac-Moody algebras, not dealt with in Section~\ref{Sec: application}.

\section{Preliminaries} \label{Sec: pre}
Let $I=\{ 0,1,...,n \}$ be an index set. An \emph{affine Cartan datum} $(\cmA,P,\Pi,P^{\vee},\Pi^{\vee})$ consists of the following quintuple:
\begin{enumerate}
    \item[({\rm a})] a matrix $\cmA=(\cma_{ij})_{i,j \in I}$ of corank $1$, called an {\it affine Cartan matrix} satisfying that, for $i,j \in I$,
    $$ ({\rm i}) \ \cma_{ii}=2 , \quad ({\rm ii}) \ \cma_{ij}  \in \Z_{\le 0} \text{ for $i \ne j \in I$,} \quad  ({\rm iii}) \ \cma_{ij}=0 \text{ if } \cma_{ji}=0,$$
    \item[({\rm c})] a free abelian group $P=\bigoplus_{i=0}^{n} \Z \Lambda_i \oplus \Z \updelta$, called the \emph{weight lattice},
    \item[({\rm e})] a linearly independent set $\Pi = \{ \affsr_i \mid i\in I \} \subset P$, called the set of {\it simple roots},
    \item[({\rm b})] a free abelian group $P^{\vee}= \mathrm{Hom}(P,\Z) $, called the \emph{coweight lattice},
    \item[({\rm d})] a linearly independent set $\Pi^{\vee} = \{ h_i \mid i\in I\} \subset P^{\vee}$, called the set of {\it simple coroots},
\end{enumerate}
subject to the condition
\[ \inn{h_i, \affsr_j} = \cma_{ij} \text{ and } \langle h_j, \Lambda_i \rangle = \delta_{ij} \text{ for all $i,j\in I$}. \]
We call $\Lambda_i$ the \emph{$i$th fundamental weight} and set $\h:= \Q \otimes_\Z P^{\vee}$. Let
\[\updelta= a_0 \affsr_0 + a_1 \affsr_1 + \cdots + a_n \affsr_n \]
be the \emph{null root} and
\[c=a^\vee_0 h_0 + a^\vee_1 h_1+ \cdots + a^\vee_n h_n  \]
be the \emph{canonical central element}.
%
We say that a weight $\Lambda \in P$ is of {\it level} $\ell$ if
$$ \langle c,\Lambda \rangle=\ell. $$
Then we have $a_i^\vee = \langle c,\Lambda_i \rangle$.

Note that there exists a non-degenerate symmetric bilinear form $( \,\cdot \, | \, \cdot \, )$ on $\h^*$ (\cite[(6.2.2)]{Kac}) such that
\begin{align}\label{eq: pairing}
(\Lambda_0|\Lambda_0) =0, \quad (\affsr_i|\affsr_j) = a_i^{\vee}a_i^{-1} \cma_{ij},  \quad (\affsr_i|\Lambda_0) =\delta_{i,0}a_0^{-1} \quad \text{for } i,j\in I,
\end{align}
and
\begin{align*}
 (\updelta |  \lambda)=\langle c,\lambda \rangle \quad \text{for } \ \lambda \in P.
\end{align*}

Set $\dwt \seteq \{\Lambda \in P \mid  \langle h_i,\Lambda \rangle \in \Z_{\ge 0},\  i \in I \}$. The elements of $\dwt$ are called the \emph{dominant integral weights}.
Also, for a nonnegative integer $\ell$, we set
$$\dwt_{\ell} \seteq \{\Lambda \in \dwt \mid \inn{c,\Lambda} = \ell\}.$$
We call the free abelian group $Q\seteq\bigoplus_{i \in I} \Z\affsr_i$ the \emph{root lattice} and set
$Q_{+}\seteq\bigoplus_{i \in I}\Z_{\ge 0}\affsr_i$.

\begin{definition}
    The {\em affine Kac-Moody algebra} $\g$ associated with an affine Cartan datum $(\cmA,P,\Pi,P^{\vee},\Pi^{\vee})$
    is the Lie algebra over $\Q$ generated by $e_i,f_i \ ( i \in I)$ and $h \in P^{\vee}$
    subject to the following defining relations:
    \begin{enumerate}
        \item $[h,h']=0, \ [h,e_i]=\inn{h,\affsr_{i}}e_i, \ \ [h,f_i]=-\inn{h,\affsr_{i}}f_i \ $ for $ h,h' \in P^{\vee}$,
        \item $[e_i,f_j]= \delta_{i,j}h_i\ $ for $ i,j \in I$,
        \item $({\rm ad}\; e_i)^{1-\cma_{ij}}(e_j)=({\rm ad}\; f_i)^{1-\cma_{ij}}(f_j)=0$ if $i \ne j$.
    \end{enumerate}
\end{definition}

Let $\g_0$ be the subalgebra of $\g$ generated by the $e_i$ and $f_i$ with $i \in I_0 \seteq I \setminus \{ 0 \}$.
Then $\g_0$ is the Lie algebra associated to the Cartan matrix $\Afin$ obtained from $\cmA$ by deleting the $0$th row and the $0$th column.
For a finite dimensional Lie algebra $\mathsf{g}$, let $\mathsf{g}^\dagger$ be the Lie algebra whose Cartan matrix is the transpose of the Cartan matrix of $\mathsf{g}$.
The following table lists $\g_0$ for each affine Kac-Moody algebra $\g$:
\begin{table}[h]
    \centering
    { \arraycolsep=1.6pt\def\arraystretch{1.5}
        \begin{tabular}{c||c|c|c|c|c|c|c|c|c|c|c}
            $\g$ &  $   A_{n}^{(1)}$ & $B_n^{(1)}, D_{n+1}^{(2)} $ & $C_n^{(1)}, A_{2n-1}^{(2)}, A_{2n}^{(2)}$ & $D^{(1)}_{n}$ & $E_6^{(1)}$ & $E_7^{(1)}$ & $E_8^{(1)}$ & $F_4^{(1)}$ & $E_6^{(2)}$ & $G_2^{(1)}$ & $D_4^{(3)}$ \\ \hline
            $\g_0$  & $A_{n} $ & $B_n$ & $C_n$ & $D_n$  & $E_6$ & $E_7$ & $E_8$ & $F_4$ & $F_4^\dagger$ & $G_2$ & $G_2^\dagger$
        \end{tabular}
    }\\[1.5ex]
    \caption{$\g_0$ for each type}
    \protect\label{og}
\end{table}

A $\g$-module $V$  is called a \emph{weight module} if it admits a \emph{weight space decomposition}
\[V= \bigoplus_{\mu \in P} V_{\mu},\quad
\text{where }V_{\mu}=\{ v \in V | \ h \cdot v =\inn{h,\mu} v \text{ for all }  h \in P^{\vee} \}.\]
If $V_\mu \neq 0$, $\mu$ is called a \emph{weight} of $V$ and $V_\mu$ is the \emph{weight space} attached to $\mu$.
A weight module $V$ over $\g$ is called {\it integrable} if $e_i$ and $f_i$ ($i \in I$) act locally nilpotent on $V$.

\begin{definition}
    The category $\mathcal{O}_{{\rm int}}$ consists of integrable $\g$-modules $V$ satisfying the following conditions:
    \begin{enumerate}
        \item $V$ admits a weight space decomposition $V=\bigoplus_{\mu \in P} V_{\mu}$ with $\dim V_\mu < \infty$ for all weights $\mu$.
        \item There exists a finite number of elements $\lambda_1,\ldots,\lambda_s \in P$ such that
        $$ {\rm wt}(V) \subset D(\lambda_1) \cup \cdots \cup D(\lambda_s).$$
        Here ${\rm wt}(V): =\{ \mu \in P \ | \ V_\mu \ne 0 \}$ and $D(\lambda):= \{ \lambda - \alpha \ | \ \alpha \in Q_+ \}$.
    \end{enumerate}
\end{definition}

It is well-known that $\mathcal{O}_{{\rm int}}$ is a semisimple tensor category such that every irreducible
objects is isomorphic to {\it the highest weight module $V(\Lambda)$} ($\Lambda \in P^+$).

A weight $\mu$ of $V(\Lambda)$ is {\it maximal} if $\mu+\updelta \not \in \mathrm{wt}(V(\Lambda))$ and the set of all maximal weights of $V(\Lambda)$ is denoted by $\mx_\g(\Lambda)$.

\begin{proposition}[{\cite[(12.6.1)]{Kac}}] For each $\Lambda \in P^+$, we have
    $$ {\rm wt}(V(\Lambda)) = \bigsqcup_{\mu \in \mx_{\g}(\Lambda)} \{ \mu-s\updelta \ | \ s \in \Z_{\ge 0} \}.$$
\end{proposition}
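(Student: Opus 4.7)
Plan. The assertion has two contents: (i) every weight of $V(\La)$ lies in $\mu-\Z_{\geq 0}\updelta$ for some $\mu\in\mx_\g(\La)$, and (ii) these $\updelta$-strings are pairwise disjoint. I would establish (i) and (ii) in turn.

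For (i), fix $\lambda\in{\rm wt}(V(\La))$ and consider
$$ S_\lambda \;:=\; \{\,s\in\Z_{\geq 0} \,\mid\, \lambda + s\updelta \in {\rm wt}(V(\La))\,\}, $$
which contains $0$. The crucial point is that $S_\lambda$ is bounded above. Since $\lambda\in{\rm wt}(V(\La))\subseteq \La - Q_+$ by the very definition of a highest weight module, we may write $\La-\lambda=\sum_{i\in I}k_i\affsr_i$ with $k_i\in\Z_{\geq 0}$, and similarly $\updelta=\sum_{i\in I}a_i\affsr_i$. The decisive affine-type input is that every $a_i$ is strictly positive. Then $\lambda+s\updelta=\La-\sum_i(k_i-sa_i)\affsr_i$ belongs to $\La-Q_+$ only if $sa_i\leq k_i$ for every $i$, forcing $s\leq \min_i \lfloor k_i/a_i\rfloor$. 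Hence $s^*:=\max S_\lambda$ exists; the weight $\mu:=\lambda+s^*\updelta$ is maximal by construction (as $\mu+\updelta\notin{\rm wt}(V(\La))$), and $\lambda=\mu-s^*\updelta$.

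For (ii), suppose $\mu_1-s_1\updelta=\mu_2-s_2\updelta$ with $\mu_1,\mu_2\in\mx_\g(\La)$ and, without loss of generality, $s_1\geq s_2\geq 0$. Rearranging, $\mu_1=\mu_2+(s_1-s_2)\updelta$, and both $\mu_2$ and $\mu_2+(s_1-s_2)\updelta=\mu_1$ lie in ${\rm wt}(V(\La))$. Assume for contradiction $s_1-s_2\geq 1$. The interval property of $\updelta$-strings in $V(\La)$ --- namely that $\{k\in\Z\mid \mu_2+k\updelta\in{\rm wt}(V(\La))\}$ is a (possibly unbounded below) interval --- then forces $\mu_2+\updelta\in{\rm wt}(V(\La))$, contradicting the maximality of $\mu_2$. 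Thus $s_1=s_2$ and $\mu_1=\mu_2$.

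The main obstacle is the ``no gap'' interval property invoked in step (ii). This is a genuinely affine phenomenon stemming from the imaginary root structure of $\g$: in affine type, $\g_{\pm k\updelta}\neq 0$ for every $k\in\Z_{>0}$, and these spaces generate a Heisenberg subalgebra whose action on $V(\La)$ moves between weight spaces along a $\updelta$-string in a step-by-step fashion, precluding gaps. Since the proposition is quoted from \cite[(12.6.1)]{Kac}, I would rely on this structural input rather than re-derive it; the new content that our proof organizes is merely the bookkeeping in (i), plus the reduction of (ii) to the interval property via maximality.
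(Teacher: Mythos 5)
Your argument is correct in substance, but note that the paper itself offers no proof of this proposition: it is quoted directly from Kac (12.6.1), so there is no internal argument to compare against. Your part (i) is the standard bookkeeping: since ${\rm wt}(V(\Lambda))\subseteq \Lambda-Q_+$ and all coefficients $a_i$ of $\updelta$ are strictly positive in affine type, each $\updelta$-string upward from a given weight is finite, and its top is maximal by definition. Your part (ii) correctly isolates the only nontrivial input, the absence of gaps along a $\updelta$-string; in Kac's book this is precisely Proposition 12.5(a) (for $\Lambda$ of positive level, $\lambda\in{\rm wt}(V(\Lambda))$ implies $\lambda-\updelta\in{\rm wt}(V(\Lambda))$), proved via the Heisenberg subalgebra attached to the imaginary roots, and relying on it is legitimate given that the statement under review is itself a citation. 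Two small points you should make explicit: first, the level-zero case, where $\Lambda\in\Z\updelta$ and $V(\Lambda)$ is one-dimensional, must be noted separately since the no-gap property is stated for positive level, though the claim is then trivial; second, in (ii) downward closure already suffices --- from $\mu_1=\mu_2+m\updelta\in{\rm wt}(V(\Lambda))$ with $m\ge 1$ one gets $\mu_2+\updelta=\mu_1-(m-1)\updelta\in{\rm wt}(V(\Lambda))$, contradicting maximality of $\mu_2$ --- so the two-sided interval formulation you invoke is more than is needed.
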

Denote by $\mx_\g^+(\Lambda)$ the set of all dominant maximal weights of $V(\Lambda)$, thus, $$\mx_\g^+(\Lambda) = \mx_\g(\Lambda) \cap P^+.$$
We will omit the subscript $\g$  for simplicity if there is no danger of confusion.
It is well-known that
$$  \mx(\Lambda) = W \cdot \mx^+(\Lambda),  \quad \text{ where $W$ is the Weyl group of $\g$}.$$

Let $\h_0$ be the vector space spanned by $\{ h_i \ | \ i \in I_0\}$.
Recall the {\it orthogonal projection} $\bar{ \ } \colon \h^* \to \h_0^*$, which is introduced in (\cite[(6.2.7)]{Kac}), by
$$  \mu \longmapsto \overline{\mu} =\mu - \inn{c,\mu}\Lambda_0 - (\mu | \Lambda_0) \updelta.$$
Let $\oQ$ (resp. $\oP$) be the image of $Q$ (resp. $P$) under this map.
We also use $\inn{\ , \ }$ and $( \ | \ )$ to denote bilinear forms for $\g_0$ since they can be obtained by restricting $\inn{\ , \ }$ and $( \ | \ )$ to $\h_0 \times \h_0^*$ and $\h_0^* \times \h_0^*$ (via $\bar{ \ }$) respectively.

Define
\begin{align}\label{eq: KCaf}
\ell \mathcal{C}_{{\rm af}} \seteq \{ \mu \in \h_0^* \ | \ \inn{h_i,\mu} \ge 0 \text{ for $i \in I_0$}, \ (\mu | \uptheta) \le \ell \} \quad \text{ where }
\uptheta \seteq \updelta-a_0\affsr_0.
\end{align}

\begin{proposition}[{\cite[Proposition 12.6]{Kac}}] \label{prop: Kac bijection}
    The map $\mu \longmapsto \overline{\mu}$ defines a bijection from $\mx^+(\Lambda)$ onto $\ell \mathcal{C}_{{\rm af}} \cap (\overline{\Lambda}+\overline{Q})$
    where $\Lambda$ is of level $\ell$. In particular, the set $\mx^+(\Lambda)$ is finite and described as follows:
    \begin{align}\label{eq: description}
    \mx^+(\Lambda) = \{ \lambda \in P^+ \ | \  \lambda \le \Lambda \text{ and } \Lambda-\lambda-\updelta \not\in Q_+ \}.
    \end{align}
\end{proposition}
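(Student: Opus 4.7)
The plan is to verify well-definedness, injectivity, and surjectivity of the projection map $\overline{\cdot}$, and then translate the notion of dominant maximality into the inequalities appearing in \eqref{eq: description}.

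For \textbf{well-definedness}, let $\mu \in \mx^+(\Lambda)$. Since $\mu \in \Lambda - Q_+$, the image $\overline{\mu}$ automatically lies in $\overline{\Lambda} + \overline{Q}$. For placement in $\ell\mathcal{C}_{\rm af}$, note that $\mu - \overline{\mu} = \ell\Lambda_0 + (\mu \mid \Lambda_0)\updelta$ lies in $\mathbb{Q}\Lambda_0 + \mathbb{Q}\updelta$, a subspace on which each $h_i$ with $i \in I_0$ vanishes; this yields $\langle h_i, \overline{\mu}\rangle = \langle h_i, \mu\rangle \ge 0$. The remaining inequality $(\overline{\mu}\mid \uptheta) \le \ell$ encodes $\langle h_0, \mu\rangle \ge 0$: expanding $c = a_0^\vee h_0 + \sum_{i \in I_0} a_i^\vee h_i$ and pairing with $\mu$ via \eqref{eq: pairing}, one extracts a linear identity between $\langle h_0, \mu\rangle$, the level $\ell$, and $(\overline{\mu}\mid \uptheta)$ that converts dominance into the chamber condition.

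For \textbf{injectivity}, the kernel of $\overline{\cdot}\colon P \to \overline{P}$ is $\mathbb{Z}\Lambda_0 + \mathbb{Z}\updelta$, and two weights of $V(\Lambda)$ with equal projection share the same level (hence the same $\Lambda_0$-coefficient), so they differ only by a multiple of $\updelta$; within any $\updelta$-string $\{\mu - s\updelta \mid s \ge 0\} \subset \mathrm{wt}(V(\Lambda))$ only the top element can be maximal. For \textbf{surjectivity}, given $\xi \in \ell\mathcal{C}_{\rm af} \cap (\overline{\Lambda}+\overline{Q})$, I would lift it to $\mu_0 \in P$ of level $\ell$ lying in $\Lambda + Q$; the inequalities defining $\ell\mathcal{C}_{\rm af}$ then transfer, via the identity of the previous paragraph, to full dominance $\mu_0 \in P^+$. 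Since $\langle h_i, \updelta\rangle = 0$ for all $i$, every shift $\mu_0 + k\updelta$ remains dominant, and for $k$ sufficiently small one has $\mu_0 + k\updelta \le \Lambda$. Appealing to the classical fact that every dominant integral weight $\le \Lambda$ lies in $\mathrm{wt}(V(\Lambda))$, such shifts are weights of $V(\Lambda)$; taking the largest admissible $k$ produces the required dominant maximal weight projecting to $\xi$.

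Finally, to obtain \eqref{eq: description}: for $\lambda \in P^+$ with $\lambda \le \Lambda$, the classical fact makes $\lambda$ a weight of $V(\Lambda)$, and since $\langle h_i, \updelta\rangle = 0$ also $\lambda + \updelta \in P^+$. Hence $\lambda + \updelta \in \mathrm{wt}(V(\Lambda))$ iff $\lambda + \updelta \le \Lambda$, i.e., iff $\Lambda - \lambda - \updelta \in Q_+$, and maximality is the negation of this. Finiteness of $\mx^+(\Lambda)$ follows from the bijection, since $\ell\mathcal{C}_{\rm af}$ is compact while $\overline{\Lambda} + \overline{Q}$ is discrete. The \textbf{main obstacle} is the auxiliary input that every dominant integral $\lambda \le \Lambda$ belongs to $\mathrm{wt}(V(\Lambda))$; this rests on $W$-invariance of $\mathrm{wt}(V(\Lambda))$ combined with the fact that its dominant weights form an order-ideal below $\Lambda$. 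A secondary technical point is verifying the $h_0$-identity uniformly across the untwisted and twisted types, where the labels $a_i$ and $a_i^\vee$ behave asymmetrically (notably in the $A_{2n}^{(2)}$ case).
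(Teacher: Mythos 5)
The paper offers no proof of this proposition at all: it is quoted verbatim from Kac's book (Proposition 12.6 there), so there is nothing internal to compare your argument against. Judged on its own merits, your reconstruction is the standard argument and is essentially sound. The three pillars are all correct: (i) the chamber condition is exactly the identity $(\overline{\mu}\mid\uptheta)=\sum_{i\in I_0}a_i^\vee\langle h_i,\mu\rangle=\ell-a_0^\vee\langle h_0,\mu\rangle$ (using $\uptheta=\sum_{i\in I_0}a_i\overline{\affsr}_i$ and $(\affsr_i\mid\lambda)=\tfrac{a_i^\vee}{a_i}\langle h_i,\lambda\rangle$), which you only gesture at but which does come out uniformly in all affine types since $a_0^\vee=1$ throughout, so your worry about $A_{2n}^{(2)}$ is harmless; (ii) injectivity via the $\delta$-string structure, using that two weights of $V(\Lambda)$ with the same projection have the same level and hence differ by a multiple of $\updelta$; (iii) surjectivity by lifting $\xi=\overline{\Lambda}+\overline{\beta}$ to $\Lambda+\beta$, noting the lift is automatically of level $\ell$ and dominant, and then maximizing over shifts by $\updelta$ — the admissible set of shifts is nonempty (coefficients $c_i-ka_i$ of $\Lambda-\mu_0-k\updelta$ grow as $k\to-\infty$) and bounded above, and since all shifts are dominant, "$\le\Lambda$" and "is a weight" coincide for them, so the top shift is indeed maximal.

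The one genuine external input, as you correctly flag, is the classical fact that $\lambda\in P^+$ with $\lambda\le\Lambda$ implies $\lambda\in\mathrm{wt}(V(\Lambda))$; this is Proposition 11.2 in Kac and is legitimate to cite, though your one-line justification ("its dominant weights form an order-ideal below $\Lambda$") is essentially a restatement of the fact rather than an argument for it — if you wanted a self-contained proof you would need the usual induction descending from $\Lambda$ to $\lambda$ along positive roots. Granting that citation, the derivation of \eqref{eq: description} and the finiteness claim (compactness of $\ell\mathcal{C}_{\mathrm{af}}$ versus discreteness of $\overline{\Lambda}+\overline{Q}$) both go through.
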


For reader's understanding, let us collect notations required to develop our arguments.
\begin{itemize}
\item[$\diamond$] For $(n+1)$-tuples $\gamma = (\gamma_0, \gamma_1,\ldots, \gamma_n)$ and $\gamma' = (\gamma'_0, \gamma'_1,\ldots, \gamma'_{n'})$ of integers, $0 \le a \le b \le n$, we set
\begin{itemize}
\item[--] $\gamma_{[a,b]} \seteq (\gamma_a,\ldots,\gamma_b)$, $ \gamma_{\le a} \seteq \gamma_{[0,a]}$,  and $\gamma_{\ge b} \seteq \gamma_{[b,n]}$.
\item[--] $\gamma*\gamma' \seteq (\gamma_0,\gamma_1,\ldots,\gamma_n,\gamma'_0, \gamma'_1,\ldots, \gamma'_{n'})$.
\end{itemize}
\item[$\diamond$] For words $\bw = w_1w_2\cdots w_n$ and $\bw' = w'_1w'_2\cdots w'_n$, we set $\bw * \bw' \seteq w_1w_2\cdots w_n w'_1w'_2\cdots w'_n$.
\item[$\diamond$] For a nonnegative integer $m$ and a positive integer $k$, we denote by $m^k$ the sequence $\underbracket{m, m, \ldots, m}_{\text{$k$-times}}$.
\item[$\diamond$] Let $k$ be a positive integer.
\begin{itemize}
\item[--] For $m,m' \in \Z$, we write $m\equiv_k m'$ if $k$ divides $m-m'$, and $m \not\equiv_k m'$ otherwise.
\item[--] For $\mathbf{m}= (m_1,m_2,\ldots, m_n), \mathbf{m}' = (m_1',m_2',\ldots, m_n') \in \Z^n$, we write $\mathbf{m} \equiv_k \mathbf{m}'$ if $m_i \equiv_k m_i'$ for all $i = 1,2,\ldots,n$.
\end{itemize}
\item[$\diamond$] For a matrix $M$, we denote by $M_{(i)}$ the $i$th row of $M$ and by $M^{(i)}$ the $i$th column of $M$.
\item[$\diamond$] For an invertible matrix $M$, we denote by $\widetilde{M}$ the inverse matrix of $M$.
\item[$\diamond$] For a commutative ring $R$ with the unity and a positive integer $n$,
the \emph{dot product} on $R^n$ denotes the map $\bullet : R^n \times R^n \ra R$ defined by
\[(x_1,x_2,\ldots,x_n) \bullet (y_1,y_2,\ldots,y_n) = \sum_{1 \le i \le n} x_i y_i.\]
\item[$\diamond$] For a statement $P$, $\delta(P)$ is defined to be $1$ if $P$ is true and $0$ if $P$ is false.
\end{itemize}

\section{Sets in bijection with $\mx^+(\Lambda)$}\label{sec: Par param}

In this section, all affine Kac-Moody algebras will be affine Kac-Moody algebras other than $A_{2n}^{(2)}$. In fact, we exclude the case $A_{2n}^{(2)}$ for the simplicity of our statements. All the notations and terminologies in the previous section will be used without change.

Choose an arbitrary element $\Lambda \in \dwt_\ell$.
The purpose of this section is to understand a combinatorial structure of $\mx^+(\Lambda)$ by investigating sets in bijection with
$\mx^+(\Lambda)$ which are induced from certain restrictions of the orthogonal projection  $\bar{ \ } \colon \h^* \to \h_0^*$.

As seen in Proposition \ref{prop: Kac bijection}, the set  $\ell \mathcal{C}_{{\rm af}} \cap (\overline{\Lambda}+\overline{Q})$
plays  a key role in the study of $\mx^+(\Lambda)$. 
Hereafter we will assume that $\Lambda$ is of the form $\sum_{0 \le i \le n} p_i \Lambda_i$
because
\[ \ellcov = \ell \mathcal{C}_{{\rm af}} \cap (\overline{\Lambda + k \updelta}+\overline{Q}) \quad \text{for all $k \in \Z$}. \]
Set
\[ \Pclp \seteq \dwt / \Z \updelta.\]
We identify $\Pclp$ with $\sum_{0 \le i \le n} \Z_{\ge 0} \Lambda_i$ in the obvious manner.
As a set, $\Pclp$ coincides with the set of classical dominant integral weights arising in the context of quantum affine Lie algebra $U_q'(\g)$ (for details, see \cite{HK}).
We also set
\[ \pclp{\ell} := \dwt_\ell / \Z\updelta, \]
which is identified with $\dwt_\ell \cap \sum_{0 \le i \le n} \Z_{\ge 0} \Lambda_i$.

\subsection{Description of $\ell \mathcal{C}_{{\rm af}} \cap (\overline{\Lambda}+\overline{Q})$}\label{subsec: equiv rel}

As mentioned in the above, $\g$ denotes an affine Kac-Moody algebra other than $A_{2n}^{(2)}$.

Set
\begin{align*}
&\Uppi_0 \seteq \{\finsr_i \mid i \in I_0\} \quad \text{(the set of simple roots of $\g_0$)},\\
&\upvarpi \seteq \{\finfw_i \mid i \in I_0\} \quad \text{(the set of fundamental dominant weights of $\g_0$).}
\end{align*}
Both $\Uppi_0$ and $\upvarpi$ are bases for $\Q \upvarpi$,  and the transition matrix $[\mathrm{Id}]_{\Uppi_0}^{\upvarpi}$ is equal to Cartan matrix $\Afin$ of $\g_0$.
For reader's understanding, let us recall that
\[
\finsr_0 = -\sum_{1 \le i \le n}a_i \finsr_i,  \qquad
\oLa_i =\begin{cases}
\finfw_i & \text{if $i \neq 0$},\\
0 & \text{if $i = 0$},
\end{cases} \]
and
\[\finsr_i = \sum_{1 \le j \le n} a_{ji} \finfw_j ,  \qquad  \finfw_i = \sum_{1 \le j \le n} d_{ji} \finsr_j  \quad \text{ ($i \in I_0$)}.\]
Here $\Afin =(a_{ij})_{i,j \in I_0}$ and $\invA = (d_{ij})_{i,j \in I_0}$ is the inverse of $\Afin$.

Choose any element $\Lambda = \sum_{0 \le i \le n} p_i \Lambda_i \in \pclp{\ell}$, which will be fixed throughout this subsection.
Then we have
\begin{align}
\ell \mathcal{C}_{{\rm af}} \cap (\overline{\Lambda}+\overline{Q}) \nonumber
& = \left\{ \oLa + \sum_{0 \le j \le n} k_j \finsr_j \; \middle|\; k_j \in \Z, \langle h_i,\oLa + \sum_{0 \le j \le n} k_j \finsr_j\rangle \geq 0 \ (i \in I_0), \  \left(\oLa + \sum_{0 \le j \le n} k_j \finsr_j\, \middle|\, \uptheta \right) \leq \ell \right\} \nonumber \\
& = \left\{ \finsr \seteq  \oLa + \sum_{1 \le j \le n} x_j \finsr_j \; \middle|\;
\begin{array}{l}
{\rm (i)}   \ \mathbf{x} \seteq (x_1, x_2, \ldots, x_n)^t \in \Z^n \\
{\rm (ii)}  \  \langle h_i, \finsr \rangle \ge 0  ~(i \in I_0) \\
{\rm (iii)} \  \left( \finsr \; \middle| \; \sum_{1\le i \le n} a_i \finsr_i \right) \le \ell
\end{array} \right\}, \label{eq: equi condition1}
\end{align}
where the second equality can be obtained by substituting $x_j$ for $k_j - k_0 a_j$ for $j \in I_0$.
Since
\begin{align*}
\left\langle h_i, \oLa + \sum_{1 \le j \le n} x_j \finsr_j \right\rangle & = p_i + \sum_{1 \le j \le n} x_j \cma_{ij} = p_i + \Afin_{(i)} \mathbf{x},
\end{align*}
one can see that the condition (ii) is satisfied if and only if $\Afin_{(i)} \mathbf{x} \ge -p_i$.
For the condition (iii), notice that (see~\eqref{eq: pairing})
\begin{equation} \label{eq: equation}
\begin{aligned}
\left( \oLa + \sum_{1 \le j \le n} x_j \finsr_j \; \middle| \; \sum_{1 \le i \le n} a_i \finsr_i \right)
= \sum_{1 \le i,j \le n} p_j a_i (\finfw_j | \finsr_i) + \sum_{1 \le i,j \le n} x_j a_i (\finsr_j|\finsr_i) =  \sum_{1 \le i \le n} (a_i^\vee p_i + a_i^\vee \Afin_{(i)}\mathbf{x}).
\end{aligned}
\end{equation}
Since $\ell = \langle c,\Lambda \rangle = \sum_{0 \le i \le n} a_i^\vee p_i$, this computation implies that
the condition (iii) in~\eqref{eq: equi condition1} is satisfied if and only if $\sum_{1 \le i \le n} a_i^\vee \Afin_{(i)}\mathbf{x} \le a_0^\vee p_0$.
As a consequence, $\ell \mathcal{C}_{{\rm af}} \cap (\overline{\Lambda}+\overline{Q})$ can be written as
    \begin{align} \label{eq: desired set 1}
    \left\{ \oLa + \sum_{1 \le j \le n} x_j\finsr_j \; \middle| \;
    \begin{array}{l}
    {\rm (i)} \ \mathbf{x} :=  (x_1,x_2,\ldots,x_{n})^t \in \Z^n \\[0.7ex]
    {\rm (ii)} \ -p_i  \le \Afin_{(i)} \mathbf{x}  \text{ for $i \in I_0$} \\[0.7ex]
    {\rm (iii)}  \ \sum_{1 \le i \le n} a^\vee_i \Afin_{(i)} \mathbf{x} \le a_0^\vee p_0. \end{array}  \right\}
    \end{align}
Finally, using the substitution $m_j \seteq \Afin_{(j)} \mathbf{x} + p_j$ for $j\in I_0$, we obtain the description of
$\ell \mathcal{C}_{{\rm af}} \cap (\overline{\Lambda}+\overline{Q})$ in terms of the basis $\upvarpi$.
\begin{proposition}\label{lem: ellCaf}
     Let $\Lambda=\sum_{0 \le i \le n} p_i\Lambda_i \in \dwt_\ell$.  Then we have
    \begin{align}\label{eqn:omx}
        \ell \mathcal{C}_{{\rm af}} \cap (\overline{\Lambda}+\overline{Q})
        & = \left\{\sum_{1 \le i \le n} m_i \finfw_i\ \; \middle| \;
        \begin{array}{l}
        {\rm (i)} \ \sum_{1 \le i \le n} (m_i - p_i) \invA^{(i)} \in \Z^n  \\ [0.5ex]
        {\rm (ii)} \  (m_1,m_2,\ldots,m_n)^t\in \Z_{\ge 0}^{n} \\[0.5ex]
        {\rm (iii)} \ \sum_{1 \le i \le n} a_i^\vee m_i \le \ell
        \end{array}
        \right\}.
        \end{align}
\end{proposition}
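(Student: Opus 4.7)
The plan is to carry out the change of variable $m_j := \Afin_{(j)}\mathbf{x} + p_j$ ($j \in I_0$) on the description \eqref{eq: desired set 1}, which the preceding paragraphs have already established for $\ell \mathcal{C}_{{\rm af}} \cap (\oLa+\oQ)$, and verify that the three defining conditions transform exactly into those of \eqref{eqn:omx}.

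For the weight itself, using $\oLa_0 = 0$ together with $\finsr_i = \sum_{1 \le j \le n} a_{ji}\finfw_j$, I compute
\[ \oLa + \sum_{1 \le j \le n} x_j \finsr_j = \sum_{1 \le j \le n}\bigl(p_j + \Afin_{(j)}\mathbf{x}\bigr)\finfw_j = \sum_{1 \le j \le n} m_j \finfw_j, \]
so the two parametrizations describe the same weight. Since $\Afin$ is invertible, the substitution is reversible via $\mathbf{x} = \invA(\mathbf{m}-\mathbf{p})$, where $\mathbf{p} = (p_1,\ldots,p_n)^t$. Hence the integrality $\mathbf{x} \in \Z^n$ demanded by (i) of \eqref{eq: desired set 1} translates to $\sum_{1 \le i \le n}(m_i - p_i)\invA^{(i)} \in \Z^n$, which is (i) of \eqref{eqn:omx}. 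The inequalities (ii) of \eqref{eq: desired set 1}, namely $\Afin_{(i)}\mathbf{x} \ge -p_i$, become $m_i \ge 0$, yielding (ii) of \eqref{eqn:omx}. Finally, (iii) of \eqref{eq: desired set 1} reads
\[ \sum_{1 \le i \le n} a_i^\vee \Afin_{(i)}\mathbf{x} = \sum_{1 \le i \le n} a_i^\vee (m_i - p_i) \le a_0^\vee p_0, \]
and rearranging with $\langle c,\Lambda\rangle = \sum_{0 \le i \le n} a_i^\vee p_i = \ell$ gives $\sum_{1 \le i \le n} a_i^\vee m_i \le \ell$, matching (iii) of \eqref{eqn:omx}.

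The only conceptually nontrivial point is condition (i): the coset constraint inherited from $\oLa + \oQ$ does not merely demand $\mathbf{m}\in \Z_{\ge 0}^n$, but rather that the \emph{shifted} vector $\mathbf{m}-\mathbf{p}$ lie in $\Afin\Z^n$. This is precisely the mechanism by which the description distinguishes coset classes of the same level, and it will be the starting point for the equivalence relation $\sim$ introduced in the subsequent subsections.
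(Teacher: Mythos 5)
Your proposal is correct and follows essentially the same route as the paper's own proof: the substitution $m_j := \Afin_{(j)}\mathbf{x} + p_j$ applied to the description \eqref{eq: desired set 1}, with the inverse change $\mathbf{x} = \invA(\mathbf{m}-\mathbf{p})$ translating conditions (i)--(iii) exactly as the paper does. The only minor point the paper makes explicit that you leave implicit is that $\Afin_{(j)}\mathbf{x}\in\Z$ (so $m_j$ is an integer, not merely nonnegative), which is immediate from $\mathbf{x}\in\Z^n$ and the integrality of $\Afin$.
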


\begin{proof}
Let $\mathbf{x} :=  (x_1,x_2,\ldots,x_{n})^t \in \Z^n$. Note that
\begin{align*}
\oLa + \sum_{1 \le i \le n}x_i \finsr_i & = \sum_{1 \le i \le n} \left( p_i +  \sum_{1 \le j \le n} x_j\cma_{ij} \right)\finfw_i = \sum_{1 \le i \le n}\left( p_i + \Afin_{(i)} \mathbf{x} \right) \finfw_i.
\end{align*}
Set $m_j := \Afin_{(j)} \mathbf{x} + p_j$.
Since $\mathbf{x} = \sum_{1 \le i \le n} \invA^{(i)}(m_i - p_i)$, (i) of~\eqref{eq: desired set 1} is equivalent to (i) of~\eqref{eqn:omx}.
By direct calculation, one can see that $\Afin_{(j)} \mathbf{x}$ is an integer. Thus, by the definition of $m_i$, (ii) of~\eqref{eq: desired set 1} is equivalent to {\rm (ii)} of~\eqref{eqn:omx}. 
For the condition (iii), observe that
\begin{align*}
    \sum_{1 \le i \le n} a_i^\vee \Afin_{(i)} \mathbf{x} = \sum_{1 \le i \le n}  a_i^\vee   \Afin_{(i)} \left( \sum_{1 \le j \le n} \invA^{(j)} (m_j - p_j) \right) = \sum_{1 \le i \le n} a_i^\vee(m_i - p_i) \le a_0^\vee p_0.
\end{align*}
This tells us that $\sum_{1 \le i \le n} a^\vee_i \Afin_{(i)} \mathbf{x} \le a_0^\vee p_0$ if and only if $\sum_{1 \le i \le n} a_i^\vee m_i \le \ell$, as required.
\end{proof}
\begin{example}\label{eg: Seq Lambda}
        Let $\g$ be the affine Kac-Moody algebra of type $A_2^{(1)}$ and $\Lambda = 2\Lambda_0 + \Lambda_1$. In this case, $a^\vee_0 = a^\vee_1 = a^\vee_2 = 1$,
        and $\invA = \left[\begin{matrix}
        \frac{2}{3} & \frac{1}{3} \\[0.5ex]
        \frac{1}{3} & \frac{2}{3}
        \end{matrix}\right]$.
        Hence, by Proposition \ref{lem: ellCaf}, we have
        \begin{align*}
        3\mathcal{C}_{{\rm af}} \cap (\overline{\Lambda}+\overline{Q}) &= \left\{m_1\finfw_1 + m_2 \finfw_2 \;\middle| \;
        \begin{array}{l}
        {\rm (i)} \ (m_1 - 1)\left[\begin{matrix}
        \frac{2}{3} \\[0.5ex]
        \frac{1}{3}
        \end{matrix}\right] + m_2\left[\begin{matrix}
        \frac{1}{3} \\[0.5ex]
        \frac{2}{3}
        \end{matrix}\right] \in \Z^2 \\[0.5ex]
        {\rm (ii)} \ m_1, m_2 \in \Z_{\ge 0}\\[0.5ex]
        {\rm (iii)} \ m_1 + m_2 \le 3 \\
        \end{array}\right\}\\
        & = \left\{\finfw_1, 2\finfw_2, 2\finfw_1 + \finfw_2 \right\}.
        \end{align*}
\end{example}

\subsection{Equivalence relation on $\pclp{\ell}$}

Let $\Lambda \in \pclp{\ell}$.
Consider the map $\iota_\Lambda: \ellcov \ra \pclp{\ell}$ defined by
\[\iota_\Lambda\left( \sum_{1 \le i \le n} m_i \finfw_i \right) = m_0\Lambda_0 + \sum_{1 \le i \le n} m_i\Lambda_i,  \]
where
$$m_0 = \ell - \sum_{1 \le i \le n} a^\vee_i m_i.$$
This map is well-defined since all $m_i$'s are nonnegative integers for all $0 \le i \le n$ by Proposition~\ref{lem: ellCaf} and $\sum_{0 \le i \le n} m_i \Lambda_i$ has level $\ell$. In particular, it is injective.

We now define a relation $\sim$  on $\pclp{\ell}$, called the \emph{sieving equivalence relation}, by
\begin{align}\label{eq: sieving relation}
\Lambda \sim \Lambda' \quad \text{if and only if} \quad \ellcov = \ell \mathcal{C}_{{\rm af}} \cap (\overline{\Lambda'}+\overline{Q}).
\end{align}
It is easy to see that $\sim$ is indeed an equivalence relation on $\pclp{\ell}$. 
The following lemma is straightforward.

\begin{lemma}\label{lem: sim equiv}
For $\Lambda, \Lambda' \in \pclp{\ell}$, the following are equivalent.
\begin{enumerate}
\item[{\rm (1)}] $\Lambda \sim \Lambda'$.
\item[{\rm (2)}] $\iota_\Lambda = \iota_{\Lambda'}$.
\item[{\rm (3)}] $\oLa + \oQ = \overline{\Lambda'} + \oQ$.
\item[{\rm (4)}] $\Lambda' \in \mathrm{Im}(\iota_\Lambda)$.
\end{enumerate}
\end{lemma}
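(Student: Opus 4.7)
My plan is to establish the equivalences as (1)\,$\Leftrightarrow$\,(2), (1)\,$\Leftrightarrow$\,(3), and (3)\,$\Leftrightarrow$\,(4), all of which pivot on a single preliminary observation: for \emph{any} $\Lambda'' = \sum_{i \in I} p''_i \Lambda_i \in \pclp{\ell}$, its projection $\overline{\Lambda''} = \sum_{i \in I_0} p''_i \finfw_i$ already lies in $\ell\mathcal{C}_{\mathrm{af}}$. Indeed, dominance is immediate from $p''_i \ge 0$ for $i \in I_0$, while the level inequality follows by specialising the computation in~\eqref{eq: equation} to $\mathbf{x} = 0$, giving $(\overline{\Lambda''} \,|\, \uptheta) = \sum_{i \in I_0} a_i^\vee p''_i = \ell - a_0^\vee p''_0 \le \ell$. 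Applied to $\Lambda$ itself, this shows in particular that $\oLa \in \ellcov$, so the set is non-empty.

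The equivalence (1)\,$\Leftrightarrow$\,(2) is essentially unpacking definitions: the rule in~\eqref{eq: embedding} that defines $\iota_\Lambda$, namely $\sum m_i \finfw_i \mapsto (\ell - \sum a_i^\vee m_i)\Lambda_0 + \sum_{i \in I_0} m_i \Lambda_i$, depends on $\Lambda$ only through the specification of its domain $\ellcov$, so the equality of maps $\iota_\Lambda = \iota_{\Lambda'}$ is literally the coincidence of their domains, which is (1). For (1)\,$\Leftrightarrow$\,(3), the implication (3)\,$\Rightarrow$\,(1) is immediate upon intersecting both sides with the fixed set $\ell\mathcal{C}_{\mathrm{af}}$, while for (1)\,$\Rightarrow$\,(3) the preliminary observation forces $\oLa$ to lie in $\ell\mathcal{C}_{\mathrm{af}} \cap (\overline{\Lambda'} + \oQ)$ as well, so $\oLa - \overline{\Lambda'} \in \oQ$ and the two cosets coincide.

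Finally, for (3)\,$\Leftrightarrow$\,(4): if (3) holds then $\overline{\Lambda'} \in \oLa + \oQ$, and combining with $\overline{\Lambda'} \in \ell\mathcal{C}_{\mathrm{af}}$ from the observation places $\overline{\Lambda'}$ in $\ellcov$; a direct expansion of $\iota_\Lambda(\overline{\Lambda'})$ then recovers $\Lambda'$, using $a_0^\vee = 1$ --- the reason we have excluded $A_{2n}^{(2)}$ from this section --- to identify the coefficient of $\Lambda_0$. Conversely, if $\Lambda' \in \mathrm{Im}(\iota_\Lambda)$, say $\Lambda' = \iota_\Lambda(\mu)$ for some $\mu \in \ellcov$, then the rule of $\iota_\Lambda$ forces $\overline{\iota_\Lambda(\mu)} = \mu$, so $\overline{\Lambda'} = \mu \in \ellcov \subseteq \oLa + \oQ$, which is (3). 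No serious obstacle is anticipated; the two points that require mild care are the non-emptiness input underlying (1)\,$\Rightarrow$\,(3) and the quiet use of $a_0^\vee = 1$ in recognising $\iota_\Lambda(\overline{\Lambda'}) = \Lambda'$.
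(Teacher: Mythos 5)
Your proof is correct and takes essentially the same route as the paper's: both rest on the observations that $\overline{\Lambda},\overline{\Lambda'}\in\ell\mathcal{C}_{\mathrm{af}}$, that $\iota_\Lambda$ depends on $\Lambda$ only through its domain, and that cosets of $\oQ$ are disjoint or equal (the paper merely routes (4) via (2)$\Rightarrow$(4)$\Rightarrow$(3) instead of your (3)$\Leftrightarrow$(4), and your explicit non-emptiness remark just fills in its terse coset argument). One harmless quibble: in Kac's convention $a_0^\vee=1$ holds for $A_{2n}^{(2)}$ as well, so that is not why the type is excluded (the real issue is that $\oP\neq P_0$ and $\oQ\neq Q_0$ there); the step you invoke it for is nonetheless valid for all types treated in this section.
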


\begin{proof}
The equivalence of (1) and (2) is straightforward from the definition, and that
of  (1) and (3) follows from the fact that
either $(\oLa + \oQ) \cap (\overline{\Lambda'} + \oQ) = \emptyset$  or $\oLa + \oQ = \overline{\Lambda'} + \oQ$
because $(\oLa + \oQ)$ and $(\overline{\Lambda'} + \oQ)$ are translations of $\oQ$.

Next, let us show that  (2) implies (4).
Suppose that $\iota_\Lambda = \iota_{\Lambda'}$. Then $\overline{\Lambda'} \in \ell \mathcal{C}_{{\rm af}} \cap (\overline{\Lambda'}+\overline{Q}) = \ellcov$ and so $\iota_{\Lambda}(\overline{\Lambda'}) = \iota_{\Lambda'}(\overline{\Lambda'}) = \Lambda'$.

Finally, let us show that  (4) implies (3).
Assume that $\Lambda' = \sum_{0 \le i \le n} m_i' \Lambda_i \in \mathrm{Im}(\iota_{\Lambda})$.
Since $\Lambda' \in \pclp{\ell}$, this gives $\overline{\Lambda'} = \sum_{1 \le i \le n} m_i'\finfw_i \in \ellcov$. Therefore, $\oLa + \oQ = \overline{\Lambda'} + \oQ$.
This completes the proof.
\end{proof}

Let $P_0:=\mathbb Z \upvarpi$ be the weight lattice of $\g_0$ and $Q_0:=\mathbb Z \Uppi_0$ the root lattice of $\g_0$.
Then $P_0/Q_0$ is known to be a finite group, called the {\it fundamental group of $\Phi_0$ (the set of roots of $\g_0$).}
Its structure is well-known in the literature. For instance, see \cite{Humph}.
\begin{table}[h]
    \centering
    { \arraycolsep=1.6pt\def\arraystretch{1.5}
        \begin{tabular}{c||c|c|c|c|c|c|c|c|}
            $ \g_0 $ &  $   A_{n}$ & $D_{n}$& $E_6$& $E_7$ & $E_8$ & $B_n \overset{t}{\leftrightarrow} C_{n}$   & $F_4$ & $G_2$\\ \hline
            $P_0/Q_0$  & $\Z_{n+1}$& $
            \begin{array}{cl}
            \Z_4 & \text{if $n$ is odd,} \\ \Z_2 \times \Z_2 & \text{if $n$ is even}
            \end{array}$& $\Z_3$ & $\Z_2$ & $\{e\}$
            & $\Z_2$
            & $\{e\}$& $\{e\}$
        \end{tabular}
    }\\[1.5ex]
    \caption{Fundamental groups}
    \protect\label{table: oP/oQ}
\end{table}

It should be noticed that, except for $A_{2n}^{(2)}$ type, it holds that $\oQ=Q_0$ and $\oP=P_0$.
Lemma~\ref{lem: sim equiv} (3)  shows that
there are at most $|P_0 / Q_0|$ equivalence classes on $\pclp{\ell}$.
In the following, we provide a complete list of representatives of very simple form.
For each type, let us define a set $\DRpclp$, called the set of {\it distinguished representatives}, as in Table~\ref{table: PRep}.
\begin{table}[h]
    \centering
    { \arraycolsep=1.6pt\def\arraystretch{1.5}
        \begin{tabular}{c|c|c}
            Type & $\DRpclp$ & $|\DRpclp| \; ( = |P_0/Q_0|)$ \\ \hline
            $   A_{n}^{(1)} $   & $\{(\ell - 1)\Lambda_0 + \Lambda_i \mid i = 0,1,\ldots, n \}$ & $n+1$ \\ \hline
            $   B_n^{(1)}, D_{n+1}^{(2)}, E_7^{(1)} $   & $\{(\ell - 1)\Lambda_0 + \Lambda_i \mid i = 0,n \}$ & $2$  \\ \hline
            $C_n^{(1)}, A_{2n-1}^{(2)}$ & $\{ (\ell - 1)\Lambda_0 + \Lambda_i \mid i = 0,1\} $ & $2$ \\ \hline
            $D_n^{(1)}$ &  $\{(\ell - 1)\Lambda_0 + \Lambda_i \mid i = 0,1,n-1,n\}$  & $4$ \\ \hline
            $E_6^{(1)}$ &  $\{ (\ell - 1)\Lambda_0 + \Lambda_i \mid i = 0,1,6\}$& $3$ \\ \hline
            & \\[-15pt]
            \makecell{$F_4^{(1)}$, $E_6^{(2)}$, $G_2^{(1)}$,\\ $D_4^{(3)}$, $E_8^{(1)}$}    & $\{\ell \Lambda_0\}$& $1$
        \end{tabular}
    }\\[1.5ex]
    \caption{Distinguished representatives}
    \protect\label{table: PRep}
\end{table}
One can prove in a direct way the following lemma.

\begin{lemma}
$\DRpclp$ is a complete set of pairwise inequivalent representatives of $\pclp{\ell}/\sim$, the set of equivalence classes of $\pclp{\ell}$ under the sieving equivalence relation.
In particular, the number of equivalence classes is given by $|P_0/Q_0|$.\
\end{lemma}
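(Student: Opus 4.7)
My plan is to invoke Lemma~\ref{lem: sim equiv}(3), which rephrases $\Lambda \sim \Lambda'$ as the coset equality $\oLa + \oQ = \overline{\Lambda'} + \oQ$. Since we restrict to affine types other than $A_{2n}^{(2)}$, we have $\oQ = Q_0$ and $\oP = P_0$, and the orthogonal projection induces a well-defined map
\[\chi : \pclp{\ell} \longrightarrow P_0/Q_0, \qquad \sum_{i=0}^n p_i \Lambda_i \longmapsto \sum_{i=1}^n p_i \finfw_i + Q_0,\]
whose fibres are exactly the sieving equivalence classes. Both assertions of the lemma will follow once I show that the restriction of $\chi$ to $\DRpclp$ is a bijection onto $P_0/Q_0$.

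I would carry this out in three short steps. First, I confirm $\DRpclp \subseteq \pclp{\ell}$ by checking that for each listed $(\ell-1)\Lambda_0 + \Lambda_i$ the level condition $(\ell-1)a^\vee_0 + a^\vee_i = \ell$ holds; since $a^\vee_0 = 1$ throughout, this reduces to $a^\vee_i = 1$, and a glance at the dual Kac labels shows that the indices $i$ in each row of Table~\ref{table: PRep} are precisely those with $a^\vee_i = 1$ (the row $\ell\Lambda_0$ being automatic). Second, I verify that $\chi|_{\DRpclp}$ is injective: adopting the convention $\finfw_0 \seteq 0$, this is the assertion that $\finfw_i \not\equiv \finfw_j \pmod{Q_0}$ for any two distinct indices in the chosen set. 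Third, the surjectivity of $\chi|_{\DRpclp}$ then follows for free from the injectivity together with the cardinality equality $|\DRpclp| = |P_0/Q_0|$ recorded in Table~\ref{table: PRep}.

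The main obstacle is the type-by-type verification in Step 2. This is routine but unavoidable: for each type one reads off from the standard description of $P_0/Q_0$ (Table~\ref{table: oP/oQ}) that the listed $\finfw_i$'s realise distinct cosets. In type $A_n$, the relation $\finfw_i \equiv i\finfw_1 \pmod{Q_0}$ shows that $\{\finfw_0, \finfw_1, \ldots, \finfw_n\}$ enumerates $\Z_{n+1}$; in $B_n$ and $C_n$ the nontrivial $\Z_2$-coset is represented by $\finfw_n$ and $\finfw_1$ respectively; in $D_n$ the four cosets of $P_0/Q_0$ are realised by $\{0, \finfw_1, \finfw_{n-1}, \finfw_n\}$ both when $n$ is odd (giving $\Z_4$ with $\finfw_n$ as generator and $\finfw_1 \equiv 2\finfw_n$, $\finfw_{n-1} \equiv 3\finfw_n \pmod{Q_0}$) and when $n$ is even (giving $\Z_2 \times \Z_2$); $E_6^{(1)}$ and $E_7^{(1)}$ are handled by analogous relations in their $\Z_3$ and $\Z_2$ fundamental groups (using $\finfw_6 \equiv -\finfw_1$ and $\finfw_7$ as the generator, respectively); and the remaining types have trivial fundamental group, so $\DRpclp = \{\ell\Lambda_0\}$. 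Once these checks are assembled, $\chi|_{\DRpclp}$ is a bijection and the lemma follows at once.
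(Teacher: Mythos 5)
Your proposal is correct and follows essentially the same route as the paper: both use Lemma~\ref{lem: sim equiv}(3) to identify the equivalence classes with cosets of $Q_0$ in $P_0$, and then combine the count $|\DRpclp| = |P_0/Q_0|$ with a check that the listed weights project to pairwise distinct cosets (the paper verifies this via the entries of $\invA$, detailed only for $A_n^{(1)}$, while you quote the standard coset representatives of the fundamental groups type by type). One harmless slip: in your Step 1 the listed indices are not always \emph{precisely} those with $a_i^\vee = 1$ (for $B_n^{(1)}$ the index $1$ also has $a_1^\vee = 1$, and for $C_n^{(1)}$ all comarks equal $1$); what your argument actually needs — and what does hold — is only that every listed index satisfies $a_i^\vee = 1$, so that $\DRpclp \subseteq \pclp{\ell}$.
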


\begin{proof}
Here we will deal with $A_n^{(1)}$ type only since other types can be verified in the exactly same manner.
Since the number of elements in $\DRpclp$ is equal to $n+1$, it suffices to show that every element is pairwise inequivalent, that is, it is enough to show that
\[
\overline{(\ell-1)\Lambda_0 + \Lambda_i} - \overline{(\ell-1)\Lambda_0 + \Lambda_j} = \oLa_i - \oLa_j \notin \oQ \; ( = Q_0).
\]
Equivalently, it suffices to show that
\[
[\oLa_i - \oLa_j]_{\Uppi_0} \notin \Z^n \quad (0 \le i < j \le n).
\]
Note that
\[
 [\oLa_i]_{\Uppi_0} =
\begin{cases}
    0 & \text{if $i=0$}, \\
    \invA^{(i)} & \text{if $i >0$}
\end{cases}
\]
and the first coordinate of $\invA^{(i)}$ is $1-i/(n+1)$. Since $0 \le i < j \le n$, the first coordinate of $[\oLa_i - \oLa_j]_{\Uppi_0}$ is not an integer, as required.
\end{proof}

For $\Lambda \in  \DRpclp$, let $ \pclp{\ell}(\Lambda)$ denote the equivalence class of $\Lambda$, i.e., $\pclp{\ell}(\Lambda):= \{ \Lambda' \in \pclp{\ell}  \; | \; \Lambda \sim \Lambda'\}$.
Then
$$\iota_\Lambda: \ellcov \to    \pclp{\ell}(\Lambda) $$
is bijective,
and its inverse is given by $\bar{ \ } |_{ \pclp{\ell}(\Lambda)}$, the restriction of  $\bar{ \ } \colon \h^* \to \h_0^*$  to $\pclp{\ell}(\Lambda)$.
Notice that if $\Lambda \nsim \Lambda'$, then $\left( \ellcov  \right) \cap  \left( \ell\mathcal{C}_{{\rm af}} \cap (\overline{\Lambda'}+\overline{Q}) \right)=\emptyset$, so we have bijections:
\begin{equation}\label{eq: bijection between ellcov and pclp}
\begin{tikzcd}
\displaystyle \bigsqcup_{\Lambda \in \DRpclp}\ellcov \arrow[r,"1-1"] & \displaystyle \bigsqcup_{\Lambda \in \DRpclp} \pclp{\ell} (\Lambda)=\pclp{\ell}  \arrow[l]
\end{tikzcd}
\end{equation}

\subsection{Equivalence classes}
For each $\Lambda \in  \DRpclp$, we give a simple description of the equivalence class $ \pclp{\ell}(\Lambda)$.
For this purpose, we recall the following elementary fact from linear algebra.

\begin{lemma}\label{lem: Zsubmodule}
Let $\beta = \{ \beta_1, \beta_2,\ldots, \beta_n\}$ and $\gamma = \{\gamma_1, \gamma_2, \ldots, \gamma_n\}$ be bases for $\Q^n$ such that $\Z \gamma \subseteq \Z \beta$.
Let $M = [\mathrm{Id}]_{\gamma}^{\beta}$ be the change of coordinate matrix that change $\gamma$-coordinates into $\beta$-coordinates.
Then for any $v \in \Z \beta$, it holds that
$v \in \Z \gamma$ if and only if $\widetilde{M}_{(i)} [v]_{\beta} \in \Z$ for all $i =1,2,\ldots,n$.
\end{lemma}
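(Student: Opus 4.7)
The plan is to unwind the change-of-basis identity and invoke a one-line tautology about integrality of coordinates. Writing $M = [\mathrm{Id}]_\gamma^\beta$, the $j$th column of $M$ is $[\gamma_j]_\beta$, and for any $v \in \Q^n$ the two coordinate vectors are related by $[v]_\beta = M\,[v]_\gamma$. Since $\beta$ and $\gamma$ are both bases of $\Q^n$, the matrix $M$ is invertible over $\Q$, so I can rewrite this as $[v]_\gamma = \widetilde{M}\,[v]_\beta$. This is the only substantive manipulation in the argument.

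The conclusion then follows by reading off entries. By definition, $v \in \Z\gamma$ if and only if $[v]_\gamma \in \Z^n$, i.e., every entry of $[v]_\gamma$ is an integer. The $i$th entry of $[v]_\gamma = \widetilde{M}\,[v]_\beta$ is precisely $\widetilde{M}_{(i)}\,[v]_\beta$, so $v \in \Z\gamma$ is equivalent to $\widetilde{M}_{(i)}\,[v]_\beta \in \Z$ for every $i = 1,\dots,n$, which is the asserted equivalence.

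There is no genuine obstacle here: the hypothesis $\Z\gamma \subseteq \Z\beta$ is not used in the equivalence itself but only serves to make the ambient condition $v \in \Z\beta$ a natural point of entry, since it guarantees that $M$ has integer entries. The lemma will presumably be applied with $\beta$ a $\Z$-basis built from $\upvarpi$ (or $\Uppi_0$) and $\gamma$ a $\Z$-basis of the relevant coroot-type sublattice, so that $\widetilde{M}$ specializes to the inverse Cartan matrix $\invA$; this matches the integrality condition (i) of Proposition~\ref{lem: ellCaf} that is used in the description of the equivalence classes $\pclp{\ell}(\Lambda)$.
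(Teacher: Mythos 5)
Your proof is correct: the identity $[v]_\beta = M\,[v]_\gamma$, hence $[v]_\gamma = \widetilde{M}\,[v]_\beta$, together with the observation that $v\in\Z\gamma$ iff $[v]_\gamma\in\Z^n$, is exactly the standard argument, and your remark that $\Z\gamma\subseteq\Z\beta$ is only needed to make $M$ integral (not for the equivalence itself) is accurate. The paper itself omits a proof, citing the lemma as an elementary fact from linear algebra, so your write-up simply supplies the expected argument.
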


Choose an arbitrary element $x\in P_0$.
Lemma \ref{lem: Zsubmodule} tells us that $x \in Q_0$ if and only if $\invA_{(i)} [x]_{\upvarpi} \in \Z$ for all $i =1,2,\ldots,n$.
Let $\{\stv_1, \stv_2, \ldots ,\stv_n \}$ be the standard basis of $\Z^n$.
Since
$$\stv_j = \Afin_{(j)} \invA = \sum_{1 \le k \le n} \Afin_{j,k}\invA_{(k)} \quad (j \in I_0),$$
$\Z^n$ is obviously a submodule of the $\Z$-span of $\{ {\invA}_{(i)} \mid i \in I_0 \}$,
denoted by $\Z\{ {\invA}_{(i)} \mid i \in I_0 \}$.
In the same manner, $\Z^n$ is a submodule of $\Z\{ {\invA}^{(i)} \mid i \in I_0 \}$ and
\begin{align*}
\Z\{\invA^{(i)} \mid i \in I_0 \} / \Z^n \cong P_0/ Q_0 \quad \text{ (as abelian groups) }
\end{align*}
since $[\finfw_i]_{\Uppi_0} = \invA^{(i)}$, for all $i\in I_0$. Going further,
using Table \ref{table: oP/oQ}, we can deduce that
\begin{align}\label{eq: oP/oQ and inv of Cartan}
\Z \{ \invA_{(i)} \mid i \in I_0 \} / \Z^n  \cong P_0/Q_0 \quad \text{ (as abelian groups) }.
\end{align}

Recall that $ P_0/Q_0$ is a cyclic group unless $\g$ is of the type $D_n^{(1)}$ ($n$ is even).
It is not difficult to see that there is an index $i_1$ (resp. $j_1)$, which may not be unique, such that
\[\invA_{(i_1)} + \Z^n \quad \text{(resp. $\invA^{(j_1)} + \Z^n$ )} \]
is a generator  of $\Z \{ \invA_{(i)} \mid i \in I_0 \} / \Z^n$
(resp. $\Z \{ \invA^{(i)} \mid i \in I_0 \} / \Z^n$).

In a similar way, in case where $\g$ is of the type $D_n^{(1)}$ ($n$ is even),  one can see that
there is a set of indices  $\{i_1, i_2 \}$ (resp. $\{j_1,j_2\})$, which may not be unique, 
such that
$$\{\invA_{(i_1)} + \Z^n, \,\, \invA_{(i_2)} + \Z^n \} \quad  \text{ (resp. $\{\invA^{(j_1)} + \Z^n, \,\, \invA^{(j_2)} + \Z^n \}$ )}$$
is a generating set  of $\Z \{ \invA_{(i)} \mid i \in I_0 \} / \Z^n$
(resp. $\Z \{ \invA^{(i)} \mid i \in I_0 \} / \Z^n$).
For the convenience of computation, from now on, we fix these indices as in the table below:

\begin{table}[h]
    \centering
    { \arraycolsep=1.6pt\def\arraystretch{1.5}
        \begin{tabular}{c||c|c|c|c|c|c}
            $\g$ &  $A_{n}^{(1)},  D_n^{(1)} \text{($n$:odd)}, E_7^{(1)}$  & $B_n^{(1)}, D_{n+1}^{(2)}$ & $C_{n}^{(1)}, A_{2n-1}^{(2)}$ & $E_6^{(1)}$ & $D_n^{(1)} \text{($n$:even)}$ &  \text{Other types} \\ \hline
            $i_k$  & $i_1 = n$ & $i_1 = 1$ & $i_1 = n$ & $i_1 = 1$ & $i_1 = 1,~ i_2 = n$ & $i_1 = 1$ \\ \hline
            $j_k$ & $j_1 = n$ & $j_1 = n$ & $j_1 = 1$& $j_1 = 1$  &$j_1 = 1,~ j_2 = n$ & $j_1 = 1$
        \end{tabular}
    }\\[1.5ex]
    \caption{$i_k,j_k$ for each type}
    \protect\label{table: i_12,j_12}
\end{table}
The above discussion shows that ${\invA}_{(i)} [\oLa]_{\upvarpi} \in \Z$ for all $i  \in I_0$ if and only if ${\invA}_{(i_k)} [\oLa]_{\upvarpi} \in \Z$ for $k = 1$ or $k = 1,2$ (up to types).
When $\g$ is of the type $D_n^{(1)}$ ($n\equiv_2 0$), the order of the coset ${\invA}_{(i_k)}+ \Z^n$ $(k=1,2)$ in $\Z\{ \widetilde{\Afin}_{(i)}  \mid i \in I_0\} /\Z^n$ is given by $2$.
For the other types, the order of the coset ${\invA}_{(i_1)}$ is $|P_0 / Q_0|$.
For simplicity of notation, we set
\begin{align}\label{eq: congN}
\congN \seteq |P_0 / Q_0|
\end{align}
With this notation, we have the following characterization.

\begin{lemma}\label{characterization of Q}
Let $\g$ be an affine Kac-Moody algebra. For $x\in P_0$, we have
\begin{align}\label{eq: root and sieving 1}
x \in Q_0 \quad \text{if and only if} \quad \adj(\Afin)_{(i_k)} [x]_{\upvarpi} \equiv_{\congN}  0
\end{align}
for all $k = 1$ or $k = 1,2$ up to types. Here, $\adj(\Afin)$ denotes the classical adjoint of $\Afin$.
\end{lemma}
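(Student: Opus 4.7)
The plan is to pass from the basis change formula to a congruence mod $\congN$ via the classical adjoint, and then to trim the list of conditions using the generating set from Table~\ref{table: i_12,j_12}.

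First, I would apply Lemma~\ref{lem: Zsubmodule} to the bases $\beta = \upvarpi$ and $\gamma = \Uppi_0$ of $\Q^n$. Since the change-of-coordinate matrix $[\mathrm{Id}]_{\Uppi_0}^{\upvarpi}$ is $\Afin$, its inverse is $\invA$, so for any $x \in P_0 = \Z\upvarpi$ the condition $x \in Q_0 = \Z\Uppi_0$ is equivalent to $\invA_{(i)} [x]_{\upvarpi} \in \Z$ for every $i \in I_0$.

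Next, I would use the identity $\adj(\Afin) = \det(\Afin)\, \invA$ together with the classical fact $\det(\Afin) = [P_0 : Q_0] = \congN$ (the order of the fundamental group of $\g_0$, recorded in Table~\ref{table: oP/oQ}). This converts the integrality condition into the congruence $\adj(\Afin)_{(i)} [x]_{\upvarpi} \equiv_{\congN} 0$ for every $i \in I_0$. At this point the lemma holds, but with the generating set of rows $\{i \in I_0\}$ instead of only the indices $i_k$ of Table~\ref{table: i_12,j_12}.

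The final step is to prune the test set. Here I would invoke the isomorphism $\Z\{\invA_{(i)} \mid i \in I_0\}/\Z^n \cong P_0/Q_0$ noted in~\eqref{eq: oP/oQ and inv of Cartan}, together with the choice of $i_k$'s made in Table~\ref{table: i_12,j_12} so that $\{\invA_{(i_k)} + \Z^n\}$ generates this quotient. Consequently, for every $i \in I_0$ there exist integers $c_k^{(i)}$ and a vector $v^{(i)} \in \Z^n$ with
\[
\invA_{(i)} = \sum_k c_k^{(i)}\, \invA_{(i_k)} + v^{(i)}.
\]
Multiplying by $\congN = \det(\Afin)$ gives $\adj(\Afin)_{(i)} = \sum_k c_k^{(i)} \adj(\Afin)_{(i_k)} + \congN\, v^{(i)}$. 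Dotting with $[x]_{\upvarpi} \in \Z^n$ and reducing modulo $\congN$ yields
\[
\adj(\Afin)_{(i)}\, [x]_{\upvarpi} \equiv_{\congN} \sum_k c_k^{(i)}\, \adj(\Afin)_{(i_k)}\, [x]_{\upvarpi},
\]
so vanishing at the $i_k$'s forces vanishing for all $i \in I_0$. The converse direction is trivial.

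I do not anticipate a real obstacle: the only non-formal input is the fact that $\{\invA_{(i_k)}+\Z^n\}$ generates $\Z\{\invA_{(i)} \mid i \in I_0\}/\Z^n$, which is a finite case check already carried out (implicitly) in the paragraph preceding the lemma and reflected in Table~\ref{table: i_12,j_12}, using that $P_0/Q_0$ is cyclic in every case except $D_n^{(1)}$ with $n$ even, where it is $\Z_2 \times \Z_2$ and two generators are needed.
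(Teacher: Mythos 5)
Your proposal is correct and follows essentially the same route as the paper, which states the lemma without a formal proof as a consequence of the immediately preceding discussion: Lemma~\ref{lem: Zsubmodule} reduces membership in $Q_0$ to integrality of $\invA_{(i)}[x]_{\upvarpi}$ for all $i\in I_0$, the generation of $\Z\{\invA_{(i)}\mid i\in I_0\}/\Z^n$ by the cosets $\invA_{(i_k)}+\Z^n$ (Table~\ref{table: i_12,j_12}) prunes the test set, and the identity $\adj(\Afin)=\det(\Afin)\,\invA$ with $\det(\Afin)=\congN$ converts integrality into the stated congruence. You have merely made explicit the adjugate--determinant step that the paper uses implicitly (as in its $A_3^{(1)}$ example), so there is no gap.
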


Consider the $\Z$-linear map given by left multiplication by  $\Afin^t$ 
$$L_{\Afin^t}: \Z^n \to \Z^n, \quad \mathbf{x} \mapsto \Afin^t \mathbf{x}.$$
From reduction modulo $\congN$
\[\redr: \Z \ra \Z_{\congN}, \quad a \mapsto a + \congN\Z \]
 we can induce a $\Z_\congN $-linear map defined by
$$L_{{\overline{\Afin}}^t}: (\Z_\congN)^n \to (\Z_\congN )^n, \quad \mathbf{x} \mapsto {\overline{\Afin}}^t \mathbf{x},$$
where ${\overline{\Afin}}$ is obtained from $\Afin$ respectively by reading entries modulo $\congN $.
We simply write $\ker (\overline{\Afin}^t)$ to denote the kernel of $L_{{\overline{\Afin}}^t}$.

Since $[\finsr_i]_{\upvarpi} = \Afin^{(i)}$  for all $i = 1,2,\ldots,n$,  by Lemma \ref{characterization of Q}, we deduce that $\adj({\overline \Afin})_{(i_k)} \in \ker (\overline{\Afin}^t)$ for all $k = 1$ or $k = 1,2$ up to types.

\begin{lemma}\label{basis of ker C transpose}
In the above setting, $\{\adj({\overline \Afin})_{(i_k)} : \text{$k = 1$ {\rm  or} $k = 1,2$ {\rm up to types}}\}$
is a minimal generating set of $\ker (\overline{\Afin}^t).$
\end{lemma}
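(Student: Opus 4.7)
The plan is to identify $\ker(\overline{\Afin}^t)$ with the fundamental group quotient $\Z\{\invA_{(i)} \mid i \in I_0\}/\Z^n \cong P_0/Q_0$ in a way that sends $\invA_{(i)} + \Z^n$ to $\adj(\overline{\Afin})_{(i)}$, and then transport the (minimal) generating property along~\eqref{eq: oP/oQ and inv of Cartan}. The key algebraic input is the adjugate identity $\adj(\Afin)\Afin = \det(\Afin) I$ together with the classical fact that $\det(\Afin) = \congN$ for every finite-type Cartan matrix.

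First I would transpose the identity and reduce modulo $\congN$ to get $\overline{\Afin}^t (\adj(\overline{\Afin})_{(i)})^t \equiv 0 \pmod{\congN}$, placing every row of $\adj(\overline{\Afin})$ in $\ker(\overline{\Afin}^t)$. For the reverse containment I would lift $v \in \ker(\overline{\Afin}^t)$ to $\tilde v \in \Z^n$, write $\Afin^t \tilde v = \congN w$ for some $w \in \Z^n$, and invert using $(\Afin^t)^{-1} = \congN^{-1}\adj(\Afin)^t$ to obtain $\tilde v = \adj(\Afin)^t w$; this shows $v$ lies in the $\Z$-span, modulo $\congN$, of $\{\adj(\overline{\Afin})_{(i)}\}_{i \in I_0}$. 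Next I would package this into the homomorphism
\[
\phi \colon \Z\{\invA_{(i)} \mid i \in I_0\}/\Z^n \longrightarrow \ker(\overline{\Afin}^t), \qquad v + \Z^n \,\mapsto\, \congN v \bmod \congN,
\]
noting $\phi(\invA_{(i)} + \Z^n) = \adj(\overline{\Afin})_{(i)}$ since $\congN \invA = \adj(\Afin)$. Well-definedness follows from $\invA_{(i)} \in \tfrac{1}{\congN}\Z^n$, surjectivity from the previous step, and injectivity from the observation that $\congN v \in \congN\Z^n$ forces $v \in \Z^n$. Chaining $\phi$ with the isomorphism $\Z\{\invA_{(i)}\}/\Z^n \cong P_0/Q_0$ from~\eqref{eq: oP/oQ and inv of Cartan}, the desired (minimal) generating property of $\{\adj(\overline{\Afin})_{(i_k)}\}$ reduces to the corresponding property of $\{\invA_{(i_k)} + \Z^n\}$ in $P_0/Q_0$, which is exactly how the indices $i_k$ were selected in Table~\ref{table: i_12,j_12}.

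The main conceptual step is setting up $\phi$ correctly; once available, the rest is bookkeeping. The only type requiring extra care is $D_n^{(1)}$ with $n$ even, where $P_0/Q_0 \cong \Z_2 \times \Z_2$ is noncyclic and two rows are needed. There, $i_1, i_2$ must be chosen so that the two cosets $\invA_{(i_1)} + \Z^n$ and $\invA_{(i_2)} + \Z^n$ together generate $\Z_2 \times \Z_2$, but minimality is then automatic since no single element of $\Z_2 \times \Z_2$ generates a subgroup of order $4$. For all remaining types $P_0/Q_0$ is cyclic, so minimality reduces to verifying that $\adj(\overline{\Afin})_{(i_1)}$ is a nonzero generator, which holds whenever $\congN > 1$.
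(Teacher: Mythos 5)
Your proof is correct and follows essentially the same route as the paper: the homomorphism $\phi$ you construct (multiplication by $\congN$ followed by reduction modulo $\congN$, using $\adj(\Afin)=\congN\,\invA$) is precisely the paper's map $f\colon \Z\{\invA^t_{(i)} \mid i \in I_0\}/\Z^n \to \ker(\overline{\Afin}^t)$, with the same well-definedness, injectivity, and surjectivity checks, and the transfer of the generating property of $\invA_{(i_k)}+\Z^n$ through the isomorphism \eqref{eq: oP/oQ and inv of Cartan} is exactly the paper's reduction. Your added remarks on minimality (one generator in the cyclic cases, two necessarily independent elements of $\Z_2\times\Z_2$ for $D_n^{(1)}$ with $n$ even) only make explicit what the paper leaves implicit.
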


\begin{proof}
To prove the assertion, it suffices to show that $\ker (\overline{\Afin}^t) \cong P_0/Q_0$ as abelian groups, equivalently
$\ker (\overline{\Afin}^t) \cong \Z\{{\invA}^t_{(i)} \mid i \in I_0 \} / \Z^n$
by~\eqref{eq: oP/oQ and inv of Cartan}.

Define $f:\Z\{{\invA}^t_{(i)} \mid i \in I_0 \} / \Z^n \ra \ker (\overline{\Afin}^t) $ as follows:
For $\mathbf{m}=(m_1,m_2,\ldots,m_{n})^t \in \Z^n$ (so, $\invA^t \cdot \mathbf{m} + \Z^n\in \Z\{{\invA}^t_{(i)} \mid i \in I_0 \} / \Z^n$),  we define
\[f\left( \invA^t \cdot \mathbf{m} + \Z^n \right) = \redr\left( \congN \invA^t \cdot \mathbf{m} \right)  \in (\Z_{\congN})^n, \]
Since $\Afin^t \left(  \congN  \invA^t \cdot \mathbf{m}  \right) \in (\congN\Z)^n$, $f$ is well-defined.
Also, by definition, $f$ is a group homomorphism.

Next, assume that for $\mathbf{m},\mathbf{m}' \in \Z^n$ ,
\[ f\left( \invA^t \cdot \mathbf{m} + \Z^n \right) = f\left( \invA^t \cdot \mathbf{m}' + \Z^n \right)  \in (\Z_{\congN})^n. \]
Then
\[\congN \left(   \invA^t \cdot (\mathbf{m} - \mathbf{m} ')    \right)\in (\congN\Z)^n, \]
which implies that $ \invA^t \cdot (\mathbf{m} - \mathbf{m} ')  \in  \Z^n$.
Hence $f$ is injective.

For the surjectivity, take any $\overline{\mathbf{x}} = (x_1,x_2,\ldots,x_n) \in \ker (\overline{\Afin}^t)$. 
Then $\mathbf{m} = \dfrac{1}{\congN}\Afin^t \cdot \overline{\mathbf{x}} \in \Z^n$ by the definition of $\ker (\overline{\Afin}^t)$. Thus we have
\[
f\left( \invA^t \cdot \mathbf{m} + \Z^n \right)= \overline{\mathbf{x}}. \qedhere
\]
\end{proof}

\begin{convention}
If there is a danger of confusion, we will use $\redr(\mathbf{x})$ and $\redr (\Afin)$ instead of $\mathbf{\overline{x}} $ and ${\overline{\Afin}}$ to emphasize the modulo $\congN$.
\end{convention}

\begin{theorem}\label{thm: map phi}
    Let $\g$ be an affine Kac-Moody algebra  
    of rank $n\in \Z_{>0}$ and $\ell \in \Z_{\ge 0}$.
For each $\Lambda \in  \DRpclp$, we have
    \begin{align} \label{eq: image of phi}
    \pclp{\ell}(\Lambda) = \left\{ \Lambda' \in \pclp{\ell} \; \middle| \;
    {\rm red_{\congN} ( [\overline{\Lambda'}]_{\upvarpi})}  \in {\rm red_{\congN} ([\oLa]_{\upvarpi}) } + \ker (\overline{\Afin}^t)^\perp
    \right\},
    \end{align}
where
\[ \ker (\overline{\Afin}^t)^\perp  \seteq \left\{ \mathbf{x} \in \Z_{\congN}^n \; \middle| \; \mathbf{x} \bullet \mathbf{y} \equiv_{\congN} 0 \,\,\text{\rm for all }\,\,\mathbf{y} \in
\ker (\overline{\Afin}^t)  \right\}.\]
Here $\bullet$ is the dot product on $\Z_\congN^n$.
\end{theorem}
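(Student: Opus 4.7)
The plan is to chain together Lemmas \ref{lem: sim equiv}, \ref{characterization of Q}, and \ref{basis of ker C transpose} to convert the equivalence condition $\Lambda \sim \Lambda'$ into the desired statement about the dot product modulo $\congN$. Throughout, I will work with $x := \overline{\Lambda'} - \overline{\Lambda} \in P_0$, which lies in $\oP = P_0$ since $\Lambda, \Lambda' \in \pclp{\ell}$ (and we are in the non-$A_{2n}^{(2)}$ setting where $\oP = P_0$ and $\oQ = Q_0$).

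First, I would invoke Lemma \ref{lem: sim equiv}(3): $\Lambda' \in \pclp{\ell}(\Lambda)$ if and only if $\overline{\Lambda'} + \overline{Q} = \overline{\Lambda} + \overline{Q}$, equivalently $x = \overline{\Lambda'} - \overline{\Lambda} \in Q_0$. Next, Lemma \ref{characterization of Q} translates membership in $Q_0$ into the congruences $\adj(\Afin)_{(i_k)} [x]_{\upvarpi} \equiv_{\congN} 0$ for the finitely many distinguished indices $i_k$ listed in Table \ref{table: i_12,j_12}. Passing to $\Z_\congN^n$ via $\redr$, this reads
\[
\adj(\overline{\Afin})_{(i_k)} \bullet \redr([x]_{\upvarpi}) \equiv_{\congN} 0 \qquad \text{for all relevant } k.
\]

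For the second step, I would apply Lemma \ref{basis of ker C transpose}, which asserts that $\{\adj(\overline{\Afin})_{(i_k)}\}$ is a minimal generating set of the finite abelian group $\ker(\overline{\Afin}^t)$. Since the dot product $\bullet$ on $\Z_\congN^n$ is $\Z_\congN$-bilinear, the vanishing of $\redr([x]_{\upvarpi}) \bullet y$ for each generator $y$ of $\ker(\overline{\Afin}^t)$ extends by $\Z_\congN$-linearity to the vanishing against every $y \in \ker(\overline{\Afin}^t)$. Thus the congruence system above is equivalent to $\redr([x]_{\upvarpi}) \in \ker(\overline{\Afin}^t)^\perp$. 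Rewriting $[x]_{\upvarpi} = [\overline{\Lambda'}]_{\upvarpi} - [\oLa]_{\upvarpi}$ and moving the second summand to the right-hand side yields precisely the description~\eqref{eq: image of phi}.

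The only genuine content beyond assembling these three lemmas is verifying that taking the generating set of $\ker(\overline{\Afin}^t)$ provided by Lemma \ref{basis of ker C transpose} really is enough to detect all $y \in \ker(\overline{\Afin}^t)$ under $\bullet$; this is immediate from $\Z_\congN$-bilinearity, so there is no serious obstacle. A small bookkeeping point to handle carefully is that Lemma \ref{characterization of Q} is phrased with $\adj(\Afin)_{(i_k)}$ acting from the left on the column vector $[x]_{\upvarpi}$, while $\ker(\overline{\Afin}^t)^\perp$ uses the symmetric dot product; one must note that $\adj(\Afin)_{(i_k)} [x]_{\upvarpi} = \adj(\Afin)_{(i_k)} \bullet [x]_{\upvarpi}$ so the two formulations match after reducing modulo $\congN$.
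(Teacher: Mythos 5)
Your proposal is correct and follows essentially the same route as the paper, whose proof simply cites Lemma~\ref{characterization of Q} together with Lemma~\ref{basis of ker C transpose}; your writeup just makes explicit the intermediate reduction via Lemma~\ref{lem: sim equiv}(3) and the $\Z_\congN$-bilinearity step that passes from the generators $\adj(\overline{\Afin})_{(i_k)}$ to all of $\ker(\overline{\Afin}^t)$.
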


\begin{proof}
    The assertion follows from Lemma \ref{characterization of Q} together with Lemma \ref{basis of ker C transpose}.
\end{proof}

For a subset $S \subset \Z^n$, set
$\redr(S): = \{ \mathbf{\overline{s}}\subset (\Z_{\congN})^n \;  | \; \mathbf{s} \in S\}.$
Motivated by~\eqref{eq: root and sieving 1}, we introduce the following definition.

\begin{definition}\label{def: sieving}
    Let $\g$ be an affine Kac-Moody algebra. We call a subset $S \subset \Z^n$ a {\it root-sieving set} 
    if, for all $\bx \in P_0$, 
    \begin{enumerate}
        \item $\bx \in Q_0$ if and only if $\mathbf{s} \bullet [\bx]_{\upvarpi} \equiv_{\congN} 0$ for all $\mathbf{s} \in S$,
        \item the set $\redr(S)\subset (\Z_{\congN})^n$ is $\mathbb Z_{\congN}$- linearly independent, and
         \item  $|\redr(S)|=|S|.$
    \end{enumerate}
    An element in $S$ is called a \emph{root-sieving vector} of $S$.
\end{definition}

For instance, by~\eqref{eq: root and sieving 1}, we have an example of a root-sieving set:
\[\begin{cases}
\left\{ s^{(1)},s^{(2)}\right\}= \{ \adj(\Afin)_{(i_1)} , \adj(\Afin)_{(i_2)} \}& \text{when $\g = D_n^{(1)}$ (for even $n$),}\\[2ex]
\left\{ s \right\}= \{ \adj(\Afin)_{(i_1)} \} & \text{otherwise},
\end{cases}\]

Let $\Lambda = \sum_{0 \le i \le n} p_i \Lambda_i, \Lambda' = \sum_{0 \le i \le n} p_i' \Lambda_i \in \pclp{\ell}$. 
Combining Lemma~\ref{lem: sim equiv} with Definition \ref{def: sieving}, we can deduce the following characterization on the sieving equivalence relation $\sim$:
\begin{align}\label{eq: eqrel and sieving}
\Lambda \sim \Lambda' \quad \text{if and only if} \quad \mathbf{s} \bullet (p_1,p_2,\ldots,p_n) \equiv_{\congN} \mathbf{s} \bullet (p_1',p_2',\ldots,p_n')\quad \text{for all $\mathbf{s} \in S$}.
\end{align}
Here, $\bullet$ is the dot product on $\Z^n$.

\begin{example}
    Let $\g = A_3^{(1)}$. Then
    \[\Afin = \left[\begin{matrix} 2 & -1 & 0 \\ -1 & 2 & -1 \\ 0 & -1 & 2 \end{matrix}\right], \quad
     \invA = \left[\begin{matrix} \frac{3}{4} & \frac{1}{2} & \frac{1}{4} \\[1ex] \frac{1}{2} & 1 & \frac{1}{2} \\[1ex] \frac{1}{4} & \frac{1}{2} & \frac{3}{4} \end{matrix}\right] \quad \text{and} \quad \adj(\Afin) =
     \left[\begin{matrix}
     3 & 2 & 1 \\
     2 & 4 & 2 \\
     1 & 2 & 3
     \end{matrix}\right]. \]
    Let $i_1 = 3$. Note that
    \[2\invA_{(3)} + \Z^3 = \left[\begin{matrix}\frac{1}{2} & 1 & \frac{3}{2}\end{matrix}\right] + \Z^3 = \invA_{(2)} + \Z^3 \]
    and
    \[3\invA_{(3)} + \Z^3 = \left[\begin{matrix}\frac{3}{4} & \frac{3}{2} & \frac{9}{4}\end{matrix}\right] + \Z^3 = \invA_{(1)} + \Z^3. \]
    That is, for $\bx \in P_0$, $\invA_{(3)} [\bx]_{\upvarpi} \in \Z$ if and only if  $\invA_{(i)} [\bx]_{\upvarpi} \in \Z$ for all $i = 1,2,3$. It means that
    \[ \bx \in Q_0 \quad \text{if and only if} \quad 4\left(\invA_{(3)}\right)^t \bullet [\bx]_{\upvarpi} \equiv_{4} 0.\]
    Thus $\left\{ 4\left(\invA_{(3)} \right)^t \right\} = \left\{ \left(\adj(\Afin)_{(3)}\right)^t \right\} = \left\{\left[\begin{matrix}
    1 \\ 2 \\ 3
    \end{matrix}\right]\right\}$ is a root sieving set.
\end{example}

In the rest of this subsection, we classify all root sieving sets up to modulo $\congN$.

\begin{lemma}\label{prop: ker and root sieving set}
Let $S \subset \Z^n$ with $|S| = |\redr(S)|$. Then $S$ is a root sieving set if and only if $\redr(S)$ is a $\Z_{\congN}$-basis of $\ker (\overline{\Afin}^t)$.
\end{lemma}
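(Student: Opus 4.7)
The plan is to recognize both sides of the equivalence as statements about orthogonal complements in $\Z_{\congN}^n$ with respect to the standard dot product $\bullet$, and then to conclude by double-annihilator duality. The key algebraic identity I would establish at the outset is
\[\redr([Q_0]_{\upvarpi}) \;=\; \mathrm{Im}(\overline{\Afin}) \;=\; \ker(\overline{\Afin}^t)^{\perp}\]
inside $\Z_{\congN}^n$. The first equality is immediate from $[\finsr_j]_{\upvarpi} = \Afin^{(j)}$, so $\redr([Q_0]_{\upvarpi})$ is the $\Z_{\congN}$-span of the columns of $\overline{\Afin}$. The second equality uses the adjoint identity $(\overline{\Afin}\mathbf{v}) \bullet \mathbf{w} = \mathbf{v} \bullet (\overline{\Afin}^t \mathbf{w})$ together with non-degeneracy of $\bullet$ on $\Z_{\congN}^n$. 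I would also invoke the double-annihilator property $(H^{\perp})^{\perp} = H$ for every subgroup $H \subseteq \Z_{\congN}^n$, which holds because the dot product induces a perfect pairing to $\Z_{\congN}$ (equivalently, by Pontryagin duality for finite abelian groups).

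For the forward direction, suppose $S$ is a root-sieving set. Applying Definition~\ref{def: sieving}(1) to each simple root $\finsr_j \in Q_0$ forces $\mathbf{s} \bullet \Afin^{(j)} \equiv_{\congN} 0$ for every $\mathbf{s} \in S$, i.e.\ $\overline{\Afin}^t \redr(\mathbf{s}) = 0$, so $\redr(S) \subseteq \ker(\overline{\Afin}^t)$. Because $[\cdot]_{\upvarpi}$ is a bijection $P_0 \to \Z^n$ and $\redr(\Z^n)=\Z_{\congN}^n$, condition (1) rewritten in $\Z_{\congN}^n$ reads $\redr(S)^{\perp} = \redr([Q_0]_{\upvarpi})$, which equals $\ker(\overline{\Afin}^t)^{\perp}$ by the displayed identity. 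Taking $\perp$ once more shows that the $\Z_{\congN}$-submodule generated by $\redr(S)$ equals $\ker(\overline{\Afin}^t)$; combined with (2) and (3), this makes $\redr(S)$ a $\Z_{\congN}$-basis of $\ker(\overline{\Afin}^t)$.

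The backward direction simply reverses this argument. Properties (2) and (3) are immediate from being a basis with $|S| = |\redr(S)|$. For (1), if $\bx \in Q_0$ write $[\bx]_{\upvarpi} = \Afin \mathbf{c}$ and compute $\mathbf{s} \bullet \Afin\mathbf{c} = (\Afin^t \mathbf{s}) \bullet \mathbf{c} \equiv_{\congN} 0$ using $\redr(\mathbf{s}) \in \ker(\overline{\Afin}^t)$. Conversely, if $\mathbf{s} \bullet [\bx]_{\upvarpi} \equiv_{\congN} 0$ for every $\mathbf{s} \in S$, then $\redr([\bx]_{\upvarpi}) \in \redr(S)^{\perp}$; since $\redr(S)$ spans $\ker(\overline{\Afin}^t)$ as a $\Z_{\congN}$-module, this orthogonal complement equals $\mathrm{Im}(\overline{\Afin}) = \redr([Q_0]_{\upvarpi})$, and a reduction-mod-$\congN$ check (using $\congN\,\Z^n \subseteq [Q_0]_{\upvarpi}$, which follows from $\congN = |P_0/Q_0|$) then gives $\bx \in Q_0$.

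The delicate point I expect is pinning down the notion of ``$\Z_{\congN}$-basis'' uniformly, especially in the non-cyclic case $\g = D_n^{(1)}$ with $n$ even, where $\ker(\overline{\Afin}^t) \cong \Z_2 \oplus \Z_2$ sits inside $\Z_4^n$ with each generator killed by $2$. The correct reading of Definition~\ref{def: sieving}(2) is that $\sum c_i v_i = 0$ only forces $c_i v_i = 0$ for each $i$ (equivalently, each $c_i$ lies in the annihilator of $v_i$); once this convention is fixed, the orthogonal-complement arguments above go through uniformly across all affine types, including the degenerate case $\congN = 1$ in which $S = \emptyset$.
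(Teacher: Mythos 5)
Your argument is correct, and it takes a genuinely different route from the paper. The paper's proof is tied to the type-dependent data built up beforehand: for the direction ``basis $\Rightarrow$ sieving set'' it verifies condition (1) by the same direct computation you use, but handles the converse implication by contradiction through Lemma~\ref{characterization of Q} (the characterization of $Q_0$ via the rows $\adj(\Afin)_{(i_k)}$), and for ``sieving set $\Rightarrow$ basis'' it argues that an element $\mathbf{y}\in\ker(\overline{\Afin}^t)$ is determined by its $j_1$ (resp.\ $j_1,j_2$) coordinates, using the explicit generators $\finfw_{j_k}+Q_0$ of $P_0/Q_0$ from Table~\ref{table: i_12,j_12}, and then solves $\mathbf{y}=x\,\mathbf{s}$ (resp.\ $\mathbf{y}=x^{(1)}\mathbf{s}^{(1)}+x^{(2)}\mathbf{s}^{(2)}$). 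You replace all of this by a uniform duality argument: $\redr([Q_0]_{\upvarpi})=\mathrm{Im}(\overline{\Afin})=\ker(\overline{\Afin}^t)^{\perp}$, non-degeneracy of $\bullet$ on $\Z_{\congN}^n$, the double-annihilator property $(H^{\perp})^{\perp}=H$, and $\congN P_0\subseteq Q_0$ (Lagrange), so condition (1) becomes exactly the statement $\redr(S)^{\perp}=\ker(\overline{\Afin}^t)^{\perp}$ and one application of $\perp$ finishes both directions. What your approach buys is type-independence (no appeal to Lemma~\ref{characterization of Q}, the chosen indices $i_k,j_k$, or the structure of $P_0/Q_0$ case by case) and a cleaner treatment of the non-cyclic case $D_n^{(1)}$, $n$ even; what the paper's approach buys is that it stays entirely within the elementary computations already set up and avoids invoking Pontryagin/self-duality of $\Z_{\congN}^n$. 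Your closing caveat is well taken: since $\ker(\overline{\Afin}^t)$ is in general not a free $\Z_{\congN}$-module, ``$\Z_{\congN}$-linearly independent'' and ``$\Z_{\congN}$-basis'' must be read in the weakened sense you describe, and the paper itself uses them in exactly that loose sense, so fixing that convention is the right move rather than a defect of your proof.
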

\begin{proof}
(a)  Suppose that $\redr(S)$ is a $\Z_{\congN}$-basis of $\ker (\overline{\Afin}^t)$.
 Since $\ker (\overline{\Afin}^t) \subset (\Z_{\congN})^n$, $\redr(S)$ should be $\Z_\congN$-linearly  independent. Therefore, it suffices to show that $S$ satisfies the condition (1) in Definition \ref{def: sieving}.

We first show that  if $\bx \in Q_0$, then we have $ [\bx]_{\upvarpi} \bullet \mathbf{s}  \equiv_{\congN} 0$ for all $s \in S$. Take $\bx = \sum_{1 \le i \le n} t_i \finsr_i \in Q_0$.
Since $\redr(S) \subset \ker (\overline{\Afin}^t)$,
\[
 [\bx]_{\upvarpi} \bullet s = \sum_{1 \le i \le n} t_i [\finsr_i]_{\upvarpi} \bullet s = \sum_{1 \le i \le n} t_i (\Afin^t)_{(i)} \mathbf{s}  \equiv_{\congN} 0, \quad \text{for all $\mathbf{s}  \in S$.}
\]

Next, we assume that there is $\bx \notin Q_0$ satisfying $\mathbf{s}\bullet [\bx]_{\upvarpi} \equiv_{\congN} 0$ for all $\mathbf{s}  \in S$. Since $\bx \notin Q_0$, we have
\[\adj(\Afin)_{(i_{k'})}\bullet [\bx]_{\upvarpi} \not\equiv_{\congN} 0 \quad \text{for $k' = 1$ or $k' = 1,2$ up to types},\]
by Lemma~\ref{characterization of Q}. However,  since $\mathbf{s} \bullet [\bx]_{\upvarpi} \equiv_{\congN} 0$ for $\mathbf{s} \in S$, there are no $t_{s} \in \Z$ such that $\sum_{\mathbf{s}  \in S} t_{\mathbf{s}} \mathbf{s} \equiv_{\congN} \adj(\Afin)_{(i_{k'})}$. Since $\adj(\Afin)_{(i_k)} \in \ker (\overline{\Afin}^t)$, it contradicts to the assumption that $\redr(S)$ is a $\Z_\congN$-basis of $\ker (\overline{\Afin}^t)$.

(b) Suppose that $S$ is a root sieving set. By definition, $|S| = |\redr(S)|$, $\redr(S)   \subset \ker (\overline{\Afin}^t)$, and $\redr(S)$ is $\Z_\congN$-linearly independent. Therefore, we have to show that the $\Z_{\congN}$-span of $\redr(S)$ equals $\ker (\overline{\Afin}^t)$.

Note that $$\text{$P_0/Q_0= \Z\{\finfw_{j_1} + Q_0\}$ or $\Z\{\finfw_{j_1} + Q_0, \finfw_{j_2} + Q_0\}$ up to types.}$$ Therefore,
for any $\mathbf{y} = (y_1,y_2,\ldots, y_n)^t \in \ker (\overline{\Afin}^t)$ and $\Lambda\in P$, $(\mathbf{y}\bullet [\bx]_{\upvarpi})$ is determined by $\mathbf{y}\bullet [\finfw_{j_k}]_{\upvarpi} = y_{j_k}$, that is, $\mathbf{y}$ is determined by $y_{j_1}$
(resp. $y_{j_1}$ and $y_{j_2}$).
Now it suffices to show that for any $\mathbf{y} \in \ker (\overline{\Afin}^t)$, there are $\Z_{\congN}$-solutions for the following equation:
\begin{align}\label{eq: span of sieving}
\mathbf{y} = x \ \mathbf{s}
\quad \text{or} \quad \mathbf{y} = x^{(1)} \ \mathbf{s}^{(1)}+x^{(2)} \ \mathbf{s}^{(2)}
\end{align}
Here $\redr(S) = \{\mathbf{s}\}$ (resp. $\{\mathbf{s}^{(1)}, \mathbf{s}^{(2)}\}$).
Since $\mathbf{y}$ is determined by $y_{j_1}$ (resp. $y_{j_1}$ and $y_{j_2}$), the linearly independence of $\redr(S)$ implies the existence of the solution to~\eqref{eq: span of sieving}.
\end{proof}

Lemma \ref{prop: ker and root sieving set} implies that there are finitely many root sieving sets for each type up to modulo $\congN$.
Combining Lemma \ref{basis of ker C transpose} with Table \ref{table: oP/oQ}, we can complete the classification of root sieving sets up to modulo $\congN$,
which is presented in the table below.
\begin{table}[h]
    \centering
    { \arraycolsep=1.6pt\def\arraystretch{1.5}
        \begin{tabular}{c|c}
            Type &  Root sieving sets up to modulo $\congN$ \\ \hline
            $A_{n}^{(1)} $  & $\{k(1,2,\ldots,n)\}$,\quad for $(k,n+1)=1$  \\ \hline
            $B_n^{(1)}, D_{n+1}^{(2)}$  & $\{(0,0,\ldots,0,1)\}$  \\ \hline
            $C_n^{(1)}, A_{2n-1}^{(2)}$ & $\{(\delta(j\equiv_2 1))_{j=0,1,\ldots,n}\}$ \\ \hline
            $D_n^{(1)} (n \equiv_2 0)$  &
            $\begin{array}{ll}
            \{(0,0,\ldots,0,2,2), (2,0,2,0,\ldots,2,0,2,0)\}, \\
            \{(0,0,\ldots,0,2,2), (2,0,2,0,\ldots,2,0,0,2)\}, \\
            \{(2,0,2,0,\ldots,2,0,2,0), (2,0,2,0,\ldots,2,0,0,2)\}
            \end{array}$\\ \hline
            $D_n^{(1)} (n \equiv_2 1)$ &
            $\{k(2,0,2,0,\ldots,0,2,1,3)\}$, \quad for $k = 1,3$
            \\ \hline
            $E_6^{(1)}$ &  $\{k(1,0,2,0,1,2)\}$, \quad for $k = 1,2$ \\ \hline
            $E_7^{(1)}$ &  $\{(0,1,0,0,1,0,1)\}$ \\ \hline
            Remaining types & $\{(0,0,\ldots,0)\}$
        \end{tabular}
    }\\[1.5ex]
    \caption{Root sieving sets up to modulo $\congN$}
    \protect\label{table: sieving}
\end{table}

\vskip 30mm

\begin{convention}\label{conv: Sieving} \hfill
\begin{enumerate}
\item[{\rm (1)}] From now on, we choose a special root sieving set, denoted by $\svs$, as follows:
\[
\svs = \begin{cases}
\{\svv=(1,2,\ldots,n)\} & \text{ if } \g=A_n^{(1)}, \\
\{\svv=(1,0,2,0,1,2)\} & \text{ if } \g=E_6^{(1)}, \\
\{\svv=(2,0,2,0,\ldots,0,2,1,3)\} & \text{ if } \g=D_n^{(1)} \text{ and } n\equiv_2 1, \\
\{\svv^{(1)}=(0,0,\ldots,0,2,2), \svv^{(2)}=(2,0,2,0,\ldots,2,0,2,0)\} & \text{ if } \g=D_n^{(1)} \text{ and } n\equiv_2 0.
\end{cases}
\]
For the other types, we choose $\svs$ as in Table~\ref{table: sieving}.
\item[{\rm (2)}] For a root sieving vector $\svv=(s_1,s_2,\ldots,s_n)$, we denote $(0,s_1,s_2,\ldots,s_n)$ by $\wtsvv$.
\end{enumerate}
\end{convention}

With the root sieving sets $\svs$ given in Convention~\ref{conv: Sieving}, we define a new statistics $\evS$, called the \emph{$\svs$-evaluation},
\begin{align}\label{eq: S-evaluation}
\evS: \pclp{\ell} \ra \Z_{\ge 0 }^k, \quad \sum_{0\le i \le n} m_i \Lambda_i \mapsto \left( \wtsvv^{(k)} \bullet \mathbf{m} \right)_{k =1 \text{ or } 1,2}.
\end{align}
Here, $\wtsvv^{(1)} = \wtsvv$ in cases except for $D_n^{(1)}~(n\equiv_2 0)$ and $\bm = (m_0,m_1,\ldots,m_n)$.
For $\Lambda \in \pclp{\ell}$, we call $\evS(\Lambda)$ the \emph{$\svs$-evaluation} of $\Lambda$.
For later use, we list $\evS(\Lambda)$ for all $\Lambda = (\ell - 1)\Lambda_0 + \Lambda_i \in \DRpclp$ in Table~\ref{table: ev}.
\begin{table}[h]
    \centering
    { \arraycolsep=1.6pt\def\arraystretch{1.5}
        \begin{tabular}{c|c|c|c|c|c|}
            &  $    A_{n}^{(1)},C_n^{(1)}, A_{2n-1}^{(2)}$ & $  B_n^{(1)}, D_{n+1}^{(2)}, E^{(1)}_{7} $ & $D^{(1)}_{n}$ & $E_6^{(1)}$ \\ \hline
            \multirow{2}{*}{$\evS(\Lambda)$}  & $i$ & $\delta_{in}$ & $\begin{array}{ll}
            (2(\delta_{i,n-1}+\delta_{i,n}), 2(\delta_{i,1} + \delta_{i,n-1}))
            & (n\equiv_2 0), \\
            2\delta_{i,1} + \delta_{i,{n-1}} + 3\delta_{i,n} & (n \equiv_2 1) \end{array}$
            & $\delta_{i,n}+2\delta_{i,6}$  \\ \cline{2-5}
            &\multicolumn{4}{|c|}{\text{\rm For the remaining types, $\evS(\Lambda) = 0$}} \\ \hline
        \end{tabular}
    }\\[1.5ex]
    \caption{$\evS((\ell - 1)\Lambda_0 + \Lambda_i)$ for each type}
    \protect\label{table: ev}
\end{table}

The following theorem follows from~\eqref{eq: eqrel and sieving}.

\begin{theorem}\label{thm: eq rel and sieving}
Let $\svs$ be the root sieving set given in Convention~\ref{conv: Sieving}.
For any $\Lambda = (\ell - 1)\Lambda_0 + \Lambda_i \in \DRpclp$, we have
\begin{align}\label{eq: pis ell Lambda}
    \pclp{\ell}(\Lambda) = \left\{ \Lambda' \in \pclp{\ell} \; \middle| \; \evS(\Lambda') \equiv_{\congN} \evS(\Lambda)   \right\}.
\end{align}
\end{theorem}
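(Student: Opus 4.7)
The plan is to deduce the theorem directly from the characterization~\eqref{eq: eqrel and sieving} of the sieving relation via the root sieving set, together with the definition~\eqref{eq: S-evaluation} of the $\svs$-evaluation. Since by construction $\pclp{\ell}(\Lambda) = \{\Lambda' \in \pclp{\ell} : \Lambda \sim \Lambda'\}$, the task reduces to showing that for any $\Lambda, \Lambda' \in \pclp{\ell}$,
\[
\Lambda \sim \Lambda' \quad \Longleftrightarrow \quad \evS(\Lambda') \equiv_\congN \evS(\Lambda).
\]

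Writing $\Lambda = \sum_{i=0}^n p_i \Lambda_i$, the decisive observation is that the padding convention $\wtsvv = (0, s_1, \ldots, s_n)$ from Convention~\ref{conv: Sieving} is engineered precisely so that $\wtsvv$ annihilates the coefficient of $\Lambda_0$ when paired against the full coefficient vector:
\[
\wtsvv^{(k)} \bullet (p_0, p_1, \ldots, p_n) \;=\; \svv^{(k)} \bullet (p_1, \ldots, p_n).
\]
Consequently, by~\eqref{eq: S-evaluation},
\[
\evS(\Lambda) \;=\; \bigl(\svv^{(k)} \bullet (p_1, \ldots, p_n)\bigr)_{k},
\]
where $k$ ranges over $\{1\}$ or $\{1,2\}$ according to the type; the analogous identity holds for $\Lambda' = \sum_{i=0}^n p_i' \Lambda_i$.

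Applying~\eqref{eq: eqrel and sieving}, the condition $\Lambda \sim \Lambda'$ is equivalent to $\svv^{(k)} \bullet (p_1, \ldots, p_n) \equiv_\congN \svv^{(k)} \bullet (p_1', \ldots, p_n')$ for every $\svv^{(k)} \in \svs$, which by the previous displays is exactly the (componentwise, in the bicyclic case) congruence $\evS(\Lambda) \equiv_\congN \evS(\Lambda')$. This completes the argument. The proof is essentially a bookkeeping translation; the only delicate point is verifying the compatibility of the padding convention with the fundamental-basis expansion of a weight, and there is no substantive obstacle. Note that the restriction to $\Lambda \in \DRpclp$ plays no role in the proof itself -- the same characterization holds for any $\Lambda \in \pclp{\ell}$ -- and is included only because $\DRpclp$ furnishes a canonical system of representatives of the equivalence classes.
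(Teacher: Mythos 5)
Your proposal is correct and follows exactly the paper's route: the paper also deduces the theorem immediately from the characterization~\eqref{eq: eqrel and sieving}, with the identification $\evS(\Lambda)=\wtsvv^{(k)}\bullet\mathbf{m}=\svv^{(k)}\bullet(p_1,\ldots,p_n)$ being the only bookkeeping involved. Your added remark that the statement holds for arbitrary $\Lambda\in\pclp{\ell}$, with $\DRpclp$ serving merely as a system of representatives, is accurate as well.
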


\begin{example}
Let $\g = A_3^{(1)}$ and $\ell = 2$. In this case, $a_i^\vee = 1$, $\svv = (1,2,3)$ and $\evS((\ell - 1)\Lambda_0 + \Lambda_i) = i$ for $i = 0,1,2,3$. Then
\begin{align*}
\pclp{2} &= \left\{\sum_{0 \le i \le 3} m_i \Lambda_i  \in \dwt_2 \; \middle| \;
\sum_{0 \le j \le 3} m_j = 2 \right\}\\
&= \left\{ 2\Lambda_0, 2\Lambda_1, 2\Lambda_2, 2\Lambda_3, \Lambda_0 + \Lambda_1, \Lambda_0 + \Lambda_2, \Lambda_0 + \Lambda_3, \Lambda_1 + \Lambda_2, \Lambda_1 + \Lambda_3, \Lambda_2 + \Lambda_3 \right\}
\end{align*}
and, for each $i = 0,1,2,3$,
\begin{align*}
{\pclp{2}}(\Lambda_0+\Lambda_i)& = \left\{ \sum_{0 \le i \le 3} m_i \Lambda_i \in \dwt_2 \; \middle| \;
\sum_{0 \le j \le 3} m_j = 2 ~\text{and}~ \sum_{0 \le j \le 3} jm_j \equiv_4 i \right\}.
\end{align*}
For instance,
\begin{align*}
{\pclp{2}}(2\Lambda_0) & =\left\{ 2\Lambda_0, 2\Lambda_2, \Lambda_1 + \Lambda_3 \right\}.
\end{align*}
\end{example}

\begin{remark}
Even in case where $\g = A_{2n}^{(2)}$, we can define the sieving equivalence relation as in~\eqref{eq: sieving relation}. In this case, there is only one equivalence class and hence we may define the distinguished representative $\DRpclp$ as $\{\ell\Lambda_0\}$. Then we have the same bijection described in~\eqref{eq: bijection between ellcov and pclp}, which implies that for any $\Lambda \in \dwt_\ell$,
\[|\ellcov| = |\pclp{\ell}|.\]
\end{remark}

\section{Sagan's action and generalization}\label{sec: Sagan action and general}

From this section, we will investigate the structure and enumeration of $\pclp{\ell}(\Lambda)$ for all $\Lambda \in \DR(\pclp{\ell})$ in a viewpoint of (bi)cyclic sieving phenomena (\cite{RSW}). In order to do this, we give a suitable (bi)cyclic group action on $\pclp{\ell}$. This will be achieved by generalizing Sagan's action in \cite{Sagan1} under consideration on our results in the previous sections.
\smallskip

For each positive integer $m$, we fix a cyclic group $C_m$ of order $m$ and a generator $\siggen_m$ of $C_m$. Note that every $C_m$-action is completely determined by the action of $\siggen_m$.

In \cite[\S 2]{Sagan1}, Sagan introduced an interesting cyclic group action on sets consisting of $(0,1)$-words.
Here we provide a generalized version of this action, which will play a key role in our demonstration of cyclic sieving phenomena
associated with dominant maximal weights.
To do this, we first recall Sagan's action.

Let
\begin{align}\label{def of Wnl}
\wordset_{n,\ell} \seteq \left\{ \mathbf{w} = w_1 w_2 \cdots w_{n+\ell} \; \middle| \; w_i = 0,1 \quad \text{for $i= 1,2,\ldots, n+\ell$, and } \sum_{1 \le i \le n+\ell} w_i = \ell \right\},
\end{align}
which is in one to one correspondence with $\pclp{\ell}$ of type $A_n^{(1)}$ via 
\begin{align}\label{eq: W and pis corres}
\sum_{0 \le i\le n} m_i \Lambda_i \mapsto \underbracket{11\cdots 1}_{m_0} 0 \underbracket{1 1 \cdots 1}_{m_1} 0 \cdots\cdots 0 \underbracket{1 1 \ldots 1}_{m_n}.
\end{align}

For any $d \in \Z_{\ge 1}$, we define a $C_d = \inn{\siggen_d}$-action on $\wordset_{n,\ell}$ as follows:
Given a $(0,1)$-word $\mathbf{w} = w_1 w_2  \cdots  w_{n+\ell} \in \wordset_{n,\ell}$, break it into subwords of length $d$,
\begin{align*}
\mathbf{w} &= w_1 w_2 \cdots w_d \mid w_{d+1} w_{d+2} \cdots  w_{2d} \mid \cdots \mid w_{(t-1)d + 1}  w_{(t-1)d + 2}  \cdots  w_{td} \mid w_{td+1} \cdots  w_{n+\ell}\\
& = w^1 \mid w^2 \mid \cdots \mid w^t \mid w^0,
\end{align*}
where $t = \floor{\frac{n+\ell}{d}}$, 
\begin{align*}
&w^j \seteq w_{(j-1)d +1} w_{(j-1)d +2} \cdots w_{jd} \quad\text{ for }1\le j \le t,\ \text{ and } \ w^0 \seteq w_{td+1} \cdots  w_{n+\ell}.
\end{align*}
Note that $C_d$ acts on each subword $w^j$ by cyclic shift:
\[\siggen_d \cdot w^j \seteq w_{jd}, w_{(j-1)d +1} w_{(j-1)d +2} \cdots w_{jd -1}. \]
Assume that $j_0$ is the smallest integer such that $\siggen_d \cdot w^{j_0} \neq w^{j_0}$. Then Sagan's action $\sqact$ is defined by
\[ \siggen_d \sqact \mathbf{w} \seteq w^1 \mid w^2 \mid \cdots \mid w^{j_0 - 1} \mid \siggen_d \cdot w^{j_0} \mid w^{j_0+1} \mid \cdots \mid w^t \mid w^0. \]
If there is no such $j_0$ in \{1,2,\ldots,t\}, set $\siggen_d \sqact \mathbf{w} \seteq \mathbf{w}$.

\begin{example}\label{eg: hsig action}
Note that
    \[\wordset_{3,2}= \left\{ 
    11000,\  01100,\  00110,\  10010,\ \ \ 
    10001,\  01001,\  00101,\  00011,\ \ \
    10100,\  01010
\right\}.\]
Under the above $C_4$-action on $\wordset_{3,2}$, we have three orbits given by
    \[ 
    \{1100  |  0,\  0110  |  0,\  0011  |  0,\  1001  |  0 \},\quad 
    \{1000  |  1,\  0100  |  1,\  0010  |  1,\  0001  |  1\},\quad
    \{1010  |  0,\  0101  |  0 \}.
 \]
\end{example}

Via the correspondence in~\eqref{eq: W and pis corres}, we can transport Sagan's actions on $\wordset_{n,\ell}$ to $\pclp{\ell}$ of type $A_n^{(1)}$.
In the following, we will generalize this approach to other types. Although our setting is more general, basically we construct a set in bijection with $\pclp{\ell}$ and define cyclic group actions on it by mimicking Sagan's actions.

In this section, we assume that $d,k$ are positive integers and $\ell$ is a nonnegative integer.
Given a $kd$-tuple $\mathbf{m} = (m_0,m_1,\ldots,m_{kd-1}) \in \Z_{\ge 0}^{kd}$,
we set
$$\mathbf{m}[j;d]:=\sum_{0 \le t \le d-1} m_{jd +t}\quad  (0\le j \le k-1).$$
Also, given a $k$-tuple $\bnu=(\nu_0, \nu_1,\ldots, \nu_{k-1}) \in \Z^k_{> 0}$, we set
\begin{align}\label{eq: def Pelldeta}
\bM_\ell (d;\bnu) := \left\{ \mathbf{m} = (m_0,m_1,\ldots,m_{kd-1}) \in \Z_{\ge 0}^{kd} \; \middle| \;
\sum_{0 \le j \le k-1} \nu_j \mathbf{m}[j;d]= \ell
\right\}.
\end{align}
In particular, if $d=1$ then $\mathbf{m}[j;1] = m_j$ and
\begin{align}\label{eq: tuset d=1}
    \bM_\ell (1;\bnu)
    & = \left\{ \mathbf{m} = (m_0,m_1,\ldots,m_{k-1}) \in \Z_{\ge 0}^{k} \; \middle| \;
    \bnu \bullet \mathbf{m}= \ell
    \right\}.
\end{align}

To each $\mathbf{m} = (m_0,m_1,\ldots,m_{kd-1}) \in {\bM_\ell} (d;\bnu)$
we associate a word
$$\mathbf{w}(\mathbf{m};d;\bnu) := w_1w_2\cdots w_{u_{\mathbf{m}}}$$
with entries in $\{0,\nu_0,\nu_1,\ldots,\nu_{k-1}\}$
defined by the following algorithm:

\begin{algorithm}\label{Algorithm} {\rm (Algorithm for $\mathbf{w}(\mathbf{m};d;\bnu)$)} Assume we have a $kd$-tuple $\mathbf{m}= (m_0,m_1,\ldots,m_{kd-1}) \in \bM_\ell (d;\bnu)$.
\begin{itemize}
\item[(${\mathbf A}$1)] Set $\mathbf{w}$ to be the empty word and $j = 0, t = 0$. Go to $(\mathbf{A}$2).
\item[(${\mathbf A}$2)] Set $\mathbf{w}$ to be the word obtained by concatenating $m_{jd + t}$ $\nu_{j}$'s at the right of $\mathbf{w}$. If $j = k-1$ and $t = d-1$, return $\mathbf{w}$ and terminate the algorithm.
Otherwise, go to $(\mathbf{A}$3).
\item[(${\mathbf A}$3)] Set $\mathbf{w}$ to be the word obtained by concatenating $0$ at the right. Go to $(\mathbf{A}$4).
\item[(${\mathbf A}$4)] If $t \neq d-1$ then set $t = t + 1$ and go to $(\mathbf{A}$2). If $t = d-1$ set $j= j+1$ and $t = 0$, and go to $(\mathbf{A}$2).
\end{itemize}
\end{algorithm}

As seen in  Algorithm~\ref{Algorithm}, the length $u_{\mathbf{m}}$ determined by $\mathbf{m}$ and the formula for $u_{\mathbf{m}}$ is given as follows:
\[ u_{\mathbf{m}} = \left( \sum_{0 \le j \le kd-1}m_j \right) + (kd-1). \]
For $\mathbf{m},\mathbf{m}' \in {\bM_\ell} (d;\bnu)$, the lengths  $u_{\mathbf{m}}$ and $u_{\mathbf{m}'}$
are not necessarily equal to each other. On the contrary, for all $\bm \in \bM_\ell(d;\bnu)$, the number of $0$'s in $\bw(\bm;d;\bnu)$ is uniquely determined by $kd-1$ (see Example~\ref{eg: dwt to word} below).

\smallskip

Set
\[\wordset_\ell(d;\bnu) \seteq \{\mathbf{w}(\mathbf{m};d;\bnu) \mid \mathbf{m} \in \bM_\ell (d;\bnu)  \}, \]
which can be viewed as a generalization of $\wordset_{n,\ell}$ since
$\wordset_\ell(1;\bnu)$ recovers $\wordset_{n,\ell}$ when $k = n + 1$ and $\bnu = (1,1,\ldots, 1)\in \Z^{k}$.

\begin{lemma}\label{lem: injective}
The map
\begin{align}\label{eq: funcPW}
\Uppsi : {\bM_\ell} (d;\bnu) \ra \wordset_\ell(d;\bnu), \quad \mathbf{m} \mapsto \mathbf{w}(\mathbf{m};d;\bnu)
\end{align}
is injective and hence bijective.
\end{lemma}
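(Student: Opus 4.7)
The plan is to establish injectivity by exhibiting an explicit left inverse
$\Phi \colon \wordset_\ell(d;\bnu) \to \bM_\ell(d;\bnu)$ obtained by reading off the block structure of a word. Since $\wordset_\ell(d;\bnu)$ is defined as $\mathrm{Im}(\Uppsi)$, surjectivity is automatic, so once injectivity is proven the bijectivity claim follows immediately.

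First I would record two structural properties of any word $\mathbf{w}(\mathbf{m};d;\bnu)$ produced by Algorithm~\ref{Algorithm}, both of which follow by a direct induction on the pair $(j,t)$ tracked by the algorithm:
\begin{enumerate}
\item[(a)] $\mathbf{w}(\mathbf{m};d;\bnu)$ contains exactly $kd-1$ occurrences of the symbol $0$, inserted only at step $(\mathbf{A}3)$, which is visited $kd-1$ times before termination;
\item[(b)] the $kd-1$ zeros partition $\mathbf{w}(\mathbf{m};d;\bnu)$ into $kd$ consecutive (possibly empty) blocks $B_0, B_1, \ldots, B_{kd-1}$, where $B_i$ consists of exactly $m_i$ copies of the single symbol $\nu_{\lfloor i/d\rfloor}$.
\end{enumerate}
Property (b) is the content of step $(\mathbf{A}2)$ together with the update rule $(\mathbf{A}4)$ governing when $j$ is incremented.

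Next, given an arbitrary $\mathbf{w} \in \wordset_\ell(d;\bnu)$, I would split $\mathbf{w}$ at every occurrence of the symbol $0$ and define
\[
\Phi(\mathbf{w}) \seteq (|B_0|, |B_1|, \ldots, |B_{kd-1}|),
\]
where $B_0, \ldots, B_{kd-1}$ are the resulting (possibly empty) blocks in order. This is well-defined because $\nu_j > 0$ for all $j$ (as $\bnu \in \Z_{>0}^k$), so the delimiter symbol $0$ is unambiguous. By (b), $\Phi(\Uppsi(\mathbf{m})) = \mathbf{m}$ for every $\mathbf{m} \in \bM_\ell(d;\bnu)$, proving injectivity of $\Uppsi$.

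I expect no serious obstacle: the only subtlety is that different coordinates of $\bnu$ may coincide as integers, which could in principle obstruct distinguishing which $\nu_j$ appears in a given block. This potential issue evaporates once one observes that $\Phi$ recovers $m_i$ purely from the \emph{length} of the $i$-th block (whose position among the $kd$ blocks already encodes the index $\lfloor i/d\rfloor$), independently of the numerical value of the symbol filling it. Thus the argument is uniform regardless of the multiplicities within $\bnu$.
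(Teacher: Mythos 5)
Your proposal is correct and follows essentially the same route as the paper: the paper also proves injectivity by constructing the left inverse $\Uppsi^{-1}$ that records, for each $\bw$, the number of nonzero entries between the $i$th and $(i+1)$st zeros (equivalently, your block lengths $|B_i|$), using that every word in $\wordset_\ell(d;\bnu)$ contains exactly $kd-1$ zeros. Your extra remark about coinciding values among the $\nu_j$ is a harmless clarification of the same argument.
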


\begin{proof}
For each $\mathbf{m} \in \bM_\ell(d;\bnu)$,  we have to apply  (${\mathbf A}$3) $(kd-1)$-times to obtain $\bwm$ via Algorithm~\ref{Algorithm}.
This says that every word $\mathbf{w} \in \wordset_\ell(d;\bnu)$ contains exactly $(kd-1)$-zero.

Define a map $\Uppsi^{-1}: \wordset_\ell(d;\bnu) \ra {\bM_\ell} (d;\bnu)$ as follows:
Let $\bw \in \wordset_\ell(d;\bnu)$. For each $1\le i \le kd-1$, let $z_i$ denote the position of the $i$th zero when we read $\bw$ from left to right, and we set $z_0 \seteq 0$ and $z_{kd} \seteq u_{\bm}+1$.
For each $0 \le j \le kd-1$, let $m_i = z_{i+1} - z_i - 1$. 
Define
\[\Uppsi^{-1}(\bw) = (m_0,m_1,\ldots,m_{kd-1}).\]
Recall that, for each $\bm = (m_0,m_1,\ldots,m_{kd-1}) \in \bM_\ell(d;\bnu)$, $\bwm$ is obtained by applying Algorithm~\ref{Algorithm} to $\bm$, which shows that there are exactly $m_i$ nonzero entries between the $i$th $0$ and the $(i+1)$st $0$ when we read $\bwm$ from left to right for $0 \le i \le kd-1$. Here the $0$th and $kd$th $0$'s are set to be the empty word (see Table~\ref{table: Cact eg} for details). Obviously it holds that
\[\Uppsi^{-1} \circ \Uppsi (\bm) = \bm \quad \text{ for each } \  \bm \in {\bM_\ell} (d;\bnu) \]
and hence $\Uppsi$ is injective.
\end{proof}

\begin{example}\label{eg: dwt to word}
Let $d=2,k=2,\ell = 4$, and $\bnu = (1,2)$. Then
\begin{align*}
\bM_4(2;(1,2)) &= \left\{ (m_0,m_1,m_2,m_3) \in \Z^{4}_{\ge 0} \; \middle| \; m_0 + m_1 + 2m_2 + 2m_3 = 4 \right\} \\
& = \left\{ \begin{array}{lll}
(4,0,0,0), & (2,0,1,0), & (0,0,2,0),    \\
(3,1,0,0), & (2,0,0,1), & (0,0,1,1),    \\
(2,2,0,0), & (1,1,1,0), & (0,0,0,2),    \\
(1,3,0,0), & (1,1,0,1),\\
(0,4,0,0), & (0,2,1,0),\\
& (0,2,0,1)
\end{array} \right\}.
\end{align*}
Using Algorithm \ref{Algorithm}, one can obtain $\Uppsi(\mathbf{m})$ for each $\mathbf{m} \in \bM_4(2;(1,2))$ as follows:
\[
\wordset_4(2;(1,2)) = \left\{ \begin{array}{lll}
\Uppsi((4,0,0,0)) = 1111000,    & \Uppsi((2,0,1,0)) = 110020,   & \Uppsi((0,0,2,0)) = 00220,    \\
\Uppsi((3,1,0,0)) = 1110100,    & \Uppsi((2,0,0,1)) = 110002,   & \Uppsi((0,0,1,1)) = 00202,    \\
\Uppsi((2,2,0,0)) = 1101100,    & \Uppsi((1,1,1,0)) = 101020,   & \Uppsi((0,0,0,2)) = 00022,    \\
\Uppsi((1,3,0,0)) = 1011100,    & \Uppsi((1,1,0,1)) = 101002,   \\
\Uppsi((0,4,0,0)) = 0111100,    & \Uppsi((0,2,1,0)) = 011020,   \\
& \Uppsi((0,2,0,1)) = 011002,
\end{array}\right\}.\]
In particular, $\Uppsi ((2,0,1,0)) = 110020$ can be computed as follows:
\begin{table}[h]
\centering
\begin{tabular}{c|c|c|c|c|c|c|c|c|c|c|c}
& (${\mathbf A}$1) & (${\mathbf A}$2) & \text{(${\mathbf A}$3)} & \text{(${\mathbf A}$4)} & \text{(${\mathbf A}$2)} & \text{(${\mathbf A}$3)} & \text{(${\mathbf A}$4)} & \text{(${\mathbf A}$2)} &
\text{(${\mathbf A}$3)} & \text{(${\mathbf A}$4)} & \text{(${\mathbf A}$2)}  \\ \hline
$\mathbf{w}$ & $\emptyset$ & 11 & 110 & 110 & 110 & 1100 & 1100 & 11002 & 110020 & 110020 & 110020  \\ \hline
$j$ & 0 & 0 & 0 & 0 & 0 & 0 & 1 & 1 & 1 & 1 & 1  \\ \hline
$t$ & 0 & 0 & 0 & 1 & 1 & 1 & 0 & 0 & 0 & 1 & 1
\end{tabular}
\caption{The process of obtaining $\Uppsi ((2,0,1,0))$ by Algorithm~\ref{Algorithm}}
\protect\label{table: Cact eg}
\end{table}
\end{example}

\begin{remark} \hfill
\begin{enumerate}
\item There are five words of length $7$, six words of length $6$, and three words of length $5$ in $\wordset_4(2;(1,2))$. This shows that the lengths of $\mathbf{m}$'s may be different.
\item The set $\wordset_\ell(d;\bnu)$ may be complicated to some extent.
The definition in~\eqref{def of Wnl} implies that all words of length $n+\ell$ consist of $n$ $0$'s and $\ell$ $1$'s are in $\wordset_{n,\ell}$, but which fails to characterize $\wordset_\ell(d;\bnu)$. For instance, although $110200, 110020$ and $110002$ have the same number of $i$'s ($i = 0,1,2$), Example~\ref{eg: dwt to word} shows that
\[110020, 110002 \in \wordset_4(2;(1,2)) \quad \text{but} \quad 110200 \not\in \wordset_4(2;(1,2)).\]
\end{enumerate}
\end{remark}

Now we define a $C_d = \inn{\siggen_d}$-action on $\wordset_\ell(d;\bnu)$.
First, we break $\mathbf{w} = w_1 w_2\ldots w_{u}$ into subwords of length $d$ as many as possible in order as follows:
\begin{equation}\label{eq: breakw}
\begin{aligned}
\mathbf{w}& =w_1w_2\cdots w_d \ | \ w_{d+1}w_{d+2}\cdots w_{2d} \ | \ \cdots \ | \ w_{(k-1)d+1}w_{(k-1)d+2}\cdots w_{td} \ | \ w_{td+1}\cdots w_u  \\
&= w^1 \ | \ w^2 \ | \ \cdots \ | \ w^t \ | \ w_{td+1}\cdots w_u,
\end{aligned}
\end{equation}
where $t = \lfloor u/d \rfloor$ and $w^j=w_{(j-1)d+1}w_{(j-1)d+2} \cdots w_{jd}$ for $1 \le j \le t$.
Note that $\siggen_d$ acts on each
subword $w^j$ by cyclic shift, i.e.,
$$  \siggen_d \cdot w^j \seteq w_{jd} w_{(j-1)d+1} w_{(j-1)d+2} \cdots w_{jd-1}.$$
Assume that $j_0$ is the smallest integer such that $ \siggen_d \cdot w^j_0 \ne w^j_0$.
Then we set
\begin{align}\label{eq: def of hsig}
\siggen_d \sqact \mathbf{w} \seteq w^1 \ | \ w^2 \ | \ \cdots \ | \ w^{{j_0}-1} \ | \ \siggen_d \cdot w^{j_0} \ | \ w^{j_0+1} \ | \ \cdots \ | \ w^t \ | \ w_{td+1}\cdots w_{u}.
\end{align}
If there is no such $j_0$, we set $ \siggen_d \sqact \mathbf{w} \seteq \mathbf{w}$.

\begin{theorem}\label{thm: hsigaction}
For any $\bnu = (\nu_0, \nu_1,\ldots, \nu_{k-1}) \in \Z^k_{> 0}$, the action defined as above is indeed a $C_d$-action on $\wordset_\ell(d;\bnu).$
\end{theorem}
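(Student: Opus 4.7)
\medskip

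\noindent\textbf{Proof plan.} My plan is to verify the two properties that make the prescription $\siggen_d\sqact$ a bona-fide $C_d$-action on $\wordset_\ell(d;\bnu)$: (a) $\siggen_d\sqact \bw\in \wordset_\ell(d;\bnu)$ for every $\bw\in \wordset_\ell(d;\bnu)$ (\emph{well-definedness}); (b) $(\siggen_d)^d\sqact \bw=\bw$ (the \emph{group-action axiom}).

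The core of the argument is the following structural lemma, which I will establish by a counting argument modulo $d$.

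\smallskip
\noindent\emph{Key lemma.} If $w^{j_0}$ is the first non-fixed chunk in the partition~\eqref{eq: breakw} of $\bw$, then $w^{j_0}$ contains no super-block boundary zero (i.e.\ no zero of $\bw$ whose left-to-right index is a multiple of $d$); consequently every entry of $w^{j_0}$ lies in a single super-block.

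\smallskip
To prove the lemma, suppose for contradiction that the $(j+1)d$-th zero of $\bw$ lies in $w^{j_0}$ at position $p_\ast$ for some $j$. Let $f$ be the number of chunks among $w^1,\ldots,w^{j_0-1}$ which are of the uniform form $\nu_i\cdots\nu_i$ for some $i$; the remaining $j_0-1-f$ preceding chunks are all-zero. Counting the zeros of $\bw$ at positions $\le p_\ast$ in two ways—globally this count equals $(j+1)d$, and chunk-by-chunk it equals $(j_0-1-f)d+z_1$, where $z_1\ge 1$ is the number of zeros of $w^{j_0}$ at positions $\le p_\ast$—yields
\[
z_1 \;=\; (j+2+f-j_0)\,d.
\]
Thus $z_1$ is a positive multiple of $d$, and since $z_1\le d$ we must have $z_1=d$ and $p_\ast=j_0 d$. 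But then every character of $w^{j_0}$ is $0$, contradicting the assumption that $w^{j_0}$ is non-fixed.

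\smallskip
Granting the lemma, $w^{j_0}$ lies inside one super-block, say super-block $j$; so its non-zero entries all equal $\nu_j$ and its zeros are interior separators of super-block $j$. Cyclically shifting $w^{j_0}$ merely reshuffles these $\nu_j$'s and zeros, so it alters only the sub-block lengths $m_{jd},\ldots,m_{jd+d-1}$ while leaving the other $m_i$'s and the positions of all super-block boundary zeros unchanged. Reading the sub-block lengths off the shifted word through $\Uppsi^{-1}$ (Lemma~\ref{lem: injective}) therefore produces some $\bm'\in\Z_{\ge 0}^{kd}$ with $\bm'[i;d]=\bm[i;d]$ for every $i$ by multiset preservation of characters, whence
\[
\sum_{0\le i\le k-1}\nu_i\,\bm'[i;d]\;=\;\sum_{0\le i\le k-1}\nu_i\,\bm[i;d]\;=\;\ell;
\]
that is, $\bm'\in\bM_\ell(d;\bnu)$ and $\siggen_d\sqact\bw\in\wordset_\ell(d;\bnu)$. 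This settles (a).

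For (b), the chunks $w^1,\ldots,w^{j_0-1}$ are untouched by $\siggen_d\sqact$ and hence remain fixed in $\siggen_d\sqact\bw$, whereas the image of $w^{j_0}$ under a single cyclic shift carries the same multiset of characters as $w^{j_0}$ and therefore is again non-fixed. Hence the first non-fixed chunk of $\siggen_d\sqact\bw$ is still located in slot $j_0$, and iteration shows that $(\siggen_d)^t\sqact\bw$ is obtained from $\bw$ by cyclically shifting the contents of $w^{j_0}$ by $t$ positions while leaving every other chunk intact. After $d$ such shifts $w^{j_0}$ returns to its original state, so $(\siggen_d)^d\sqact\bw=\bw$. (When no chunk of $\bw$ is non-fixed, $\siggen_d\sqact\bw=\bw$ by the convention in~\eqref{eq: def of hsig} and (b) is automatic.)

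\smallskip
\noindent\emph{Main obstacle.} The delicate point is the key lemma. Without it one must worry about $w^{j_0}$ straddling a super-block boundary—the chunk could contain interior zeros that \emph{are} super-block boundaries, or even contain such a boundary without containing any $\nu_{j+1}$ from the next super-block—and in either of those scenarios a naive cyclic shift can move a $\nu_j$ across a boundary and break the block-type assignment required by~\eqref{eq: def Pelldeta}. The uniform modulo-$d$ counting in the lemma handles all these sub-cases at once and is the single ingredient one could easily miss.
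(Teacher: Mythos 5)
Your proposal is correct and takes essentially the same route as the paper: the heart of both arguments is the mod-$d$ zero count showing that the first non-fixed length-$d$ chunk $w^{j_0}$ cannot meet a super-block boundary (the paper's observations (i)--(ii), which use that the fixed chunks preceding $w^{j_0}$ contribute a multiple of $d$ zeros), so the cyclic shift only redistributes $\nu_s$'s and interior separator zeros within one super-block and the defining relation $\sum_{j}\nu_j\,\mathbf{m}[j;d]=\ell$ is preserved. The differences are cosmetic: you replace the paper's explicit case-by-case construction of the new tuple $\mathbf{m}'$ (its Steps 1--3) by a multiset-preservation argument, and you spell out the order-$d$ axiom via persistence of the index $j_0$, which the paper simply asserts from the definition.
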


\begin{proof}
From the definition in~\eqref{eq: def of hsig}, one can see that $e \sqact \bw = \underbracket{(\siggen_d \sqact (\siggen_d \sqact \cdots \sqact (\siggen_d}_{d} \sqact \bw)\cdots))  = \bw$ for all $\mathbf{w} \in \wordset_\ell(d;\nu)$.
Therefore, our assertion can be justified by showing that $\wordset_\ell(d;\nu)$ is closed under the action of $\siggen_d$. To do this, for any $\bw \in \wordset_\ell(d;\nu)$, we will find an element $\bm' \in \bM_\ell(d;\bnu)$ such that $\Uppsi (\bm') = \siggen_d \sqact \mathbf{w}$.

Let $\bw \in \wordset_\ell(d;\nu)$. We may assume that $\siggen_d \sqact \bw \neq \bw$.
Break $\bw$ into subwords
\begin{align*}
\bw & =w_1w_2\cdots w_d \ | \  w_{d+1}w_{d+2}\cdots w_{2d} \ | \ \cdots \ | \ w_{(t-1)d+1}w_{(t-1)d+2}\cdots w_{td} \ | \ w_{td+1}\cdots w_u  \\
&= w^1 \ | \ w^2 \ | \ \cdots \ | \ w^t \ | \ w_{td+1}\cdots w_u
\end{align*}
as in~\eqref{eq: def of hsig}.
Since $\siggen_d \sqact \mathbf{w} \neq \mathbf{w}$, there exists an index $1 \le j_0 \le t$ such that
\[\siggen_d \sqact \mathbf{w} = w^1 \ | \ w^2 \ | \ \cdots \ | \ w^{j_0-1} \ | \ \siggen_d \cdot w^{j_0} \ | \ w^{j_0+1} \ | \ \cdots \ | \ w^t \ | \ w_{td+1}\cdots w_u.\]
Note that for each $1 \le j \le j_0$, $w^j$ consists of $d$ $0$'s or $d$ $\nu_r$'s for some $0\le r < k$. Thus, the number of zeros in $w^1w^2\cdots w^{j_0-1}$ is $s\times d$ for some $s \in \Z_{\ge 0}$.
Moreover, from Algorithm~\ref{Algorithm}  and $\siggen_d \sqact w^{j_0} \neq w^{j_0}$, we see that
\begin{enumerate}[label = (\roman*)]
\item $\nu_{s+1}$ can appear in $\bw$ after $(s+1)\times d$ zeros occurrence, and
\item $w^{j_0}$ consists of $z$ $0$'s and $(d-z)$ $\nu_{s}$'s for some $z \ge 1$.
\end{enumerate}

By Lemma~\ref{lem: injective}, we can write $\Uppsi^{-1}(\mathbf{w})$ as $\bm = (m_0,m_1,\ldots,m_{kd-1})$.
Now we shall construct a tuple $\bm' = (m_0',m_1',\ldots, m_{kd-1}') \in \Z^{kd}_{\ge 0}$ satisfying that $\bm'  =  \Uppsi^{-1}(\siggen_d \sqact  \mathbf{w}) \in \bM_\ell(d;\bnu)$ in the following steps:
\begin{itemize}
\item[{\rm Step 1.}] Take  $m_i' = m_i$  for $0 \le i \le sd-1$.
\item[{\rm Step 2.}] Recall that $z$ denotes the number of $0$'s in $w^{j_0}$. Take \begin{align}\label{eq: step2}
\begin{cases}
m'_{sd} = m_{sd} + 1, \  m'_{sd +1}  = m_{sd +1}, \  m'_{sd + z} = m_{sd + z} - 1, \quad \text{and} \quad m'_i = m_i & \text{ if $w_{j_0 d} = \nu_{s}$}, \\
m'_{sd} = m_{sd} - p, \  m'_{sd +1} = p, \ m'_{sd+z} = m_{sd + z -1} + m_{sd + z} \quad \text{and} \quad m'_{i} =  m_{i-1}&\text{ if $w_{j_0 d} = 0$},
\end{cases}
\end{align}
for $sd+1  < i < sd+z$,  where $w^{j_0} = \underbracket{\nu_{s+1}\nu_{s+1}\cdots \nu_{s+1}}_{p}0**\cdots*w_{j_0 d}$.
\item[{\rm Step 3.}] Take $m_i' = m_i$ for $sd+z+1 \le i \le kd-1$.
\end{itemize}

By the construction of $\bm'$, we have
\[\sum_{0 \le j \le k-1} \nu_j \mathbf{m'}[j;d] = \sum_{0 \le j \le k-1} \nu_j \mathbf{m}[j;d] = \ell.\]
    Thus, we have $\bm' \in \bM_\ell(d;\bnu)$. Moreover,~\eqref{eq: step2} implies $\Uppsi (\bm') = \siggen_d \sqact \bw$, by Algorithm~\ref{Algorithm}.
\end{proof}

Now we define a $C_d$-action on $\bM_\ell (d;\bnu)$
by transporting the $C_d$-action $\sqact$ on $\wordset_\ell(d;\bnu)$ via the bijection $\Uppsi$, that is,
\begin{align}\label{eq: Sagact}
\siggen_d \sqact \mathbf{m} \seteq \Uppsi^{-1}( \siggen_d \sqact \Uppsi(\mathbf{m}))\quad \text{for all $\mathbf{m} \in \bM_\ell (d;\bnu)$}.
\end{align}

\begin{example}\label{eg: hsigaction}
    Let $d=2,k=2,\ell = 4$, and $\bnu = (1,2)$. For $\mathbf{m} = (3,1,0,0), \mathbf{m}' = (2,0,1,0), \mathbf{m}'' \in \bM_4(2;(1,2))$, we have the following commutative diagrams:
    \[\begin{tikzcd}[column sep=scriptsize]
    {\mathbf{m} = (3,1,0,0)} \arrow[r, "\Uppsi "] \arrow[d, "\siggen_2", dotted] & 11 | 10 | 10 | 0  \arrow[d, "\siggen_2"]        \\
    {\siggen_2 \sqact \mathbf{m} = (2,2,0,0)} & 11 | 01 | 10 | 0,  \arrow[l, "\Uppsi ^{-1}"]
    \end{tikzcd}
    \
    \begin{tikzcd}[column sep=scriptsize]
    {\mathbf{m}' = (1,1,1,0)} \arrow[r, "\Uppsi "] \arrow[d, "\siggen_2", dotted] & 10|10|20  \arrow[d, "\siggen_2"]        \\
    {\siggen_2 \sqact \mathbf{m}' = (0,2,1,0)} & 01|10|20,  \arrow[l, "\Uppsi ^{-1}"]
    \end{tikzcd}
    \
    \begin{tikzcd}[column sep=scriptsize]
    {\mathbf{m}'' = (0,0,1,1)} \arrow[r, "\Uppsi "] \arrow[d, "\siggen_2", dotted] & 00 | 20 | 2 \arrow[d, "\siggen_2"]        \\
    {\siggen_2 \sqact \mathbf{m}'' = (0,0,0,2)} & 00 | 02 | 2  \arrow[l, "\Uppsi ^{-1}"]
    \end{tikzcd}
    \]
\end{example}
\begin{remark}\label{rem: hsig remark} \hfill
\begin{enumerate}
\item[{\rm (1)}] Suppose that $C_d$ acts on $\bM_\ell(d;\bnu)$ as in~\eqref{eq: Sagact}.
 Then, for any $r \in \Z_{>0}$, $\bM_\ell(d;\bnu)$ is also equipped with a $C_{rd}$-action $\sqact_d$, which is given by
\begin{align}\label{eq: hsigaction multi}
\siggen_{rd} \sqact_d \mathbf{m} \seteq \siggen_d \sqact \mathbf{m}.
\end{align}
\item[{\rm (2)}] In~\eqref{eq: hsigaction multi}, if $d = 1$ then the $C_r$-action $\sqact_1$ on $\bM_\ell(1;\bnu)$ is trivial.
\end{enumerate}
\end{remark}

\medskip
Let us generalize the above setting a little further. Let $d, k, k'$ and $r$ be positive integers and $\ell$ a nonnegative integer. For $\bnu=(\nu_0, \nu_1,\ldots, \nu_{k-1}) \in \Z^k_{> 0}$ and $\bnu'=(\nu'_0, \nu'_1,\ldots, \nu'_{k'-1}) \in \Z^{k'}_{> 0}$, set
\begin{align}\label{eq: def of M}
    \bM_\ell(rd,d;\bnu,\bnu') \seteq \left\{ \mathbf{m} \in \Z_{\ge 0}^{krd + k'd} \; \middle| \;
    \sum_{0 \le j \le k-1} \nu_j \mathbf{m}[j;rd] + \sum_{0 \le j \le k'-1} \nu_j'\mathbf{m}[kr + j;d] =
    \ell \right\}.
\end{align}
Using the actions given in~\eqref{eq: def of hsig} and~\eqref{eq: hsigaction multi}, we define a new $C_{rd}$-action, denoted by $\sqact_{rd,d}$, on $\bM_\ell(rd,d;\bnu,\bnu')$ as follows:
Given $\mathbf{m} \in \bM_\ell(rd,d;\bnu,\bnu')$, we break it into
$\mathbf{m}_{\le krd-1} \in \bM_l(rd;\bnu)$ and $\mathbf{m}_{\ge krd} \in \bM_{l'}(d;\bnu')$, where $\ell = l + l'$.
Now, we define
\begin{align}\label{eq: hsigaction double}
\siggen_{rd} \sqact_{rd,d} \mathbf{m} \seteq \begin{cases}
(\siggen_{rd} \sqact \mathbf{m}_{\le krd-1})* \mathbf{m}_{\ge krd}  & \text{if $\siggen_{rd} \sqact \mathbf{m}_{\le krd-1} \neq \mathbf{m}_{\le krd-1}$,} \\
\mathbf{m}_{\le krd-1}* (\siggen_{rd} \sqact_d \mathbf{m}_{\ge krd}) & \text{otherwise.}
\end{cases}
\end{align}

\begin{example}
Let $d= 2, k = 1,k' = 2,r = 2, \ell = 8, \bnu = (1)$, and $\bnu' = (1,2)$. Then
\[\bM_{8}(4,2;(1),(1,2)) = \left\{ \mathbf{m} \in \Z_{\ge 0}^{8} \; \middle| \; (m_0 + m_1 + m_2 + m_3) + (m_4 + m_5) + 2(m_6 + m_7) = 8 \right\}, \]
where $\mathbf{m} = (m_0,m_1,m_2,m_3,m_4,m_5,m_6)$.

\begin{enumerate}[label = (\arabic*)]
\item For $\mathbf{m} = (6,0,0,0,1,1,0,0) \in \bM_{8}(4,2;(1),(1,2))$, break $\mathbf{m}$ into
\begin{align*}
\mathbf{m}_{\le 3} & = (6,0,0,0) \in \bM_{6}(4;(1)) \quad \text{ and } \quad  \mathbf{m}_{\ge 4} = (1,1,0,0) \in \bM_{2}(2;(1,2)).
\end{align*}
Since $\Uppsi((6,0,0,0)) = 1111|1100|0$, it follows that $\siggen_4 \sqact \Uppsi((6,0,0,0)) = 1111|0110|0$ and so
\[\siggen_4 \sqact (6,0,0,0) = (4,2,0,0).\]
Thus,~\eqref{eq: hsigaction double} shows that
\begin{align*}
\siggen_4 \sqact_{4,2} \mathbf{m} & = (\siggen_4 \sqact \mathbf{m}_{\le 3})* \mathbf{m}_{\ge 4} = (4,2,0,0 \; | \;1,1,0,0).
\end{align*}

\item For $\mathbf{m} = (4,0,0,0,1,1,1,0) \in \bM_{8}(4,2;(1),(1,2))$, break $\mathbf{m}$ into
\begin{align*}
\mathbf{m}_{\le 3} &= (4,0,0,0) \in \bM_{4}(4;(1)) \quad \text{ and } \quad \mathbf{m}_{\ge 4} = (1,1,1,0) \in \bM_{4}(1;(1,2)).
\end{align*}
Since $\Uppsi(\mathbf{m}_{\le 3}) = 1111|000$, one can see that $\siggen_4 \sqact(4,0,0,0) = (4,0,0,0)$. In Example \ref{eg: hsigaction}, we have already shown that $\siggen_2 \sqact \mathbf{m}_{\ge 4} = (0,2,1,0)$.
Thus, by~\eqref{eq: hsigaction double}, we have
\begin{align*}
\siggen_4 \sqact_{4,2} \mathbf{m} & = \mathbf{m}_{\le 3}*(\siggen_2 \sqact \mathbf{m}_{\ge 4})  = (4,0,0,0 \; | \; 0,2,1,0).
\end{align*}
\end{enumerate}
\end{example}

\begin{remark}\label{rem: Sagact dist}
    The expression for $\bM_\ell(rd,d;\bnu,\bnu')$ may not be unique. For instance, $\bM_\ell(4,2;(1),(2^k)) = \bM_\ell(2,1;(1^2),(2^{2k}))$ $(k \in \Z_{\ge 1})$ as sets. They should be distinguished since the former has a $C_4$-action $\sqact_{4,2}$, while the latter has a $C_2$-action $\sqact_{2,1}$, which is different from $(\sqact_{4,2})^2$.
\end{remark}

In the following, we will extend $\Uppsi$ in~\eqref{eq: funcPW} to $\bM_\ell(rd,d;\bnu,\bnu')$. The resulting map is denoted by $\funcMtoW$.
\begin{definition}\label{def: def of funcMtoW}
Let $\bm \in \bM_\ell(rd,d;\bnu,\bnu')$. Break $\bm$ into $\bm_{\le krd-1} \in \bM_l(rd;\bnu)$ and $\bm_{\ge krd} \in \bM_{l'}(d;\bnu')$, where $\ell = l + l'$. Let $\bw^{(1)} = \Uppsi(\bm_{\le krd-1} )$ and $\bw^{(2)} =  \Uppsi(\bm_{\ge krd})$. Define
\begin{align}\label{eq: definition of Psi}
\funcMtoW(\bm) = \bw^{(1)} * \blo * \bw^{(2)}.
\end{align}
Notice that $\blo$ denotes the $(krd)$-th zero in $\funcMtoW(\bm)$ when read from left to right.
\end{definition}

Using the injectivity of $\Uppsi$, one can easily see that $\funcMtoW$ is injective. Indeed, the inverse of $\funcMtoW$ is defined in the following way: For an element $\bw \in \funcMtoW(\bM_\ell(rd,d;\bnu,\bnu'))$, break it into $\bw^{(1)} *\blo* \bw^{(2)}$, where $\blo$ denotes the $krd$th zero. Then
\begin{align}\label{eq: definition inverse of Psi}
\funcMtoW^{-1}(\bw) \seteq \Uppsi^{-1}(\bw^{(1)}) * \Uppsi^{-1}(\bw^{(2)}).
\end{align}

\begin{convention}
Hereafter the blue zero $\blo$ will denote the $krd$-th zero in $\funcMtoW(\bm)$ when we read it from left to right.
\end{convention}

\begin{example}
Let $d=1,k=1,k'=2,r=2,\ell = 6,\bnu=(1)$, and $\bnu' = (1,2)$. Then
\[\bM_{6}(2,1;(1),(1,2)) = \left\{ \mathbf{m} \in \Z_{\ge 0}^{4} \; \middle| \; (m_0 + m_1) + m_2 + 2m_3 = 6 \right\}, \]
where $\mathbf{m} = (m_0,m_1,m_2,m_3)$. For $\bm = (1,2,1,1) \in \bM_{6}(2,1;(1),(1,2))$, break $\bm$ into
\[\bm_{\le 1} = (1,2) \in \bM_{3}(2;(1)) \quad \text{and} \quad \bm_{\ge 2} = (1,1) \in \bM_{3}(1;(1,2)). \]
Since $\bm_{\le 1} = (1,2) \in \bM_{3}(2;(1))$ (resp. $\bm_{\ge 2} = (1,1) \in \bM_{3}(1;(1,2))$), we have $\Uppsi((1,2)) = 1011$ (resp. $\Uppsi((1,1)) = 102$). Thus we have
\[ \funcMtoW((1,2,1,1)) = 1011\blo102. \]
\end{example}

\section{Cyclic sieving phenomena(except for $D_n^{(1)}(n\equiv_2 0)$)} \label{Sec: CSP1}
The {\it cyclic sieving phenomenon} was introduced by Reiner-Stanton-White in \cite{RSW}.
Let $X$ be  a finite set, with an action of a cyclic group $C$ of order $m$. Elements within a $C$-orbit share the same stabilizer subgroup, whose cardinality we will call the \emph{stabilizer-order} for the orbit.
Let $X(q)$ be a polynomial in $q$ with nonnegative integer coefficients.
For $d \in \Z_{>0}$, let $\omega_d$ be a $d$th primitive root of the unity.
We say that $(X, C, X(q))$ exhibits the  cyclic sieving phenomenon if, for all $c\in C$, we have
$$
|X^c| = X(\omega_{o(c)}),
$$
where $o(c)$ is the order of $c$ and $X^c$ is the fixed point set under the action of $c$.
Note that this condition is equivalent to the following:
$$
X(q)\equiv \sum_{0 \le i \le m-1}  b_i  q^i \pmod {q^m-1},
$$
where $b_i$ counts the number of $C$-orbits on $X$ for which the stabilizer-order divides $i$.

In this section, we suppose that $\g$ is any affine Kac-Moody algebra of rank $n$ except for $D_n^{(1)} (n\equiv_2 0)$ and $A_{2n}^{(2)}$.
For a nonnegative integer $\ell$, let $X=\pclp{\ell}$, $C = C_{\congN}$, with $\congN$  in~\eqref{eq: congN}, and we define
\begin{align}\label{eq: definition of X(q)}
X(q) = \pclp{\ell}(q) \seteq \sum_{\Lambda \in \pclp{\ell}} q^{\evS(\Lambda)},
\end{align}
where $\svs$ is the root-sieving set given in Convention~\ref{conv: Sieving}.  Then   Theorem~\ref{thm: eq rel and sieving} tells that
\begin{equation}\label{eq: level and series C equiv cond}
\begin{aligned}
\pclp{\ell}(q) &= \sum_{i \ge 0} \left| \left\{\Lambda \in \pclp{\ell} \; \middle| \; \evS(\Lambda) = i
\right\} \right|q^i \equiv \sum_{\Lambda \in  \DRpclp } \left|\pclp{\ell} (\Lambda) \right|q^{ \evS(\Lambda)}   \pmod{q^\congN-1}. 
\end{aligned}
\end{equation}

\begin{remark}
Let $\svv = (s_1,s_2,\ldots,s_n)$ and $s_0 \seteq 0$. Then $\pclp{\ell}(q)$ can be defined by the geometric series:
\begin{align} \label{eq: ge series not Dn1}
\sum_{\ell \ge 0} \pclp{\ell}(q)t^\ell & = \prod_{0 \le i \le n} \dfrac{1}{1-q^{s_i}t^{a_i^\vee}}.
\end{align}
Note that the coefficient of $t^\ell$ of the right hand side is given by
\[\sum_{j \ge 0} \left|\left\{\sum_{0\le i \le n}m_i\Lambda_i \; \middle| \; \sum_{0 \le i \le n} a_i^{\vee} m_i = \ell \text{ and } \sum_{0 \le i \le n} s_i m_i = j \right\}  \right|q^j. \]
\end{remark}

The purpose of this section is to show that $(\pclp{\ell}, C_{\congN}, \pclp{\ell}(q))$ exhibits the cyclic sieving phenomenon.

\medskip

Let us introduce some necessary notations.
When a finite group $G$ acts on $X$, we denote by $X^G$ the set of fixed points under the action of $G$.
For any $g \in G$, we let $X^g \seteq X^{\inn{g}}$.
For $n \in \Z_{\ge 0}$ and $k  \in \Z_{\ge 0} \cap \Z_{\le n}$, we let 
{\it $q$-binomial coefficient} which are defined as follows:
\[ [n]_q:=\dfrac{q^n-1}{q-1}, \quad [n]_q! \seteq \prod_{1 \le k \le n} [k]_q, \quad \text{ and } \quad \left[ \begin{matrix} n \\ k \end{matrix} \right]_q := \dfrac{[n]_q!}{[k]_q![n-k]_q!}. \]

Let $\clfw = \{\Lambda_0 , \Lambda_1, \ldots, \Lambda_n\}$ and let
\begin{align} \label{eq: funcPcltoZ}
\funcPcltoZ: \Pclp \ra \Z^{n+1}, \quad \Lambda \mapsto [\Lambda]_{\clfw}
\end{align}
be the map given by the matrix representation in terms of $\clfw$.

\subsection{$A_{n}^{(1)}$ type}

To begin with, we review a result in ~\cite{RSW}.
For a positive integer $N$, let $[0,N]:=\{0,1,2,\ldots,N\}$ and ${[0,N] \choose \ell} $ the set of all $\ell$-multisubsets of $[0,N]$.
Then the symmetric group $\Sym_{[0,N]}$ on $[0,N]$ acts on ${[0,N] \choose \ell}$. 
We say that a cyclic group $C$ of order $m$ acts {\it nearly freely} on $[0,N]$ if it is generated by a permutation $c\in \Sym_{[0,N]}$
whose cycle type is either
\begin{align}\label{eq: CSP in RSW}
\hspace{-15ex} \begin{array}{l}
\text{(1) $j$ cycles of size $m$ so that $N+1=jm$, or}\\
\text{(2) $j$ cycles of size $m$ and one singleton cycle, so that $N+1=jm+1$}
\end{array}
\end{align}
for some positive integer $j$.

\begin{lemma} \cite[Theorem 1.1 (a)]{RSW} \label{thm: CSP claissic 1}
Let a cyclic group $C$ of order $m$ act nearly freely on $[0,N]$.   Then
the triple $$\left({[0,N] \choose \ell},C,\left[ \begin{matrix} N+\ell \\ \ell \end{matrix} \right]_q \right)$$ exhibits the cyclic sieving phenomenon.
\end{lemma}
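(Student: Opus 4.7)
The plan is to verify the defining identity $\bigl|X^{c^a}\bigr|=X(\omega_d)$ for every $a\in\{0,1,\ldots,m-1\}$, where $c$ is the nearly-free generator of $C$, $d\seteq o(c^a)=m/\gcd(m,a)$, and $X(q)=\left[\begin{matrix}N+\ell\\\ell\end{matrix}\right]_q$. I would compute the two sides independently and then compare case by case.

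For the right-hand side, I would invoke the $q$-Lucas congruence for roots of unity: if $n=Ad+r$ and $k=Bd+s$ with $0\le r,s<d$, then
\[\left[\begin{matrix}n\\k\end{matrix}\right]_{\omega_d}=\binom{A}{B}\left[\begin{matrix}r\\s\end{matrix}\right]_{\omega_d}.\]
Writing $\ell=pd+r$ with $0\le r<d$ and reducing $N+\ell$ modulo $d$ in each sub-case of near-freeness, a short arithmetic computation yields
\[X(\omega_d)=\begin{cases}\binom{jm/d+\ell/d-1}{\ell/d} & \text{if }N+1=jm\text{ and }d\mid\ell,\\[.4ex] 0 & \text{if }N+1=jm\text{ and }d\nmid\ell,\\[.4ex] \binom{jm/d+\lfloor\ell/d\rfloor}{\lfloor\ell/d\rfloor} & \text{if }N+1=jm+1,\end{cases}\]
where the vanishing in the middle line comes from the factor $\left[\begin{matrix}r-1\\r\end{matrix}\right]_{\omega_d}=0$ whenever $r\ge 1$.

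For the left-hand side, I would argue combinatorially. Under $c^a$ of order $d$, each $m$-cycle of $c$ splits into $m/d$ cycles of length $d$, so the orbits of $c^a$ on $[0,N]$ consist of exactly $jm/d$ cycles of length $d$, together with one singleton in sub-case (2). A multiset $M\in\binom{[0,N]}{\ell}$ is fixed by $c^a$ iff its multiplicity function is constant on every $c^a$-orbit. Encoding $M$ by a multiplicity $k_i\in\Z_{\ge 0}$ on each $d$-orbit and (when available) a multiplicity $s\in\Z_{\ge 0}$ on the singleton, the size constraint becomes $d\sum_i k_i+s=\ell$. In sub-case (1) this forces $d\mid\ell$ and reduces to the weak-composition count $\binom{jm/d+\ell/d-1}{\ell/d}$; in sub-case (2), summing the corresponding counts over the legal values $s=\ell-dt$ with $0\le t\le\lfloor\ell/d\rfloor$ and applying the hockey-stick identity $\sum_{t=0}^{T}\binom{n-1+t}{t}=\binom{n+T}{T}$ yields $\binom{jm/d+\lfloor\ell/d\rfloor}{\lfloor\ell/d\rfloor}$.

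A case-by-case comparison of the two computations gives the desired equality. The step I expect to be the main obstacle is the $q$-Lucas evaluation in sub-case (1), where one must track precisely how the remainders of $N+\ell$ and $\ell$ modulo $d$ relate in order to produce the sharp divisibility condition $d\mid\ell$ that matches the combinatorial count; once that bookkeeping is carried out, the rest of the argument is routine.
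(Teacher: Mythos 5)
Your sketch is correct, and both sides of the verification check out. Note that the paper does not prove this lemma at all: it is quoted directly from Reiner--Stanton--White, so there is no internal argument to compare with; what you have written is essentially a reconstruction of the standard elementary proof of \cite[Theorem 1.1(a)]{RSW} (fixed-point count versus $q$-Lucas evaluation). The bookkeeping you flagged as the main obstacle does go through: with $\ell = pd + r$, $0\le r<d$, and $d\mid m$, in sub-case (1) one has $N+\ell \equiv r-1 \pmod d$, so for $r=0$ the remainder of $N+\ell$ is $d-1$, the quotient is $jm/d+\ell/d-1$, and $q$-Lucas gives $\binom{jm/d+\ell/d-1}{\ell/d}\left[\begin{matrix} d-1\\ 0\end{matrix}\right]_{\omega_d}=\binom{jm/d+\ell/d-1}{\ell/d}$, while for $r\ge 1$ the factor $\left[\begin{matrix} r-1\\ r\end{matrix}\right]_{\omega_d}$ vanishes identically; in sub-case (2) the remainders of $N+\ell$ and $\ell$ agree, giving $\binom{jm/d+\lfloor \ell/d\rfloor}{\lfloor \ell/d\rfloor}$. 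These match your orbit-multiplicity counts (weak compositions in case (1), hockey-stick sum in case (2)), so the identity $|X^{c^a}|=X(\omega_{o(c^a)})$ holds for every $a$, including $a=0$.
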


Recall that $\congN=n+1$ and $\langle c,\Lambda_i \rangle =1$ for all $i \in I$ when $\g=A^{(1)}_{n}$. Let us define a $C_{n+1}$-action on $\pclp{\ell}$ by
\begin{align}\label{eq: sigma action}
\siggen_{n+1} \cdot \sum_{0 \le i \le n} m_i \Lambda_i = \sum_{0 \le i \le n} m_{i+1} \Lambda_{i}, \qquad \text{where $m_{n+1} = m_0$.}
\end{align}

On the other hand, for the long cycle $\upsig=(0,1,2,\ldots, n) \in \Sym_{[0,n]}$ of order $n+1$,
the cyclic group $\inn{\upsig}$ acts  freely on ${[0,n] \choose \ell}$. For simplicity, let us use $0^{m_0}1^{m_1}\cdots n^{m_n}$ to denote the multiset with $m_i$ $i$'s for all $0\le i \le n$.
There is a natural bijection, say $\kappa$, between $\pclp{\ell}$ and ${[0,n] \choose \ell}$
\begin{align}\label{eq: kappa bijection}
\kappa: {\pclp{\ell}} \to \matr{[0,n]}{\ell}, \quad  \sum_{0 \le i \le n} m_i\Lambda_i \mapsto 0^{m_0}1^{m_1}\cdots n^{m_n}
\end{align}
preserving group actions, more precisely, satisfying that
\begin{align}\label{eq: group action preserve}
\kappa\left(\siggen_{n+1} \cdot \sum_{0 \le i \le n} m_i\Lambda_i\right) = \upsig \cdot 0^{m_0}1^{m_1}\cdots n^{m_n} ~( = 0^{m_n} 1^{m_0}\cdots n^{m_{n-1}} ).
\end{align}
Moreover, there is a bijection between $\pclp{\ell}$ and the set ${\rm Par}(n,\ell)$ of partitions contained in $n\times \ell$ rectangle defined by
\begin{align}\label{eq: wt par corres}
    \sum_{0 \le i \le n} m_i \Lambda_i \mapsto (n^{m_{n}} (n-1)^{m_{n-1}} \cdots 1^{m_1})'.
\end{align}
Here $(n^{m_{n}} (n-1)^{m_{n-1}} \cdots 1^{m_1})$ denotes the partition having $m_i$ part equal to $i~(1\le i \le n)$ and $\mu'$ the conjugate of $\mu$ for any partition $\mu$. It can be easily seen that $\evS(\Lambda)$ is equal to the size of the corresponding partition, thus \cite[Proposition 1.7.3]{EC1} says that
\begin{align}\label{eq: pclp An1 type}
\pclp{\ell}(q)&= \sum_{i \ge 0} \left| \left\{ \Lambda\in \pclp{\ell} \; \middle| \; \evS(\Lambda) = i \right\} \right|q^i
=  \sum_{i \ge 0} \left| \left\{ \lambda \in {\rm Par}(n,\ell) \; \middle| \; |\lambda| = i \right\} \right|q^i  = \left[ \begin{matrix} n + \ell \\ \ell \end{matrix} \right]_q.
\end{align}
\begin{theorem}\label{A TYPE csp}
Under the $C_{n+1}$-action in~\eqref{eq: sigma action}, the triple
\begin{align}\label{eq: CSP for An1}
\left(\pclp{\ell}, C_{n+1}, \pclp{\ell}(q) \right)
\end{align}
exhibits the cyclic sieving phenomenon.
\end{theorem}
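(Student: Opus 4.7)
The plan is to deduce this CSP triple directly from the classical Reiner-Stanton-White result (Lemma~\ref{thm: CSP claissic 1}), by transferring everything across the bijection $\kappa$ from~\eqref{eq: kappa bijection}. All the pieces are already present in the excerpt; the proof will amount to assembling them.

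First, I would verify the $C_{n+1}$-equivariance of $\kappa \colon \pclp{\ell} \to \binom{[0,n]}{\ell}$. The generator $\siggen_{n+1}$ acts on $\pclp{\ell}$ by cyclically permuting the coordinates $(m_0,m_1,\ldots,m_n)$ as in~\eqref{eq: sigma action}, and under $\kappa$ this corresponds to the long cycle $\upsig = (0,1,2,\ldots,n) \in \Sym_{[0,n]}$ acting on multisets, which is exactly the identity~\eqref{eq: group action preserve}. In particular, $\kappa$ is an isomorphism of $C_{n+1}$-sets.

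Second, I would observe that the action of $\inn{\upsig}$ on $[0,n]$ is nearly free in the sense of~\eqref{eq: CSP in RSW}: it consists of a single cycle of size $n+1$ with $N+1 = n+1 = 1 \cdot (n+1)$, so we are in case (1) with $j=1$ and $m=n+1$. Applying Lemma~\ref{thm: CSP claissic 1} to this action yields that
\[
\left( \binom{[0,n]}{\ell},\; \inn{\upsig},\; \left[\begin{matrix} n+\ell \\ \ell \end{matrix}\right]_q \right)
\]
exhibits the cyclic sieving phenomenon.

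Third, I would invoke~\eqref{eq: pclp An1 type}, which identifies $\pclp{\ell}(q)$ with the $q$-binomial coefficient $\left[\begin{matrix} n+\ell \\ \ell \end{matrix}\right]_q$ via the bijection~\eqref{eq: wt par corres} between $\pclp{\ell}$ and partitions in an $n\times\ell$ rectangle, together with the fact that $\evS$ matches the size statistic on partitions. Combined with the $C_{n+1}$-equivariance of $\kappa$, the cyclic sieving for $\binom{[0,n]}{\ell}$ transports verbatim to $\pclp{\ell}$: fixed-point counts agree because $\kappa$ is a bijection of $C_{n+1}$-sets, and the polynomials agree by~\eqref{eq: pclp An1 type}. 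Hence $(\pclp{\ell}, C_{n+1}, \pclp{\ell}(q))$ exhibits the cyclic sieving phenomenon.

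There is essentially no obstacle here; the result for $A_n^{(1)}$ is really a repackaging of \cite{RSW}. The substance of the paper begins in the later sections, where the analogous triple must be produced for the other affine types with $\oP/\oQ$ in place of the long cycle, and where the natural combinatorial bijection with multisets is no longer available, forcing the generalized Sagan-style action of Section~\ref{sec: Sagan action and general} into play.
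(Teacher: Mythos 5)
Your proposal is correct and follows essentially the same route as the paper, whose proof simply combines Lemma~\ref{thm: CSP claissic 1}, the equivariant bijection $\kappa$ in~\eqref{eq: kappa bijection}--\eqref{eq: group action preserve}, and the identity $\pclp{\ell}(q) = \left[\begin{matrix} n+\ell \\ \ell \end{matrix}\right]_q$ from~\eqref{eq: pclp An1 type}. You have merely spelled out the nearly-free condition and the transport of fixed-point counts, which the paper leaves implicit.
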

\begin{proof}
Our assertion follows from Lemma \ref{thm: CSP claissic 1}, ~\eqref{eq: kappa bijection} and~\eqref{eq: pclp An1 type}.
\end{proof}
\begin{remark}\label{rem: dom wt and par}
    For $0 \le i \le n$, the image of $\pclp{\ell}((\ell-1)\Lambda_0 + \Lambda_i)$ under the correspondence in~\eqref{eq: wt par corres} is $\left\{ \lambda \in {\rm Par}(n,\ell) \; \middle| \; |\lambda| \equiv_{n+1} i \right\}$, which yields the identity:
\[
|\mx^+((\ell-1)\Lambda_0 + \Lambda_i)| = | \left\{ \lambda \in {\rm Par}(n,\ell) \; \middle| \; |\lambda| \equiv_{n+1} i \right\} |. 
 \]
This shows that $|\mx^+((\ell-1)\Lambda_0 + \Lambda_i)|~(0\le i \le n)$ appear as the coefficient of $\pclp{\ell}(q)$, more precisely,
    \begin{equation}\label{eq: coeff of pclp(q)}
    \begin{aligned}
    \pclp{\ell}(q)
    & \equiv \sum_{0 \le i \le n} |\mx^+((\ell-1)\Lambda_0 + \Lambda_i)|q^i \pmod{q^{n+1}-1}.
    \end{aligned}
    \end{equation}
\end{remark}

The congruence \eqref{eq: coeff of pclp(q)} implies that $|\mx^+((\ell-1)\Lambda_0 + \Lambda_i)|$ counts the $C_{n+1}$-orbits on $\pclp{\ell}$ for which the stabilizer-order divides $i$. In the following, we will give a closed formula for the number of $C_{n+1}$-orbits on $\pclp{\ell}$ for which the stabilizer-order divides $i$.

A $C_{n+1}$-orbit of $\pclp{\ell}$ can be considered as \emph{necklace} with black beads and white beads.
Fix $n,\ell \in \Z_{>0}$. Let $\beads(n,\ell)$ be the set of words of $n$ white beads and $\ell$ black beads. The cyclic group $C_{n+1} = \inn{\siggen_{n+1}}$ acts on $\beads(n,\ell)$ by
\begin{align*}
&\siggen_{n+1} \cdot \left (\underbracket{B,B,\ldots,B}_{m_0},W,\underbracket{B,B,\ldots,B}_{m_1},W,\ldots,W,\underbracket{B,B,\ldots,B}_{m_n} \right)\\
=& \left(\underbracket{B,B,\ldots,B}_{m_n},W,\underbracket{B,B,\ldots,B}_{m_0},W,\underbracket{B,B,\ldots,B}_{m_1},W,\ldots,W,\underbracket{B,B,\ldots,B}_{m_{n-1}} \right), \nonumber
\end{align*}
where $W$ denotes a white bead and $B$ denotes a black bead. We can realize a $C_{n+1}$-orbit of $\beads(n,\ell)$ as a necklace using $n+1$ white beads and $\ell$ black beads by (i) threading the beads into a necklace with the same order,
and (ii) adding a white bead between the last $B$ and the first $B$ as follows:
For $\scrS \seteq \underbracket{B,B,\ldots,B}_{m_0},W,\underbracket{B,B,\ldots,B}_{m_1},W,\ldots,W,\underbracket{B,B,\ldots,B}_{m_n} \in \beads(n,\ell)$,
\begin{align*}
\begin{tikzpicture}
    \node[circle,draw,fill = black]     at ({1*(360/24)}:1.6){};
    \node[rotate = -60]                 at ({2*(360/24)}:1.6){$\cdots$};
    \node[rotate = -45]                 at ({3*(360/24)}:1.6){$\cdots$};
    \node[circle,draw,fill = black]     at ({4*(360/24)}:1.6){};
    \node[circle,draw,fill = black]     at ({5*(360/24)}:1.6){};
    \node                               at ({6*(360/24)}:1.6){$*$};
    \node[circle,draw,fill = black]     at ({7*(360/24)}:1.6){};
    \node[rotate = 30]                  at ({8*(360/24)}:1.6){$\cdots$};
    \node[rotate = 45]                  at ({9*(360/24)}:1.6){$\cdots$};
    \node[circle,draw,fill = black]     at ({10*(360/24)}:1.6){};
    \node[circle,draw,fill = black]     at ({11*(360/24)}:1.6){};
    \node[circle,draw]                  at ({12*(360/24)}:1.6){};
    \node[rotate = 120]                             at ({14*(360/24)}:1.6){$\cdots$};
    \node[rotate = 150]                 at ({16*(360/24)}:1.6){$\cdots$};
    \node[circle,draw]                  at ({18*(360/24)}:1.6){};
    \node[circle,draw,fill = black]     at ({19*(360/24)}:1.6){};
    \node[rotate = 30]                  at ({20*(360/24)}:1.6){$\cdots$};
    \node[rotate = 45]                  at ({21*(360/24)}:1.6){$\cdots$};
    \node[circle,draw,fill = black]     at ({22*(360/24)}:1.6){};
    \node[circle,draw,fill = black]     at ({23*(360/24)}:1.6){};
    \node[circle,draw]                  at ({24*(360/24)}:1.6){};
    \draw[decorate,decoration=brace] (0.6,2) -- (1.9,0.4);
    \node at (1.6,1.4) {$m_0$};
    \draw[decorate,decoration={brace,mirror}] (0.6,-2) -- (1.9,-0.4);
    \node at (1.6,-1.4) {$m_1$};
    \draw[decorate,decoration={brace,mirror}] (-0.6,2) -- (-1.9,0.4);
    \node at (-1.6,1.4) {$m_n$};
    \node at (2.7,0) {$\longrightarrow$};
    \node at (2.7,0.3) {\scriptsize (ii)};
    \node at (-2.5,0) {$\longrightarrow$};
    \node at (-2.5,0.3) {\scriptsize (i)};
\node at (-3.5,0) {$\scrS$};
\end{tikzpicture}\quad
\begin{tikzpicture}
\node[circle,draw,fill = black]     at ({1*(360/24)}:1.6){};
\node[rotate = -60]                 at ({2*(360/24)}:1.6){$\cdots$};
\node[rotate = -45]                 at ({3*(360/24)}:1.6){$\cdots$};
\node[circle,draw,fill = black]     at ({4*(360/24)}:1.6){};
\node[circle,draw,fill = black]     at ({5*(360/24)}:1.6){};
\node[circle,draw]                          at ({6*(360/24)}:1.6){};
\node[circle,draw,fill = black]     at ({7*(360/24)}:1.6){};
\node[rotate = 30]                  at ({8*(360/24)}:1.6){$\cdots$};
\node[rotate = 45]                  at ({9*(360/24)}:1.6){$\cdots$};
\node[circle,draw,fill = black]     at ({10*(360/24)}:1.6){};
\node[circle,draw,fill = black]     at ({11*(360/24)}:1.6){};
\node[circle,draw]                  at ({12*(360/24)}:1.6){};
\node[rotate = 120]                             at ({14*(360/24)}:1.6){$\cdots$};
\node[rotate = 150]                 at ({16*(360/24)}:1.6){$\cdots$};
\node[circle,draw]                  at ({18*(360/24)}:1.6){};
\node[circle,draw,fill = black]     at ({19*(360/24)}:1.6){};
\node[rotate = 30]                  at ({20*(360/24)}:1.6){$\cdots$};
\node[rotate = 45]                  at ({21*(360/24)}:1.6){$\cdots$};
\node[circle,draw,fill = black]     at ({22*(360/24)}:1.6){};
\node[circle,draw,fill = black]     at ({23*(360/24)}:1.6){};
\node[circle,draw]                  at ({24*(360/24)}:1.6){};
\draw[decorate,decoration=brace] (0.6,2) -- (1.9,0.4);
\node at (1.6,1.4) {$m_0$};
\draw[decorate,decoration={brace,mirror}] (0.6,-2) -- (1.9,-0.4);
\node at (1.6,-1.4) {$m_1$};
\draw[decorate,decoration={brace,mirror}] (-0.6,2) -- (-1.9,0.4);
\node at (-1.6,1.4) {$m_n$};
\end{tikzpicture}
\end{align*}
A necklace $C_{n+1} \cdot \scrS$ is called \emph{primitive} if the stabilizer subgroup of $\scrS$ is trivial.

\begin{lemma}[{\cite[Theorem 7.1]{Reu}}]\label{thm: Necklaces} The number of primitive necklaces using $n+1$ white beads and $\ell$ black beads is
    \[\dfrac{1}{(n+1)+\ell} \sum_{d \mid (n+1,\ell)} \mu(d) \matr{((n+1)+\ell)/d}{\ell/d},\]
    where $\mu$ is the classical M\"{o}bius function.
\end{lemma}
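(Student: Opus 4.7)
The plan is a standard period-stratification followed by M\"obius inversion. Let $N \seteq (n+1)+\ell$ be the total bead count and let $M(a,b)$ denote the number of primitive necklaces built from $a$ white beads and $b$ black beads, so the target quantity is $M(n+1,\ell)$. Every linear word of length $N$ with $n+1$ white and $\ell$ black beads has a unique expression $v^r$ with $v$ primitive of length $N/r$, and requiring $v$ to contain $(n+1)/r$ white and $\ell/r$ black beads forces $r \mid (n+1,\ell)$; conversely any such $r$ is admissible. Since a primitive necklace of length $N/r$ accounts for exactly $N/r$ distinct linear rotations, counting length-$N$ words in two ways yields
\[
\matr{N}{\ell} = \sum_{r \,\mid\, (n+1,\ell)} \frac{N}{r}\, M\!\left( \frac{n+1}{r},\, \frac{\ell}{r} \right).
\]

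Next I would apply M\"obius inversion to this identity. Set $g(a,b) \seteq (a+b)\, M(a,b)$ and $f(a,b) \seteq \matr{a+b}{b}$, so that the displayed identity becomes $f(a,b) = \sum_{r \mid (a,b)} g(a/r,b/r)$. Writing $(a,b) = (d a',\, d b')$ with $(a',b') = 1$ turns both $f$ and $g$ into functions of the single divisor $d$, and classical one-variable M\"obius inversion then gives
\[
g(a,b) = \sum_{k \,\mid\, (a,b)} \mu(k)\, f\!\left( \frac{a}{k},\, \frac{b}{k} \right).
\]
Dividing by $a+b$ and specializing to $(a,b) = (n+1,\ell)$ produces exactly the claimed formula.

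The only real subtlety is pinning down that the correct index set for the stratification sum is $\{r : r \mid (n+1,\ell)\}$ rather than the a priori larger $\{r : r \mid N\}$; this follows from the observation that if $v^r$ carries $n+1$ white and $\ell$ black beads, then $r$ must divide both counts (and conversely, any such $r$ produces a valid composition for $v$, since $r \mid N$ is automatic from $r \mid n+1$ and $r \mid \ell$). Once this bookkeeping is in place, the M\"obius step is purely formal, and the whole argument is the standard derivation of the two-letter necklace/Lyndon count with prescribed letter composition, which is precisely Reutenauer's theorem.
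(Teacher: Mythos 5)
Your proof is correct. Note that the paper does not prove this lemma at all; it is quoted from Reutenauer's book, so there is no internal argument to compare against. Your derivation — unique decomposition $w=v^r$ of each linear word into a power of a primitive word, the observation that the letter counts force $r\mid(n+1,\ell)$, the count of $N/r$ rotations per primitive necklace giving $\binom{N}{\ell}=\sum_{r\mid(n+1,\ell)}\frac{N}{r}\,M\bigl(\frac{n+1}{r},\frac{\ell}{r}\bigr)$, and then one-variable M\"obius inversion after writing $(n+1,\ell)=d(a',b')$ with $(a',b')=1$ — is the standard proof of the fixed-content primitive necklace count and is essentially the argument behind Reutenauer's Theorem 7.1, so it fills the citation correctly. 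One small point worth a sentence if you were to splice this into the paper's context: the paper defines a ``primitive'' necklace via triviality of the stabilizer of the gap sequence $(m_0,\ldots,m_n)$ under the $C_{n+1}$-action, whereas you count necklaces that are aperiodic under the full rotation group $C_{(n+1)+\ell}$; these notions coincide because any rotation fixing the necklace must send white beads to white beads and hence induces a cyclic shift fixing the gap sequence, and conversely. With that identification made explicit, your argument proves exactly the statement as used in the paper.
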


There is a natural $C_{n+1}$-set isomorphism between $\pclp{\ell}$ and $\beads(n,\ell)$ defined by
\begin{align*}
(m_0,m_1,\ldots,m_n) \longleftrightarrow \underbracket{B,B,\ldots,B}_{m_0},W,\underbracket{B,B,\ldots,B}_{m_1},W,\cdots,W,\underbracket{B,B,\ldots,B}_{ m_{n}}.
\end{align*}
Combining Theorem \ref{A TYPE csp} with \eqref{eq: coeff of pclp(q)} and Lemma~\ref{thm: Necklaces}, we derive the following closed formula.

\begin{theorem}\label{thm: number of dom wt A}
For any $(\ell-1)\Lambda_0 + \Lambda_i \in \DRpclp$, we have
\begin{align}\label{eq: explicit formula for |mx|}
|\mx^+((\ell-1)\Lambda_0 + \Lambda_i)| =
\sum_{d \mid (n+1,\ell,i)} \dfrac{d}{(n+1)+\ell} \sum_{d'|(\frac{n+1}{d},\frac{\ell}{d})} \mu(d') \matr{((n+1)+\ell)/dd'}{\ell/dd'}.
\end{align}
\end{theorem}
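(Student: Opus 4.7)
The plan is to combine the cyclic sieving result (Theorem~\ref{A TYPE csp}) with the well-known necklace enumeration of Lemma~\ref{thm: Necklaces}, using a stratification of the $C_{n+1}$-orbits on $\pclp{\ell}$ by their stabilizer order.

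First, from the CSP congruence in~\eqref{eq: coeff of pclp(q)} (equivalently, from the general characterization of cyclic sieving in terms of coefficients mod $q^{\congN}-1$ recalled at the beginning of Section~\ref{Sec: CSP1}), I read off the combinatorial meaning of $|\mx^+((\ell-1)\Lambda_0+\Lambda_i)|$: namely, it equals the number of $C_{n+1}$-orbits on $\pclp{\ell}$ whose stabilizer-order divides $i$. Transport this statement along the $C_{n+1}$-equivariant bijection $\pclp{\ell}\cong\beads(n,\ell)$ (inserting a white bead as explained before Lemma~\ref{thm: Necklaces}), so that it becomes a statement about necklaces with $n+1$ white beads and $\ell$ black beads.

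Next, I would stratify these necklaces by stabilizer order. For a divisor $d$ of $n+1$, a necklace $C_{n+1}\cdot\scrS$ has stabilizer of order \emph{exactly} $d$ precisely when $\scrS$ is $(n+1)/d$-periodic and the fundamental period is primitive; in particular such orbits can exist only when $d\mid\gcd(n+1,\ell)$, in which case the fundamental period has $(n+1)/d$ white beads and $\ell/d$ black beads. Thus the orbits with stabilizer of order exactly $d$ are in bijection with the primitive necklaces on $(n+1)/d$ white beads and $\ell/d$ black beads, whose number by Lemma~\ref{thm: Necklaces} is
\[
\frac{d}{(n+1)+\ell}\sum_{d'\mid(\tfrac{n+1}{d},\tfrac{\ell}{d})}\mu(d')\binom{((n+1)+\ell)/(dd')}{\ell/(dd')}.
\]

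Finally, to count orbits whose stabilizer-order divides $i$ I just sum the above over all $d$ that divide \emph{both} the orbit-existence constraint $\gcd(n+1,\ell)$ and $i$, i.e.\ over $d\mid\gcd(n+1,\ell,i)$. This yields exactly the right-hand side of the claimed formula, completing the proof. The only nontrivial step is the bijection between stabilizer-$d$ necklaces and primitive necklaces on the divided parameters, which is the standard observation that a nontrivial symmetry of a necklace forces strict periodicity; I would write this out as a short lemma to justify the stratification cleanly. Everything else reduces to bookkeeping of divisors.
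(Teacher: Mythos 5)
Your proposal is correct and follows essentially the same route as the paper: the paper likewise combines the congruence~\eqref{eq: coeff of pclp(q)} (giving the interpretation of $|\mx^+((\ell-1)\Lambda_0+\Lambda_i)|$ as the number of $C_{n+1}$-orbits whose stabilizer-order divides $i$) with the necklace model and Lemma~\ref{thm: Necklaces}, summing over divisors $d\mid(n+1,\ell,i)$. Your stratification of orbits by exact stabilizer order, with the fundamental-period bijection to primitive necklaces on the divided parameters, is precisely the bookkeeping the paper leaves implicit.
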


\begin{remark}
In \cite{JM}, Jayne-Misra conjectured that
\begin{align*}
|\mx^+(\ell\Lambda_0)| = \dfrac{1}{(n+1)+\ell} \sum_{d|(n+1,\ell)} \varphi(d) \matr{((n+1)+\ell)/d}{\ell/d},
\end{align*}
where $\varphi$ is Euler's phi function. This is the case where $i = 0$ in~\eqref{eq: explicit formula for |mx|}, which was proven in \cite{TW}.
\end{remark}

It should be remarked that the cardinality of $\left\{ \lambda \in {\rm Par}(n,\ell) \; \middle| \; |\lambda| \equiv_{n+1} i \right\}$ have already appeared in~\cite{EJP}. Hence, we can also derive Theorem~\ref{thm: number of dom wt A} using Remark~\ref{rem: dom wt and par}.

\begin{corollary}
    Let $(\ell-1)\Lambda_0 + \Lambda_i, (\ell-1)\Lambda_0 + \Lambda_j \in \DRpclp$. Then
    \[ |\mx^+((\ell-1)\Lambda_0 + \Lambda_i)| = |\mx^+((\ell-1)\Lambda_0 + \Lambda_j)| \]
    if and only if $(n+1,\ell,i) = (n+1,\ell,j)$.
\end{corollary}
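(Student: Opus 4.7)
The forward direction is immediate from the closed formula of Theorem~\ref{thm: number of dom wt A}: since the right-hand side depends on $i$ only through the triple gcd $(n+1,\ell,i)$, identical gcd-triples yield identical cardinalities.

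For the converse, I will set $m := \gcd(n+1,\ell)$, and for each $d \mid m$ denote the $d$-summand of the closed formula by
\[
f(d) := \frac{d}{(n+1)+\ell}\sum_{d' \mid \gcd((n+1)/d,\ell/d)} \mu(d')\matr{((n+1)+\ell)/(dd')}{\ell/(dd')},
\]
so that $|\mx^+((\ell-1)\Lambda_0 + \Lambda_i)| = A_{(n+1,\ell,i)}$ with $A_g := \sum_{d \mid g} f(d)$. By Lemma~\ref{thm: Necklaces}, $f(d)$ counts primitive necklaces built from $(n+1)/d$ white beads and $\ell/d$ black beads, so the aperiodic word $W^{(n+1)/d}B^{\ell/d}$ shows $f(d) \geq 1$ for every $d \mid m$. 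It thus suffices to establish injectivity of the map $g \mapsto A_g$ on the divisor lattice of $m$.

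For divisors $g_1 \mid g_2$ with $g_1 \neq g_2$, one has $A_{g_2} - A_{g_1} = \sum_{d \mid g_2,\; d \nmid g_1} f(d) \geq f(g_2) > 0$, so the map is strictly increasing along every chain of divisors. The main obstacle will be the case of incomparable $g_1, g_2 \mid m$, where I must rule out
\[
\sum_{d \mid g_1,\; d \nmid g_2} f(d) \;=\; \sum_{d \mid g_2,\; d \nmid g_1} f(d).
\]
My plan is to compare the largest term on each side, which corresponds to the smallest divisor (in the usual order) appearing in the respective symmetric difference. Because $f(d)$ is dominated by the binomial factor $\matr{((n+1)+\ell)/d}{\ell/d}$, which grows super-polynomially as $d$ decreases, a careful size estimate should show that one side's dominant summand outweighs the entire opposite sum, forcing strict inequality.

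Should this size argument prove too delicate (in particular when the two sides have comparable ``leading'' divisors, as may happen near the bottom of the divisor lattice), I will fall back on Remark~\ref{rem: dom wt and par}: reinterpret $|\mx^+((\ell-1)\Lambda_0 + \Lambda_i)|$ as the number of partitions in an $n \times \ell$ box whose size lies in a prescribed residue class modulo $n+1$, and invoke classical distinctness results for such congruence-class enumerations (cf.~\cite{EJP}) to complete the argument.
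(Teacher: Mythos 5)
Your first paragraph is, in effect, the paper's entire proof: the paper disposes of this corollary with the single remark that it is a direct consequence of Theorem~\ref{thm: number of dom wt A}, the point being that the closed formula depends on $i$ only through $(n+1,\ell,i)$. Your further observations are correct as far as they go: each summand $f(d)$ is the number of primitive necklaces with $(n+1)/d$ white and $\ell/d$ black beads, hence $f(d)\ge 1$ for every $d\mid\gcd(n+1,\ell)$, so $g\mapsto A_g=\sum_{d\mid g}f(d)$ is strictly increasing along divisibility chains; this settles the converse whenever the two gcds are comparable, in particular whenever $\gcd(n+1,\ell)$ is a prime power.

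The gap is the case you yourself flag and then do not close: incomparable divisors $g_1,g_2$ of $m=\gcd(n+1,\ell)$, which genuinely occur (e.g.\ $n+1=\ell=6$ with $i=2$, $j=3$, where one must check $f(2)=3\neq 1=f(3)$). For such pairs you need $\sum_{d\mid g_1,\,d\nmid g_2}f(d)\neq\sum_{d\mid g_2,\,d\nmid g_1}f(d)$, and your argument for this is only the assertion that ``a careful size estimate should show'' the leading summand on one side outweighs the entire other side. That dominance statement is exactly what has to be proved: the relevant quantities can be single-digit numbers in low rank and level, the binomials on the two sides live at different scales $(n+1+\ell)/d$, and the M\"obius corrections in $f(d)$ are not negligible there, so it cannot be waved through without a uniform estimate (or an injection of necklaces) covering the small cases. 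The fallback does not repair this: Remark~\ref{rem: dom wt and par} and \cite{EJP} only give the same enumeration of partitions in an $n\times\ell$ box by the residue of their size -- i.e.\ they reproduce the closed formula -- and contain no distinctness or injectivity statement across residue classes, so there is no ``classical distinctness result'' available to invoke. As written, the only-if direction is therefore not proved for incomparable gcds; to complete your route you would need to actually establish an inequality of the shape $f(d_1)>\sum_{d\mid m,\,d>d_1}f(d)$ (with $d_1$ the least divisor in the symmetric difference), or some substitute for it.
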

\begin{proof}
It is a direct consequence of Theorem~\ref{thm: number of dom wt A}.
\end{proof}

\subsection{$B_{n}^{(1)},C_{n}^{(1)},A_{2n-1}^{(2)},D_{n+1}^{(2)},E_6^{(1)}, E_{7}^{(1)}$ types}\label{subsec : CSPs}
In this subsection, we assume that $\g$ is an affine Kac-Moody algebra other than $A_n^{(1)}$ and $D_n^{(1)}$. In $A_n^{(1)}$-case, $\congN$ is a composite unless $n+1$ is a prime. In $D_n^{(1)}$-case, $\congN=4$ is a composite for all $n \in \Z_{\ge 4}$.

Note that $\mathfrak{S}_{[0,n]}$ acts on $\pclp{\ell}$ by permuting indices of coefficients, that is,
$$  \sigma \cdot \sum_{0 \le i \le n} m_i\Lambda_i = \sum_{0 \le i \le n} m_{\sigma(i)}\Lambda_i \quad \text{for $\sigma \in \mathfrak{S}_{[0,n]}$}.$$

Let us take  $\upsig \in \mathfrak{S}_{[0,n]}$ of order $\congN$ as in Table~\ref{table: Cact}.
\begin{table}[h]
    \centering
    { \arraycolsep=1.6pt\def\arraystretch{1.5}
        \begin{tabular}{c|c|c}
            Types & $\upsig$  & $\congN$  \\ \hline
            $B_{n}^{(1)}, D_{n+1}^{(2)}$ & $(0,n)$ & 2   \\ \hline
            $C_{n}^{(1)}, A_{2n-1}^{(2)} ~(n\equiv_2 1)$ & $(0,1)(2,3)\cdots(n-1,n)$ &2    \\ \hline
            $C_{n}^{(1)}, A_{2n-1}^{(2)}~(n\equiv_2 0)$ & $(0,1)(2,3)\cdots(n-2,n-1)$  &2 \\ \hline
            $E_{6}^{(1)}$ & $(0,1,6)(2,3,5)$ &3  \\ \hline
            $E_{7}^{(1)}$ & $(0,7)(1,6)(3,5)$ &2
        \end{tabular}
    }\\[1.5ex]
    \caption{$\upsig$ and $\congN$ for other types}
    \protect\label{table: Cact}
\end{table}
We define a $C_\congN = \inn{\siggen_\congN}$-action on $\pclp{\ell}$ by
\begin{align}\label{eq: CSP action}
\siggen_\congN \cdot \sum_{0 \le i \le n} m_i \Lambda_i \seteq \sum_{0 \le i \le n} m_{\upsig(i)} \Lambda_i
\end{align}
for any $\sum_{0 \le i \le n} m_i \Lambda_i \in \pclp{\ell}$.

\begin{theorem}\label{thm: CSP}
    Let $\g$ be of type $B_{n}^{(1)},C_{n}^{(1)},A_{2n-1}^{(2)},D_{n+1}^{(2)},E_6^{(1)}$, or $E_{7}^{(1)}$. Then, under the $C_\congN$-action given in~\eqref{eq: CSP action}, the triple
    \begin{align}\label{eq: CSP}
    \left(\pclp{\ell}, C_{\congN}, \pclp{\ell}(q) \right)
    \end{align}
    exhibits the cyclic sieving phenomenon.
\end{theorem}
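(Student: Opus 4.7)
My plan is to verify the cyclic sieving condition directly from the product formula~\eqref{eq: ge series not Dn1}, without routing through the Sagan-action machinery of Section~\ref{sec: Sagan action and general} (which will be essential for the bicyclic case in Section~\ref{Sec: CSP2}). Since $\congN \in \{2,3\}$ is prime for each of the listed types, every subgroup of $C_{\congN}$ is either trivial or all of $C_{\congN}$, so it suffices to establish the tautology $|\pclp{\ell}|=\pclp{\ell}(1)$, which is immediate from~\eqref{eq: definition of X(q)}, together with the single nontrivial identity $|(\pclp{\ell})^{\siggen_{\congN}}|=\pclp{\ell}(\omega_{\congN})$.

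I will prove the latter by comparing two generating functions. On the one hand, by~\eqref{eq: ge series not Dn1},
\[
\sum_{\ell\ge 0}\pclp{\ell}(\omega_{\congN})\,t^\ell=\prod_{i=0}^{n}\frac{1}{1-\omega_{\congN}^{s_i}\,t^{a_i^\vee}}.
\]
On the other hand, a $\upsig$-fixed weight $\sum_{i} m_i\Lambda_i$ is determined by choosing one multiplicity per $\upsig$-orbit; because $\upsig$ is a diagram automorphism, the coefficients $a_i^\vee$ are constant on each orbit (call the common value $a_c^\vee$ on the orbit $c$). Hence
\[
\sum_{\ell\ge 0}\bigl|(\pclp{\ell})^{\siggen_{\congN}}\bigr|\,t^\ell=\prod_{\text{orbits $c$ of $\upsig$}}\frac{1}{1-t^{|c|\,a_c^\vee}}.
\]
The goal reduces to showing these two products agree, which I plan to do by grouping the factors on the left according to the cycle structure of $\upsig$ in Table~\ref{table: Cact}, then invoking the cyclotomic identity $\prod_{\zeta^{\congN}=1}(1-\zeta y)=1-y^{\congN}$ whenever the multiset $\{s_i \bmod \congN : i\in c\}$ exhausts $\Z/\congN\Z$, and the identity $(1-t^a)(1+t^a)=1-t^{2a}$ together with possible cross-cycle regroupings otherwise.

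The verification is type by type. For $E_6^{(1)}$ with $\congN=3$, the two $3$-cycles of $\upsig=(0,1,6)(2,3,5)$ each satisfy $\{s_i \bmod 3\}=\{0,1,2\}$ by Table~\ref{table: sieving}, so cyclotomic factorization applies cycle by cycle. For $B_n^{(1)}$, $D_{n+1}^{(2)}$, $C_n^{(1)}$ and $A_{2n-1}^{(2)}$, each $2$-cycle of $\upsig$ contains one index with $s_i\equiv 0$ and one with $s_i\equiv 1 \pmod 2$, so $(1-t^a)(1+t^a)=1-t^{2a}$ handles it directly. The delicate case is $E_7^{(1)}$: the cycle $(1,6)$ of $\upsig$ has $s_1=s_6=0$, and one must regroup the product $(1-t^{a_1^\vee})(1-t^{a_6^\vee})$ with the $\upsig$-fixed factor $(1+t^{a_2^\vee})$ contributed by the index $i=2$ (where $s_2=1$); using $a_1^\vee=a_2^\vee=a_6^\vee$ this combined product rewrites as $(1-t^{2a_1^\vee})(1-t^{a_2^\vee})$, which is precisely the cycle-$(1,6)$ contribution times the fixed-index-$2$ contribution.

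The main obstacle is this type-by-type bookkeeping, particularly the parity splits in $C_n^{(1)}$ and $A_{2n-1}^{(2)}$ (which determine whether the node $n$ lies in a $2$-cycle of $\upsig$ or is fixed) and the cross-cycle regrouping required for $E_7^{(1)}$. The root sieving vectors in Table~\ref{table: sieving} were chosen precisely so that these regroupings balance; once every case is confirmed, the coefficientwise equality of generating functions delivers $|(\pclp{\ell})^{\siggen_{\congN}}|=\pclp{\ell}(\omega_{\congN})$ for every $\ell$ and hence~\eqref{eq: CSP}.
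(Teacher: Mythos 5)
Your proposal is correct, and it reaches the theorem by a genuinely different route than the paper. The paper transports the problem to the auxiliary set $\bM_\ell(rd,d;\bnu,\bnu')$ of Table~\ref{table: tau and tuset}, equips it with the generalized Sagan action of Section~\ref{sec: Sagan action and general}, proves the CSP for that action by an orbit analysis with the $\svs$-evaluation (fixed points have evaluation $\equiv 0$, free orbits distribute evaluations uniformly over the residues; cf.\ Lemma~\ref{lem: Sagan CSP E61}), and finally matches the fixed-point counts of the Sagan action and of the action \eqref{eq: CSP action}. You instead use that $\congN$ is prime for every listed type, so (since $\pclp{\ell}(q)$ has integer coefficients, checking one primitive root suffices) the CSP reduces to the single identity $|(\pclp{\ell})^{\siggen_\congN}|=\pclp{\ell}(\omega_\congN)$, which you verify by substituting $q=\omega_\congN$ into \eqref{eq: ge series not Dn1} and comparing with the fixed-point generating function $\prod_{c}(1-t^{|c|a_c^\vee})^{-1}$; the resulting polynomial identity does hold, orbit by orbit in all cases except $E_7^{(1)}$, where your cross-cycle regrouping of the $2$-cycle $(1,6)$ with the fixed node $2$ (all three indices having comark $2$, and $s_2=1$) is exactly what is needed, via $(1-t^2)^2(1+t^2)=(1-t^4)(1-t^2)$; I checked the remaining types against Convention~\ref{conv: Sieving} and Table~\ref{table: Cact} and they work as you describe. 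Your route buys a shorter, more elementary proof of this particular theorem; what it forgoes is the word model and cyclic action on $\bM_\ell(rd,d;\bnu,\bnu')$, which the paper reuses for $D_n^{(1)}$, for the bicyclic case, and for the enumeration in Section~\ref{Sec: application}. One small correction: $\upsig$ from Table~\ref{table: Cact} is not in general a diagram automorphism (e.g.\ $(0,n)$ for $B_n^{(1)}$ is not), so the constancy of $a_i^\vee$ on $\upsig$-orbits should be justified by direct inspection of the comarks rather than by appeal to diagram symmetry; the fact itself is true (and is implicitly required for \eqref{eq: CSP action} to preserve $\pclp{\ell}$), so this does not affect the validity of your argument.
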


Since the method of proof for each type is essentially same, we only deal with $E_6^{(1)}$ type. Recall that
$$(a_i^\vee)_{i=0}^6 = (1,1,2,2,3,2,1), \quad \wtsvv = (s_i)_{i=0}^6 = (0,1,0,2,0,1,2)  \quad \text{ and } \quad   \sigma=(0,1,6)(2,3,5).$$
Then one can see that, for all $j = 0,1,\ldots, 6$, we have
\begin{align}\label{eq: E61 Cact reason}
a_j^\vee = a_{\upsig(j)}^\vee \quad \text{and} \quad \{s_j, s_{\upsig(j)}, s_{\upsig^2(j)}\} = \begin{cases}
\{0,1,2\}  & \text{ if $i \neq 4$,}\\
\{0\} & \text{ if $i=4$.}
\end{cases}
\end{align}
By Theorem~\ref{thm: eq rel and sieving} and ~\eqref{eq: level and series C equiv cond}, we have
\begin{equation}\label{eq: E61 dwt_ell(q)}
\begin{aligned}
\pclp{\ell}(q) &= \sum_{i \ge 0} \left| \left\{ \Lambda \in \pclp{\ell} \; \middle| \; \evS(\Lambda) = i \right\} \right|q^i  \equiv \sum_{0 \le i \le 2} \left| \left\{ \Lambda \in \pclp{\ell} \; \middle| \; \evS(\Lambda) \equiv_3 i \right\} \right|q^i \pmod{q^3-1}.
\end{aligned}
\end{equation}

We will prove Theorem~\ref{thm: CSP} by providing a set $X$ with a $C_3$-action such that $X$ is isomorphic to $\pclp{\ell}$ as $C_3$-sets and $(X,C_3,\pclp{\ell}(q))$ exhibits the cyclic sieving phenomenon.
More precisely, we will take $X$ as $\bM_\ell(3,1;(1,2),(3))$.

Recall that $\Sym_{[0,6]}$ acts on $\Pclp$ by permuting indices of coefficients. Let $\tau = (4,3,2,6) \in \Sym_{[0,6]}$. Since
\[\tau \cdot (a_0^\vee,a_1^\vee,a_2^\vee,a_3^\vee,a_4^\vee,a_5^\vee,a_6^\vee)
 =(a_{\tau(0)}^\vee,a_{\tau(1)}^\vee,a_{\tau(2)}^\vee,a_{\tau(3)}^\vee,a_{\tau(4)}^\vee,a_{\tau(5)}^\vee,a_{\tau(6)}^\vee),\]
we have
\[\tau \cdot (1,1,2,2,3,2,1) = (1,1,1 \ | \ 2,2,2 \ | \ 3).\]
Thus the image of $\tau \cdot \pclp{\ell}$ under $\funcPcltoZ$ in~\eqref{eq: funcPcltoZ} is the same as $\bM_\ell(3,1;(1,2),(3))$. For the definition of $\bM_\ell(3,1;(1,2),(3))$, see~\eqref{eq: def of M}.
\begin{example}
Let $\g = E_6^{(1)}$ and $\ell = 3$. Then, we have
\begin{align*}
\pclp{3} = \left\{
\begin{array}{lll}
3\Lambda_0,                         &\Lambda_0 + \Lambda_2, & \Lambda_4, \\
2\Lambda_0 + \Lambda_1,             &\Lambda_0 + \Lambda_3,\\
2\Lambda_0 + \Lambda_6,             &\Lambda_0 + \Lambda_5,\\
\Lambda_0 + 2\Lambda_1,             &\Lambda_1 + \Lambda_2,\\
\Lambda_0 + \Lambda_1 + \Lambda_6,  &\Lambda_1 + \Lambda_3,\\
\Lambda_0 + 2\Lambda_6,             &\Lambda_1 + \Lambda_5,\\
3\Lambda_1,                         &\Lambda_2 + \Lambda_6,\\
2\Lambda_1 + \Lambda_6,             &\Lambda_3 + \Lambda_6,\\
\Lambda_1 + 2\Lambda_6,             &\Lambda_5 + \Lambda_6,\\
3\Lambda_6
\end{array}
\right\}
\quad \text{and} \quad
\tau \cdot \pclp{3} = \left\{
\begin{array}{lll}
3\Lambda_{0},                           &\Lambda_{0} + \Lambda_{3}, & \Lambda_{6}, \\
2\Lambda_{0} + \Lambda_{1},             &\Lambda_{0} + \Lambda_{4},\\
2\Lambda_{0} + \Lambda_{2},             &\Lambda_{0} + \Lambda_{5},\\
\Lambda_{0} + 2\Lambda_{1},             &\Lambda_{1} + \Lambda_{3},\\
\Lambda_{0} + \Lambda_{1} + \Lambda_{2},    &\Lambda_{1} + \Lambda_{4},\\
\Lambda_{0} + 2\Lambda_{2},             &\Lambda_{1} + \Lambda_{5},\\
3\Lambda_{1},                           &\Lambda_{2} + \Lambda_{3},\\
2\Lambda_{1} + \Lambda_{2},             &\Lambda_{2} + \Lambda_{4},\\
\Lambda_{1} + 2\Lambda_{2},             &\Lambda_{2} + \Lambda_{5},\\
3\Lambda_{2}
\end{array}
\right\}.
\end{align*}
Note that the image of $\tpclp{3}$ under $\funcPcltoZ$ in~\eqref{eq: funcPcltoZ} is
\begin{align*}
\bM_3(3,1;(1,2),(3))
= \left\{
\begin{array}{lll}
(3,0,0,0,0,0,0), & (1,0,0,1,0,0,0), & (0,0,0,0,0,0,1), \\
(2,1,0,0,0,0,0), & (1,0,0,0,1,0,0), \\
(2,0,1,0,0,0,0), & (1,0,0,0,0,1,0),\\
(1,2,0,0,0,0,0), & (0,1,0,1,0,0,0),\\
(1,1,1,0,0,0,0), & (0,1,0,0,1,0,0),\\
(1,0,2,0,0,0,0), & (0,1,0,0,0,1,0),\\
(0,3,0,0,0,0,0), & (0,0,1,1,0,0,0),\\
(0,2,1,0,0,0,0), & (0,0,1,0,1,0,0),\\
(0,1,2,0,0,0,0), & (0,0,1,0,0,1,0),\\
(0,0,3,0,0,0,0)
\end{array}
 \right\}.
\end{align*}
\end{example}
In Table~\ref{table: tau and tuset}, we list $\tau$ and $\bM_\ell(rd,d;\bnu,\bnu')$ for all types 
(except for $A_n^{(1)}$ and $D_n^{(1)}$).
\begin{remark} \label{rem: choice}  \hfill
\begin{enumerate}[label = (\arabic*)]
\item All $\bM_\ell(rd,d;\bnu,\bnu')$ in Table~\ref{table: tau and tuset} are contained in $\Z_{\ge 0}^{n+1}$ as subsets.
\item In Table~\ref{table: tau and tuset}, we choose $\tau,r,d,\bnu,\bnu'$ to be satisfied that the image of $\tau \cdot \pclp{\ell}$ under $\funcPcltoZ$ in~\eqref{eq: funcPcltoZ} is the same as $\bM_\ell(rd,d;\bnu,\bnu')$.
\end{enumerate}
\end{remark}

\begin{table}[h]
    \centering
    { \arraycolsep=1.6pt\def\arraystretch{1.5}
        \begin{tabular}{c|c|c}
            Types &  $\tau$ & $\bM_\ell(rd,d;\bnu,\bnu')$ \\ \hline
            $B_{n}^{(1)}$ &  $(n,n-1,\ldots,1)$ & $\bM_\ell(2,1;(1),(1,2^{n-2}))$   \\ \hline
            $C_{n}^{(1)}~(n\equiv_2 1)$ & $\mathrm{id}$ & $\bM_\ell(2;(1^{(n+1)/2}))$   \\ \hline
            $C_{n}^{(1)}~(n\equiv_2 0)$ &  $\mathrm{id}$ & $\bM_\ell(2,1;(1^{n/2}),(1))$   \\ \hline
            $A_{2n-1}^{(2)}~(n\equiv_2 1)$  & $\mathrm{id}$  & $\bM_\ell(2;(1,2^{(n-1)/2}))$  \\ \hline
            $A_{2n-1}^{(2)}~(n\equiv_2 0)$ & $\mathrm{id}$ & $\bM_\ell(2,1;(1,2^{(n-2)/2}),(2))$   \\ \hline
            $D_{n+1}^{(2)}$ & $(n,n-1,\ldots,1)$ & $\bM_\ell(2,1;(1),(2^{n-1}))$ \\ \hline
            $E_{6}^{(1)}$ & $(4,3,2,6)$ & $\bM_\ell(3,1;(1,2),(3))$  \\ \hline
            $E_{7}^{(1)}$ & $(1,7,4,3,2,6)$ & $\bM_\ell(2,1;(1,2,3),(2,4))$
        \end{tabular}
    }\\[1.5ex]
    \caption{$\tau$ and $\bM_\ell(rd,d;\bnu,\bnu')$ for all types}
    \protect\label{table: tau and tuset}
\end{table}

Now, we have a $C_3$-action $\sqact_{3,1}$ on $\tau \cdot \pclp{\ell}$ defined as follows: For $\Lambda \in \tau \cdot \pclp{\ell}$,
\begin{equation}\label{eq: E61 action on tau}
\begin{aligned}
\siggen_3 \sqact_{3,1} \Lambda & = \funcPcltoZ^{-1}( \siggen_3 \sqact_{3,1} \funcPcltoZ(\Lambda)) = (\funcMtoW \circ \funcPcltoZ)^{-1} \left( \siggen_3 \sqact_{3,1} (\funcMtoW \circ \funcPcltoZ)(\Lambda)\right).
\end{aligned}
\end{equation}

\begin{example}
Let $\g = E_6^{(1)}$. For $2\Lambda_0 + \Lambda_1 \in \tpclp{3}$, we have the following commutative diagram:
\[
\begin{tikzcd}
2\Lambda_0 + \Lambda_1 \arrow[r, "\funcPcltoZ"] \arrow[d, "\siggen_3", dotted] & {(2,1,0,0,0,0)} \arrow[r, "\funcMtoW"] \arrow[d, "\siggen_3"]        & 110|100|00\blo \arrow[d, "\siggen_3"]                             \\
3\Lambda_1 \arrow[d, "\siggen_3", dotted]                                      & {(0,3,0,0,0,0)} \arrow[l, "\funcPcltoZ^{-1}"] \arrow[d, "\siggen_3"] & 011|100|00\blo \arrow[l, "\funcMtoW^{-1}"] \arrow[d, "\siggen_3"] \\
\Lambda_0 + 2\Lambda_1 \arrow[bend left=90, "\siggen_3", dotted]{uu}                          & {(1,2,0,0,0,0)} \arrow[l, "\funcPcltoZ^{-1}"]  & 101|100|00\blo \arrow[l, "\funcMtoW^{-1}"] \arrow[bend right=90,swap, "\siggen_3"]{uu} \\
\end{tikzcd}
\]
\end{example}

\begin{lemma}\label{lem: Sagan CSP E61}
    Under the $C_3$-action $\sqact_{3,1}$ on $\bM_\ell(3,1;(1,2),(3))$ given in \eqref{eq: hsigaction double},
    \[\left(\bM_\ell(3,1;(1,2),(3)), C_3, \pclp{\ell}(q) \right)\]
    exhibits the cyclic sieving phenomenon.
\end{lemma}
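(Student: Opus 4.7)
My plan is to verify the cyclic sieving phenomenon by computing $|X^g|$ and $\pclp{\ell}(\omega_{o(g)})$ for each $g \in C_3$ and showing they agree, where $X \seteq \bM_\ell(3,1;(1,2),(3))$. The case $g = e$ is immediate: the bijection $\funcMtoW \circ \funcPcltoZ \circ \tau \colon \pclp{\ell} \to X$ of Remark~\ref{rem: choice}\,(2) gives $|X^e| = |\pclp{\ell}| = \pclp{\ell}(1)$.

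For the nontrivial case, I would first evaluate $\pclp{\ell}(\omega_3)$ via the generating function~\eqref{eq: ge series not Dn1}. With $(a_i^\vee) = (1,1,2,2,3,2,1)$ and $\wtsvv = (0,1,0,2,0,1,2)$, the seven Euler factors pair up as $(1-\omega_3 t)(1-\omega_3^2 t) = 1+t+t^2$ and $(1-\omega_3 t^2)(1-\omega_3^2 t^2) = 1+t^2+t^4$. Combined with $(1-t)(1+t+t^2) = 1-t^3$ and $(1-t^2)(1+t^2+t^4) = 1-t^6$, the product collapses to $\frac{1}{(1-t^3)^2(1-t^6)}$, so $\pclp{\ell}(\omega_3)$ counts $(a,b,c) \in \Z_{\ge 0}^3$ with $3a + 3b + 6c = \ell$; in particular it vanishes unless $3 \mid \ell$.

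Next, I would describe $X^{\siggen_3}$ explicitly. Since the $C_d$-action $\sqact_1$ on $\bm_{\ge 6}$ is trivial (Remark~\ref{rem: hsig remark}\,(2)), the definition~\eqref{eq: hsigaction double} reduces the fixity of $\bm$ to $\siggen_3 \sqact \bm_{\le 5} = \bm_{\le 5}$, which by~\eqref{eq: def of hsig} is equivalent to every length-$3$ prefix block of the word $\funcMtoW(\bm_{\le 5}) = 1^{m_0} 0 1^{m_1} 0 1^{m_2} 0 2^{m_3} 0 2^{m_4} 0 2^{m_5}$ being constant (one of $000$, $111$, $222$). Since this word contains exactly five zeros, an elementary count forces \emph{exactly one} length-$3$ block to be $000$ (zero or two such blocks are both impossible), so the word length is $\equiv 2 \pmod 3$ and the length-$2$ remainder must be $00$. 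Reading off positions, this remainder condition forces $m_4 = m_5 = 0$, while the $000$-block condition forces $m_1 = m_2 = 0$ (so that zeros $1,2,3$ are consecutive) together with $m_0, m_3 \in 3\Z_{\ge 0}$ (so that the $000$-block is properly aligned on the $3$-grid). Hence $X^{\siggen_3} = \{(3a, 0, 0, 3b, 0, 0, c) : a, b, c \in \Z_{\ge 0}, \, 3a + 6b + 3c = \ell\}$, whose cardinality matches $\pclp{\ell}(\omega_3)$ directly. The case $g = \siggen_3^2$ follows from $\langle \siggen_3^2 \rangle = \langle \siggen_3 \rangle$ and the equality $\pclp{\ell}(\omega_3^2) = \overline{\pclp{\ell}(\omega_3)} = \pclp{\ell}(\omega_3)$.

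The main obstacle is the combinatorial characterization in the third paragraph: the five zeros of $\funcMtoW(\bm_{\le 5})$ must distribute as $3+2$ between the unique $000$-block and the $00$ remainder, and translating this into the vanishing and divisibility constraints on the $m_i$'s requires careful alignment arguments against the $3$-grid. Once established, the matching with the generating-function evaluation is immediate.
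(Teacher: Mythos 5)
Your proposal is correct, and it reaches the lemma by a genuinely different route than the paper. You verify the defining condition $|X^{g}|=\pclp{\ell}(\omega_{o(g)})$ directly: you compute the fixed-point set explicitly (your zero-counting and $3$-grid alignment argument gives exactly $\{(3k_1,0,0,3k_2,0,0,k_3): 3k_1+6k_2+3k_3=\ell\}$, which is the same computation the paper performs in its Claim~1 and again in \eqref{eq: bM fixed}), and you evaluate $\pclp{\ell}(q)$ at $q=\omega_3$ by substituting into the Euler product \eqref{eq: ge series not Dn1}, which collapses to $1/\bigl((1-t^3)^2(1-t^6)\bigr)$; the two counts visibly agree, and $\siggen_3^2$ is covered since $\langle\siggen_3^2\rangle=\langle\siggen_3\rangle$ and $\pclp{\ell}(\omega_3)$ is real. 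The paper never evaluates at roots of unity: besides the same fixed-point analysis (its Claim~1, showing fixed points have $\whsvv$-evaluation $0$), its key step is Claim~2, that on every free orbit the statistic $\whsvv\bullet(-)$ takes each residue mod $3$ exactly once, and it then concludes via the coefficient formulation of the CSP modulo $q^3-1$ (constant coefficient $=$ all orbits, the other two $=$ free orbits). Your route is shorter and more computational; note that the clean collapse of the product at $\omega_3$ rests on the same structural fact \eqref{eq: E61 Cact reason} (the $\upsig$-orbits on indices preserve $a_i^\vee$ and carry sieving values $\{0,1,2\}$) that drives the paper's Claim~2, and in fact your fixed-point description would let you prove Theorem~\ref{thm: CSP} for this type directly by comparing with \eqref{eq: E61 Cact fixed}. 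What the paper's statistic/orbit argument buys is that it yields the orbit-count interpretation of the coefficients without invoking the equivalence of the two CSP formulations, and the identical two-claim template transfers uniformly to all types of Subsection~\ref{subsec : CSPs} and to $D_n^{(1)}$, whereas your evaluation must be redone product-by-product (easy, but type-specific). In a final write-up you should also record the trivial converse inclusion, i.e.\ that every $(3k_1,0,0,3k_2,0,0,k_3)$ is indeed fixed, since its word is $1^{3k_1}\,|\,000\,|\,2^{3k_2}\,|\,00$ followed by the block of $3$'s.
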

\begin{proof}
    Note the following:
    \begin{itemize}
        \item $\left| \bM_\ell(3,1;(1,2),(3))\right| = \left|\pclp{\ell}\right|$.
        \item $C_3$-orbits are of length $1$ or $3$.
        \item For any $\Lambda \in \pclp{\ell}$, $\evS(\Lambda) =  \widehat{\svv} \bullet  \funcPcltoZ(\tau \cdot \Lambda)$, where $\widehat{\svv} \seteq \tau \cdot \wtsvv = (0,1,2,0,1,2,0)$.
    \end{itemize}

    For simplicity, we set $X \seteq \tau \cdot \pclp{\ell}$ and
    $X(i) \seteq \tau \cdot \left\{ \Lambda \in \pclp{\ell} \; \middle| \; \evS(\Lambda) \equiv_3 i \right\}$.
    Suppose that
    the following claims hold (which will be proven in the below):\\[-1ex]

    {\it Claim 1.} Let $\mathbf{m}=(m_0,m_1,\ldots,m_5,m_6) \in \bM_\ell(3,1;(1,2),(3))^{C_3}$. Then
$$ \widehat{\svv} \bullet \mathbf{m}= \widehat{\svv} \bullet  \funcPcltoZ(\tau \cdot \Lambda) = 0$$ (see Remark~\ref{rem: choice}).    \\[-1ex]

    {\it Claim 2.} Let $O = \{\mathbf{m}^{(1)},\mathbf{m}^{(2)},\mathbf{m}^{(3)}\}$ be a free $C_3$-orbit of $\bM_\ell(3,1;(1,2),(3))$. For each $\mathbf{m}^{(j)}~(j=1,2,3)$,
$\whsvv \bullet \mathbf{m}^{(j)}$ are distinct up to modulo $3$.\\[-1ex]

    By {\it Claim 1}, we have
    \begin{align}\label{eq: fixed in ellLambda0}
    X^{C_3} \subset X(0),
    \end{align}
    under the $C_3$-action on $X$ given in~\eqref{eq: E61 action on tau}.

    By {\it Claim 2}, we have
    \begin{align}\label{eq: free oribts distribution}
    \left| \left( X \setminus X^{C_3} \right) \cap X(0) \right|
    & = \left| \left( X \setminus X^{C_3} \right) \cap X (1) \right|
    = \left| \left( X \setminus X^{C_3} \right) \cap X(2) \right|.
    \end{align}
    By~\eqref{eq: fixed in ellLambda0}, for $i = 1,2$, we have
    \[ \left| \left( X \setminus X^{C_3} \right) \cap X (i) \right| =
    \left|X(i) \right|  = \left| \left\{ \Lambda \in \pclp{\ell} \; \middle| \; \evS(\Lambda) \equiv_3 i \right\} \right|,\]
    which is equal to the number of free $C_3$-orbits, by~\eqref{eq: free oribts distribution}. Moreover, since
    \begin{align*}
    \left| \left\{ \Lambda \in \pclp{\ell} \; \middle| \; \evS(\Lambda) \equiv_3 0 \right\} \right| & = \left|X(0) \right|
    = \left| X^{C_3} \right| + \left| \left( X \setminus X^{C_3} \right) \cap X(0) \right|\\
    & = \text{(the number of fixed points)} + \text{(the number of free orbits)}\\
    & = \text{(the number of all orbits)},
    \end{align*}
    our assertion holds.

    To complete the proof, we have only to verify {\it Claim 1} and {\it Claim 2}.
\smallskip

    \noindent (a) For {\it Claim 1}, suppose that $\mathbf{m} \in \bM_\ell(3,1;(1,2),(3))^{C_3}$. Recall the function $\Uppsi$ from~\eqref{eq: funcPW}. Let $\mathbf{w} = w_1 w_2\cdots w_u \seteq \Uppsi(\mathbf{m}_{\le 5})$.
    Break $\mathbf{w}$ into subwords
    \[w^1 \ | \ w^2 \ | \ \cdots \ | \ w^t \ | \ w_{3t+1}\cdots w_u,  \qquad  (t = \lfloor u/3 \rfloor) \]
    of length $3$ as (\ref{eq: breakw}).
    Since $\mathbf{m}$ is a fixed point, Algorithm~\ref{Algorithm} and the definition of $\siggen_3 \sqact$ in~\eqref{eq: Sagact} say that
    \begin{align}\label{eq: fix WWF}
    \mathbf{w} = \underbracket{11\cdots 1}_{3k_1} \; | \; 000 \; | \; \underbracket{22\ldots 2}_{3k_2} \; | \; 00,
    \end{align}
    for $k_1,k_2 \in \Z_{\ge 0}$ such that $\ell - 3k_1 - 6k_2 = 3k_3$ for some $k_3 \in \Z_{\ge 0 }$. Therefore,
    \begin{align}\label{eq: fix dwt}
    \mathbf{m} = (3k_1,0,0,3k_2,0,0,k_3).
    \end{align}
    Thus,
    \[   \widehat{\svv} \bullet \mathbf{m} =    (0,1,2,0,1,2,0)  \bullet (3k_1,0,0,3k_2,0,0,k_3)= 0. \]

    \noindent (b) For {\it Claim 2}, choose any element $\mathbf{m} \in O$. Then we have
    $$\text{$\mathbf{m}_{\le 5} \in \bM_l(3;(1,2))$ and $\mathbf{m}_{6} \in \bM_{l'}(1;(3))$ for some $l$ and $l'$ with  $l + l' = \ell$.}$$
    Since $C_3$ acts on  $\mathbf{m}_{6}$  in a trivial way (see Remark \ref{rem: hsig remark} (2)), it suffices to consider the $C_3$-action on $\mathbf{m}_{\le 5}$.

    Let $\mathbf{w} = w_1 w_2\cdots w_u \seteq \Uppsi(\mathbf{m}_{\le 5})$. Break $\mathbf{w}$ into subwords
    \[w^1 \ | \ w^2 \ | \ \cdots \ | \ w^t \ | \ w_{3t+1}\cdots w_u\]
    of length $3$ as (\ref{eq: breakw}). Since $O$ is a free orbit, there exists the smallest $1 \le j_0 \le t$ such that $\siggen_3 \cdot w^{j_0} \ne w^{j_0}$.
    Note that the definition of the $C_3$-action in~\eqref{eq: hsigaction double} says that, for all $1\le j <j_0$, $w^j$'s are $000,111$, or $222$.

    Now $w^{j_0}$ should be one of
    \begin{align*}
    & 100,~~ 010,~~ 001,\qquad 110,~~ 011,~~ 101,    \qquad  200,~~ 020,~~ 002,\qquad 220,~~ 022,~~ 202.
    \end{align*}
    We shall only give a proof for the case $w^{j_0} = 100$ since the other cases can be proved by the same argument.

    \smallskip

    Note that for all $1\le j <j_0$, $w^j$ is $000$ or $111$ under our assumption. Assume that there is $1\le j <j_0$ such that $w^j = 000$.
    Then, by Algorithm~\ref{Algorithm}, $w^{j_0}$ is not able to contain 1 since $\bm_{\le 5} \in \bM_l(3;(1,2))$. Now  we have $w^j = 111$, for all $1\le j <j_0$.  Then
    \begin{align*}
    \siggen^i_3 \sqact \mathbf{w} &=\hspace{.63em} w^1 \hspace{.93em} | \hspace{.93em} w^2 \hspace{.93em} | \ \cdots \ |\hspace{.29em} w^{j_0-1} \hspace{.29em} | \hspace{.35em} \siggen_3^i \cdot w^{j_0} \hspace{.35em} | \ w^{j_0+1} \ | \ \cdots \ | \ w^t \ | \ w_{3t+1} \cdots w_u\\
    & = \hspace{.3em} 111 \hspace{.6em} \ |\hspace{.6em} 111 \hspace{.6em}\ | \ \cdots \ | \ \hspace{.3em} 111 \hspace{.6em} \ | \  \hspace{.95em} \begin{matrix} 010 \\ 001 \end{matrix}   \hspace{.95em} \ | \ w^{j_0+1} \ | \ \cdots \ | \ w^t \ | \ w_{3t+1}\cdots w_u
\quad \begin{matrix} \text{ if } i=1, \\  \text{ if } i=2. \end{matrix}
    \end{align*}
    Therefore, by the construction of $\Uppsi ^{-1}$, we have
    \begin{align*}
    \siggen^i_3 \sqact_{3,1} \mathbf{m}
    & = \Uppsi ^{-1}(\siggen_3^i \sqact \mathbf{w}) * \mathbf{m}_{6} = \begin{cases} (m_0 - 1, m_1 + 1, m_2 \ \ \ \ \  ,m_3,m_4,m_5,m_6) & \text{ if } i=1, \\  (m_0 - 1, m_1 \ \ \  \ \  , m_2 + 1 ,m_3,m_4,m_5,m_6) & \text{ if } i=2. \end{cases}
    \end{align*}
    Hence, we have
\[
    \widehat{\svv} \bullet (\siggen_3^i \sqact_{3,1} \mathbf{m} - \mathbf{m}) = \begin{cases}   (0,1,2,0,1,2,0)  \bullet (-1,1,0,0,0,0,0)   \equiv_3 1  & \text{ if } i=1, \\    (0,1,2,0,1,2,0)  \bullet (-1,0,1,0,0,0,0) \equiv_3 2   & \text{ if } i=2. \end{cases}
\qedhere \]
\end{proof}

 Now we are ready to prove Theorem \ref{thm: CSP}.

\begin{proof}[Proof for Theorem \ref{thm: CSP}]
Since $|\pclp{\ell}| = |\bM_\ell(3,1;(1,2),(3))|$, by Lemma \ref{lem: Sagan CSP E61}, we have only to see that
\[|(\pclp{\ell})^{C_3}| = |\bM_\ell(3,1;(1,2),(3))^{C_3}|.  \]
Note that
\begin{align}\label{eq: E61 Cact fixed}
\left( \pclp{\ell} \right)^{C_3} = \left\{(k_1,k_1,k_2,k_2,k_3,k_2,k_1) \in \Z_{\ge 0}^{7} \; \middle| \; 3k_1 + 6k_2 + 3k_3 = \ell   \right\}.
\end{align}
We claim that
\begin{align}\label{eq: bM fixed}
\bM_\ell(3,1;(1,2),(3))^{C_3} = \left\{(3k_1,0,0,3k_2,0,0,k_3) \in \Z_{\ge 0}^{7} \; \middle| \; 3k_1 + 6k_2 + 3k_3 = \ell   \right\} =: Y.
\end{align}
By~\eqref{eq: fix WWF} and~\eqref{eq: fix dwt}, $\bM_\ell(3,1;(1,2),(3))^{C_3}$ is contained in $Y$.

For the reverse inclusion, recall the function $\funcMtoW$ from Definition~\ref{def: def of funcMtoW}. For $\bm \in Y$, we have
\[ \funcMtoW(\bm) = \underbracket{1 \cdots 1}_{3k_1} \ | \ 000 \ | \ \underbracket{2 \cdots 2}_{3k_2} \ | \ 0 0 \blo \ | \ \underbracket{3\cdots 3}_{k_3}.\]
Thus, by \eqref{eq: hsigaction double} and~\eqref{eq: definition of Psi}, $\siggen_3 \sqact_{3,1} \bm = \bm$ and hence $Y$ is contained in $\bM_\ell(3,1;(1,2),(3))^{C_3}$.

By~\eqref{eq: E61 Cact fixed} and~\eqref{eq: bM fixed}, $\left( \pclp{\ell} \right)^{C_3}$ and $\bM_\ell(3,1;(1,2),(3))^{C_3}$ have the same cardinality, as required.
\end{proof}

\subsection{$D_n^{(1)}$ type ($n\equiv_2 1$)}\label{subsec: CSP Dn1 odd}
Throughout this subsection, we set
$$\eta \seteq  \frac{n-3}{2},$$
which is an integer since $n$ is odd.
Recall $\congN = 4$,
$$(a_i^\vee)_{i=0}^n = (1,1,2,2,\ldots,2,1,1) \quad \text{and} \quad \wtsvv = (s_i)_{i=0}^n = (0,2,0,2,0,\ldots,0,2,1,3).$$

Let
\[\upsig = (0,n,1,n-1)(2,3)(3,4)\cdots(n-3,n-2) \in \Sym_{[0,n]},\]
which is of order $4$. Since $a_j^\vee = a_{\upsig(j)}^\vee$ for any $j \in I$, we can define a $C_4 = \inn{\siggen_4}$-action on $\pclp{\ell}$ as follows:
\begin{align}\label{eq: C4 action on D_n}
\siggen_4 \cdot \sum_{0 \le i \le n} m_i \Lambda_i \seteq \sum_{0 \le i \le n} m_{\upsig(i)} \Lambda_i \quad \text{for $\sum_{0 \le i \le n} m_i \Lambda_i \in \pclp{\ell}$.}
\end{align}

\begin{theorem}\label{thm: D TYPE csp}
    Under the $C_4$-action given in \eqref{eq: C4 action on D_n},
    \begin{align}\label{eq: CSP for Dn1}
    \left(\pclp{\ell}, C_{4}, \pclp{\ell}(q) \right)
    \end{align}
    exhibits the cyclic sieving phenomenon.
\end{theorem}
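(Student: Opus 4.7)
The plan is to verify the CSP by directly matching fixed-point cardinalities with evaluations of $\pclp{\ell}(q)$ at fourth roots of unity, using the generating-function identity~\eqref{eq: ge series not Dn1}. This follows the high-level template of Theorem~\ref{thm: CSP} for $E_6^{(1)}$, but with the essential adjustment that, because $C_4$ has elements of two nontrivial orders ($2$ and $4$), the CSP identity must be checked separately for both $\siggen_4$ and $\siggen_4^2$.

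On the direct side, the $\upsig$-orbits among $\{0,1,\ldots,n\}$ are the 4-cycle $\{0,n,1,n-1\}$ and the $\eta$ transpositions $\{2,3\},\{4,5\},\ldots,\{n-3,n-2\}$, while $\upsig^2=(0,1)(n-1,n)$. Hence $\Lambda=\sum m_i\Lambda_i$ is $\siggen_4$-fixed iff $m_0=m_1=m_{n-1}=m_n$ and $m_{2j}=m_{2j+1}$ for $1\le j\le \eta$, and $\siggen_4^2$-fixed iff $m_0=m_1$ and $m_{n-1}=m_n$. Solving the corresponding level equations gives
\[ \sum_{\ell\ge 0}\bigl|\pclp{\ell}^{\siggen_4}\bigr|\,t^\ell=\frac{1}{(1-t^4)^{\eta+1}},\qquad \sum_{\ell\ge 0}\bigl|\pclp{\ell}^{\siggen_4^2}\bigr|\,t^\ell=\frac{1}{(1-t^2)^{n-1}}. \]

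On the polynomial side, substituting $q=-1$ and $q=i$ into~\eqref{eq: ge series not Dn1} with $(a_i^\vee)=(1,1,2,\ldots,2,1,1)$ and $\wtsvv=(0,2,0,2,\ldots,0,2,1,3)$ produces the denominators $(1-t)^2(1-t^2)^{n-3}(1+t)^2$ at $q=-1$ and $(1-t)(1+t)(1-t^2)^{\eta}(1+t^2)^{\eta}(1-it)(1+it)$ at $q=i$. The successive cancellations $(1-t)(1+t)=1-t^2$, $(1-it)(1+it)=1+t^2$, and $(1-t^2)(1+t^2)=1-t^4$ collapse these to $1/(1-t^2)^{n-1}$ and $1/(1-t^4)^{\eta+1}$, which match the fixed-point generating functions from the previous paragraph. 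Combined with the trivial identity $|\pclp{\ell}|=\pclp{\ell}(1)$ for the identity element of $C_4$, this establishes the cyclic sieving phenomenon.

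The main obstacle is really only bookkeeping: verifying the $\upsig$-orbit structure, tracking the interleaved pattern of $\wtsvv$, and carefully performing the product cancellations. One could alternatively proceed Sagan-style as in Lemma~\ref{lem: Sagan CSP E61}, identifying $\tau\cdot\pclp{\ell}$ with $\bM_\ell(4,2;(1),(2^\eta))$ for a suitable permutation $\tau$ and invoking the action $\sqact_{4,2}$ of~\eqref{eq: hsigaction double}; however this approach is substantially more delicate than in the $E_6^{(1)}$ case, because $C_4$ admits size-$2$ orbits and some size-$4$ Sagan orbits shift $\evS$ by $\pm 2 \pmod 4$ rather than $\pm 1$, requiring a balance argument between size-$2$ orbits with $\evS$-values $\{1,3\}$ and size-$4$ orbits with $\evS$-values $\{0,2\}$ (and vice versa), which is entirely bypassed by the generating-function route sketched above.
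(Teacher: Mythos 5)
Your proposal is correct, but it takes a genuinely different route from the paper. You verify the sieving identities directly: from the orbit structure of $\upsig$ (one $4$-cycle on $\{0,1,n-1,n\}$ and $\eta$ disjoint transpositions on $\{2,\dots,n-2\}$) you get $\sum_{\ell}\bigl|(\pclp{\ell})^{\siggen_4}\bigr|t^\ell=(1-t^4)^{-(\eta+1)}$ and $\sum_{\ell}\bigl|(\pclp{\ell})^{\siggen_4^2}\bigr|t^\ell=(1-t^2)^{-(n-1)}$, and you evaluate the product formula~\eqref{eq: ge series not Dn1} at $q=-1$ and $q=\pm i$, where the cancellations $(1-t)(1+t)=1-t^2$, $(1-it)(1+it)=1+t^2$, $(1-t^2)(1+t^2)=1-t^4$ reproduce exactly these two series; I checked the bookkeeping ($2\eta=n-3$ nodes with $a_i^\vee=2$ split $\eta/\eta$ between $s_i=0$ and $s_i=2$, plus four coroot-one nodes with $s=0,2,1,3$) and the level equations for the fixed-point sets, and both are right, so together with $\pclp{\ell}(1)=|\pclp{\ell}|$ this gives the CSP (the case $\siggen_4^3$ is covered since the $q=-i$ evaluation is identical, $\pclp{\ell}(q)$ having real coefficients). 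The paper instead transports $\pclp{\ell}$ via $\tau$ to the tuple sets $\bM_\ell(4,2;(1),(2^\eta))$ and $\bM_\ell(2,1;(1^2),(2^{2\eta}))$, equips them with the Sagan-type actions $\sqact_{4,2}$ and $\sqact_{2,1}$, and proves Lemmas~\ref{lem: Dn1 fixed by hsig}--\ref{lem: Dn1 hsig' and Cact^2}, which relate the fixed-point counts to the coefficients $b_0,\dots,b_3$ of $\pclp{\ell}(q)$ modulo $q^4-1$ by tracking how $\evS$ changes along Sagan orbits. Your route is shorter and bypasses that machinery, at the cost of relying on the product form of $\pclp{\ell}(q)$ (which the paper itself records in the remark containing~\eqref{eq: ge series not Dn1}, so this is legitimate); the paper's route yields as byproducts the orbit-level statements ($b_1=b_3$, $b_0-b_2=|X^{C_4}|$, explicit descriptions of the fixed points) that it reuses for the enumerative formulae in Section~\ref{Sec: application}. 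Two harmless remarks: the paper's displayed $\upsig$ has a typo (the transpositions must be disjoint, $(2,3)(4,5)\cdots(n-3,n-2)$, as you tacitly assumed, consistently with~\eqref{eq: Dn1 fixed by Cact}), and your $\siggen_4^2$-fixed condition $m_0=m_1$, $m_{n-1}=m_n$ differs from the paper's~\eqref{eq: Dn1 fixed by Cact square} ($m_0=m_{n-1}$, $m_1=m_n$) only through this ambiguity in the $4$-cycle; either convention gives the same count $(1-t^2)^{-(n-1)}$, so nothing in your argument is affected.
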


For reader's understanding, let us briefly explain our strategy for proving Theorem~\ref{thm: D TYPE csp}. First, as in  Table~\ref{table: tau and tuset}, we take
\begin{align}
\tau = (n-1,n-3,\ldots,4,2,1)(n,n-2,\ldots,5,3)\in \Sym_{[0,n]}
\end{align}
so that
\[
\tau \cdot (a_0^\vee,a_1^\vee,\ldots,a_n^\vee)= \tau \cdot (1,1,2,2,\ldots,2,2,1,1) = (1,1,1,1,2,2, \ldots,2).
\]

\noindent
By breaking $(1,1,1,1,2,2, \ldots,2)$ into $(1,1,1,1 \ | \ 2,2 \ | \ 2,2 \ | \  \cdots \ | \ 2,2)$, we can identify the image of $\tau \cdot \pclp{\ell}$ under $\funcPcltoZ$ with $\bM_\ell(4,2;(1),(2^{\eta}))$. Thus, we can define the $C_4$-action $\sqact_{4,2}$ on $\tau \cdot \pclp{\ell}$ by
\begin{align}\label{eq: 42action on tau}
\siggen_4 \sqact_{4,2} \Lambda = \funcPcltoZ^{-1}( \siggen_4 \sqact_{4,2} \funcPcltoZ(\Lambda)) \quad \text{for $\Lambda \in \tau \cdot \pclp{\ell}$}.
\end{align}
On the other hand, by breaking $(1,1,1,1,2,2, \ldots,2)$ into  $(1,1 \ | \ 1,1 \ | \ 2 \ | \ 2 \ | \ \cdots \ | \ 2)$,
we can also identify the image of $\tau \cdot \pclp{\ell}$ under $\funcPcltoZ$ with $\bM_\ell(2,1;(1^2),(2^{2\eta}))$. Thus, we can define the $C_2$-action $\sqact_{2,1}$ on $\tau \cdot \pclp{\ell}$  by
\begin{align}\label{eq: 21action on tau}
\siggen_2 \sqact_{2,1} \Lambda = \funcPcltoZ^{-1}( \siggen_2 \sqact_{2,1} \funcPcltoZ(\Lambda)) \quad \text{for $\Lambda \in \tau \cdot \pclp{\ell}$}.
\end{align}
Then, we will show
\begin{itemize}
\item $\left| \left( \pclp{\ell} \right)^{C_4} \right| = \pclp{\ell}(\zeta_4^j) \quad (j=1,3)$ using the $C_4$-action defined in~\eqref{eq: 42action on tau},
\item $\left| \left( \pclp{\ell} \right)^{\siggen_4^2} \right|  = \pclp{\ell}(-1)$ using the $C_2$-action defined in~\eqref{eq: 21action on tau}.
\end{itemize}

\medskip

By~\eqref{eq: level and series C equiv cond}, we have
\begin{equation}\label{eq: Dn1 dwt_ell(q)}
\begin{aligned}
\pclp{\ell}(q) & = \sum_{i \ge 0} \left| \left\{\Lambda \in \pclp{\ell} \; \middle| \; \evS(\Lambda) = i
\right\} \right|q^i \equiv \sum_{0 \le i \le 3} \left| \left\{\Lambda \in \pclp{\ell} \; \middle| \; \evS(\Lambda) \equiv_4 i
\right\} \right|q^i \pmod{q^4 -1}.
\end{aligned}
\end{equation}
For simplicity, we let
\[ \pclp{\ell}(q) \equiv \sum_{0 \le i \le 3} b_i q^i \pmod{q^4 - 1}.\]

Before proving Theorem~\ref{thm: D TYPE csp}, let us introduce four key lemmas.

\begin{lemma}\label{lem: Dn1 fixed by hsig}
Under the $C_4$-action $\sqact_{4,2}$ on $\bM_\ell(4,2;(1),(2^\eta))$ given in \eqref{eq: hsigaction double}, we have
\[\left|  \bM_\ell(4,2;(1),(2^\eta))^{C_4} \right| = b_0 - b_2 \quad \text{ and } \quad b_1 =b_3.\]
\end{lemma}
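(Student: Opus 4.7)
The plan is to establish $\pclp{\ell}(\zeta_4) = \bigl|\bM_\ell(4, 2; (1), (2^\eta))^{C_4}\bigr|$. Once this is in hand, the congruence~\eqref{eq: Dn1 dwt_ell(q)} gives $\pclp{\ell}(\zeta_4) = (b_0 - b_2) + i(b_1 - b_3)$, and the requirement that the left-hand side be a non-negative integer immediately forces both $b_1 = b_3$ and $b_0 - b_2 = \bigl|\bM_\ell(\ldots)^{C_4}\bigr|$. A preliminary computation of $\whsvv \seteq \tau \cdot \wtsvv$ for the $\tau$ in Table~\ref{table: tau and tuset} gives $\whsvv = (0, 1, 2, 3, 0, 2, 0, 2, \ldots, 0, 2)$, and $\evS(\Lambda) = \whsvv \bullet \funcPcltoZ(\tau \cdot \Lambda)$ throughout.

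Modeled on Claim~1 of Lemma~\ref{lem: Sagan CSP E61}, the first step will show that every fixed point satisfies $\whsvv \bullet \bm \equiv 0 \pmod 4$. Unpacking~\eqref{eq: hsigaction double} together with Remark~\ref{rem: hsig remark} forces both $\siggen_4 \sqact \bm_{\le 3} = \bm_{\le 3}$ and $\siggen_2 \sqact \bm_{\ge 4} = \bm_{\ge 4}$. The first equality requires every length-$4$ block of $\Uppsi(\bm_{\le 3})$ to be $1^4$ (its three zeros must all land in the tail), pinning down $\bm_{\le 3} = (4t, 0, 0, 0)$; the second requires every length-$2$ block of $\Uppsi(\bm_{\ge 4})$ to be $0^2$ or $2^2$, and since $\Uppsi(\bm_{\ge 4})$ contains the odd number $2\eta - 1$ of zeros, a careful trace through Algorithm~\ref{Algorithm} pins down $m_n = 0$, $m_i = 0$ for every odd $i \in \{5, 7, \ldots, n - 2\}$, and $m_i$ even for every even $i \in \{4, 6, \ldots, n - 1\}$. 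Substituting into $\whsvv \bullet \bm = 2(m_5 + m_7 + \cdots + m_n)$ then gives the claim.

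The second step will show that the aggregate contribution of all non-fixed orbits to $\pclp{\ell}(\zeta_4)$ vanishes. For each non-fixed orbit $O$ one distinguishes whether $\bm_{\le 3}$ is fixed under $\siggen_4 \sqact$. If it is, then $\sqact_{4, 2}$ restricts to $\siggen_2 \sqact$ on $\bm_{\ge 4}$, yielding a size-$2$ orbit; a direct analysis of the length-$2$ block shift on $\Uppsi(\bm_{\ge 4})$, using that the tail of $\whsvv$ alternates $0, 2$, shows that a single application of $\siggen_2 \sqact$ changes $\whsvv \bullet \bm$ by exactly $\pm 2$, and hence $\zeta_4^v + \zeta_4^{v \pm 2} = 0$. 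Otherwise $\bm_{\ge 4}$ is constant across $O$, and the orbit sum factors as a power of $\zeta_4$ (depending only on $\bm_{\ge 4}$) times a $\zeta_4$-sum over a non-trivial $\sqact$-orbit of $\bm_{\le 3}$ inside $\bM_l(4; (1))$ with $l = \ell - 2 \sum_{i \ge 4} m_i$. Aggregating over all non-trivial $\sqact$-orbits in $\bM_l(4; (1))$, the inner sum becomes $\pclp{l}(\zeta_4) - \bigl|\bM_l(4; (1))^{\sqact}\bigr|$ for $A_3^{(1)}$, which vanishes by combining Theorem~\ref{A TYPE csp} with the observation that $\sqact$-fixed points in $\bM_l(4; (1))$ coincide with the rotation-fixed points used in the $A_3^{(1)}$ cyclic sieving phenomenon (both are counted by $(4t, 0, 0, 0)$ respectively $(k, k, k, k)$, giving $1$ when $4 \mid l$ and $0$ otherwise).

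The main obstacle is precisely this aggregation step: individual non-trivial $\sqact$-orbits on $\bM_l(4; (1))$ can exhibit residue multisets such as $\{0, 0, 2, 2\}$ or $\{1, 1, 3, 3\}$, so no orbit-by-orbit cancellation in the spirit of Lemma~\ref{lem: Sagan CSP E61} is available, and one must instead cancel at the aggregate level by invoking the $A_3^{(1)}$ CSP. Once this is handled, combining the two steps yields $\pclp{\ell}(\zeta_4) = \bigl|\bM_\ell(4, 2; (1), (2^\eta))^{C_4}\bigr|$ and the lemma follows.
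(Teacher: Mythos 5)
Your proposal is correct, but it takes a genuinely different route from the paper in the decisive step. Your Step 1 coincides with the paper's Claim 1: fixedness under $\sqact_{4,2}$ forces $\bm_{\le 3}=(4t,0,0,0)$ and $\bm_{\ge 4}=(2k_2,0,\ldots,2k_{\eta+1},0)$, hence $\whsvv\bullet\bm=0$. After that the paper never evaluates at $\zeta_4$: its Claim 2 shows that along any non-fixed orbit the successive differences of $\whsvv$-evaluations are constant and nonzero modulo $4$, and a three-case analysis then builds explicit bijections between the residue classes $X(0)$ and $X(2)$ (and between $X(1)$ and $X(3)$) inside the non-fixed part, giving $|\bM_\ell(4,2;(1),(2^\eta))^{C_4}|=b_0-b_2$ and $b_1=b_3$ directly; the root-of-unity evaluation appears only later, in the proof of Theorem~\ref{thm: D TYPE csp}. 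You instead compute $\sum_{\bm}\zeta_4^{\whsvv\bullet\bm}=\pclp{\ell}(\zeta_4)$, cancel the size-two orbits (where $\bm_{\le 3}$ is $\sqact$-fixed) via the exact $\pm2$ shift from the alternating tail of $\whsvv$, cancel the remaining orbits in the aggregate by factoring off $\zeta_4^{\whsvv_{\ge 4}\bullet\bm_{\ge 4}}$ and invoking the $A_3^{(1)}$ cyclic sieving phenomenon of Theorem~\ref{A TYPE csp} together with the count of $\sqact$-fixed points $(4t,0,0,0)$, and then read off both assertions from real and imaginary parts; all of these steps check out, and citing Theorem~\ref{A TYPE csp} is not circular. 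Your route gets $b_1=b_3$ for free and avoids the case analysis, at the cost of importing the type-$A$ CSP; the paper's argument is self-contained, yields finer bijective information, and its Claim-2 machinery is reused for Lemma~\ref{lem: Dn1 fixed by hsig'} and the even-$n$ case. One mischaracterization, though not a gap: orbit-by-orbit cancellation \emph{is} available here, since multisets like $\{0,0,2,2\}$ and $\{1,3\}$ sum to zero at $\zeta_4$ (indeed the paper's Claim 2 implies every non-fixed orbit has vanishing $\zeta_4$-sum); what fails is only the distinct-residue argument of Lemma~\ref{lem: Sagan CSP E61}, so your aggregate detour is valid but not forced.
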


\begin{proof}
For simplicity, we set $X = \tau \cdot \pclp{\ell}$ and $X(i) \seteq \tau \cdot \left\{ \Lambda \in \pclp{\ell} \; \middle| \; \evS(\Lambda) \equiv_4 i \right\}$.
Note the following:
\begin{itemize}
\item $C_4$-orbits are of length $1,2$ or $4$.
\item Under the $C_4$-action on $X$ given in~\eqref{eq: 42action on tau}, $\left|  \bM_\ell(4,2;(1),(2^\eta))^{C_4} \right| = \left| X^{C_4} \right|$.
\item For any $\Lambda \in \pclp{\ell}$, $\evS(\Lambda) =  \widehat{\svv} \bullet  \funcPcltoZ(\tau \cdot \Lambda),$ where $\widehat{\svv} \seteq \tau \cdot \wtsvv = (0,1,2,3,0,2,0,2,\ldots,0,2)$.
\end{itemize}

Suppose that the following claims hold (which will be proven below): \\[-1ex]

{\it Claim 1.} Let $\Lambda \in X^{C_4}$ and $\mathbf{m} = \funcPcltoZ(\Lambda)$. Then $\whsvv \bullet \mathbf{m} = 0$ and hence $X^{C_4} \subset X(0)$.\\[-1ex]

{\it Claim 2.} Let $\Lambda \in X \setminus X^{C_4}, \mathbf{m} = \funcPcltoZ(\Lambda)$, and $i,j \in \{0,1,2,3\}$. Then
\[\whsvv \bullet (\siggen_4^i \sqact_{4,2} \mathbf{m} - \siggen_4^{i+1} \sqact_{4,2} \mathbf{m}) \equiv_4
\whsvv \bullet (\siggen_4^j \sqact_{4,2} \mathbf{m} - \siggen_4^{j+1} \sqact_{4,2} \mathbf{m}) \not \equiv_4 0,\]

By {\it Claim 1}, we have
\begin{align}\label{eq: fixed by C4}
X(2) \subset X \setminus X^{C_4}.
\end{align}
Let $\Lambda \in X(0) \cap \left(X \setminus X^{C_4} \right)$ and $\mathbf{m}=\funcPcltoZ(\Lambda)$. Then we have the following cases:
\begin{enumerate}
\item[{\it Case 1.}] $\siggen_4^2 \sqact_{4,2} \Lambda = \Lambda$,
\item[{\it Case 2.}] $\siggen_4^2 \sqact_{4,2} \Lambda \neq \Lambda$ and $\whsvv \bullet \funcPcltoZ(\Lambda - \siggen_4 \sqact_{4,2} \Lambda) \equiv_4 1 \text{ or } 3$,
\item[{\it Case 3.}] $\siggen_4^2 \sqact_{4,2} \Lambda \neq \Lambda$ and $\whsvv \bullet \funcPcltoZ(\Lambda - \siggen_4 \sqact_{4,2} \Lambda) \equiv_4 2$.
\end{enumerate}
\begin{itemize}
\item In {\it Case 1}, we have $\whsvv \bullet  (\siggen_4 \sqact_{4,2} \bm) = 2$ by {\it Claim 2} and thus $\siggen_4 \sqact_{4,2} \Lambda \in X(2)$. This shows that one can correspond $\Lambda$ to $\siggen_4 \sqact_{4,2} \Lambda$ in a bijective way.
\item In {\it Case 2}, we have $\whsvv \bullet (\siggen_4^2 \sqact_{4,2} \bm) = 2$  by {\it Claim 2} and thus $\siggen_4^2 \sqact_{4,2} \Lambda \in X(2)$. This shows that one can correspond $\Lambda$ to $\siggen_4^2 \sqact_{4,2} \Lambda$ in a bijective way.
\item In {\it Case 3}, we have $\whsvv \bullet  (\siggen^i_4 \sqact_{4,2} \bm)  \equiv_4 2i$ for $i=1,2,3$. Therefore $\siggen_4 \sqact_{4,2} \Lambda, \siggen_4^3 \sqact_{4,2} \Lambda \in X(2)$ and $ \siggen_4^2 \sqact_{4,2} \Lambda \in X(0)$. This shows that one can correspond $\Lambda,  \siggen_4^2 \sqact_{4,2} \Lambda $ to $\siggen_4 \sqact_{4,2} \Lambda, \siggen_4^3 \sqact_{4,2} \Lambda$ in a bijective way.
\end{itemize}
In this way, we obtain a bijection between $X(0) \cap \left(X \setminus X^{C_4} \right)$ and $X(2) \cap \left(X \setminus X^{C_4} \right)$ and thus, by~\eqref{eq: fixed by C4},
\begin{align*}
\left|X(0) \cap \left(X \setminus X^{C_4} \right)  \right|
& = \left|X(2) \cap \left( X \setminus X^{C_4} \right)  \right| = \left|X(2)\right|.
\end{align*}
Consequently we have
\begin{align*}
\left|  \bM_\ell(4,2;(1),(2^\eta))^{C_4} \right|
&= \left|X^{C_4} \right|
= \left|X(0)\right| - \left|X{(0)} \cap \left(X \setminus X^{C_4} \right) \right|\\
& = \left|X(0)\right| - \left|X(2)\right| = b_0 - b_2.
\end{align*}

In the same manner, by taking $\Lambda \in X(1)$ or $X(3) \subset X \setminus X^{C_4}$, one can see that
$$   |X(1)| = |X(3)| \iff b_1 =b_3.$$

To complete the proof, we have only to verify {\it Claim 1} and {\it Claim 2}.

\noindent (a) For {\it Claim 1}, suppose that $\Lambda \in X^{C_4}$. Recall $\funcMtoW$ in~\eqref{eq: definition of Psi} and $\funcPcltoZ$ in~\eqref{eq: funcPcltoZ}. Let $\bm = \funcPcltoZ(\Lambda)$ and $\bw = \funcMtoW(\bm)$. Break $\bw$ into subwords
\begin{align}\label{eq: Dn1 word break}
w^1 \ | \ w^2 \ | \ \cdots \ | \ w^{t_1} \ | \ w^{t_1 + 1}\ | \ \cdots \ | \ w^{t_1+t_2} \ | \ w^{0},
\end{align}
where
\begin{itemize}
\item $w^j$ is of length $4$ for $1 \le j \le t_1$,
\item $w^{t_1}$ contains the $4$th zero when we read $\mathbf{w}$ from left to right,
\item $w^j$ is of length $2$ for $t_1+1 \le j \le t_1 + t_2$,
\item  $w^0$ is the empty word or of length $1$.
\end{itemize}
Here such $w^{t_1}$ exists since the number of $0$ in $\bw$  is $n \ge 4$ by Algorithm~\ref{Algorithm} and~\eqref{eq: definition of Psi}.

Since $\Lambda \in X^{C_4}$, we have $\bm \in \bM_\ell(4,2;(1),(2^\eta))^{C_4}$. Then $\siggen_4 \sqact_{4,2}$ in $\eqref{eq: hsigaction double}$ and~\eqref{eq: definition of Psi}  say that
\begin{align}\label{eq: Dn1 fixed w}
\underbracket{1 \cdots 1}_{4k_1} \ | \ 000\blo \ | \ \underbracket{2 \cdots 2}_{2k_2} \ | \ 0 0 \ | \ \underbracket{2\cdots 2}_{2k_3} \ | \ 00 \ | \ \cdots \ | \ \underbracket{2 \cdots 2}_{2k_{\eta+1}} \ | \ 0
\end{align}
for some $k_1,k_2,\ldots, k_{\eta+1} \in \Z_{\ge 0}$ such that $\sum_{1 \le j \le \eta+1} 4k_j = \ell$.
Therefore,
\begin{align}\label{eq: Dn1 fixed dwt}
\mathbf{m} = \left(4k_1, 0 ,0,0,2k_2,0,2k_3,0,\ldots,2k_{\eta+1},0\right)
\end{align}
and hence
\[\whsvv  \bullet \mathbf{m} = (0,1,2,3,0,2,\ldots,0,2) \bullet \left(4k_1, 0 ,0,0,2k_2,0,2k_3,0,\ldots,2k_{\eta+1},0\right) = 0.\]

\noindent (b) For {\it Claim 2}, suppose that $\Lambda \in X \setminus X^{C_4}$. Let $\bm = \funcPcltoZ(\Lambda)$ and $\bw = \funcMtoW(\bm)$. Break $\mathbf{w}$ into subwords
\[w^1 \ | \ w^2 \ | \ \cdots \ | \ w^{t_1} \ | \ w^{t_1 + 1}\ | \ \cdots \ | \ w^{t_1+t_2} \ | \ w^{0},\]
as~\eqref{eq: Dn1 word break}. Since $\Lambda \in X \setminus X^{C_4}$, there exists the smallest $1 \le j_0 \le t_1 + t_2$ such that $\siggen_4 \cdot w^{j_0} \neq w^{j_0}$. Note that $w^{i_0}$ should be one of
\begin{align*}
& 1000,~~ 0100,~~ 0010,~~ 0001, \qquad 1100,~~ 0110,~~ 0011,~~1001,\\
& 1110,~~ 0111,~~ 1011,~~ 1101, \qquad 1010,~~ 0101,~ \qquad 20,~~ 02.
\end{align*}
We shall only give a proof for the case where $w^{j_0} = 1000$ since the other cases can be proven by the same argument.
Since $\bm \in \bM_\ell(4,2;(1),(2^\eta))$ and $w^{j_0}$ contains $1$, by~\eqref{eq: definition of Psi},  $w^{j}$ is $1111$ for all $1\le j < j_0$. Thus we have
\begin{align*}
\siggen^i_4 \sqact_{4,2} \mathbf{w} &=\hspace{.87em} w^1 \hspace{1.17em} | \hspace{1.17em} w^2 \hspace{1.17em} | \ \cdots \ |\hspace{.52em} w^{j_0-1} \hspace{.51em} | \hspace{.62em} \siggen \cdot w^{j_0} \hspace{.52em} | \ w^{j_0+1} \ | \ \cdots \ | \ w^t \ | \ w_{4t+1} \cdots w_u\\
& = \hspace{.3em} 1111 \hspace{.6em} \ |\hspace{.6em} 1111 \hspace{.6em}\ | \ \cdots \ | \ \hspace{.3em} 1111 \hspace{.6em} \ | \  \hspace{.6em} \begin{matrix} 0100 \\ 0010 \\ 0001 \end{matrix} \hspace{.7em} \ | \ w^{j_0+1} \ | \ \cdots \ | \ w^t \ | \ w_{4t+1}\cdots w_u
\quad \begin{matrix} \text{ if } i=1, \\ \text{ if } i=2, \\ \text{ if } i=3, \end{matrix}
\end{align*}
which implies
\begin{align*}
\siggen_4^i \sqact_{4,2} \mathbf{m} = (m_0 - 1, m_1 + \delta_{i,1}, m_2+ \delta_{i,2}, m_3+ \delta_{i,3}, m_4\ldots,m_n)\quad \text{for $i = 1,2,3$},
\end{align*}
by the construction of $\funcMtoW^{-1}$ in \eqref{eq: definition inverse of Psi}.  Hence, we have
\begin{align*}
& \widehat{\svv} \bullet (\mathbf{m} - \siggen_4 \sqact_{4,2} \mathbf{m}) = (0,1,2,3,0,2,\ldots,0,2)  \bullet (1,-1,0,0,0,\ldots,0)   \equiv_4 3, \\
& \widehat{\svv}  \bullet (\siggen_4 \sqact_{4,2}\mathbf{m} - \siggen_4^2 \sqact_{4,2} \mathbf{m})= (0,1,2,3,0,2,\ldots,0,2)  \bullet (0,1,-1,0,0,\ldots,0) \equiv_4 3, \\
& \widehat{\svv}  \bullet (\siggen_4^2 \sqact_{4,2}\mathbf{m} - \siggen_4^3 \sqact_{4,2} \mathbf{m})= (0,1,2,3,0,2,\ldots,0,2)  \bullet (0,0,1,-1,0,\ldots,0) \equiv_4 3, \\
& \widehat{\svv}  \bullet (\siggen_4^3 \sqact_{4,2}\mathbf{m} - \mathbf{m})= (0,1,2,3,0,2,\ldots,0,2)  \bullet (-1,0,0,1,0,\ldots,0) \equiv_4 3.
\end{align*}
This completes the proof.
\end{proof}

\begin{lemma}\label{lem: Dn1 hsig and Cact}
    Under the $C_4$-action on $\pclp{\ell}$ given in~\eqref{eq: C4 action on D_n} and the $C_4$-action on $\bM_\ell(4,2;(1),(2^\eta))$ given in \eqref{eq: hsigaction double}, we have
    \[\left| \left( \pclp{\ell} \right)^{C_4} \right| = \left| \bM_\ell(4,2;(1),(2^\eta))^{C_4} \right|.\]
\end{lemma}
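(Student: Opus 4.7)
The plan is to explicitly parametrize both fixed-point sets and to observe that they are both in natural bijection with the set of tuples $(k_1,\ldots,k_{\eta+1})\in\Z_{\ge 0}^{\eta+1}$ satisfying $k_1+\cdots+k_{\eta+1}=\ell/4$ (which is empty when $4\nmid\ell$). In particular both cardinalities equal $\binom{\ell/4+\eta}{\eta}$ if $4\mid \ell$ and $0$ otherwise.

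First I would unpack $(\pclp{\ell})^{C_4}$. The permutation $\upsig$ is (reading the cycle decomposition correctly) the $4$-cycle $(0,n,1,n-1)$ together with the $\eta$ disjoint transpositions $(2j,2j{+}1)$ for $j=1,\ldots,\eta$. By the defining formula \eqref{eq: C4 action on D_n}, a weight $\sum_{i}m_i\Lambda_i$ is fixed by $\siggen_4$ if and only if $m_0=m_1=m_{n-1}=m_n$ (call this common value $k_1$) and $m_{2j}=m_{2j+1}$ for $j=1,\ldots,\eta$ (call it $k_{j+1}$). Since $(a_i^\vee)=(1,1,2,2,\ldots,2,1,1)$, the level-$\ell$ constraint becomes
\[
4k_1+2\bigl(2k_2+2k_3+\cdots+2k_{\eta+1}\bigr)=\ell,\qquad\text{i.e.}\qquad k_1+k_2+\cdots+k_{\eta+1}=\ell/4.
\]
Hence $(\pclp{\ell})^{C_4}$ is in bijection with $\{(k_1,\ldots,k_{\eta+1})\in\Z_{\ge 0}^{\eta+1}:\sum_j k_j=\ell/4\}$.

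Next I turn to $\bM_\ell(4,2;(1),(2^\eta))^{C_4}$. One inclusion is already recorded inside the proof of Lemma~\ref{lem: Dn1 fixed by hsig}: the derivation of \eqref{eq: Dn1 fixed w} and \eqref{eq: Dn1 fixed dwt} shows that any $\mathbf{m}\in\bM_\ell(4,2;(1),(2^\eta))^{C_4}$ must be of the form
\[
\mathbf{m}=(4k_1,\,0,\,0,\,0,\,2k_2,\,0,\,2k_3,\,0,\,\ldots,\,2k_{\eta+1},\,0)
\]
with $\sum_j 4k_j=\ell$. For the reverse direction I would apply $\funcMtoW$ to such a tuple and verify via Algorithm~\ref{Algorithm} and Definition~\ref{def: def of funcMtoW} that the resulting word is exactly \eqref{eq: Dn1 fixed w}; every length-$4$ block is $1111$ or $0000$, every length-$2$ block (to the right of the blue zero) is $22$ or $00$, so each block is individually fixed by the cyclic shift $\siggen_4$ and hence, by the definition of $\sqact_{4,2}$ in \eqref{eq: hsigaction double}, $\mathbf{m}$ is fixed. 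This shows $\bM_\ell(4,2;(1),(2^\eta))^{C_4}$ is also indexed by tuples $(k_1,\ldots,k_{\eta+1})\in\Z_{\ge 0}^{\eta+1}$ with $\sum_j k_j=\ell/4$.

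Matching the two parametrizations (which is nothing more than writing the same tuple $(k_1,\ldots,k_{\eta+1})$ in the two languages) yields a canonical bijection between the two fixed-point sets and completes the proof. There is no real obstacle: the slightly subtle point is the ``converse'' direction in the second step, but it is an immediate unwinding of $\funcMtoW$, and the enumerative identification of both sides as compositions of $\ell/4$ into $\eta+1$ nonnegative parts is automatic.
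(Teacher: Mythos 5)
Your proposal is correct and follows essentially the same route as the paper: both identify $\bM_\ell(4,2;(1),(2^\eta))^{C_4}$ with the set $Y$ of tuples $(4k_1,0,0,0,2k_2,0,\ldots,2k_{\eta+1},0)$ (forward inclusion via \eqref{eq: Dn1 fixed w}--\eqref{eq: Dn1 fixed dwt}, reverse by computing $\funcMtoW$), describe $\left(\pclp{\ell}\right)^{C_4}$ by constancy on $\upsig$-orbits, and match the two descriptions, the only cosmetic difference being that you count both sides as weak compositions of $\ell/4$ into $\eta+1$ parts while the paper writes the bijection $\Theta$ directly.
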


\begin{proof}
By~\eqref{eq: C4 action on D_n}, one can see that
\begin{align}\label{eq: Dn1 fixed by Cact}
\left( \pclp{\ell} \right)^{C_4} = \left\{ \sum_{0 \le i \le n} m_i\Lambda_i \in \pclp{\ell} \; \middle| \; \begin{array}{l}
m_0 = m_1 = m_{n-1} = m_n,\\
m_{2j} = m_{2j+1}\quad \text{for $1\le j \le \eta$},\\
m_0 + m_1 + \sum_{2 \le j \le n-2}2m_j + m_{n-1} + m_n = \ell
\end{array} \right\}.
\end{align}

We claim that
$$\bM_\ell(4,2;(1),(2^\eta))^{C_4}= \left\{ \left(4k_1, 0 ,0,0,2k_2,0,2k_3,0,\ldots,2k_{\eta+1},0\right) \ \middle| \ k_i \in \Z_{\ge 0}, \ \sum_{1 \le i \le \eta+1} 4k_i =\ell  \right\} =: Y.$$

By~\eqref{eq: Dn1 fixed w} and \eqref{eq: Dn1 fixed dwt}, $\bM_\ell(4,2;(1),(2^\eta))^{C_4}$ is contained in $Y$.

\smallskip

On the contrary, for an element  $\mathbf{m} \in Y$, we have
\begin{align*}
\funcMtoW(\mathbf{m}) &
= \underbracket{1 \cdots 1}_{4k_1} \ | \ 000\blo \ | \ \underbracket{2 \cdots 2}_{2k_2} \ | \ 0 0 \ | \ \underbracket{2\cdots 2}_{2k_3} \ | \ 00 \ | \ \cdots \ | \ \underbracket{2 \cdots 2}_{2k_{\eta+1}} \ | \ 0.
\end{align*}
Thus, by \eqref{eq: hsigaction double} and~\eqref{eq: definition of Psi}, $\siggen_4 \sqact_{4,2} \mathbf{m} = \mathbf{m}$ and hence our claim follows.

Next, we have an obvious bijection $\Theta: \left( \pclp{\ell} \right)^{C_4} \ra \left( \bM_\ell(4,2;(1),(2^\eta)) \right)^{C_4}$ defined by
\[ \Theta\left( \sum_{0 \le i \le n} m_i\Lambda_i \right) = \left( 4m_0,0,0,0,2m_2,0,2m_4,0,\ldots,2m_{n-3},0\right). \]
This completes the proof.
\end{proof}

\begin{lemma}\label{lem: Dn1 fixed by hsig'}
Under the $C_2$-action given in \eqref{eq: hsigaction double}, we have
\[\left|\bM_\ell(2,1;(1^2),(2^{2\eta}))^{C_2} \right| = b_0 - b_1 + b_2 - b_3.\]
\end{lemma}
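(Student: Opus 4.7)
The plan is to adapt the strategy of Lemma \ref{lem: Dn1 fixed by hsig}, this time analyzing the $C_2$-action $\sqact_{2,1}$ instead of the $C_4$-action $\sqact_{4,2}$. The starting point is the identity
\[b_0 - b_1 + b_2 - b_3 = \sum_{\Lambda \in \pclp{\ell}}(-1)^{\evS(\Lambda)} = \sum_{\bm \in X}(-1)^{\whsvv \bullet \bm},\]
where $X \seteq \bM_\ell(2,1;(1^2),(2^{2\eta}))$ is identified with $\tau \cdot \pclp{\ell}$ via $\funcPcltoZ$, and $\whsvv \seteq \tau \cdot \wtsvv = (0,1,2,3,0,2,0,2,\ldots,0,2)$. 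It then suffices to establish the orbit identity
\[|X^{C_2}| = \sum_{\bm \in X}(-1)^{\whsvv \bullet \bm},\]
which in turn follows from two claims: \emph{(Claim 1)} if $\bm \in X^{C_2}$, then $\whsvv \bullet \bm$ is even; and \emph{(Claim 2)} if $\bm \notin X^{C_2}$, then $\whsvv \bullet (\siggen_2 \sqact_{2,1} \bm - \bm)$ is odd. Given these, each free $C_2$-orbit contributes $(-1)^{\whsvv \bullet \bm}+(-1)^{\whsvv \bullet \siggen_2 \sqact_{2,1}\bm} = 0$ to the alternating sum while each fixed point contributes $+1$, yielding the result.

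For Claim 1, since the factor with $d = 1$ makes the action on $\bm_{\ge 4}$ trivial by Remark \ref{rem: hsig remark}(2), $\bm$ is fixed iff its initial segment $\bm_{\le 3} \in \bM_l(2;(1^2))$ is fixed by $\siggen_2 \sqact$. Writing $\Uppsi(\bm_{\le 3}) = 1^{m_0}\,0\,1^{m_1}\,0\,1^{m_2}\,0\,1^{m_3}$ and demanding that every length-$2$ block be $00$ or $11$, a case analysis on the positions of the three zeros forces $m_0 \in 2\Z_{\ge 0}$, $m_1 = 0$, $m_2 \in 2\Z_{\ge 0}$, and $m_3 = 0$. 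Hence
\[\whsvv \bullet \bm = 2m_2 + 2(m_5 + m_7 + \cdots + m_n),\]
which is even.

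For Claim 2, if $\bm \notin X^{C_2}$, the first non-fixed length-$2$ block $w^{j_0}$ of $\Uppsi(\bm_{\le 3})$ equals $01$ or $10$, and $\siggen_2 \sqact$ simply swaps its two characters. This swap amounts to moving exactly one of the three zeros by one position to the left or right, so $\bm_{\le 3}$ changes by $\pm(e_{k-1} - e_k)$ for some $k \in \{1,2,3\}$, where $e_i$ denotes the $i$th standard basis vector of $\Z^4$. Since the first four coordinates of $\whsvv$ are $(0,1,2,3)$,
\[\whsvv \bullet (\siggen_2 \sqact_{2,1} \bm - \bm) = \pm(\whsvv_{k-1} - \whsvv_k) = \pm 1,\]
which is odd.

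The main obstacle is the case analysis in Claim 1: classifying when every length-$2$ block of $\Uppsi(\bm_{\le 3})$ is $00$ or $11$ requires tracking the positions of the three zeros together with the parity of $l = m_0+m_1+m_2+m_3$, which governs whether a length-$1$ leftover block occurs at the right end. Once the explicit form of $\Uppsi$ is unwound, however, the deductions $m_1 = m_3 = 0$ and $m_0, m_2 \in 2\Z_{\ge 0}$ follow in a straightforward manner, and Claim 2 is essentially immediate from the simple arithmetic of $(0,1,2,3)$.
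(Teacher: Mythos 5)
Your proof is correct and follows essentially the same route as the paper: the same two claims (a $\sqact_{2,1}$-fixed point has even $\whsvv$-value, and the generator changes the value by an odd amount), with your explicit fixed-point description $(2k_1,0,2k_2,0,m_4,\ldots,m_n)$ matching what the paper records in Lemma~\ref{lem: Dn1 hsig' and Cact^2}. The only cosmetic difference is that you finish via the alternating sum $\sum_{\bm}(-1)^{\whsvv \bullet \bm}$ and cancellation over free orbits, whereas the paper pairs $\left(X(0)\sqcup X(2)\right)\cap \left(X\setminus X^{C_2}\right)$ with $\left(X(1)\sqcup X(3)\right)\cap \left(X\setminus X^{C_2}\right)$ by an explicit bijection -- the same computation in different packaging.
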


\begin{proof}
As we did in the proof of Lemma~\ref{lem: Dn1 fixed by hsig}, we set $X = \tau \cdot \pclp{\ell}$ and $X(i) \seteq \tau \cdot \left\{ \Lambda \in \pclp{\ell} \; \middle| \; \evS(\Lambda) \equiv_4 i \right\}$.
Note the following:
\begin{itemize}
\item Under the $C_2$-action on $X$ given in~\eqref{eq: 21action on tau}, $\left|\bM_\ell(2,1;(1^2),(2^{2\eta}))^{C_2} \right| = \left| X^{C_2} \right|$.
\item For any $\Lambda \in \pclp{\ell}$, $\evS(\Lambda) =  \widehat{\svv} \bullet  \funcPcltoZ(\tau \cdot \Lambda),$ where $\widehat{\svv} \seteq \tau \cdot \wtsvv = (0,1,2,3,0,2,0,2,\ldots,0,2)$.
\end{itemize}

Suppose that the following claims hold: \\[-1ex]

{\it Claim 1.} Let $\Lambda \in X^{C_2}$ and $\mathbf{m} = \funcPcltoZ(\Lambda)$. Then $\whsvv \bullet \mathbf{m} \equiv_2 0$ and hence  $X^{C_2} \subset X(0) \cup X(2)$.\\[-1ex]

{\it Claim 2.} Let $\Lambda \in X \setminus X^{C_2}$ and $\mathbf{m} = \funcPcltoZ(\Lambda)$. Then
\[\whsvv \bullet \left(\mathbf{m} - \siggen_2 \sqact_{2,1} \mathbf{m}\right) \equiv_2 1.\]

By {\it Claim1}, we have
\begin{align}\label{eq: Dn1 fixed supset}
X(1)\sqcup X(3) \subset X \setminus X^{C_2}.
\end{align}

Let $\Lambda \in \left(X(0)\sqcup X(2)\right) \cap X \setminus X^{C_2}$ and $\bm = \funcPcltoZ(\Lambda)$. By {\it Claim2}, we have $\whsvv \bullet (\siggen_2 \sqact_{2,1} \mathbf{m})\equiv_2 1$ and thus $\siggen_2 \sqact_{2,1} \Lambda \in \left(X(1)\sqcup X(3)\right) \cap X \setminus X^{C_2}$. So, we obtain a bijection from $\left(X(0)\sqcup X(2)\right) \cap X \setminus X^{C_2}$ to $\left(X(1)\sqcup X(3)\right) \cap X \setminus X^{C_2}$ by mapping $\Lambda$ to $\siggen_2 \sqact_{2,1} \Lambda$. By~\eqref{eq: Dn1 fixed supset}, we have
\begin{align*}
\left|\left(X(0)\sqcup X(2)\right) \cap X \setminus X^{C_2}\right| & = \left|\left(X(1)\sqcup X(3)\right) \cap X \setminus X^{C_2}\right| = \left|X(1)\sqcup X(3)\right|\\
& = \left|X(1)\right| + \left|X(3)\right|.
\end{align*}
Finally we have
\begin{align*}
\left| \bM_\ell(2,1;(1^2),(2^{2\eta}))^{C_2} \right|
& = \left| X^{C_2} \right|
 = \left| X(0)\sqcup X(2)\right| - \left| \left(X(0)\sqcup X(2)\right)\cap X \setminus X^{C_2}\right| \\
& = \left(\left| X(0) \right| + \left|X(2) \right|\right) - \left(\left|X(1)\right| + \left|X(3)\right|\right)
= b_0 - b_1 + b_2 - b_3.
\end{align*}

We omit the proof of {\it Claim1} and {\it Claim 2} since they can be proven in the same manner as those in the proof of Lemma~\ref{lem: Dn1 fixed by hsig}.
\end{proof}

\begin{lemma}\label{lem: Dn1 hsig' and Cact^2}
Under the $C_4 = \inn{\siggen_4}$-action on $\pclp{\ell}$ given in~\eqref{eq: C4 action on D_n} and the $C_2$-action on $\bM_\ell(2,1;(1^2),(2^{2\eta}))$ given in \eqref{eq: hsigaction double}, we have
\[\left| \left( \pclp{\ell} \right)^{\siggen_4^2} \right| = \left|  \bM_\ell(2,1;(1^2),(2^{2\eta}))^{C_2} \right|.\]
\end{lemma}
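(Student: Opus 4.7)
My plan is to follow the strategy of Lemma~\ref{lem: Dn1 hsig and Cact}: describe both fixed-point sets concretely, then exhibit a cardinality-preserving bijection between them.

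First, I would compute $\upsig^{2}=(0,1)(n-1,n)$, using that the $4$-cycle $(0,n,1,n-1)$ squares to $(0,1)(n-1,n)$ while the remaining product of disjoint transpositions squares to the identity. Consequently
\[
\bigl(\pclp{\ell}\bigr)^{\siggen_4^2}
= \left\{\sum_{0 \le i \le n} m_i\Lambda_i \in \pclp{\ell} \; \middle| \; m_0=m_1,\ m_{n-1}=m_n\right\},
\]
and the level condition reduces to $m_0+\sum_{2\le i \le n-2}m_i+m_n=\ell/2$.

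Next, I would characterise the $C_2$-fixed points of $\bM_\ell(2,1;(1^2),(2^{2\eta}))$. By the definition of $\sqact_{2,1}$ in~\eqref{eq: hsigaction double} combined with Remark~\ref{rem: hsig remark}(2), the $C_2$-action is trivial on $\bm_{\ge 4}$ and acts on $\bm_{\le 3}$ only through $\sqact$. Applying Algorithm~\ref{Algorithm} with $d=2$, $k=2$, $\bnu=(1,1)$ yields
\[
\Uppsi(\bm_{\le 3}) = \underbracket{1\cdots 1}_{m_0}\,0\,\underbracket{1\cdots 1}_{m_1}\,0\,\underbracket{1\cdots 1}_{m_2}\,0\,\underbracket{1\cdots 1}_{m_3}.
\]
Breaking this word into consecutive length-$2$ blocks with at most one trailing leftover, and requiring every full block to be $00$ or $11$, I would run through the possible positions of the three $0$'s. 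The only admissible configuration is that $m_3=0$, so that the rightmost $0$ is the leftover character; the first two $0$'s then sit in a single $00$-pair (forcing $m_1=0$); and each intervening block of $1$'s must have even length (forcing $m_0,m_2\in 2\Z_{\ge 0}$). This yields
\[
\bM_\ell(2,1;(1^2),(2^{2\eta}))^{C_2}=\left\{(2k_1,0,2k_2,0,m_4,m_5,\ldots,m_n) \; \middle| \; 2k_1+2k_2+2\!\!\sum_{4\le i \le n}\!\! m_i=\ell \right\}.
\]

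Finally, I would write down the explicit bijection
\[
\Theta'\colon \bigl(\pclp{\ell}\bigr)^{\siggen_4^2}\longrightarrow \bM_\ell(2,1;(1^2),(2^{2\eta}))^{C_2},\qquad
\sum_{0\le i \le n} m_i\Lambda_i\longmapsto (2m_0,0,2m_n,0,m_2,m_3,\ldots,m_{n-2}),
\]
whose level-preservation follows from $m_0+m_1+m_{n-1}+m_n=2m_0+2m_n$ and whose inverse sends $(2k_1,0,2k_2,0,m_4,\ldots,m_n)$ to $k_1\Lambda_0+k_1\Lambda_1+\sum_{4\le j \le n}m_j\Lambda_{j-2}+k_2\Lambda_{n-1}+k_2\Lambda_n$. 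The main technical step is the case analysis characterising $\bM_\ell(2,1;(1^2),(2^{2\eta}))^{C_2}$ through the word $\Uppsi(\bm_{\le 3})$; however, this is almost verbatim the argument used in the proof of Lemma~\ref{lem: Dn1 hsig and Cact}, so I do not anticipate a substantive difficulty.
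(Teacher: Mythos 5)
Your proposal is correct and takes essentially the same route as the paper: describe $\left(\pclp{\ell}\right)^{\siggen_4^2}$ explicitly, characterize $\bM_\ell(2,1;(1^2),(2^{2\eta}))^{C_2}$ through the word $\Uppsi(\bm_{\le 3})$ (every full length-$2$ block equal to $00$ or $11$, the action on the tail being trivial), and match the two sets by an explicit bijection. The only divergence is harmless: you compute $\upsig^2=(0,1)(n-1,n)$, so your fixed-point conditions read $m_0=m_1$, $m_{n-1}=m_n$ rather than the conditions $m_0=m_{n-1}$, $m_1=m_n$ displayed in~\eqref{eq: Dn1 fixed by Cact square}; both descriptions give sets of the same cardinality and your bijection is adapted consistently, so the conclusion is unaffected.
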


\begin{proof}
By~\eqref{eq: C4 action on D_n}, one can see that
\begin{align}\label{eq: Dn1 fixed by Cact square}
\left( \pclp{\ell} \right)^{\siggen_4^2} =  \left\{ \sum_{0 \le i \le n} m_i \Lambda_i \in \pclp{\ell} \; \middle| \; m_0 = m_{n-1}, m_1 = m_n~\text{and}~ 2m_0 + 2m_1 + \sum_{2 \le j \le n-2}m_j = \ell \right\}.
\end{align}

We claim that
\[\bM_\ell(2,1;(1^2),(2^{2\eta}))^{C_2} =  \left\{ (2k_1,0,2k_2,0,m_4,m_5,\ldots,m_n) \; \middle| \; \begin{array}{l}
k_1,k_2,m_4,m_5,\ldots,m_n \in \Z_{\ge 0 },\\
2k_1 + 2k_2 + \sum_{4 \le j \le n} m_j = \ell
\end{array}\right\} = : Y. \]
Suppose $\bm \in \bM_\ell(2,1;(1^2),(2^{2\eta}))^{C_2}$. Recall the function $\funcMtoW$ from~\eqref{eq: definition of Psi}. Let $\mathbf{w} = \funcMtoW(\mathbf{m})$.
Break $\mathbf{w}$ into subwords
\begin{align}\label{eq: Dn1 subword break}
w^1 \ | \ w^2 \ | \ \cdots \ | \ w^{t_1} \ | \ w_{2t_1 + 1}w_{2t_1 + 2}\cdots w_{u},
\end{align}
where
\begin{itemize}
\item $w^j$ is of length $2$ for $1 \le j \le t_1$,
\item $w^{t_1}$ contains the $4$th zero when we read $\bw$ from left to right.
\end{itemize}
By~\eqref{eq: hsigaction double} and~\eqref{eq: definition of Psi}, we have
\begin{align*}
\mathbf{w} = \underbracket{1\cdots 1}_{2k_1} \ | \ 00 \ | \ \underbracket{1\cdots 1}_{2k_2} \ | \ 0 \blo \ | \ w_{2t_1 + 1}w_{2t_1 + 2}\cdots w_{u}
\end{align*}
for some $k_1,k_2, \in \Z_{\ge 0}$; i.e., $w^{t_1} = 00$.
Therefore, $\mathbf{m} \in Y$.

On the contrary, for $\mathbf{m}'\in Y$, we have
\[\funcMtoW(\mathbf{m}') = \underbracket{1\cdots 1}_{2k_1} \ | \ 00 \ | \ \underbracket{1\cdots 1}_{2k_2} \ | \ 0 \blo \ | \ \underbracket{2\cdots 2}_{m_4'} 0 \underbracket{2\cdots 2}_{m_5'} 0 \cdots\cdots 0\underbracket{2\cdots 2}_{m_n'} . \]
Therefore, $\siggen_2 \sqact_{2,1} \mathbf{m}' = \mathbf{m}'$ and hence our assertion follows.

Hence, we have an obvious bijection $\Theta : \left( \pclp{\ell} \right)^{\siggen_4^2} \ra \bM_\ell(2,1;(1^2),(2^{2\eta}))^{C_2}$ defined by
\[ \Theta\left( \sum_{0 \le i \le n} m_i \Lambda_i \right) = ( 2m_0, 0, 2m_1, 0, m_2, m_3, \ldots, m_{n-2} ). \]
This completes the proof.
\end{proof}

\begin{proof}[Proof of Theorem \ref{thm: D TYPE csp}]
Let $\zeta_4$ be a $4$th primitive root of unity. We will see that
\[\left|\left( \pclp{\ell} \right)^{\siggen_4^j} \right| = \pclp{\ell}(\zeta_4^j) \quad \text{for $j = 0,1,2,3$.}\]
\begin{itemize}
\item When $j = 0$, since $\left|\left( \pclp{\ell} \right)\right| =  \pclp{\ell}(1)$, it is trivial.
\item For the case $j \in \{1,3\}$, note that
\[\left( \pclp{\ell} \right)^{C_4} = \left( \pclp{\ell} \right)^{\siggen_4^j} \quad \text{and} \quad \pclp{\ell}(\zeta_4^j) =  b_0 + b_1\zeta^j_4 - b_2 - b_3\zeta^j_4.\]
Lemma~\ref{lem: Dn1 fixed by hsig} and Lemma~\ref{lem: Dn1 hsig and Cact} say that
\[\left| \left( \pclp{\ell} \right)^{C_4} \right| =\left|  \bM_\ell(4,2;(1),(2^\eta))^{C_4} \right| = b_0 - b_2 = \pclp{\ell}(\zeta_4^j). \]
\item For the case $j = 2$, note that
\[ \pclp{\ell}(-1) = b_0 - b_1 + b_2 - b_3. \]
Lemma~\ref{lem: Dn1 fixed by hsig'} and Lemma~\ref{lem: Dn1 hsig' and Cact^2} say that
\[\left| \left( \pclp{\ell} \right)^{\siggen_4^2} \right| = \left|  \bM_\ell(2,1;(1^2),(2^{2\eta}))^{C_2} \right| = b_0 - b_1 + b_2 - b_3 = \pclp{\ell}(-1).\]
\end{itemize}
Thus our assertion holds.
\end{proof}

\section{Bicyclic sieving phenomenon for $D_n^{(1)}$} \label{Sec: CSP2}
We start with reviewing the notion of bicyclic sieving phenomenon. For details, see \cite[Section 3]{BRS} or \cite[Section 9]{Sagan2}.

Let $X$ be a finite set with a permutation action of a finite \emph{bicyclic group}, that is, a product $C_k \times C_{k'}$ for some $k,k' \in \Z_{>0}$. Fix embeddings $\omega : C_k  \ra \C^{\times}$ and $\omega': C_{k'}\ra \C^{\times}$ into the complex roots of unity. Let $X(q_1,q_2) \in \Z_{\ge 0 }[q_1,q_2]$.

\begin{proposition}[\cite{BRS}, Proposition 3.1]\label{prop: biCSP}
In the above situation, the following two conditions on the triple $(X,C_k\times C_{k'},X(q_1,q_2))$ are equivalent:
\begin{enumerate}
\item[{\rm (1)}] For any $(c,c') \in C_k \times C_{k'}$,
\[X(\omega(c), \omega(c')) = \left| \left\{ x \in X \; \middle| \; (c,c')\,x = x \right\} \right|.\]
\item[{\rm (2)}] The coefficients $a(j_1,j_2)$ uniquely defined by the expansion
\[X(q_1,q_2) \equiv \sum_{\substack{0 \le j_1 <k \\ 0 \le j_2 < k'}} a(j_1,j_2)q_1^{j_1}q_2^{j_2} \pmod{q_1^{k} - 1, q_2^{k'} -1} \]
have the following interpretation: $a(j_1,j_2)$ is the number of orbits of $C_k \times C_{k'}$ on $X$ for which the $C_k \times C_{k'}$-stabilizer subgroup of any element in the orbit lies in the kernel of the $C_k \times C_{k'}$-character $\rho^{(j_1,j_2)}$ defined by
\[ \rho^{(j_1,j_2)}(c,c') = \omega(c)^{j_1}\omega'(c')^{j_2}. \]
\end{enumerate}
\end{proposition}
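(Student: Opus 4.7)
The plan is to interpret both conditions through the character theory of the abelian group $G = C_k \times C_{k'}$ and show that they encode the same data. Since $G$ is abelian, its irreducible characters are precisely the one-dimensional characters $\rho^{(j_1,j_2)}$ with $0 \le j_1 < k$ and $0 \le j_2 < k'$, and they form a basis for the space of class functions on $G$ (which, since $G$ is abelian, is just the space of all functions $G \to \C$). The permutation action of $G$ on $X$ yields a permutation character $\chi_X(g) := |X^g|$, and decomposing $\C[X]$ into irreducibles gives
\begin{equation*}
\chi_X = \sum_{0 \le j_1 < k,\ 0 \le j_2 < k'} m(j_1,j_2)\, \rho^{(j_1,j_2)}
\end{equation*}
for certain nonnegative integer multiplicities $m(j_1,j_2)$.

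First I would reformulate condition (1). Writing
\begin{equation*}
X(q_1,q_2) \equiv \sum_{\substack{0 \le j_1 < k \\ 0 \le j_2 < k'}} a(j_1,j_2) q_1^{j_1} q_2^{j_2} \pmod{q_1^k-1,\ q_2^{k'}-1},
\end{equation*}
and using $\omega(c)^k = 1$ and $\omega'(c')^{k'} = 1$, the evaluation $X(\omega(c),\omega'(c'))$ is unchanged by reduction modulo $(q_1^k-1,q_2^{k'}-1)$, so (1) is equivalent to
\begin{equation*}
\sum_{j_1,j_2} a(j_1,j_2)\, \rho^{(j_1,j_2)}(c,c') = \chi_X(c,c') \quad \text{for all } (c,c') \in G.
\end{equation*}
By linear independence of the irreducible characters of $G$, this identity holds for all $(c,c')$ if and only if $a(j_1,j_2) = m(j_1,j_2)$ for every pair $(j_1,j_2)$.

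Next I would identify $m(j_1,j_2)$ with the orbit-theoretic quantity in (2). Decomposing $X$ into $G$-orbits $X = \bigsqcup_{\mathcal O} G/\mathrm{Stab}(\mathcal O)$, the permutation representation splits as $\C[X] = \bigoplus_{\mathcal O} \mathrm{Ind}_{\mathrm{Stab}(\mathcal O)}^{G} \mathbf 1$. Frobenius reciprocity then gives
\begin{equation*}
\langle \rho^{(j_1,j_2)}, \mathrm{Ind}_{\mathrm{Stab}(\mathcal O)}^{G} \mathbf 1 \rangle_G = \langle \mathrm{Res}^{G}_{\mathrm{Stab}(\mathcal O)} \rho^{(j_1,j_2)}, \mathbf 1 \rangle_{\mathrm{Stab}(\mathcal O)} = \delta\bigl(\mathrm{Stab}(\mathcal O) \subseteq \ker \rho^{(j_1,j_2)}\bigr),
\end{equation*}
so $m(j_1,j_2)$ counts exactly the orbits $\mathcal O$ whose stabilizer lies in $\ker \rho^{(j_1,j_2)}$, which is the description in (2). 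Combining the two equivalences yields (1) $\Leftrightarrow$ (2).

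There is no genuine obstacle here; the content is a clean application of character orthogonality for finite abelian groups together with Frobenius reciprocity. The only point to watch is that the reduction modulo $(q_1^k-1,q_2^{k'}-1)$ does not affect evaluations at $(\omega(c),\omega'(c'))$, which is immediate from the orders of the roots of unity involved, and that since $G$ is abelian every element forms its own conjugacy class so the equality of class functions is literally a pointwise equality on $G$.
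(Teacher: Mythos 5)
Your proof is correct: reducing modulo $(q_1^k-1,q_2^{k'}-1)$ does not change evaluations at $(\omega(c),\omega'(c'))$, the injectivity of the embeddings $\omega,\omega'$ guarantees that the characters $\rho^{(j_1,j_2)}$ exhaust the irreducible characters of the abelian group $C_k\times C_{k'}$ so that linear independence pins down the coefficients, and Frobenius reciprocity applied to $\mathbb{C}[X]=\bigoplus_{\mathcal O}\mathrm{Ind}_{\mathrm{Stab}(\mathcal O)}^{G}\mathbf 1$ identifies the multiplicities with the orbit counts in (2). Note that the paper itself does not prove this proposition but quotes it from \cite{BRS}; your character-theoretic argument is essentially the standard proof given there, so there is no substantive divergence to report.
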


\begin{definition}
When either of the above two conditions holds, we say that the triple $(X,C_k\times C_{k'},X(q_1,q_2))$ exhibits the \emph{bicyclic sieving phenomenon}.
\end{definition}

In this section, we let $\g = D_n^{(1)}~(n\equiv_2 0)$.
In this case,
$(a_i^\vee)_{i=0}^n = (1,1,2,2,\ldots,2,1,1)$,
\[\wtsvv^{(1)} = (s_i^{(1)})_{i=0}^n = (0,0,\ldots,0,2,2) \quad \text{and} \quad \wtsvv^{(2)} = (s_i^{(2)})_{i=0}^n = (0,2,0,2,0,\ldots,2,0).\]
We set
$$\fks^{(1)} \seteq \frac{1}{2}\wtsvv^{(1)} = (0,0,\ldots,0,1,1) \quad \text{and} \quad \fks^{(2)} \seteq \frac{1}{2}\wtsvv^{(2)} = (0,1,0,1,0,\ldots,1,0).$$

Let
\[\upsig_1 = (0,n)(1,n-1) \in \Sym_{[0,n]} \quad \text{and} \quad \upsig_2 = (0,1)(2,3)\cdots(n-4,n-3)(n-1,n) \in \Sym_{[0,n]}.\]
Note that $\upsig_1$ and $\upsig_2$ commute to each other in $\Sym_{[0,n]}$, so $\inn{\upsig_1, \upsig_2} \simeq C_2 \times C_2$. Thus, we can define a $C_2 \times C_2 = \inn{\siggen_2} \times \inn{\siggen_2}$-action on $\pclp{\ell}$ by
\begin{align}\label{eq: C2 times C2 action}
(\siggen_2,e) \cdot \sum_{0 \le i \le n} m_i \Lambda_i \seteq \sum_{0 \le i \le n} m_{\upsig_1(i)} \Lambda_{i} \quad \text{and} \quad (e,\siggen_2) \cdot \sum_{0 \le i \le n} m_i \Lambda_i \seteq \sum_{0 \le i \le n} m_{\upsig_2(i)} \Lambda_{i}.
\end{align}
Here $e$ denotes the identity of $C_2$.
Note that
\begin{align}\label{eq: Dn1 even Cact_1 reason}
a_j^\vee = a_{\upsig_k(j)}^\vee \quad \text{and} \quad \begin{cases}
\left\{\fks^{(k)}_j, \fks^{(k)}_{\upsig_k(j)}\right\} = \{ 0,1 \} & \text{if $\upsig_k(j) \neq j$,}\\[1ex]
\fks^{(k)}_j = 0 & \text{if $\upsig_k(j) = j$,}
\end{cases}
\end{align}
for any $k=1,2$ and $j = 0,1,\ldots,n$.

Let
\[
\pclp{\ell}(q_1,q_2) \seteq \sum_{\Lambda \in \pclp{\ell}} q_1^{\ev_{\fks^{(1)}}(\Lambda)}q_2^{\ev_{\fks^{(2)}}(\Lambda)},  
\]
where $\ev_{\fks^{(t)}}(\Lambda): \pclp{\ell} \ra \Z_{\ge 0 } ~(t=1,2)$ is defined as follow:
\[ \sum_{0 \le i \le n} m_i \Lambda_i \mapsto  \fks^{(t)} \bullet \mathbf{m}. \]
Alternatively, $\pclp{\ell}(q_1,q_2)$ can be defined by the geometric series as in the other affine types:
\begin{align}\label{eq: ge series Dn1}
\sum_{\ell \ge 0} \pclp{\ell}(q_1,q_2)t^\ell \seteq \prod_{0 \le i \le n} \dfrac{1}{1-q_1^{\fks^{(1)}_i}q_2^{\fks^{(2)}_i}t^{a_i^\vee}}.
\end{align}
We set
$$\ev_{\fks} (\Lambda) \seteq (\evfks{1}(\Lambda),\evfks{2}(\Lambda)).$$
Note that all components of $\svv^{(1)}$ and $\svv^{(2)}$ are even.
Therefore, by Theorem~\ref{thm: eq rel and sieving}, we have
\begin{equation}\label{eq: Dnotcong_n40}
\begin{aligned}
    \pclp{\ell}(q_1,q_2)
    & = \sum_{i_1, i_2 \ge 0} \left|\left\{\Lambda \in \pclp{\ell} \; \middle| \; \evfks{1}(\Lambda) = i_1 \text{ and } \evfks{2}(\Lambda) = i_2 \right\}\right|q_1^{i_1} q_2^{i_2} \\
    & \equiv \sum_{0 \le i_1, i_2 \le 1} \left|\left\{\Lambda \in \pclp{\ell} \; \middle| \; \evfks{1}(\Lambda) \equiv_2 i_1 \text{ and } \evfks{2}(\Lambda) \equiv_2 i_2 \right\}\right|
    q_1^{i_1} q_2^{i_2} \pmod{q_1^{2} - 1, q_2^{2} - 1}.
\end{aligned}
\end{equation}
For simplicity, we let
\[ \pclp{\ell}(q_1,q_2) \equiv \sum_{0\le i_1,i_2 \le 1} b_{(i_1,i_2)} q^{i_1}q^{i_2} \pmod{q_1^2 - 1, q_2^2 - 1}.\]
Throughout this subsection, we set
\[ \eta_1' = n-3 \quad \text{and} \quad \eta_2' = \frac{n-4}{2}. \]

\begin{theorem}\label{eq: Dn1 even biCSP}
Under the $C_2 \times C_2$-action given in~\eqref{eq: C2 times C2 action},
\[\left( \pclp{\ell},C_2\times C_2, \pclp{\ell}(q) \right)\]
exhibits the bicyclic sieving phenomenon.
\end{theorem}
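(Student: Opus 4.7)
By Proposition~\ref{prop: biCSP} it suffices to verify the fixed-point identity
\[
\pclp{\ell}\bigl(\omega(c), \omega'(c')\bigr) \;=\; \bigl|\{\Lambda\in\pclp{\ell} : (c,c')\cdot \Lambda = \Lambda\}\bigr|
\]
for each $(c,c') \in C_2 \times C_2$, where $\omega,\omega' : C_2\hookrightarrow\{\pm 1\}$ are the natural embeddings. The identity case is immediate from $\pclp{\ell}(1,1)=|\pclp{\ell}|$; the three nontrivial cases will be handled by adapting the two-layer strategy of Section~\ref{subsec: CSP Dn1 odd} to each non-identity element $g$ individually.

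For each such $g$, we choose a permutation $\tau_g\in\Sym_{[0,n]}$ that rearranges $(a_0^\vee,\ldots,a_n^\vee)=(1,1,2,\ldots,2,1,1)$ into the prefix form $(1,1,1,1,2,\ldots,2)$ and simultaneously conjugates the action of $g$ on $\pclp{\ell}$ into a block-pair-compatible permutation of the rearranged coordinates. Via $\funcPcltoZ$ of~\eqref{eq: funcPcltoZ}, the set $\tau_g\cdot\pclp{\ell}$ is identified with some $\bM_\ell(rd,d;\bnu,\bnu')$ from Section~\ref{sec: Sagan action and general}, and the Sagan-type $C_2$-action $\sqact_{rd,d}$ on this set transfers to a $C_2$-action on $\pclp{\ell}$. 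Natural choices, paralleling Table~\ref{table: tau and tuset}, are $\bM_\ell(2,1;(1^2),(2^{\eta_1'}))$ for $g_1=(\siggen_2,e)$ (sending the $\upsig_1$-pairs $\{0,n\},\{1,n-1\}$ to the coordinate-pairs $\{0,1\},\{2,3\}$) and $\bM_\ell\bigl(2,1;(1^2,2^{\eta_2'}),(2)\bigr)$ for $g_2=(e,\siggen_2)$ and $g_3=(\siggen_2,\siggen_2)$ (sending the pair structures of $\upsig_2$ and $\upsig_2\upsig_1 = (0,n-1)(1,n)(2,3)\cdots(n-4,n-3)$ respectively into consecutive coordinate-pairs, with the unique fixed index $n-2$ placed at position $n$).

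For each non-identity $g$, the proof then splits into two lemmas in the spirit of Lemmas~\ref{lem: Dn1 fixed by hsig}--\ref{lem: Dn1 hsig' and Cact^2}: a \emph{counting lemma} $|\bM_\ell(rd,d;\bnu,\bnu')^{C_2}| = \pclp{\ell}(\omega_g)$, and a \emph{bijection lemma} $|(\pclp{\ell})^g| = |\bM_\ell(rd,d;\bnu,\bnu')^{C_2}|$. The counting lemma mirrors Lemma~\ref{lem: Dn1 fixed by hsig}: break $\funcMtoW(\mathbf{m})$ into length-$2$ subblocks, locate the first non-invariant block $w^{j_0}$, and run a case analysis showing that (i) Sagan-fixed elements $\mathbf{m}$ correspond to $\Lambda\in\pclp{\ell}$ with $(\evfks{1}(\Lambda),\evfks{2}(\Lambda))\bmod 2$ lying in the particular parity class on which $\omega_g$ evaluates to $+1$, and (ii) Sagan-non-fixed elements pair up via $\siggen_2\sqact_{rd,d}$ with elements of the complementary parity class, so that the alternating sum in~\eqref{eq: Dnotcong_n40} telescopes to exactly the Sagan-fixed count. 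The bijection lemma parametrizes both sides explicitly (the left side from the cycle structure of $\upsig_1$, $\upsig_2$, $\upsig_2\upsig_1$ on coordinates; the right side from the block shape of $\funcMtoW(\mathbf{m})$ forced by Sagan invariance), and the bijection $\Theta_g$ is read off as in the proof of Lemma~\ref{lem: Dn1 hsig' and Cact^2}.

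The main obstacle is the counting lemma for $g_3=(\siggen_2,\siggen_2)$. The permutation $\upsig_2\upsig_1$ interlocks the four-index swap coming from $\upsig_1$ with the pair-swaps of $\upsig_2$, so after the rearrangement $\tau_{g_3}$ the Sagan shift must be tracked simultaneously on the $(1^2)$-prefix block and on the $(2,2)$-pair blocks, whereas the $n$-odd case of Section~\ref{subsec: CSP Dn1 odd} treated these via two separate arguments. The case analysis on $w^{j_0}$ must accordingly keep track of both parities $(\evfks{1},\evfks{2})\bmod 2$ introduced by each possible Sagan shift. Once the three pairs of lemmas are in place, combining them with condition (2) of Proposition~\ref{prop: biCSP} yields the bicyclic sieving phenomenon.
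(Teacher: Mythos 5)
Your treatment of the identity element and of $g_1=(\siggen_2,e)$, $g_2=(e,\siggen_2)$ coincides with the paper's (its Lemmas~\ref{lem: Dn1 even fixed by hsig1}--\ref{lem: Dn1 even hsig2 and Cact2}, using exactly the sets $\bM_\ell(2,1;(1^2),(2^{\eta'_1}))$ and $\bM_\ell(2,1;(1^2,2^{\eta'_2}),(2))$). The gap is in your plan for the diagonal element $g_3=(\siggen_2,\siggen_2)$, and it is not merely technical. The character you must match is $(-1)^{\evfks{1}+\evfks{2}}$, i.e.\ $\pclp{\ell}(-1,-1)=b_{(0,0)}-b_{(1,0)}-b_{(0,1)}+b_{(1,1)}$. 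Under any $\tau_{g_3}$ adapted to $\upsig_1\upsig_2=(0,n-1)(1,n)(2,3)\cdots(n-4,n-3)$, the Claim~2--style step of your counting lemma fails on the first two blocks: the combined statistic $\fks^{(1)}+\fks^{(2)}$ takes the values $(0,0)$ on the pair $\{0,n-1\}$ and $(1,1)$ on the pair $\{1,n\}$ modulo $2$, so a Sagan shift inside either of these blocks changes $\evfks{1}$ and $\evfks{2}$ simultaneously and leaves the sign $(-1)^{\evfks{1}+\evfks{2}}$ unchanged. Consequently the free orbits pair $(0,0)\leftrightarrow(1,1)$ and $(1,0)\leftrightarrow(0,1)$ rather than a $+1$ class with a $-1$ class, and the alternating sum does not telescope to the Sagan-fixed count. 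In fact, since the same set $\bM_\ell(2,1;(1^2,2^{\eta'_2}),(2))$ already satisfies $|\bM^{C_2}|=b_{(0,0)}+b_{(1,0)}-b_{(0,1)}-b_{(1,1)}$ by the $g_2$ counting lemma, the identity you want for $g_3$ is \emph{equivalent} to $b_{(1,0)}=b_{(1,1)}$; no amount of "keeping track of both parities" in the case analysis on $w^{j_0}$ will produce this equality, which is an extra symmetry requiring its own proof.

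The paper closes this case without any third Sagan analysis: it observes directly from the explicit parametrizations of the fixed-point sets that $\left|\left(\pclp{\ell}\right)^{(\siggen_2,\siggen_2)}\right|=\left|\left(\pclp{\ell}\right)^{(e,\siggen_2)}\right|$ (Remark~\ref{rem: Dn1 even fixed Cact2 and Cact1Cact2}), and it proves $b_{(1,0)}=b_{(1,1)}$ by an explicit bijection $\uppsi$ between the classes $\pclp{\ell}((\ell-1)\Lambda_0+\Lambda_{n-1})$ and $\pclp{\ell}((\ell-1)\Lambda_0+\Lambda_{n})$ (Lemma~\ref{lem: Dn1 even X_10 X_11 bijection}), whence $\pclp{\ell}(-1,-1)=\pclp{\ell}(1,-1)=\left|\left(\pclp{\ell}\right)^{(e,\siggen_2)}\right|=\left|\left(\pclp{\ell}\right)^{(\siggen_2,\siggen_2)}\right|$. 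To repair your proposal you would need to supply an analogue of this bijection (or some equivalent involution exchanging the $+1$ and $-1$ character classes among the non-fixed elements); as written, the diagonal case is unproved.
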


Take
\begin{align*}
\tau_1 &= (n, n-2,\ldots,2,1)(n-1,n-3,\ldots,3)\in \Sym_{[0,n]} \text{ and } \\
\tau_2 &= (n-1, n-3, \ldots, 3, n, n-2,\ldots, 2) \in \Sym_{[0,n]}.
\end{align*}
Then we have
\begin{align*}
\tau_i \cdot (a_0^\vee,a_1^\vee,\ldots,a_n^\vee) & = \tau_i \cdot (1,1,2,2,\ldots,2,1,1) = (1,1,1,1,2,2,\ldots,2) \qquad \text{ for } i=1,2.  
\end{align*}
(a) By breaking $(1,1,1,1,2,2,\ldots,2)$ into $(1,1 \ | \ 1,1 \ | \ 2,2,\ldots,2)$,
the image of $\tau_1 \cdot \pclp{\ell}$ under $\funcPcltoZ$ can be identified with $\bM_\ell(2,1;(1^2),(2^{\eta'_1})) \subset \Z^{n+1}_{\ge 0}$ and we can define the $C_2$-action on $\tau_1 \cdot \pclp{\ell}$ by
\begin{align}\label{eq: siggen tau_1 act}
\siggen_2 \sqact_{2,1} \Lambda =  \funcPcltoZ^{-1}( \siggen_2 \sqact_{2,1} \funcPcltoZ(\Lambda)) \quad \text{for $\Lambda \in \tau_1 \cdot \pclp{\ell}$}.
\end{align}
(b) By breaking $(1,1,1,1,2,2,\ldots,2)$ into $ (1,1 \ | \ 1,1 \ | \ 2,2 \ | \ 2,2 \ | \ \cdots \ | \ 2,2 \ | \ 2)$,
the image of $\tau_2 \cdot \pclp{\ell}$ under $\funcPcltoZ$ can be identified with $\bM_\ell(2,1;(1^2, 2^{\eta'_2}),(2))\subset \Z^{n+1}_{\ge 0}$ and we can define the $C_2$-action on $\tau_2 \cdot \pclp{\ell}$ by
\begin{align}\label{eq: siggen tau_2 act}
\siggen_2 \sqact_{2,1} \Lambda =  \funcPcltoZ^{-1}( \siggen_2 \sqact_{2,1} \funcPcltoZ(\Lambda)) \quad \text{for $\Lambda \in \tau_2 \cdot \pclp{\ell}$}.
\end{align}

\begin{lemma}\label{lem: Dn1 even fixed by hsig1}
Under the $C_2$-action on $\bM_\ell(2,1;(1^2),(2^{\eta'_1}))$ given in \eqref{eq: hsigaction double},
\[\left| \bM_\ell(2,1;(1^2),(2^{\eta'_1}))^{C_2} \right|
= b_{(0,0)} - b_{(1,0)} + b_{(0,1)} - b_{(1,1)} = \pclp{\ell}(-1,1).
\]
\end{lemma}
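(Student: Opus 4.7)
The proof closely follows the strategy of Lemma~\ref{lem: Dn1 fixed by hsig'}, now tracking two parities simultaneously. Set $X \seteq \tau_1 \cdot \pclp{\ell}$ equipped with the $C_2$-action $\sqact_{2,1}$ from~\eqref{eq: siggen tau_1 act}, so that $|\bM_\ell(2,1;(1^2),(2^{\eta'_1}))^{C_2}| = |X^{C_2}|$. Let $\whfksone \seteq \tau_1 \cdot \fks^{(1)}$ and $\whfkstwo \seteq \tau_1 \cdot \fks^{(2)}$. A short calculation using the cycle structure of $\tau_1$ (which fixes $0$, sends $1 \mapsto n$, $2 \mapsto 1$, $3 \mapsto n-1$, and $j \mapsto j-2$ for $j \ge 4$) yields
\[
\whfksone = (0, 1, 0, 1, 0, 0, \ldots, 0) \quad \text{and} \quad \whfkstwo = (0, 0, 1, 1, 0, 1, 0, 1, \ldots, 0, 1, 0),
\]
so $\whfksone$ has support $\{1, 3\}$ and $\whfkstwo$ has support $\{2, 3, 5, 7, \ldots, n-1\}$. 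Using the equivariance $\evfks{t}(\Lambda) = \widehat{\fks}^{(t)} \bullet \funcPcltoZ(\tau_1 \cdot \Lambda)$, partition $X = \bigsqcup_{(i_1, i_2) \in \{0, 1\}^2} X(i_1, i_2)$ where $X(i_1, i_2) \seteq \tau_1 \cdot \{\Lambda \in \pclp{\ell} \mid \evfks{t}(\Lambda) \equiv_2 i_t \text{ for } t = 1, 2\}$, so that $|X(i_1, i_2)| = b_{(i_1, i_2)}$ by~\eqref{eq: Dnotcong_n40}.

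The argument then reduces to two claims, parallel to those in Lemma~\ref{lem: Dn1 fixed by hsig'}: (i) if $\bm \in X^{C_2}$ then $\whfksone \bullet \bm \equiv_2 0$, so $X^{C_2} \subset X(0, 0) \cup X(0, 1)$; (ii) if $\bm \in X \setminus X^{C_2}$ then $\whfksone \bullet (\bm - \siggen_2 \sqact_{2, 1} \bm) \equiv_2 1$ and $\whfkstwo \bullet (\bm - \siggen_2 \sqact_{2, 1} \bm) \equiv_2 0$. Granting these, (ii) makes $\siggen_2 \sqact_{2,1}$ a bijection between the non-fixed elements of $X(0, i_2)$ and those of $X(1, i_2)$ for each $i_2 \in \{0, 1\}$, while (i) shows $X(1, 0) \cup X(1, 1) \subset X \setminus X^{C_2}$; therefore $|X(0, i_2) \setminus X^{C_2}| = b_{(1, i_2)}$ and
\[
|X^{C_2}| = b_{(0, 0)} + b_{(0, 1)} - b_{(1, 0)} - b_{(1, 1)} = \pclp{\ell}(-1, 1),
\]
where the last equality is a direct specialisation of~\eqref{eq: Dnotcong_n40}.

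To verify (i), a $C_2$-fixed $\bm$ forces each length-$2$ subword of $\funcMtoW(\bm_{\le 3})$ to be constant; since $\funcMtoW(\bm_{\le 3})$ contains exactly three zeros, the only admissible distribution of them into $00$/$11$ blocks plus a length-$\le 1$ tail forces $\bm_{\le 3} = (2k_1, 0, 2k_2, 0)$, and then $\whfksone \bullet \bm = 0$ is immediate. For (ii), the first non-fixed length-$2$ block $w^{j_0}$ of $\funcMtoW(\bm_{\le 3})$ must be $01$ or $10$; applying~\eqref{eq: definition inverse of Psi} in each subcase shows that $\siggen_2 \sqact_{2, 1}$ alters $\bm$ by $\pm (1, -1)$ in exactly one of the coordinate pairs $(m_0, m_1)$ or $(m_2, m_3)$. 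Since $\whfksone$ vanishes at coordinates $0$ and $2$ but equals $1$ at coordinates $1$ and $3$, its dot product with the difference shifts by $\pm 1$; since $\whfkstwo$ vanishes on $\{0, 1\}$ and is constantly $1$ on $\{2, 3\}$, its dot product shifts by $0$. The main technical obstacle is this simultaneous parity check: the support of $\whfkstwo$ must be arranged so that every elementary Sagan swap induces an even change in $\evfks{2}$, which is exactly what the choice of $\tau_1$ accomplishes and explains why the specialisation $q_2 = 1$ produces a genuine fixed-point count.
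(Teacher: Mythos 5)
Your proof is correct and follows essentially the same route as the paper's: transport everything to $\tau_1\cdot\pclp{\ell}$, show via the word model that fixed points have the form $(2k_1,0,2k_2,0,m_4,\ldots,m_n)$ so their $\whfksone$-evaluation vanishes, show the involution shifts the $\whfksone$-parity by $1$ on non-fixed points, and then count orbits. Your additional observation that each elementary swap changes $\bm$ by $\pm(1,-1)$ inside $(m_0,m_1)$ or $(m_2,m_3)$ and hence preserves the $\whfkstwo$-parity is true, but it is an unneeded refinement: the paper gets the same count from the single bijection between $\bigl(X_1(0,0)\cup X_1(0,1)\bigr)\setminus X_1^{C_2}$ and $X_1(1,0)\cup X_1(1,1)$ without tracking the second statistic.
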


\begin{proof}
For simplicity, we set $X_1 \seteq \tau_1 \cdot \pclp{\ell}$ and $X_1(i_1,i_2) \seteq \tau_1 \cdot \left\{\Lambda \in \pclp{\ell} \; \middle| \; \evfks{1}(\Lambda) \equiv_2 i_1 \text{ and } \evfks{2}(\Lambda) \equiv_2 i_2 \right\}$. Note the following:
\begin{itemize}
\item $C_2$-orbits are of length $1$ or $2$.
\item Under the $C_2$-action on $X_1$ given in~\eqref{eq: siggen tau_1 act}, $\left| \bM_\ell(2,1;(1^2),(2^{\eta'_1}))^{C_2} \right| = \left| X_1^{C_2} \right|$.
\item For any $\Lambda \in \pclp{\ell}$, $\evfks{1}(\Lambda) =  \whfksone \bullet  \funcPcltoZ(\tau \cdot \Lambda),$ where $\whfksone \seteq \tau \cdot \fks^{(1)} = (0,1,0,1,0,0,\ldots,0)$.
\end{itemize}

Suppose that the following claims hold (which will be proven below):\\[-1ex]

{\it Claim 1.} Let $\Lambda \in X_1^{C_2}$ and $\mathbf{m} = \funcPcltoZ(\Lambda)$. Then $\whfksone \bullet \mathbf{m} = 0$ and hence $X_1^{C_2} \subset X_1(0,0)\cup X_1(0,1)$.\\[-1ex]

{\it Claim 2.} Let $\Lambda \in X_1 \setminus X_1 ^{C_2}$ and $\mathbf{m} = \funcPcltoZ(\Lambda)$. Then
\[\whfksone \bullet (\mathbf{m} -\siggen_2 \sqact_{2,1} \mathbf{m}) \equiv_2 1.\]

By {\it Claim 1}, we have
\begin{align}\label{eq: Dn1 even fixed supset}
X_1(1,1) \cup X_1(1,0)\subset X_1 \setminus X_1^{C_2}.
\end{align}

Let $\Lambda \in \left(X_1(0,0) \cup X_1(0,1)\right) \cap X_1 \setminus X_1^{C_2}$ and $\bm = \funcPcltoZ(\Lambda)$. Note that
\[ \whfksone \bullet  \funcPcltoZ(\tau \cdot \Lambda) \equiv_2 \begin{cases}
0 & \text{if $\Lambda \in X_1(0,0) \cup X_1(0,1)$},\\
1 & \text{if $\Lambda \in X_1(1,0) \cup X_1(1,1)$}.
\end{cases}  \]
By {\it Claim 2}, we have $\whfksone \bullet (\siggen_2 \sqact_{2,1} \mathbf{m}) \equiv_2 1$ and thus $\siggen_2 \sqact_{2,1} \Lambda \in X_1(1,0) \cup X_1(1,1)$.
So, we obtain a bijection from $\left(X_1(0,0) \cup X_1(0,1)\right) \cap X_1 \setminus X_1^{C_2}$ to $\left(X_1(1,0) \cup X_1(1,1)\right) \cap X_1 \setminus X_1^{C_2}$ by mapping $\Lambda$ to $\siggen_2 \sqact_{2,1} \Lambda$.
By~\eqref{eq: Dn1 even fixed supset}, we have
\begin{align*}
\left|\left(X_1(0,0) \cup X_1(0,1)\right) \cap X_1 \setminus X_1^{C_2}   \right|
&= \left|\left(X_1(1,0) \cup X_1(1,1)\right) \cap X_1 \setminus X_1^{C_2}   \right|\\
&= \left|X_1(1,0) \cup X_1(1,1) \right|
= \left|\pclp{\ell}(1,0)\right| + \left|\pclp{\ell}(1,1)\right|.
\end{align*}
Finally we have
\[ \left| \bM_\ell(2,1;(1^2),(2^{\eta'_1}))^{C_2} \right| = \left| X_1^{C_2} \right|  = b_{(0,0)} - b_{(1,0)} + b_{(0,1)} - b_{(1,1)}. \]

To complete the proof, we have only to verify {\it Claim 1} and {\it Claim 2}.

\noindent
(a) For {\it Claim 1,} suppose $\Lambda \in X_1^{C_2}$. Let $\mathbf{m} = \funcPcltoZ(\Lambda)$ and $\mathbf{w} = \funcMtoW(\mathbf{m})$. Break $\mathbf{w}$ into subwords
\begin{align}\label{eq: Dn1 even word break1}
w^1 \ | \ w^2 \ | \ \cdots \ | \ w^{t} \ | \ w_{2t+1}w_{2t+2}\cdots w_u,
\end{align}
where
\begin{itemize}
\item $w^j$ is of length $2$ for $1 \le j \le t$,
\item $w^{t}$ contains the $4$th zero when we read $\bw$ from left to right,
\item $w_j = 0~\text{or}~2$ for $2t+1 \le j \le u$.
\end{itemize}
Since $\Lambda \in X_1^{C_2}$, $\mathbf{w}$ should be of the form
\begin{align*}
\underbracket{1 \cdots 1}_{2k_1} \ | \ 00 \ | \ \underbracket{1 \cdots 1}_{2k_2} \ | \ 0 \blo \ | \  w_{2t+1}w_{2t+2}\cdots w_u
\end{align*}
for $k_1,k_2 \in \Z_{\ge 0}$. Therefore,
\begin{align}\label{eq: Dn1 even fixed dwt}
\mathbf{m} = (2k_1,0,2k_2,0,m_4, m_5,\ldots,m_n)
\end{align}
for some $m_4,m_5,\ldots,m_n \in \Z_{\ge 0}$ such that $2k_1 + 2k_2 + \sum_{4 \le j \le n} 2m_j = \ell$. Thus, we have
\[ \whfksone \bullet \mathbf{m} = (0,1, 0,1, 0,0,\ldots,0) \bullet (2k_1,0,2k_2,0,m_4, m_5,\ldots,m_n) = 0. \]

\noindent
(b) For {\it Claim 2}, suppose $\Lambda \in X \setminus X^{C_2}$. Let $\mathbf{m} = \funcPcltoZ(\Lambda)$ and $\mathbf{w} = \funcMtoW(\mathbf{m})$. Break $\mathbf{w}$ into subwords
\[ w^1 \ | \ w^2 \ | \ \cdots \ | \ w^{t} \ | \ w_{2t+1}w_{2t+2}\cdots w_u, \]
as~\eqref{eq: Dn1 even word break1}. Since $\Lambda \in X \setminus X^{C_2}$, there exists the smallest integer $1 \le j_0 \le t$ such that $\siggen_2 \cdot w^{j_0} \neq w^{j_0}$. Note that $w^{j_0}$ should be one of
\[10,~~01.\]

Note that if there are $1 \le j_1 < j_2 < j_0$ such that $w^{j_1} = w^{j_2} = 00$ then, by~\eqref{eq: definition of Psi}, $1$ can not appear in $w^{j_0}$ since $\bm \in \bM_\ell(2,1;(1^2),(2^{\eta'_1}))$. Therefore, there is at most one $j \in\{1,2,\ldots, j_0-1\}$ such that $w^j = 00$. Thus, we have four cases as follows:
\[\begin{array}{l}
w^{j_0} = 10 ~\text{and}~\text{there is no $j \in \{1,2,\ldots, j_0-1\}$ such that $w^j = 00$},\\
w^{j_0} = 10 ~\text{and}~\text{there is one $j \in \{1,2,\ldots, j_0-1\}$ such that $w^j = 00$},\\
w^{j_0} = 01 ~\text{and}~\text{there is no $j \in \{1,2,\ldots, j_0-1\}$ such that $w^j = 00$, and}\\
w^{j_0} = 01 ~\text{and}~\text{there is one $j \in \{1,2,\ldots, j_0-1\}$ such that $w^j = 00$}.\\
\end{array}\]
We shall only give a proof for the case that $w^{j_0} = 10$ and there is no $j \in \{1,2,\ldots, j_0-1\}$ such that $w^j = 00$ since the other cases can be proved by the same argument. In this case, $\mathbf{w}$ is of the form
\[ \underbracket{1\cdots1}_{2k} \ | \ 10 \ | \ w^{j_0 +1} \ | \ \cdots \ | \ w^{t} \ | \  w_{2t+1}w_{2t + 2}\cdots w_{u} \]
for some $k \in \Z_{\ge 0 }$. Thus,
\[\siggen_2 \sqact_{2,1} \mathbf{w} = \underbracket{1\cdots1}_{2k} \ | \ 01 \ | \ w^{j_0 +1} \ | \ \cdots \ | \ w^{t_1} \ | \  w_{2t_1 + 1}w_{2t_1 + 2}\cdots w_{u} \]
and hence
\[ \siggen_2 \sqact_{2,1} \mathbf{m} = (m_0-1,m_1+1,m_2,m_3,\ldots,m_n). \]
Thus we have
\[ \whfksone \bullet (\mathbf{m} - \siggen_2 \sqact_{2,1} \mathbf{m}) = (0,1,0,1,0,0,\ldots,0) \bullet(1,-1,0,0,0,0,\ldots,0,0) \equiv_2 1. \]
This completes the proof.
\end{proof}

\begin{lemma}\label{lem: Dn1 even hsig1 and Cact1}
Under the $C_2$-action on $\bM_\ell(2,1;(1^2),(2^{\eta'_1}))$ given in \eqref{eq: hsigaction double} and the $\inn{(\siggen_2,e)}$-action on $\pclp{\ell}$ given in~\eqref{eq: C2 times C2 action}, we have
\[\bM_\ell(2,1;(1^2),(2^{\eta'_1}))^{C_2} = \left| \left( \pclp{\ell} \right)^{(\siggen_2,e)} \right|.\]
\end{lemma}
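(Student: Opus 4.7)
The strategy mirrors the proof of Lemma~\ref{lem: Dn1 hsig' and Cact^2}: I will describe both fixed-point sets explicitly, then exhibit a natural bijection between them. A preliminary simplification is that the second factor in $\bM_\ell(2,1;(1^2),(2^{\eta_1'}))$ uses $d=1$, so by Remark~\ref{rem: hsig remark}(2) the action $\sqact_1$ on $\bm_{\ge 4}$ is trivial; hence~\eqref{eq: hsigaction double} gives $\siggen_2 \sqact_{2,1} \bm = \bm$ if and only if $\siggen_2 \sqact \bm_{\le 3} = \bm_{\le 3}$. On the left-hand side, from $\upsig_1=(0,n)(1,n-1)$ and~\eqref{eq: C2 times C2 action} one reads off
$$\left(\pclp{\ell}\right)^{(\siggen_2,e)} = \left\{\sum_{i=0}^{n} m_i \Lambda_i \in \pclp{\ell} \;\middle|\; m_0 = m_n,\ m_1 = m_{n-1}\right\}.$$

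For the right-hand side, Algorithm~\ref{Algorithm} and Definition~\ref{def: def of funcMtoW} yield
$$\funcMtoW(\bm) = 1^{m_0}\, 0\, 1^{m_1}\, 0\, 1^{m_2}\, 0\, 1^{m_3}\, \blo\, 2^{m_4}\, 0\, 2^{m_5}\, 0 \cdots 0\, 2^{m_n}.$$
The word $\Uppsi(\bm_{\le 3})$ has exactly three zeros, at positions $p_1 = m_0+1$, $p_2 = m_0+m_1+2$, $p_3 = m_0+m_1+m_2+3$, and total length $u = m_0+m_1+m_2+m_3+3$. Breaking it into blocks of size $rd=2$, fixedness under $\siggen_2 \sqact$ forces each block to be either $00$ or $11$. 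Since three is odd, exactly one zero must occupy the length-$1$ tail; only $p_3$ can equal $u$ (forcing $m_3=0$), after which $p_1$ and $p_2$ must form a single $00$-block (forcing $m_1=0$ and $m_0$ even), and the $m_2$ intermediate ones must fill complete $11$-blocks (forcing $m_2$ even). Hence
$$\bM_\ell(2,1;(1^2),(2^{\eta_1'}))^{C_2} = \left\{(2k_1, 0, 2k_2, 0, m_4, \ldots, m_n) \;\middle|\; k_1, k_2, m_j \in \Z_{\ge 0},\ 2k_1 + 2k_2 + 2\sum_{j=4}^{n} m_j = \ell \right\}.$$

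The map
$$\Theta\left(\sum_{i=0}^{n} m_i \Lambda_i\right) \seteq (2m_0,\, 0,\, 2m_1,\, 0,\, m_2,\, m_3,\, \ldots,\, m_{n-2})$$
then provides the desired bijection, with inverse sending $(2k_1, 0, 2k_2, 0, m_4, \ldots, m_n)$ to $k_1(\Lambda_0 + \Lambda_n) + k_2(\Lambda_1 + \Lambda_{n-1}) + \sum_{j=2}^{n-2} m_{j+2}\Lambda_j$; checking that both sides satisfy the level condition $\ell$ is immediate. The main — and essentially only — obstacle is the block-by-block case analysis above, where one must rule out every alternative pairing of the three zeros of $\Uppsi(\bm_{\le 3})$: namely, that neither $p_1$ nor $p_2$ can equal $u$ (for strict positivity reasons), and that no zero can share a length-$2$ block with a $1$ without breaking fixedness.
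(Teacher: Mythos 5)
Your proposal is correct and follows essentially the same route as the paper: an explicit description of both fixed-point sets together with the bijection $\Theta\bigl(\sum_i m_i\Lambda_i\bigr)=(2m_0,0,2m_1,0,m_2,\ldots,m_{n-2})$, which is exactly the map used there. The only cosmetic difference is that you re-derive the description of $\bM_\ell(2,1;(1^2),(2^{\eta'_1}))^{C_2}$ from scratch via the parity analysis of the three zeros of $\Uppsi(\bm_{\le 3})$ (using that the $d=1$ factor carries the trivial action), whereas the paper imports one inclusion from the computation already made in the proof of Lemma~\ref{lem: Dn1 even fixed by hsig1} and verifies the other by writing out $\funcMtoW$ on elements of $Y$.
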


\begin{proof}
By~\eqref{eq: C2 times C2 action}, one can see that
\begin{align}\label{Dn1 even fixed by Cact_1}
\left( \pclp{\ell} \right)^{(\siggen_2,e)} = \left\{ (m_0,m_1,\ldots,m_n) \in \Z_{\ge 0}^{n+1} \; \middle| \; \begin{array}{l}
m_0 = m_n,~m_{1} = m_{n-1},~\text{and}\\
m_0 + m_1 + \sum_{2 \le j \le n-2}2m_j + m_{n-1} + m_n = \ell
\end{array} \right\}.
\end{align}

We claim that
\[\bM_\ell(2,1;(1^2),(2^{\eta'_1}))^{C_2} = \left\{ \left(2k_1,0,2k_2,0,m_4, \ldots,m_n\right) \; \middle| \; \begin{array}{l}
k_1,k_2,m_4\ldots, m_{n} \in \Z_{\ge 0} \\
2k_1 + 2k_2 + \sum_{4 \le j \le n} 2m_j = \ell
\end{array} \right\} =: Y.\]
By~\eqref{eq: Dn1 even fixed dwt}, $\bM_\ell(2,1;(1^2),(2^{\eta'_1}))^{C_2}$ is contained in $Y$.

On the contrary, for $\mathbf{m}' = (2k'_1,0,2k'_2,0,m'_4, m'_5,\ldots,m'_n) \in Y$, we have
\begin{align*}
\funcMtoW(\mathbf{m'}) &
= \underbracket{1 \cdots 1}_{2k'_1} \ | \ 00 \ | \ \underbracket{1 \cdots 1}_{2k'_2} \ | \ 0 0 \ | \ \underbracket{2\cdots 2}_{2m'_4} 0  \underbracket{2\cdots 2}_{2m'_5} 0 \cdots \cdots 0 \underbracket{2 \cdots 2}_{2m'_{n}}.
\end{align*}
Thus, $\siggen_2 \sqact_{2,1} \mathbf{m}' = \mathbf{m}'$ and hence the claim follows.

Thus, we have an obvious bijection $\Theta: \left( \pclp{\ell} \right)^{(\siggen_2,e)} \ra \bM_\ell(2,1;(1^2),(2^{\eta'_1}))^{C_2}$ defined by
\[ \Theta\left( \sum_{0 \le i \le n} m_i \Lambda_i \right) = ( 2m_0, 0, 2m_1, 0,m_2,m_3,\ldots,m_{n-2} ).\]
This completes the proof.
\end{proof}

\begin{lemma}\label{lem: Dn1 even fixed by hsig2}
Under the $C_2$-action on $\bM_\ell(2,1;(1^2, 2^{\eta'_2}),(2))$ given in \eqref{eq: hsigaction double},
\[\left| \bM_\ell(2,1;(1^2, 2^{\eta'_2}),(2))^{C_2} \right|
= b_{(0,0)} + b_{(1,0)} - b_{(0,1)} - b_{(1,1)} = \pclp{\ell}(1,-1).\]
\end{lemma}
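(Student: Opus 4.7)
The plan is to mimic the proof of Lemma~\ref{lem: Dn1 even fixed by hsig1}, replacing $\tau_1, \fks^{(1)}$ by $\tau_2, \fks^{(2)}$, so that one tracks the parity of $\evfks{2}(\Lambda)$ instead of $\evfks{1}(\Lambda)$. Concretely, set $X_2 \seteq \tau_2 \cdot \pclp{\ell}$ and $X_2(i_1,i_2) \seteq \tau_2 \cdot \{\Lambda \in \pclp{\ell} \mid \evfks{1}(\Lambda) \equiv_2 i_1,\ \evfks{2}(\Lambda) \equiv_2 i_2\}$. Under the identification $\funcPcltoZ(X_2) = \bM_\ell(2,1;(1^2, 2^{\eta'_2}),(2))$, the $C_2$-action $\sqact_{2,1}$ of~\eqref{eq: hsigaction double} transports to $X_2$ via~\eqref{eq: siggen tau_2 act}, so that $|\bM_\ell(2,1;(1^2,2^{\eta'_2}),(2))^{C_2}| = |X_2^{C_2}|$. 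Writing $\whfkstwo \seteq \tau_2 \cdot \fks^{(2)}$, I observe that $\evfks{2}(\Lambda) = \whfkstwo \bullet \funcPcltoZ(\tau_2 \cdot \Lambda)$ for every $\Lambda \in \pclp{\ell}$, and a direct inspection shows that every entry of $\whfkstwo$ lies in $\{0,1\}$.

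I will then establish two claims analogous to those in Lemma~\ref{lem: Dn1 even fixed by hsig1}. The first says that for each $\Lambda \in X_2^{C_2}$ with $\bm = \funcPcltoZ(\Lambda)$ one has $\whfkstwo \bullet \bm \equiv_2 0$; consequently $X_2^{C_2} \subset X_2(0,0) \cup X_2(1,0)$. I would prove this by applying Algorithm~\ref{Algorithm} together with the definition of $\sqact_{2,1}$ in~\eqref{eq: hsigaction double}: breaking $\funcMtoW(\bm)$ into subwords of length $2$ shows that each such block in a fixed point is either $00$ or consists of two equal nonzero letters, which pins down the structure of $\bm$ and forces $\whfkstwo \bullet \bm$ to be an even sum. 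The second claim says that for $\Lambda \in X_2 \setminus X_2^{C_2}$ with $\bm = \funcPcltoZ(\Lambda)$, one has $\whfkstwo \bullet (\bm - \siggen_2 \sqact_{2,1} \bm) \equiv_2 1$. I would prove this by locating the smallest index $j_0$ for which the $j_0$-th length-$2$ block $w^{j_0}$ is not $\siggen_2$-fixed and running a case analysis over the possible shapes of $w^{j_0}$ (namely $10$, $01$, $20$, $02$, together with the boundary blocks connecting the $\nu = 1$ region to the $\nu = 2$ region); in every case, $\sqact_{2,1}$ alters exactly two neighboring coordinates of $\bm$ whose $\whfkstwo$-values differ by $1$ modulo $2$.

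With the two claims in hand, the involution $\Lambda \mapsto \siggen_2 \sqact_{2,1} \Lambda$ restricts to a bijection between $(X_2(0,0) \cup X_2(1,0)) \cap (X_2 \setminus X_2^{C_2})$ and $X_2(0,1) \cup X_2(1,1)$, the latter lying entirely in $X_2 \setminus X_2^{C_2}$ by the first claim. It follows that
\[
|X_2^{C_2}| = \bigl(|X_2(0,0)| + |X_2(1,0)|\bigr) - \bigl(|X_2(0,1)| + |X_2(1,1)|\bigr) = b_{(0,0)} + b_{(1,0)} - b_{(0,1)} - b_{(1,1)},
\]
and the rightmost expression equals $\pclp{\ell}(1,-1)$ by~\eqref{eq: Dnotcong_n40}.

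The main obstacle will be the case analysis in the second claim at the boundary between positions $\le 3$ and positions $\ge 4$ of $\bm$, since $\tau_2$ permutes indices nontrivially there and $\whfkstwo$ is not uniformly alternating. However, because every coordinate of $\whfkstwo$ is $0$ or $1$ and each elementary toggling move affects exactly two neighboring coordinates of $\bm$ whose $\whfkstwo$-values have opposite parities, the verification is routine and parallels step~(b) in the proof of Lemma~\ref{lem: Dn1 even fixed by hsig1}.
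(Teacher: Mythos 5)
Your proposal is correct and follows essentially the same route as the paper: the paper likewise transports the $C_2$-action to $X_2=\tau_2\cdot\pclp{\ell}$, proves the same two claims (fixed points satisfy $\whfkstwo\bullet\bm\equiv_2 0$, while non-fixed points have $\whfkstwo\bullet(\bm-\siggen_2\sqact_{2,1}\bm)\equiv_2 1$, with the claims verified exactly as in Lemma~\ref{lem: Dn1 even fixed by hsig1}), and then concludes by the same orbit-pairing argument together with \eqref{eq: Dnotcong_n40}. One minor remark: $\whfkstwo=\tau_2\cdot\fks^{(2)}=(0,1,0,1,\ldots,0,1,0)$ is in fact uniformly alternating, so the boundary case you flag at the end is even simpler than you anticipate and your operative claim (the two affected neighboring coordinates have opposite $\whfkstwo$-parities) holds throughout.
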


\begin{proof}
For simplicity, we write $X_2 \seteq \tau_2 \cdot \pclp{\ell}$ and $X_2(i_1,i_2) \seteq \tau_2 \cdot \left\{\Lambda \in \pclp{\ell} \; \middle| \; \evfks{1}(\Lambda) \equiv_2 i_1 \text{ and } \evfks{2}(\Lambda) \equiv_2 i_2 \right\}$.
Note the following:
\begin{itemize}
\item $C_2$-orbits are of length $1$ or $2$.
\item Under the $C_2$-action on $X_2$ given in~\eqref{eq: siggen tau_2 act}, $\left| \bM_\ell(2,1;(1^2, 2^{\eta'_2}),(2))^{C_2} \right| = \left| X_2^{C_2} \right|$.
\item For any $\Lambda \in \pclp{\ell}$, $\evfks{2}(\Lambda) =  \whfkstwo \bullet  \funcPcltoZ(\tau \cdot \Lambda),$ where $\whfkstwo \seteq \tau \cdot \fks^{(2)} = (0,1,0,1,\ldots,0,1,0)$.
\end{itemize}
Suppose that the following claims hold:\\[-1ex]

{\it Claim 1.} Let $\Lambda \in X_2^{C_2}$ and let $\mathbf{m} = \funcPcltoZ(\Lambda)$. Then $\whfkstwo \bullet \mathbf{m} = 0$ and hence $X_2^{C_2} \subset X_2(0,0) \cup X_2(1,0)$.\\[-1ex]

{\it Claim 2.} Let $\Lambda \in X_2 \setminus X_2^{C_2}$ and let $\mathbf{m} = \funcPcltoZ(\Lambda)$. Then
\begin{align*}
\whfkstwo \bullet (\mathbf{m} - \siggen_2 \sqact_{2,1} \mathbf{m}) \equiv_2 1.
\end{align*}

By {\it Claim 1}, we have
\begin{align}\label{eq: Dn1 even fixed supset2}
X_2(0,1) \cup X_2(1,1)\subset  X_2 \setminus X_2^{C_2}.
\end{align}

Let $\Lambda \in \left(X_2(0,0) \cup X_2(1,0)\right) \cap X_2 \setminus X_2^{C_2}$ and $\bm = \funcPcltoZ(\Lambda)$. Note that
\[ \whfkstwo \bullet  \funcPcltoZ(\tau \cdot \Lambda) \equiv_2 \begin{cases}
0 & \text{if $\Lambda \in X_2(0,0) \cup X_2(1,0)$},\\
1 & \text{if $\Lambda \in X_2(0,1) \cup X_2(1,1)$}.
\end{cases}  \]
Therefore, by {\it Claim 2}, we have $\whfkstwo \bullet (\siggen_2 \sqact_{2,1} \mathbf{m}) \equiv_2 1$ and thus $\siggen_2 \sqact_{2,1} \Lambda \in X_2(0,1) \cup X_2(1,1)$.
So, we obtain a bijection from $\left(X_2(0,0) \cup X_2(1,0)\right) \cap X_2 \setminus X_2^{C_2}$ to $\left(X_2(0,1) \cup X_2(1,1)\right) \cap X_2 \setminus X_2^{C_2}$ by mapping $\Lambda$ to $\siggen_2 \sqact_{2,1} \Lambda$.
By~\eqref{eq: Dn1 even fixed supset2}, we have
\begin{align*}
\left|\left(X_2(0,0) \cup X_2(1,0)\right) \cap X_2 \setminus X_2^{C_2}   \right|
&= \left|\left(X_2(0,1) \cup X_2(1,1)\right) \cap X_2 \setminus X_2^{C_2}   \right|\\
&= \left|X_2(0,1) \cup X_2(1,1) \right|
= \left|\pclp{\ell}(0,1)\right| + \left|\pclp{\ell}(1,1)\right|.
\end{align*}
Finally we have
\[ \left| \bM_\ell(2,1;(1^2, 2^{\eta'_2}),(2))^{C_2} \right| = \left| X_2^{C_2} \right|  = b_{(0,0)} + b_{(1,0)} - b_{(0,1)} - b_{(1,1)}. \]

We omit the proof of {\it Claim1} and {\it Claim 2} since they can be proven in the same manner as those in the proof of Lemma~\ref{lem: Dn1 even fixed by hsig1}.
\end{proof}

\begin{lemma}\label{lem: Dn1 even hsig2 and Cact2}
Under the $C_2$-action on $\bM_\ell(2,1;(1^2, 2^{\eta'_2}),(2))$ given in \eqref{eq: hsigaction double} and the $\inn{(e,\siggen_2)}$-action on $\pclp{\ell}$ given in~\eqref{eq: C2 times C2 action}, we have
\[\bM_\ell(2,1;(1^2, 2^{\eta'_2}),(2))^{C_2} = \left| \left( \pclp{\ell} \right)^{(e,\siggen_2)} \right|.\]
\end{lemma}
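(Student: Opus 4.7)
The plan is to mirror the proof of Lemma~\ref{lem: Dn1 even hsig1 and Cact1}: I will describe both fixed-point sets explicitly as subsets of $\Z^{n+1}_{\ge 0}$ and exhibit an obvious level-preserving bijection between them. Since $\upsig_2 = (0,1)(2,3)\cdots(n-4,n-3)(n-1,n)$ leaves only the index $n-2$ fixed, reading off~\eqref{eq: C2 times C2 action} gives
\begin{align*}
\left(\pclp{\ell}\right)^{(e, \siggen_2)} = \left\{\sum_{0 \le i \le n} m_i\Lambda_i \in \pclp{\ell} \,\middle|\, m_{2i} = m_{2i+1}\text{ for } 0 \le i \le \eta'_2,\ m_{n-1} = m_n\right\},
\end{align*}
with $m_{n-2}$ constrained only by the level identity. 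I then claim that
\begin{align*}
\bM_\ell(2,1;(1^2, 2^{\eta'_2}),(2))^{C_2} = \left\{(2k_1, 0, 2k_2, 0, 2k_3, 0, \ldots, 2k_{n/2}, 0, m_n) \in \Z^{n+1}_{\ge 0}\,\middle|\, 2k_1 + 2k_2 + 4 \sum_{3 \le j \le n/2} k_j + 2m_n = \ell\right\} =: Y.
\end{align*}

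To verify the inclusion $\bM_\ell(2,1;(1^2, 2^{\eta'_2}),(2))^{C_2} \subseteq Y$, observe that~\eqref{eq: hsigaction double} together with Remark~\ref{rem: hsig remark}~(2) forces $\sqact_{2,1}$-fixed $\bm$ to satisfy $\siggen_2 \sqact \bm_{\le n-1} = \bm_{\le n-1}$ under the $d=2$ action (the $\sqact_1$-action on $\bm_n$ being trivial). Writing
\[\Uppsi(\bm_{\le n-1}) = 1^{m_0}\, 0\, 1^{m_1}\, 0\, 1^{m_2}\, 0\, 1^{m_3}\, 0\, 2^{m_4}\, 0\, \cdots\, 0\, 2^{m_{n-1}}\]
and requiring every length-$2$ block of its pair decomposition to be homogeneous, a direct case analysis of the alternating $0$-, $1$- and $2$-runs forces every $m_{2i+1}$ (for $0 \le i \le (n-2)/2$) to vanish and every $m_{2i}$ to be even. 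For the reverse inclusion, direct computation yields
\[\funcMtoW(\bm) = \underbracket{1\cdots 1}_{2k_1}\,|\,00\,|\,\underbracket{1 \cdots 1}_{2k_2}\,|\,00\,|\,\underbracket{2 \cdots 2}_{2k_3}\,|\,00\,|\,\cdots\,|\,00\,|\,\underbracket{2 \cdots 2}_{2k_{n/2}}\,|\,0\blo\,|\,\underbracket{2 \cdots 2}_{m_n}\]
for $\bm \in Y$, whence~\eqref{eq: hsigaction double} and~\eqref{eq: definition of Psi} give $\siggen_2 \sqact_{2,1} \bm = \bm$.

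Finally, I would define the bijection by
\[\Theta\!\left(\sum_{0 \le i \le n} m_i \Lambda_i\right) = (2m_0,\,0,\,2m_{n-2},\,0,\,2m_2,\,0,\,2m_4,\,0,\,\ldots,\,2m_{n-4},\,0,\,m_{n-1}).\]
Matching the three coefficient-$2$ parameters $(m_0, m_{n-2}, m_{n-1})$ with $(k_1, k_2, m_n)$ and the $\eta'_2$ coefficient-$4$ parameters $(m_2, m_4, \ldots, m_{n-4})$ with $(k_3, k_4, \ldots, k_{n/2})$ shows that $\Theta$ lands in $Y$ and preserves the level identity; it is manifestly invertible. The hardest part will be the structural analysis just sketched: ensuring that the interleaving of $1$- and $2$-segments separated by single $0$-separators in $\Uppsi(\bm_{\le n-1})$ admits a homogeneous pair decomposition only for the claimed alternating shape, which is more delicate than in Lemma~\ref{lem: Dn1 even hsig1 and Cact1} because odd-length interior $0$-runs can now arise at the interface between the $1$-part and the $2$-part of the word.
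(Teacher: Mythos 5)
Your proposal is correct and follows essentially the same route as the paper: an explicit description of $\left(\pclp{\ell}\right)^{(e,\siggen_2)}$ from~\eqref{eq: C2 times C2 action}, the characterization of $\bM_\ell(2,1;(1^2,2^{\eta'_2}),(2))^{C_2}$ as the set $Y$ via the word picture and both inclusions, and then an explicit bijection $\Theta$. In fact your $\Theta$, which sends $m_{n-2}$ to the second weight-$1$ block and $m_{n-1}=m_n$ to the final weight-$2$ entry, is genuinely level-preserving, whereas the paper's printed $\Theta$ contains a small transposition slip (its image does not satisfy the defining level equation unless $m_2=m_n$); apart from this harmless variant the two arguments coincide.
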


\begin{proof}
By~\eqref{eq: C2 times C2 action}, one can see that
\begin{align}\label{eq: fixed by Cact2}
\left( \pclp{\ell} \right)^{(e,\siggen_2)} = \left\{ \sum_{0 \le i \le n} m_i\Lambda_i \in \pclp{\ell} \; \middle| \; \begin{array}{l}
m_{2j} = m_{2j+1},~\text{for $j = 0,1,\ldots, \frac{n-4}{2}$},~ m_{n-1} = m_n,\\
m_0 + m_1 + \sum_{2 \le j \le n-2}2m_j + m_{n-1} + m_n = \ell
\end{array} \right\}.
\end{align}

We claim that
\[\bM_\ell(2,1;(1^2, 2^{\eta'_2}),(2))^{C_2} = \left\{ \left(2k_1,0,2k_2,0,\ldots,2k_{\frac{n}{2}},0,k_0\right) \; \middle| \; \begin{array}{l}
k_0,\ldots,k_{\frac{n}{2}} \in \Z_{\ge 0}\\
2k_0 + 2k_1 + 2k_2 + \sum_{3 \le j \le \frac{n}{2}} 4k_j = \ell
\end{array} \right\} =: Y. \]

Let $\mathbf{m} = \bM_\ell(2,1;(1^2, 2^{\eta'_2}),(2))^{C_2}$ and $\mathbf{w} = \funcMtoW(\mathbf{m})$. Break $\mathbf{w}$ into subwords
\begin{align}\label{eq: Dn1 even word break2}
w^1 \ | \ w^2 \ | \ \cdots \ | \ w^{t} \ | \ w_{2t+1}w_{2t+2}\cdots w_u,
\end{align}
where
\begin{itemize}
\item $w^j$ is of length $2$ for $1 \le j \le t$,
\item $w^{t}$ contains the $n$th zero when we read $\bw$ from left to right.
\end{itemize}
Since $\bm \in \bM_\ell(2,1;(1^2, 2^{\eta'_2}),(2))^{C_2}$, $\mathbf{w}$ should be of the form
\begin{align*}
\underbracket{1 \cdots 1}_{2k_1} \ | \ 00 \ | \ \underbracket{1 \cdots 1}_{2k_2} \ | \ 0 0 \ | \ \underbracket{2 \cdots 2}_{2k_3} \ | \ 0 0 \ | \ \underbracket{2 \cdots 2}_{2k_4} \ | \ \cdots \ | \ \underbracket{2 \cdots 2}_{2k_{\frac{n}{2}}} \ | \ 0 \blo \ | \ \underbracket{22\cdots 2}_{k_0},
\end{align*}
where for $j = 0,1,\ldots \frac{n}{2}$, $k_j \in \Z_{\ge 0}$ and $2k_0 + 2k_1 + 2k_2 +\sum_{3 \le j \le \frac{n}{2}}4k_{j} = \ell$. Therefore,
\begin{align}\label{eq: Dn1 even fixed dwt2}
\mathbf{m} = (2k_1,0,2k_2,0,2k_3,0,2k_4,0,\ldots,2k_{\frac{n}{2}},0,k_0).
\end{align}
Hence, $\bM_\ell(2,1;(1^2, 2^{\eta'_2}),(2))^{C_2}$ is contained in $Y$.

On the contrary, for $\mathbf{m} = (2k_1,0,2k_2,0,\ldots,2k_{\frac{n}{2}},0,k_0) \in Y$, we have
\begin{align*}
\funcMtoW(\mathbf{m}) &
= \underbracket{1 \cdots 1}_{2k_1} \ | \ 00 \ | \ \underbracket{1 \cdots 1}_{2k_2} \ | \ 0 0 \ | \ \underbracket{2 \cdots 2}_{2k_3} \ | \ 00   \ | \ \underbracket{2 \cdots 2}_{2k_4} \ | \ 00  \ | \ \cdots\cdots \ | \ 00 \ | \ \underbracket{2 \cdots 2}_{2k_{\frac{n}{2}}} \ | \ 0\blo \ | \ \underbracket{2 \cdots 2}_{k_0}.
\end{align*}
Thus, $\siggen_2 \sqact_{2,1} \mathbf{m} = \mathbf{m}$ and hence our claim follows.

We have an obvious bijection $\Theta: \left( \pclp{\ell} \right)^{(e,\siggen_2)} \ra \bM_\ell(2,1;(1^2, 2^{\eta'_2}),(2))^{C_2}$ defined by
\[ \Theta \left( \sum_{0 \le i \le n} m_i \Lambda_i \right) = ( 2m_0, 0, 2m_2, 0, \ldots, 2m_{n-4}, 0, 2m_n,0,m_{n-2} ). \]
This completes the proof.
\end{proof}

\begin{remark}\label{rem: Dn1 even fixed Cact2 and Cact1Cact2}
Note that $\upsig_1 \upsig_2 = (0,n-1)(1,n)(2,3)(4,5)\cdots(n-4,n-3) \in \Sym_{[0,n]}$. Therefore,
\begin{align}\label{eq: Dn1 n2 fixed by sig2,sig2}
\left( \pclp{\ell} \right)^{(\siggen_2,\siggen_2)} = \left\{ (m_0,m_1,\ldots,m_n) \in \Z_{\ge 0}^{n+1} \; \middle| \; \begin{array}{l}
m_0 = m_{n-1},~m_{1} = m_n,\\
m_{2j} = m_{2j+1},~\text{for $j = 1,\ldots, \frac{n-4}{2}$},~\text{and}\\
m_0 + m_1 + \sum_{2 \le j \le n-2}2m_j + m_{n-1} + m_n = \ell
\end{array} \right\}.
\end{align}
Combining~\eqref{eq: fixed by Cact2} with~\eqref{eq: Dn1 n2 fixed by sig2,sig2}, we have $\left| \left( \pclp{\ell} \right)^{(e,\siggen_2)} \right| = \left| \left( \pclp{\ell} \right)^{(\siggen_2,\siggen_2)} \right|$.
\end{remark}

\begin{lemma}\label{lem: Dn1 even X_10 X_11 bijection}
For any even integer $n \ge 4 $ and $\ell \in \Z_{>0}$, there is a bijection between $\pclp{\ell}((\ell-1)\Lambda_0+\Lambda_{n-1})$ and $\pclp{\ell}((\ell-1)\Lambda_0+\Lambda_{n})$, that is, $b_{(1,0)} = b_{(1,1)}$.
\end{lemma}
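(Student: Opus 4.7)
The plan is to construct an explicit involutive bijection induced by the interchange of the coefficients of $\Lambda_{n-1}$ and $\Lambda_{n}$. First, recall from Table~\ref{table: ev} (or direct computation using $\fks^{(1)} = (0,\ldots,0,1,1)$ and $\fks^{(2)} = (0,1,0,1,\ldots,0,1,0)$, the latter being $1$ exactly on odd positions since $n$ is even) that for $\Lambda = (\ell-1)\Lambda_0 + \Lambda_{n-1}$ we have $\ev_{\fks}(\Lambda) = (1,1)$, whereas for $\Lambda = (\ell-1)\Lambda_0+\Lambda_n$ we get $\ev_{\fks}(\Lambda) = (1,0)$. Thus, by Theorem~\ref{thm: eq rel and sieving}, the assertion reduces to exhibiting a bijection between
\[
X(1,0) \seteq \left\{\Lambda \in \pclp{\ell} \ \middle|\ \evfks{1}(\Lambda) \equiv_2 1,\ \evfks{2}(\Lambda) \equiv_2 0\right\}
\]
and the analogously defined set $X(1,1)$, which will in particular prove $b_{(1,0)} = b_{(1,1)}$.

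Define $\Phi: \pclp{\ell} \to \pclp{\ell}$ by
\[
\Phi\Bigl(\sum_{0 \le i \le n} m_i \Lambda_i\Bigr) \seteq \sum_{0 \le i \le n-2} m_i \Lambda_i + m_n \Lambda_{n-1} + m_{n-1} \Lambda_n,
\]
that is, $\Phi$ swaps the coefficients at indices $n-1$ and $n$. Since $a^\vee_{n-1} = a^\vee_n = 1$, the map $\Phi$ preserves the level $\ell$, so $\Phi$ is a well-defined involution on $\pclp{\ell}$. The plan is then to verify the two key arithmetic properties:
\[
\evfks{1}(\Phi(\Lambda)) \equiv_2 \evfks{1}(\Lambda), \qquad \evfks{2}(\Phi(\Lambda)) - \evfks{2}(\Lambda) \equiv_2 \evfks{1}(\Lambda).
\]
The first holds because $\evfks{1}(\Lambda) = m_{n-1}+m_n$ is symmetric in $m_{n-1},m_n$. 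For the second, since $\fks^{(2)}_{n-1} = 1$ and $\fks^{(2)}_n = 0$ (as $n$ is even), only the term at position $n-1$ in $\evfks{2}$ is affected, so the difference equals $m_n - m_{n-1} \equiv m_n + m_{n-1} = \evfks{1}(\Lambda) \pmod 2$.

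Consequently, restricting $\Phi$ to $\Lambda$ with $\evfks{1}(\Lambda) \equiv_2 1$ preserves the parity of $\evfks{1}$ and flips the parity of $\evfks{2}$, so $\Phi$ interchanges $X(1,0)$ and $X(1,1)$. Since $\Phi$ is an involution, this restriction is a bijection. No substantive obstacle is anticipated; the argument is entirely routine once one unravels the definitions and uses that $a^\vee_{n-1} = a^\vee_n$ and that the supports of $\fks^{(1)}$ and $\fks^{(2)}$ at the last two coordinates are $\{n-1,n\}$ and $\{n-1\}$ respectively.
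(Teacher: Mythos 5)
Your proposal is correct, and it takes a slightly different route from the paper. The paper also reduces the claim to comparing the two sets cut out by the conditions $\evfks{1}(\Lambda)\equiv_2 1$ and $\evfks{2}(\Lambda)\equiv_2 1$ (resp.\ $\equiv_2 0$), but its bijection $\uppsi$ is a case-split map that transfers one unit between the coefficients $m_{n-1}$ and $m_n$ (replacing $(m_{n-1},m_n)$ by $(m_{n-1}-1,m_n+1)$ or $(m_{n-1}+1,m_n-1)$ according to the parity of $m_1+m_3+\cdots+m_{n-3}$), which requires the preliminary observation that $m_{n-1}>0$ or $m_n>0$ in the respective cases so that the map lands in $\pclp{\ell}$, and a separately stated inverse. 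Your map $\Phi$ simply swaps $m_{n-1}$ and $m_n$; it is a globally defined involution on $\pclp{\ell}$ (well-defined because $a^\vee_{n-1}=a^\vee_n=1$), and the two parity identities $\evfks{1}(\Phi(\Lambda))\equiv_2\evfks{1}(\Lambda)$ and $\evfks{2}(\Phi(\Lambda))-\evfks{2}(\Lambda)\equiv_2 m_n-m_{n-1}\equiv_2\evfks{1}(\Lambda)$ immediately show that $\Phi$ interchanges the two equivalence classes when $\evfks{1}(\Lambda)\equiv_2 1$. Your verification of the class descriptions via Table~\ref{table: ev} (equivalently, halving $\wtsvv^{(1)},\wtsvv^{(2)}$ and working mod $2$, exactly as the paper does in the display preceding the lemma) is accurate, so the argument is complete. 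What your version buys is uniformity: no case analysis, no positivity check, and the inverse is the map itself; it is also conceptually natural, being induced by the diagram automorphism exchanging the nodes $n-1$ and $n$. The paper's version buys nothing essential over yours here; both are elementary and yield $b_{(1,0)}=b_{(1,1)}$.
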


\begin{proof}
Recall that
\begin{align*}
&\pclp{\ell}((\ell-1)\Lambda_0+\Lambda_{n-1}) = \left\{\Lambda \in \pclp{\ell} \; \middle| \; \evfks{1}(\Lambda) \equiv_2 1 \text{ and } \evfks{2}(\Lambda) \equiv_2 1 \right\}\\
&\pclp{\ell}((\ell-1)\Lambda_0+\Lambda_{n}) = \left\{\Lambda \in \pclp{\ell} \; \middle| \; \evfks{1}(\Lambda) \equiv_2 1 \text{ and } \evfks{2}(\Lambda) \equiv_2 0 \right\}.
\end{align*}

Note that, for any $\sum_{0 \le i \le n} m_i \Lambda_i \in \pclp{\ell}((\ell-1)\Lambda_0+\Lambda_{n-1}) \cup \pclp{\ell}((\ell-1)\Lambda_0+\Lambda_{n})$, we have
\[\evfks{1}\left(\sum_{0 \le i \le n} m_i \Lambda_i\right) = m_{n-1} + m_n \equiv_2 1.\]
Note also that, for any $\sum_{0 \le i \le n} m_i \Lambda_i \in \pclp{\ell}((\ell-1)\Lambda_0+\Lambda_{n-1})$, we have
\[\evfks{2}\left(\sum_{0 \le i \le n} m_i \Lambda_i \right) = m_1+m_3+\cdots + m_{n-1} \equiv_2 1.\]
Therefore, for $\sum_{0 \le i \le n} m_i \Lambda_i \in \pclp{\ell}((\ell-1)\Lambda_0+\Lambda_{n-1})$, if $ m_1 + m_3 + \cdots + m_{n-3}$ is even then $m_{n-1}$ should be odd and so $m_{n-1}>0$. If $m_1 + m_3 + \cdots + m_{n-3}$ is odd, $m_{n-1}$ should be even and so $m_n$ should be odd and so $m_{n}>0$.

Let $\uppsi : \pclp{\ell}((\ell-1)\Lambda_0+\Lambda_{n-1}) \ra \pclp{\ell}((\ell-1)\Lambda_0+\Lambda_{n})$ be a function defined by
\[ \uppsi \left(\sum_{0 \le i \le n} m_i \Lambda_i\right) = \begin{cases}
\displaystyle \sum_{0 \le i \le n-2} m_i \Lambda_i + (m_{n-1} - 1)\Lambda_{n-1} + (m_n + 1) \Lambda_n & \text{if $ m_1 + m_3 + \cdots + m_{n-3}$ is even,}\\[1ex]
\displaystyle \sum_{0 \le i \le n-2} m_i \Lambda_i + (m_{n-1} + 1)\Lambda_{n-1} + (m_n - 1) \Lambda_n
& \text{if $ m_1 + m_3 + \cdots + m_{n-3}$ is odd.}
\end{cases}\]
By the above observations, one can easily see that $\uppsi$ is well-defined.

One can easily see that the function $ \uppsi^{-1}: \pclp{\ell}((\ell-1)\Lambda_0+\Lambda_{n}) \ra \pclp{\ell}((\ell-1)\Lambda_0+\Lambda_{n-1})$ defined by
\[\uppsi^{-1} \left(\sum_{0 \le i \le n} m_i \Lambda_i\right) = \begin{cases}
\displaystyle \sum_{0 \le i \le n-2} m_i \Lambda_i + (m_{n-1} + 1)\Lambda_{n-1} + (m_n - 1)\Lambda_n & \text{if $ m_1 + m_3 + \cdots + m_{n-3}$ is even,}\\[1ex]
\displaystyle\sum_{0 \le i \le n-2} m_i \Lambda_i + (m_{n-1} - 1)\Lambda_{n-1} + (m_n + 1)\Lambda_n & \text{if $ m_1 + m_3 + \cdots + m_{n-3}$ is odd.}
\end{cases}\]
is the inverse function of $\uppsi$.
Thus $\uppsi$ is a bijection and hence our assertion follows.
\end{proof}

\begin{proof}[Proof of Theorem~\ref{eq: Dn1 even biCSP}]
By (1) of Proposition \ref{prop: biCSP}, it suffices to show that
\[\begin{array}{l}
\pclp{\ell}(1,1) \hspace{-.3ex} = \hspace{-.3ex}\left|\pclp{\ell}\right|,
~\pclp{\ell}(-1,1) \hspace{-.3ex}=\hspace{-.3ex} \left|\left(\pclp{\ell}\right)^{\hspace{-.5ex}(\siggen_2,e)}\right|,
~\pclp{\ell}(1,\hspace{-.5ex}-1)\hspace{-.3ex} =\hspace{-.3ex} \left|\left(\pclp{\ell}\right)^{\hspace{-.5ex}(e,\siggen_2)}\right|,
~\pclp{\ell}(-1,\hspace{-.5ex}-1) \hspace{-.3ex}=\hspace{-.3ex} \left|\left(\pclp{\ell}\right)^{\hspace{-.5ex}(\siggen_2,\siggen_2)}\right|.
\end{array}\]

We have that
\begin{itemize}
\item $\pclp{\ell}(1,1) = \left|\pclp{\ell}\right|$,
\item by Lemma \ref{lem: Dn1 even fixed by hsig1} and Lemma \ref{lem: Dn1 even hsig1 and Cact1},
$\pclp{\ell}(-1,1) = \left|\left(\pclp{\ell}\right)^{(\siggen_2,e)}\right|$,
\item by Lemma \ref{lem: Dn1 even fixed by hsig2} and Lemma \ref{lem: Dn1 even hsig2 and Cact2}, $\pclp{\ell}(1,-1) = \left|\left(\pclp{\ell}\right)^{(e,\siggen_2)}\right|$ and
\item by Remark \ref{rem: Dn1 even fixed Cact2 and Cact1Cact2} and Lemma \ref{lem: Dn1 even X_10 X_11 bijection},
\begin{align*}
\pclp{\ell}(-1,-1) = \pclp{\ell}(1,-1) = \left|\left(\pclp{\ell}\right)^{(e,\siggen_2)}\right| = \left|\left(\pclp{\ell}\right)^{(\siggen_2,\siggen_2)}\right|.
\end{align*}
\end{itemize}
Hence our assertion follows.
\end{proof}

\section{Formulae on the number of maximal dominant weights} \label{Sec: application}

In this section, exploiting the sieving phenomenon on $\pclp{\ell}$,  we derive a closed formula for $|\mx^+(\Lambda)|$.
Based on this formula, we also derive a recursive formula for $|\mx^+(\Lambda)|$.
Finally, we observe a remarkable symmetry, called \emph{level-rank duality}, on dominant maximal weights.

\subsection{Closed formulae on the number of dominant maximal weights}\label{subsec: The number of dom max wts}
In case of $A_n^{(1)}$ type, we have already given a closed formula for  $|\mx^+(\Lambda)|$
for all $\Lambda = (\ell-1)\Lambda_0 + \Lambda_i \in \DRpclp$ (see Theorem \ref{thm: number of dom wt A}).
We here derive such a formula for an affine Kac-Moody algebra of arbitrary type.
Let us start with an example for reader's understanding.

\begin{example}
Let $\g = E_6^{(1)}$ and $\ell \in \Z_{>0}$.
Then $\DRpclp = \{ \ell \Lambda_0, (\ell-1) \Lambda_0 + \Lambda_1, (\ell-1) \Lambda_0 + \Lambda_6 \}$.
Under the $C_3=\inn{\siggen_3}$ action on $\pclp{\ell}$  as in \eqref{eq: CSP action}, let $N_T$ be the number of all orbits and $N_F$ be the number of free orbits.
By Theorem \ref{thm: CSP} together with \eqref{eq: E61 dwt_ell(q)}, we have
$N_T=|\mx^+(\ell \Lambda_0)|$ and $N_F=|\mx^+((\ell-1)\Lambda_0 + \Lambda_1)|= |\mx^+((\ell-1)\Lambda_0 + \Lambda_6)|$.
Since \[ \left| \pclp{\ell}\right| = 3N_F + \left|\left( \pclp{\ell}\right)^{\siggen_3}\right| = N_T + 2N_F,\]
there follows
\[ N_F = \frac{1}{3}\left( \left| \pclp{\ell}\right| - \left|\left( \pclp{\ell}\right)^{\siggen_3}\right| \right) \quad \text{and} \quad N_T = \left| \pclp{\ell}\right| - 2N_F. \]
\end{example}
\smallskip

Notice that for any type of Kac-Moody algebra $\g$,
\[|\pclp{\ell}| = \left|\bM_\ell(1;\mathbf{a}^\vee) \right|= \left|\bM_\ell(1; \tau \cdot \mathbf{a}^\vee) \right| \qquad \text{ for $\tau \in \Sym_{[0,n]}$},\]
where  $\mathbf{a}^\vee \seteq (a_0^\vee,a_1^\vee,\ldots,a_n^\vee)=[c]_{\Pi^\vee}$. For the definition of $\bM_\ell(1;\bnu)$, see~\eqref{eq: tuset d=1}.

Let $\ell$ be a nonnegative integer, $t_1 \in \Z_{>0}$ and $t_2, t_3, \ldots, t_k \in \Z_{\ge 0}$. Define
\[\upc_\ell(t_1;\emptyset) \seteq \begin{cases}
\ell & \text{if $t_1 > 1$,}\\
0 & \text{if $t_1 = 1$}.
\end{cases}  \]
Choose a nonnegative integer $i_1 \in [0,\upc_\ell(t_1;\emptyset)]$ and define
\[\upc_\ell(t_2; i_1) \seteq \begin{cases}
\floor{\frac{\ell - i_1}{2}} & \text{if $t_2 > 0$,}\\
0 & \text{if $t_2 = 0$}.
\end{cases}\]
For $1 < r < k$, suppose that $i_1,i_2,\ldots,i_{r-1}$ are chosen and $\upc_\ell(t_{r};i_1,i_2,\ldots,i_{r-1})$ is defined. Now, choose a nonnegative integer $i_r \in [0,\upc_\ell(t_{r};i_1,i_2,\ldots,i_{r-1})]$ and define
\[\upc_\ell(t_{r+1}; i_1,i_2,\ldots,i_{r}) \seteq \begin{cases}
\floor{\frac{\ell - \sum_{1 \le s \le r} si_s}{r+1}} & \text{if $t_{r+1} > 0$,}\\
0 & \text{if $t_{r+1} = 0$}.
\end{cases}\]
Since
\[i_r \le \upc_\ell(t_r;i_1,i_2,\ldots,i_{r-1}) \le \frac{\ell - \sum_{1 \le s \le r-1} s i_s}{r} \quad \text{for $1 < r < k$},\]
we have $\sum_{1 \le s \le r} s i_s \le \ell$. This implies that $\upc_\ell(t_{r+1};i_1,i_2,\ldots,i_{r})$ is a nonnegative integer. For $1 \le r \le k$, if $i_1,i_2,\ldots, i_{r-1}$ and $t_r$ are clear in the context, we simply write $\upc_\ell[r]$ for $\upc_\ell(t_r;i_1,\ldots,i_{r-1})$.
With this notation, we have the following lemma.
\begin{lemma}\label{lem: calnum}
	Let $a,P \in \Z_{>0}, \ell \in \Z_{\ge 0}$ and $\bnu = (a^{t_1},(2a)^{t_2},\ldots, (ka)^{t_k}) \in \Z^{P+1}_{\ge 0}$ with $t_1, t_k >0$ and $t_2, t_3, \ldots, t_{k-1} \ge 0$.
	With the above notation, we have
	\begin{align}\label{eq: calnum}
	\left|\bM_\ell(1;\bnu) \right| = 
	\begin{cases}
	\displaystyle{\sum_{i_1 = 0}^{\upc_{\ell/a}(t_1;\emptyset)} \sum_{i_2 = 0}^{\upc_{\ell/a}(t_2; i_1)} \cdots \sum_{i_k = 0}^{\upc_{\ell/a}(t_{k}; i_1,i_2,\ldots,i_{k-1})}   \prod_{r=1}^k \binom{i_r + t_r -1-\delta_{1,r}}{t_r -1-\delta_{1,r}}} & \text{if $a$ divides $\ell$},\\
	\qquad \qquad \qquad 0 & \text{otherwise},
	\end{cases}
	\end{align}
	where $\binom{-1}{-1}$ is set to be $1$.
\end{lemma}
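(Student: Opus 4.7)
The plan is to first observe that $|\bM_\ell(1;\bnu)| = 0$ unless $a \mid \ell$, since every component of $\bnu$ is a multiple of $a$, forcing $\bnu \bullet \bm \in a\Z$. Assume henceforth that $\ell = aN$ with $N := \ell/a$. Partition the coordinates of $\bm$ into $k$ consecutive blocks $\bm^{(1)}, \ldots, \bm^{(k)}$, where $\bm^{(r)} \in \Z_{\ge 0}^{t_r}$ collects the entries weighted $ra$, and write $n_r$ for the sum of the entries of $\bm^{(r)}$ (with the convention $n_r = 0$ when $t_r = 0$). The defining equation $\bnu \bullet \bm = \ell$ then reduces to
\[
n_1 + 2 n_2 + \cdots + k n_k = N, \qquad n_r \in \Z_{\ge 0}.
\]

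By a standard stars-and-bars argument, for each admissible tuple $(n_1, \ldots, n_k)$ the number of $\bm$ giving rise to it is $\prod_{r=1}^k \binom{n_r + t_r - 1}{t_r - 1}$, using the convention $\binom{-1}{-1} = 1$ to cover the edge case $t_r = 0,\, n_r = 0$. Consequently
\[
|\bM_\ell(1;\bnu)| = \sum_{\substack{n_1,\ldots,n_k \ge 0 \\ n_1 + 2n_2 + \cdots + k n_k = N}} \prod_{r=1}^k \binom{n_r + t_r - 1}{t_r - 1}.
\]

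To recast this in the nested-sum form of the statement, we eliminate $n_1$ in favor of a new parameter. Set $i_r := n_r$ for $r \ge 2$, and let $i_1$ denote the sum of the first $t_1 - 1$ coordinates of $\bm^{(1)}$ (so $i_1 = 0$ is forced when $t_1 = 1$). Then the last coordinate of $\bm^{(1)}$ equals $N - (i_1 + 2 i_2 + \cdots + k i_k)$, and its nonnegativity is equivalent to $i_1 + 2 i_2 + \cdots + k i_k \le N$, thereby replacing the equality constraint by an inequality. Conversely, every tuple $(i_1, \ldots, i_k)$ of nonnegative integers with $i_1 + 2i_2 + \cdots + k i_k \le N$ arises uniquely in this way. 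The first block now contributes $\binom{i_1 + t_1 - 2}{t_1 - 2}$ ways (the number of ways to distribute $i_1$ among the first $t_1 - 1$ coordinates), matching $\binom{i_1 + t_1 - 1 - \delta_{1,1}}{t_1 - 1 - \delta_{1,1}}$ with the convention $\binom{-1}{-1}=1$ handling $t_1 = 1$, while the remaining blocks still contribute $\binom{i_r + t_r - 1}{t_r - 1}$ each for $r \ge 2$.

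Finally, iterate the sum from $r = 1$ to $r = k$. For fixed $i_1, \ldots, i_{r-1}$, the maximal admissible $i_r$ (realized by taking $i_{r+1} = \cdots = i_k = 0$) is $\floor{(N - \sum_{s=1}^{r-1} s i_s)/r}$, which is exactly $\upc_N(t_r; i_1, \ldots, i_{r-1})$ when $t_r > 0$; if $t_r = 0$, then $i_r = 0$ is forced, in agreement with $\upc_N(t_r;\, \cdot\, ) = 0$. This yields the claimed formula. No single step is technically difficult; the main care required is in the boundary conventions $\binom{-1}{-1} = 1$ and the degenerate case $t_1 = 1$, in which block $1$ is a single forced entry absorbing the residue.
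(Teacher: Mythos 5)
Your argument is correct, and it takes a different route from the paper. The paper proves the lemma by induction on $k$: the base case $k=1$ is the hockey-stick identity $\binom{P+\ell/a}{\ell/a}=\sum_{i_1}\binom{i_1+P-2}{P-2}$, and the inductive step peels off the last block (the $(ka)$-weighted entries), sums over the amount placed there, and then proves a set equality between the resulting summation region and the nested region defined by the $\upc$'s. You instead argue globally: reduce to counting weighted solutions of $n_1+2n_2+\cdots+kn_k=\ell/a$ by stars and bars, then trade the equality constraint for an inequality by sacrificing one coordinate of the \emph{first} block (which is exactly where the $\delta_{1,r}$ in the binomial factors comes from, and which the paper effectively hides inside its base case), and finally unfold the single inequality $\sum_r r\,i_r\le \ell/a$ into the nested bounds by observing that, given $i_1,\dots,i_{r-1}$, the extendable values of $i_r$ are precisely $0\le i_r\le\lfloor(\ell/a-\sum_{s<r}s\,i_s)/r\rfloor$ when $t_r>0$ (extension by zeros is legal since $t_k>0$) and $i_r=0$ when $t_r=0$. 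Both proofs share this last region-equivalence step, but yours avoids induction and makes the special role of the first block transparent, at the cost of a slightly more delicate bookkeeping of degenerate blocks: strictly speaking, tuples with $t_r=0$ and $i_r>0$ arise from no $\bm$, so in your intermediate sums one should either exclude them or invoke the standard convention $\binom{m}{-1}=0$ for $m\ge 0$ alongside $\binom{-1}{-1}=1$; since the final nested sum already forces $i_r=0$ there via $\upc=0$, this is only a matter of phrasing, not a gap.
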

\begin{proof}
	In case where $a$ does not divide $\ell$, in view of~\eqref{eq: tuset d=1}, 
	one has that $\bM_\ell(1;\bnu) = \emptyset$. Therefore we assume that $a$ divides $\ell$. We will prove our assertion by induction on $k$. Let $k=1$. Then $\bnu = (a^{P+1})$ and
	\[\bM_\ell(1;\bnu) = \left\{ (m_0,m_1,\ldots,m_P) \in \Z_{\ge 0}^{P+1} \; \middle| \; \sum_{0 \le j \le P} a \cdot m_j = \ell \right\}. \]
	It follows that $|\bM_\ell(1;\bnu)|$ is equal to $\matr{P+\frac{\ell}{a}}{\frac{\ell}{a}}$. On the other hand, since $P+1 \ge 2$, the right hand side of~\eqref{eq: calnum} is given by
	\[ \sum_{0 \le i_1 \le \frac{\ell}{a}} \matr{i_1 + (P+1) -2}{P+1-2}.\]
	Using Pascal's triangle, one can easily see that it is equal to $\matr{P + \frac{\ell}{a}}{\frac{\ell}{a}}$. Thus we can start the induction.
	
	Let $k >1$ and assume that our assertion holds for all positive integers less than $k$. Set $p_0 \seteq \sum_{1 \le j \le k-1} t_j$ and $\bnu' = (a^{t_1}, (2a)^{t_2} ,\ldots, ((k-1)a)^{t_{k-1}})$. Then, by~\eqref{eq: tuset d=1},
	\[ \bM_\ell(1;\bnu) = \left\{ (m_0,m_1,\ldots,m_P) \; \middle| \;
	\begin{array}{l}
	(m_0 ,m_1, \ldots, m_{{p_0}-1}) \in \bM_{\ell-kai}(1;\bnu'),\\
	(m_{p_0},m_{p_0+1}, \ldots, m_P) \in \bM_{kai}(1;((ka)^{t_k}))
	\end{array}~\text{for some $0 \le i \le \floor{\frac{\ell}{ka}}$}
	\right\}.\]
	It follows that
	\[\left|\bM_\ell(1;\bnu)\right| =  \sum_{i = 0}^{\floor{\ell/ka}}  \left| \bM_{\ell - kai}(1;\bnu') \right| \times \left|\bM_{kai}(1;((ka)^{t_k})) \right|. \]
	Note that
	\[\left|\bM_{kai}(1;((ka)^{t_k})) \right| = \matr{i + t_k - 1}{t_k - 1}.\]
	Thus, by the induction hypothesis, we have
	\begin{equation}\label{eq: card of M(1;v)}
	\begin{aligned}
	\left|\bM_\ell(1;\bnu)\right|& = \sum_{i = 0}^{\floor{\ell/ka}} \left( \sum_{i_1 = 0}^{\upc_{\ell/a - ki}[1]} \sum_{i_2 = 0}^{\upc_{\ell/a - ki}[2]} \cdots \sum_{i_{k-1} = 0}^{\upc_{\ell/a - ki}[k-1]}   \prod_{r=1}^{k-1} \binom{i_r + t_r -1-\delta_{1,r}}{t_r -1-\delta_{1,r}} \right) \cdot \matr{i + t_k - 1}{t_k - 1} \\
	& = \sum_{(i_1,i_2,\ldots,i_{k-1},i) \in R}  \left( \prod_{r=1}^{k-1} \binom{i_r + t_r -1-\delta_{1,r}}{t_r -1-\delta_{1,r}}  \cdot \matr{i + t_k - 1}{t_k - 1}\right),
	\end{aligned}
	\end{equation}
	where
	\begin{align*}
	R := \left\{ (i_1,i_2,\ldots,i_{k-1},i) \in \Z^k \; \middle| \;
	0 \le i \le \floor{\frac{\ell}{ka}}
	\ \text{ and } \
	0 \le i_r \le \upc_{\ell/a - ki}[r] \ \text{ for } 1 \le r \le k-1
	\right\}.
	\end{align*}
	We claim that $R$ is equal to
	\begin{align*}
	R' \seteq \left\{ (i_1,i_2,\ldots,i_{k-1},i_k) \in \Z^k \; \middle| \;
	0 \le i_r \le \upc_{\ell/a}[r] \ \text{ for } 1 \le r \le k   \right\}.
	\end{align*}
	
	To prove $R \subseteq R'$, take $(i_1,i_2,\ldots,i_{k-1},i) \in R$. Note that $\upc_{\ell/a - ki}[r] \le \upc_{\ell/a}[r]$ for $1 \le r \le k-1$. Therefore it suffices to show that
	$i \le \upc_{\ell/a}[k]$.
	Since $t_k >1$, we have
	\[\upc_{\ell/a}[k] = \floor{\frac{\ell/a - \sum_{1 \le s \le k-1} si_s}{k}}.\]
	since
	\[ i_{k-1} \le \upc_{\ell/a - ki}[k-1] \le \frac{\ell/a - ki - \sum_{1 \le s \le k - 2} si_s}{k - 1}, \]
	we have
	\[ i \le \frac{\ell/a - \sum_{1 \le s \le k - 1} si_s}{k} \]
	and hence $i \le \upc_{\ell/a}[k]$.

	For the reverse inclusion, take $(i_1,i_2,\ldots,i_{k}) \in R'$. The condition $i_k \le \upc_{\ell/a}[k]$ implies that
	\begin{align}\label{eq: i_k condition}
	i_k \le \floor{\frac{\ell/a - \sum_{1 \le s \le k-1} si_s}{k}} \le \floor{\frac{\ell}{ka}}.
	\end{align}
	Therefore it suffices to show that $i_r \le \upc_{\ell/a - k i_k}[r]$ for all $1\le r \le k-1$. Let $r \in \{1,2,\ldots, k-1\}$. If $t_r = 0$ then we have $i_r \le \upc_{\ell/a}[r] = 0 = \upc_{\ell/a - ki_k}[r]$. Therefore, our claim holds in this case. Assume that $t_r > 0$. The first inequality in~\eqref{eq: i_k condition} implies that for each $1 \le r \le k-1$,
	\[ \sum_{r \le s \le k-1} s i_s \le \ell/a - k i_k - \sum_{1 \le s \le r-1}s i_s. \]
	Since $i_s$'s are nonnegative for all $1 \le s \le k$, this inequality gives
	\[ i_r \le \floor{\frac{\ell/a - k i_k - \sum_{1 \le s \le r-1}s i_s}{r}} = \upc_{\ell/a - k i_k}[r],\]
	as required.
	
	Applying $R = R'$ to~\eqref{eq: card of M(1;v)}, we finally have
	\[ \left|\bM_{\ell}(1;\bnu) \right| = \sum_{i_1 = 0}^{\upc_{\ell/a}[1] } \sum_{i_2 = 0}^{\upc_{\ell/a}[2] } \cdots \sum_{i_k = 0}^{\upc_{\ell/a}[k] }   \prod_{r=1}^k \binom{i_r + t_r -1-\delta_{1,r}}{t_r -1-\delta_{1,r}}. \qedhere\]
\end{proof}

If there is no danger of confusion on $\ell$, we write
$$ \sfm(\bnu) = \left|\bM_\ell(1;\bnu) \right|.$$
From now on, we will compute the number of $\left(\pclp{\ell}\right)^H$ for all subgroups $H$ of $C_\congN$ for the $H$-action induced by~\eqref{eq: CSP action}.
For instance, in case where $\g =E_6^{(1)}$, we showed in~\eqref{eq: E61 Cact fixed} that
\begin{align}\label{eq: fixed E61}
\left( \pclp{\ell} \right)^{C_3} = \left\{ \sum_{0 \le i \le 6} m_i\Lambda_i \in \pclp{\ell}  \; \middle| \; m_{0} = m_{1} = m_{6},~m_2=m_3=m_5 \right\}.
\end{align}
Since $a^\vee_0 = a^\vee_1 = a^\vee_6 = 1$, $a^\vee_2 = a^\vee_3 = a^\vee_5 = 2$ and $a^\vee_4 = 3$, by \eqref{eq: fixed E61}, we have
\[\left|\left( \pclp{\ell} \right)^{C_3} \right| = \left| \left\{  ( m_0,m_1,m_2) \in \Z_{\ge 0}^3 \; \middle| \; 3m_0 + 6 m_1 + 3m_2 = \ell \right\}  \right|.   \]
Thus $\left|\left( \pclp{\ell} \right)^{C_3}\right|$ is equal to $\sfm(3^2, 6^1)$. Similarly, for $B_n^{(1)}, C_n^{(1)}, A_{2n-1}^{(2)},E_6^{(1)}$, and $E_7^{(1)}$, there exists a unique $\bnu$ such that $\left|\left( \pclp{\ell} \right)^{C_3}\right|$ is equal to $\sfm(\bnu)$. We list all $\bnu$'s in Table~\ref{table: fixed by Cact}:

\begin{table}[h]
\centering
{ \arraycolsep=1.6pt\def\arraystretch{1.5}
\begin{tabular}{c|c|c}
Types & $\left( \pclp{\ell} \right)^{C_\congN}$ & $\bnu$ \\ \hline
$B_{n}^{(1)}$ & $\left\{ \sum_{0 \le i \le n} m_i\Lambda_i \in \pclp{\ell}  \; \middle| \; m_0 = m_n \right\}$ & $(1^1, 2^{n-1})$ \\ \hline
$C_{n}^{(1)}~(n\equiv_2 1)$ & $\left\{ \sum_{0 \le i \le n} m_i\Lambda_i \in \pclp{\ell}  \; \middle| \; m_{2j} = m_{2j+1}~(0\le j \le \frac{n-1}{2}) \right\}$ & $(2^{(n+1)/2})$ \\ \hline
$C_{n}^{(1)}~(n\equiv_2 0)$ & $\left\{ \sum_{0 \le i \le n} m_i\Lambda_i \in \pclp{\ell}  \; \middle| \; m_{2j} = m_{2j+1}~(0\le j \le \frac{n-2}{2}) \right\}$ & $(1^1, 2^{n/2})$ \\ \hline
$A_{2n-1}^{(2)}~(n\equiv_2 1)$ & $\left\{ \sum_{0 \le i \le n} m_i\Lambda_i \in \pclp{\ell}  \; \middle| \; m_{2j} = m_{2j+1}~(0\le j \le \frac{n-1}{2}) \right\}$ &  $(2^1, 4^{(n-1)/2})$ \\ \hline
$A_{2n-1}^{(2)}~(n\equiv_2 0)$ & $\left\{ \sum_{0 \le i \le n} m_i\Lambda_i \in \pclp{\ell}  \; \middle| \; m_{2j} = m_{2j+1}~(0\le j \le \frac{n-2}{2}) \right\}$ & $(2^2, 4^{(n-2)/2})$  \\ \hline
$D_{n+1}^{(2)}$ & $\left\{ \sum_{0 \le i \le n} m_i\Lambda_i \in \pclp{\ell}  \; \middle| \; m_0 = m_n \right\}$ & $(2^n)$ \\ \hline
$E_{6}^{(1)}$ & $\left\{ \sum_{0 \le i \le 6} m_i\Lambda_i \in \pclp{\ell}  \; \middle| \; m_{0} = m_{1} = m_{6},~m_2=m_3=m_5 \right\}$ & $(3^2, 6^1)$  \\ \hline
$E_{7}^{(1)}$ & $\left\{ \sum_{0 \le i \le 7} m_i\Lambda_i \in \pclp{\ell}  \; \middle| \; m_{0} = m_{7},~m_1=m_6,~m_3=m_5 \right\}$ & $(2^2, 4^2, 6^1)$
\end{tabular}
}\\[1.5ex]
\caption{$\left( \dwt_\ell \right)^{C_{\congN}}$ and the corresponding $\bnu$ for other types}
\protect\label{table: fixed by Cact}
\end{table}

For $D_n^{(1)} (n\equiv_2 1)$ type, recall the $C_4$-action on $\pclp{\ell}$ given in~\eqref{eq: C4 action on D_n}. In~\eqref{eq: Dn1 fixed by Cact} and~\eqref{eq: Dn1 fixed by Cact square}, we showed that
\[\left( \pclp{\ell} \right)^{C_4} = \left\{ \sum_{0 \le i \le n} m_i \Lambda_i \in \pclp{\ell} \; \middle| \; m_0 = m_1 = m_{n-1} = m_n,~
m_{2j} = m_{2j+1}\quad \text{for $1\le j \le \frac{n-3}{2}$} \right\}\]
and
\[\left( \pclp{\ell} \right)^{\siggen_4^2} =  \left\{ \sum_{0 \le i \le n} m_i \Lambda_i \in \pclp{\ell} \; \middle| \; m_0 = m_{n-1},~ m_1 = m_n \right\}.\]
Therefore we have
\[\left|\left( \pclp{\ell} \right)^{C_4}\right| = \sfm(4^{(n-1)/2}) \quad \text{and} \quad \left| \left( \pclp{\ell} \right)^{\siggen_4^2}\right| = \sfm( 2^{n-1} ).\]

For $D_n^{(1)} (n\equiv_2 0)$ type, recall the $C_2\times C_2$-action on $\pclp{\ell}$ given in \eqref{eq: C2 times C2 action}. In~\eqref{Dn1 even fixed by Cact_1} and~\eqref{eq: fixed by Cact2}, we showed that
\[\left( \pclp{\ell} \right)^{(\siggen_2,e)} = \left\{ \sum_{0 \le i \le n} m_i \Lambda_i \in \pclp{\ell} \; \middle| \; m_0 = m_n,~m_{1} = m_{n-1}\right\}\]
and
\[\left( \pclp{\ell} \right)^{(e,\siggen_2)} = \left\{ \sum_{0 \le i \le n} m_i \Lambda_i \in \pclp{\ell} \; \middle| \;
m_{2j} = m_{2j+1},~\text{for $j = 0,1,\ldots, \frac{n-4}{2}$},~ m_{n-1} = m_n \right\}.\]
Therefore we have
\[\left| \left( \pclp{\ell} \right)^{(\siggen_2,e)}\right| = \sfm(2^{n-1}) \quad \text{and} \quad \left|\left( \pclp{\ell} \right)^{(e,\siggen_2)}\right| = \sfm(2^3, 4^{(n-4)/2}).\]

To summarize, we have the following closed formula for $|\mx^+(\Lambda)|$ for each $\Lambda \in \DRpclp$.

\begin{theorem}\label{thm: cardinality}
For each $\Lambda \in \DRpclp$, $|\mx^+(\Lambda)|$ is given as in Table \ref{table: num of dom wts}.
\begin{table}[h]
\centering
{ \arraycolsep=1.6pt\def\arraystretch{1.5}
\begin{tabular}{c|c}
Types & $|\mx^+(\Lambda)|$ \quad $\left(\Lambda = (\ell-1)\Lambda_0 + \Lambda_i \in \DRpclp\right)$ \\ \hline
$A_{n}^{(1)}$ & $ \displaystyle{\sum_{d \mid (n+1,\ell,i)} \dfrac{d}{(n+1)+\ell} \sum_{d'|(\frac{n+1}{d},\frac{\ell}{d})} \mu(d') \matr{((n+1)+\ell)/dd'}{\ell/dd'}}$  \\ \hline
$B_{n}^{(1)}$ & $\frac{1}{2}\left(\sfm(1^3, 2^{n-2}) - \sfm(1^1, 2^{n-1})\right) + \delta_{i,0} \sfm(1^1, 2^{n-1})$ \\ \hline
$C_{n}^{(1)}~(n \equiv_2 1)$ & $\frac{1}{2} \left( \sfm(1^{n+1}) - \sfm(2^{(n+1)/2}) \right) + \delta_{i,0} \sfm(2^{(n+1)/2}) $\\ \hline
$C_{n}^{(1)}~(n \equiv_2 0)$ & $\frac{1}{2} \left( \sfm(1^{n+1}) - \sfm(1^1, 2^{n/2}) \right) + \delta_{i,0} \sfm(1^1, 2^{n/2})$ \\ \hline
$D_{n}^{(1)}~(n\equiv_2 1)$ & $\frac{1}{4} \left(\sfm(1^{4}, 2^{n-3}) - \sfm(2^{n-1}) \right) + \frac{\delta(i = 0,1)}{2} \left( \sfm(2^{n-1}) - \sfm(4^{(n-1)/2}) \right) + \delta_{i,0} \sfm(4^{(n-1)/2})$ \\ \hline
$D_{n}^{(1)}~(n\equiv_2 0)$ & $
\frac{1}{4} \left( \sfm(1^{4}, 2^{n-3}) - \sfm(2^{n-1}) \right) + \frac{\delta(i = 0,1)}{2} \left(\sfm(2^{n-1}) - \sfm(2^3, 4^{(n-4)/2}) \right) + \delta_{i,0} \sfm(2^3, 4^{(n-4)/2})$ \\ \hline
$A_{2n-1}^{(2)}~(n \equiv_2 1)$ & $\frac{1}{2} \left( \sfm(1^2, 2^{n-1}) - \sfm(2^1, 4^{(n-1)/2}) \right) + \delta_{i,0} \sfm(2^1, 4^{(n-1)/2})$ \\ \hline
$A_{2n-1}^{(2)}~(n \equiv_2 0)$ & $\frac{1}{2} \left(\sfm(1^2, 2^{n-1}) - \sfm(2^2, 4^{(n-2)/2}) \right) + \delta_{i,0} \sfm(2^2, 4^{(n-2)/2})$ \\ \hline
$A_{2n}^{(2)}$ &  $\sfm(1^1, 2^{n})$ \\ \hline
$D_{n+1}^{(2)}$ & $\frac{1}{2} \left( \sfm(1^2, 2^{n-1}) - \sfm(2^n) \right) + \delta_{i,0} \sfm(2^n)$ \\ \hline
$F_{4}^{(1)}$ & $\sfm(1^2, 2^2, 3)$ \\ \hline
$E_{6}^{(2)}$ & $\sfm(1^1, 2^2, 3^1, 4^1)$ \\ \hline
$G_{2}^{(1)}$ & $\sfm(1^2, 2^1)$ \\ \hline
$D_{4}^{(3)}$ & $\sfm(1^1, 2^1, 3^1)$ \\ \hline
$E_{6}^{(1)}$ & $\frac{1}{3} \left(\sfm(1^3, 2^3, 3^1) - \sfm(3^2, 6^1) \right) + \delta_{i,0} \sfm(3^2, 6^1) $ \\ \hline
$E_{7}^{(1)}$ & $\frac{1}{2} \left(\sfm(1^2, 2^3, 3^2, 4^1) - \sfm(2^2, 4^2, 6) \right) + \delta_{i,0} \sfm(2^2, 4^2, 6)$ \\ \hline
$E_{8}^{(1)}$ & $\sfm(1^1, 2^2, 3^2, 4^2, 5^1, 6^1)$
\end{tabular}
}\\[1.5ex]
\caption{Closed formulae for $|\mx^+(\Lambda)|$}
\protect\label{table: num of dom wts}
\end{table}
\end{theorem}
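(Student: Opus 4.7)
The plan is to combine three ingredients: (i) the identification $|\pclp{\ell}(\Lambda)| = |\mx^+(\Lambda)|$ for each $\Lambda \in \DRpclp$ that follows from Proposition~\ref{prop: Kac bijection} via $\iota_\Lambda$, (ii) the (bi)cyclic sieving phenomena of Theorems~\ref{A TYPE csp}, \ref{thm: CSP}, \ref{thm: D TYPE csp}, and~\ref{eq: Dn1 even biCSP}, and (iii) the counting Lemma~\ref{lem: calnum}, which evaluates $\sfm(\bnu) = |\bM_\ell(1;\bnu)|$ as a nested binomial sum. The row for $A_n^{(1)}$ is already established by Theorem~\ref{thm: number of dom wt A}. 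For the types with $\congN = 1$ (namely $F_4^{(1)}, E_6^{(2)}, G_2^{(1)}, D_4^{(3)}, E_8^{(1)}$) and for $A_{2n}^{(2)}$, there is only one equivalence class under $\sim$, so $|\mx^+(\ell\Lambda_0)| = |\pclp{\ell}| = \sfm(\mathbf{a}^\vee)$ follows immediately.

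For the intermediate cases whose cyclic action has prime order $\congN \in \{2,3\}$ (that is, $B_n^{(1)}, C_n^{(1)}, A_{2n-1}^{(2)}, D_{n+1}^{(2)}, E_6^{(1)}, E_7^{(1)}$), orbits of $C_\congN$ on $\pclp{\ell}$ have length $1$ or $\congN$. Writing $M_\Lambda$ for $|\mx^+(\Lambda)|$ and applying the congruence~\eqref{eq: level and series C equiv cond} together with the CSP specialization $|(\pclp{\ell})^{C_\congN}| = \pclp{\ell}(\zeta_\congN)$, I will obtain a linear system among the $M_\Lambda$'s. Integrality of $|(\pclp{\ell})^{C_\congN}|$ forces $b_1 = b_2$ when $\congN = 3$, yielding a coincidence $M_\Lambda = M_{\Lambda'}$ over the two non-trivial distinguished representatives; combined with $|\pclp{\ell}| = \sum_{\Lambda \in \DRpclp} M_\Lambda$, the system reduces to
\[
M_{\ell\Lambda_0} = \tfrac{1}{\congN}\bigl(|\pclp{\ell}| + (\congN-1)|(\pclp{\ell})^{C_\congN}|\bigr), \qquad M_{(\ell-1)\Lambda_0+\Lambda_i} = \tfrac{1}{\congN}\bigl(|\pclp{\ell}| - |(\pclp{\ell})^{C_\congN}|\bigr)
\]
for each non-trivial distinguished representative. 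Both $|\pclp{\ell}|$ and $|(\pclp{\ell})^{C_\congN}|$ are instances of $\sfm(\bnu)$, with $\bnu$ read off respectively from a sorted $\mathbf{a}^\vee$ and from Table~\ref{table: fixed by Cact}, so Lemma~\ref{lem: calnum} completes those rows.

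For $D_n^{(1)}$ with odd $n$ one has $\congN = 4$ and possible orbit-stabilizers $C_1, C_2, C_4$, so a further fixed-point computation is needed. I will evaluate $\pclp{\ell}(q)$ at $q \in \{1, \zeta_4, -1\}$ and equate, via Theorem~\ref{thm: D TYPE csp}, with $|\pclp{\ell}|$, $|(\pclp{\ell})^{C_4}|$, and $|(\pclp{\ell})^{\siggen_4^2}|$, respectively. Reality of the middle evaluation forces $b_1 = b_3$, i.e.\ $M_{(\ell-1)\Lambda_0+\Lambda_{n-1}} = M_{(\ell-1)\Lambda_0+\Lambda_n}$, so the $4\times 4$ system collapses to an invertible $3\times 3$ one whose solution matches the tabulated formula; the required fixed-point sets were identified in~\eqref{eq: Dn1 fixed by Cact} and~\eqref{eq: Dn1 fixed by Cact square} as having sizes $\sfm(4^{(n-1)/2})$ and $\sfm(2^{n-1})$. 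The even-$n$ case proceeds analogously via biCSP (Proposition~\ref{prop: biCSP}): the four fixed-point counts $|(\pclp{\ell})^{(g_1, g_2)}|$ for $(g_1,g_2) \in C_2 \times C_2$ give four linear equations in the four unknowns $M_0, M_1, M_{n-1}, M_n$, Lemma~\ref{lem: Dn1 even X_10 X_11 bijection} furnishes the consistency relation $M_{n-1} = M_n$, and the fixed-point counts themselves are expressed in~\eqref{Dn1 even fixed by Cact_1}, \eqref{eq: fixed by Cact2}, together with the identification in Remark~\ref{rem: Dn1 even fixed Cact2 and Cact1Cact2}.

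The main obstacle, and the step most prone to error, is the bookkeeping of permutation orbits in each type: identifying the correct $\bnu$ associated with each fixed-point set $|(\pclp{\ell})^H|$ depends delicately on the cycle structure of $\upsig$ (or $\upsig_1, \upsig_2$) combined with the weights $a_i^\vee$. Much of this analysis has already been executed in Sections~\ref{Sec: CSP1} and~\ref{Sec: CSP2}, so what remains for Theorem~\ref{thm: cardinality} is the algebraic inversion of the resulting linear systems followed by a routine invocation of Lemma~\ref{lem: calnum} to expand each $\sfm(\bnu)$ as a nested binomial sum.
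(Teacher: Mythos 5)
Your proposal is correct and follows essentially the same route as the paper: it reuses the fixed-point computations of Sections~\ref{Sec: CSP1}--\ref{Sec: CSP2} expressed as $\sfm(\bnu)$ via Lemma~\ref{lem: calnum}, invokes the established (bi)cyclic sieving theorems, and inverts the resulting small linear systems, exactly as the paper does in Section~\ref{subsec: The number of dom max wts}. The only cosmetic difference is that you extract the equalities $b_1=b_2$ (resp.\ $b_1=b_3$, $M_{n-1}=M_n$) from reality of the root-of-unity evaluations, whereas the paper obtains the same equidistribution of free orbits directly inside its CSP proofs; the two arguments are interchangeable here.
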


In Table~\ref{table: num of dom wts}, $\sfm(\bnu)$'s appearing in the closed formulae for $|\mx^+(\Lambda)|$ for the classical types can be expressed in terms of binomial coefficients as follows:
\begin{align*}
&\sfm(1^{n}) = \matr{\ell + n - 1}{\ell},~ \sfm(1^1,2^{n}) = \matr{\floor{\frac{\ell}{2}}+ n}{\floor{\frac{\ell}{2}}},\ \sfm(1^2, 2^{n}) = 
\matr{\floor{\frac{\ell}{2}} + n + 1}{\floor{\frac{\ell}{2}}} + \matr{\floor{\frac{\ell-1}{2}} + n + 1}{\floor{\frac{\ell-1}{2}}},
\\
&\sfm(1^3,2^{n}) = 2\matr{\floor{\frac{\ell}{2}} + n + 1}{\floor{\frac{\ell}{2}} - 1} + 2\matr{\floor{\frac{\ell-1}{2}} + n + 2}{\floor{\frac{\ell - 1}{2}}} + \matr{\floor{\frac{\ell}{2}} + n + 1}{\floor{\frac{\ell}{2}}},\\
&\sfm(1^{4},2^{n-3}) = \begin{cases}
8\matr{\frac{\ell}{2} + n - 1}{\frac{\ell}{2} - 1} + \matr{\frac{\ell}{2} + n - 2}{\frac{\ell}{2}} & \text{if $\ell \equiv_2 0$},\\
4\matr{\frac{\ell-1}{2} + n}{\frac{\ell-1}{2}} + 4\matr{\frac{\ell-1}{2} + n - 1}{\frac{\ell-1}{2} - 1} &\text{if $\ell \equiv_2 1$},
\end{cases}\\
&\sfm(2^{n}) = \delta(\ell \equiv_2 0)\matr{\frac{\ell}{2} + n - 1}{\frac{\ell}{2}},\quad
\sfm(2^{1},4^{n}) = \delta(\ell \equiv_2 0)\matr{\floor{\frac{\ell}{4}} + n}{\floor{\frac{\ell}{4}}}, \\
&\sfm(2^{2},4^{n}) = \delta(\ell \equiv_2 0)\left(\matr{\floor{\frac{\ell-2}{4}} + n + 1}{\floor{\frac{\ell-2}{4}}} + \matr{\floor{\frac{\ell}{4}} + n + 1}{\floor{\frac{\ell}{4}}}\right),\\
&\sfm(2^{3},4^{n}) = \delta(\ell \equiv_2 0)\left(2\matr{\floor{\frac{\ell}{2}} + n + 1}{\floor{\frac{\ell}{2}} - 1} + 2\matr{\floor{\frac{\ell - 1}{2}} + n + 2}{\floor{\frac{\ell - 1}{2}}} + \matr{\floor{\frac{\ell}{2}} + n + 1}{\floor{\frac{\ell}{2}}}\right),\\
&\sfm(4^{n}) = \delta(\ell \equiv_4 0)\matr{\frac{\ell}{4} + n - 1}{\frac{\ell}{4}} .
\end{align*}

For $B_n^{(1)}, C_n^{(1)}, A_{2n}^{(2)}, D_{n+1}^{(2)}$, the formulae in Table~\ref{table: num of dom wts} can be expressed in terms of binomial coefficients. In particular, for $C_n^{(1)}$ type, the formula in even case and that in odd case can be merged
regardless of rank.

\begin{corollary} \label{cor: BCAD binom}
Let $\g = B_n^{(1)}, C_n^{(1)}, A_{2n}^{(2)}, D_{n+1}^{(2)}$. Then, for each $\Lambda \in \DRpclp$, we have a closed formula for $|\mx^+(\Lambda)|$ in terms of binomial coefficients as in Table~\ref{table: num of dom wts_binom}.
\begin{table}[h]
\centering
\fontsize{9}{9}\selectfont
\begin{tabular}{ c | c | c | c}
Types & $|\mx^+(\Lambda)|$  \quad $\left(\Lambda = (\ell-1)\Lambda_0 + \Lambda_i \in \DRpclp\right)$ & Types & $|\mx^+(\Lambda)|$ \quad $\left(\Lambda = (\ell-1)\Lambda_0 + \Lambda_i \in \DRpclp\right)$ \\   \hline
$B_{n}^{(1)}$ &
$\matr{n+\floor{\frac{\ell-\delta_{in}}{2}}}{n}+\matr{n+\floor{\frac{\ell-1-\delta_{in}}{2}}}{n}$ &
$C_{n}^{(1)}$ &
$\dfrac{1}{2} \left( \matr{n + \ell}{n} + (-1)^i  \delta(n \ell\equiv_2 0) \matr{ \lfloor \frac{n+\ell}{2} \rfloor }{\lfloor \frac{n}{2} \rfloor}  \right)$ \\[2ex] \hline
$A_{2n}^{(2)}$ &
$\matr{n+ \floor{\frac{\ell}{2}}}{n}$ &
$D_{n+1}^{(2)}$ &
$\matr{n + \floor{\frac{\ell-\delta_{in}}{2}}}{n}$
\end{tabular}
\fontsize{10}{10}\selectfont
\caption{Closed formulae for $|\mx^+(\Lambda)|$ in terms of binomial coefficients}
\protect\label{table: num of dom wts_binom}
\end{table}
\end{corollary}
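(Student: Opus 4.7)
My plan is to obtain the binomial expressions by substituting the closed expressions for $\sfm(\bnu)$ listed immediately after Theorem~\ref{thm: cardinality} into the formulae of Table~\ref{table: num of dom wts}, and then collapsing the resulting sums via Pascal's identity together with a case analysis on the parity of $\ell$ (and, for $C^{(1)}_{n}$, of $n$). Since every ingredient is already computed, the proof is essentially bookkeeping; no new structural argument is needed.

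The first case to dispatch is $A^{(2)}_{2n}$: the table gives $|\mx^+(\Lambda)| = \sfm(1^1,2^n)$, and the formula $\sfm(1^1,2^n)=\binom{n+\lfloor \ell/2\rfloor}{n}$ is exactly the asserted expression. For $D^{(2)}_{n+1}$ the formula reads $\tfrac12(\sfm(1^2,2^{n-1})-\sfm(2^n))+\delta_{i,0}\sfm(2^n)$. Splitting into $\ell$ even and $\ell$ odd, substituting
\[
\sfm(1^2,2^{n-1}) = \binom{\lfloor \ell/2\rfloor + n}{\lfloor \ell/2\rfloor} + \binom{\lfloor (\ell-1)/2\rfloor + n}{\lfloor (\ell-1)/2\rfloor}, \qquad \sfm(2^n)=\delta(\ell\equiv_2 0)\binom{\ell/2+n-1}{\ell/2},
\]
and applying $\binom{a}{b}+\binom{a}{b-1}=\binom{a+1}{b}$ collapses the sum: the even case telescopes $\binom{\ell/2+n-1}{\ell/2-1}+\binom{\ell/2+n-1}{\ell/2} = \binom{\ell/2+n}{\ell/2}$, and the odd case is trivial because $\sfm(2^n)=0$. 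In both sub-cases one checks that the $i=0$ answer becomes $\binom{n+\lfloor \ell/2\rfloor}{n}$ and the $i=n$ answer becomes $\binom{n+\lfloor (\ell-1)/2\rfloor}{n}$, matching the single unified expression $\binom{n+\lfloor (\ell-\delta_{in})/2\rfloor}{n}$ in the table.

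The case $B^{(1)}_n$ is handled in the same spirit: substitute the stated four-term expansion for $\sfm(1^3,2^{n-2})$ and the two-term expansion for $\sfm(1^1,2^{n-1})$, separate by parity of $\ell$, and repeatedly apply Pascal's identity. The $\sfm(1^1,2^{n-1})$ piece is designed to absorb exactly the extra term appearing when $i=0$, leaving a clean sum of two binomials of the asserted form. For $C^{(1)}_n$ the formula is $\tfrac12(\sfm(1^{n+1})-\sfm_{\mathrm{fix}}) + \delta_{i,0}\sfm_{\mathrm{fix}}$, where $\sfm_{\mathrm{fix}}$ is $\sfm(2^{(n+1)/2})$ for $n$ odd and $\sfm(1^1,2^{n/2})$ for $n$ even. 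A short computation shows that in both parities of $n$,
\[
\sfm_{\mathrm{fix}} = \delta(n\ell\equiv_2 0)\,\binom{\lfloor (n+\ell)/2\rfloor}{\lfloor n/2\rfloor},
\]
after which $\sfm(1^{n+1})=\binom{n+\ell}{n}$ gives the uniform expression stated in Table~\ref{table: num of dom wts_binom}; the factor $(-1)^i$ simply records whether one adds or subtracts $\sfm_{\mathrm{fix}}$ depending on whether $i=0$ or $i=1$.

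The step I expect to require the most care is the unification of the four parity sub-cases for $C^{(1)}_n$ into the single expression involving $(-1)^i\delta(n\ell\equiv_2 0)$; in particular, one must verify that the floor functions on each side match simultaneously for $(n,\ell)$ of all four parities, and that the ``phantom'' term $\binom{\lfloor (n+\ell)/2\rfloor}{\lfloor n/2\rfloor}$ vanishes precisely when $n\ell$ is odd, which is exactly the regime where $\sfm(2^{(n+1)/2})=\delta(\ell\equiv_2 0)\binom{\ell/2+(n-1)/2}{\ell/2}$ is zero. Once that compatibility is established, the remaining identities are straightforward applications of Pascal's identity and the symmetry $\binom{a}{b}=\binom{a}{a-b}$.
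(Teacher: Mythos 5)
Your proposal is correct and follows essentially the same route as the paper: the corollary is obtained exactly by substituting the binomial evaluations of $\sfm(\bnu)$ listed after Theorem~\ref{thm: cardinality} into the formulae of Table~\ref{table: num of dom wts} and simplifying by parity analysis and Pascal's identity, and your unification of the $C_n^{(1)}$ sub-cases via the symmetry $\matr{a}{b}=\matr{a}{a-b}$ is exactly what makes the single $(-1)^i\,\delta(n\ell\equiv_2 0)$ expression valid.
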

\subsection{Consequences of our closed formulae}\label{subsec: inductive and triangular}
In this subsection, we introduce two consequences coming from our closed formulae, recursive formulae and level-rank duality for $|\mx^+(\Lambda)|$.

\subsubsection{Recursive formulae(Triangular arrays)}\label{subsubsec: inductive}
The formulae in Table~\ref{table: num of dom wts} and Table~\ref{table: num of dom wts_binom} enable us to compute $|\mx^+(\Lambda)|$ recursively. We list recursive formulae for $|\mx^+(\Lambda)|$ for all types except for $A_n^{(1)}$ and exceptional types. For clarity, we use $\mx_\g(\Lambda)$ to denote the set of dominant maximal weights to emphasize the rank of affine Kac-Moody algebras in consideration.
Here we deal with only the $C_n^{(1)}$ type. For other types, see Appendix A.

Define $\triT^{C^{(1)}}_0:\Z_{\ge 0} \times \Z_{\ge 0 } \ra \Z_{\ge 0 }$ by
\begin{align}\label{eq: recursiv rel}
&\triT^{C^{(1)}}_0(n,0) = 1 ~ (n \ge 0), \quad \triT^{C^{(1)}}_0(0,\ell) = 1 ~ (\ell \ge 1),~\text{and} \nonumber\\
&\triT^{C^{(1)}}_0(n,\ell) = \triT^{C^{(1)}}_0(n,\ell-1) + \triT^{C^{(1)}}_0(n-1,\ell) - \delta(n\ell \equiv_2 1) \matr{\frac{n+\ell}{2} - 1}{\frac{\ell-1}{2}} ~(n\ge 1,~\ell \ge 1). 
\end{align}
Using the formula in Table~\ref{table: num of dom wts_binom}, one can see that $|\mx^+_{C_n^{(1)}}(0)| = \triT^{C^{(1)}}_0(n,0)$ $(n \ge 2)$,  $|\mx^+_{C_2^{(1)}}(\ell\Lambda_0)| = \triT^{C^{(1)}}_0(2,\ell)$ $(\ell \ge 0)$, and $|\mx^+_{C_n^{(1)}}(\ell \Lambda_0)|$ $(n \ge 3 , \ell \ge 1)$ satisfies the same recursive relation as~\eqref{eq: recursiv rel}.
Therefore we can conclude that $\triT^{C^{(1)}}_0(n,\ell) = |\mx^+_{C_n^{(1)}}(\ell \Lambda_0)|$ for all $n \in \Z_{\ge 2},~\ell \in \Z_{\ge 0 }$. In particular, as a triangular array, $\triT^{C^{(1)}}_0$ can be described as follows: \fontsize{7}{7}\selectfont
\begin{equation*}
\begin{tikzpicture}
\node[] {
$\begin{array}{ccccccccccccccccccccccccc}
&&&&&&&\scriptstyle{n = 0} && \scriptstyle{\ell = 0}\\
&&&&&&\scriptstyle{n = 1}&&1&&\scriptstyle{\ell = 1} \\
&&&&&\scriptstyle{n = 2}&&1&&1 && \scriptstyle{\ell = 2} \\
&&&&\scriptstyle{n = 3}&&1&&1&&1 && \scriptstyle{\ell = 3}\\
&&&\scriptstyle{n = 4}&&1&&2&&2&&1 && \scriptstyle{\ell = 4}\\
&&\scriptstyle{n = 5}&&1&&2&&4&&2&&1 && \scriptstyle{\ell = 5}\\
&\scriptstyle{n = 6}&&1&&3&&6&&6&&3&&1 && \scriptstyle{\ell = 6}\\
\scriptstyle{n = 7}&&1&&3&&9&&10&&9&&3&&1 && \scriptstyle{\ell = 7}\\
&1&&4&&12&&19&&19&&12&&4&&1 \\
\udots&&\udots&&\udots&&\udots&&\vdots&&\ddots&&\ddots&&\ddots&&\ddots &,
\end{array}$
};
\draw (-6,-.6) -- (-0.03,1.2);
\draw (5.6,-.6) -- (-0.03,1.2);
\end{tikzpicture}
\end{equation*} \fontsize{10}{10}\selectfont
which is known to be \emph{Lozani\'{c}'s triangle} \cite[A034851]{OEIS}. 
To compare $\triT^{C^{(1)}}_0(n,\ell)$ with $|\mx^+_{C_n^{(1)}}(\ell \Lambda_0)|$,  we write the triangular array $\triT^{C^{(1)}}_0$ in a different direction from \cite{OEIS}. In the rest of this paper, we use this convention for triangular arrays.

Similarly, define $\triT^{C^{(1)}}_1:\Z_{\ge 0} \times \Z_{\ge 0 } \ra \Z_{\ge 0 }$ by
\begin{align*}
&\triT^{C^{(1)}}_1(n,0) = 0 ~ (n \ge 0), \quad \triT^{C^{(1)}}_1(0,\ell) = 0 ~ (\ell \ge 1),~\text{and}\\
&\triT^{C^{(1)}}_1(n,\ell) = \triT^{C^{(1)}}_1(n,\ell-1) + \triT^{C^{(1)}}_1(n-1,\ell) + \delta(n,\ell \equiv_2 1) \matr{\frac{n+\ell}{2} - 1}{\frac{\ell-1}{2}} ~(n\ge 1,~\ell \ge 1).
\end{align*}
Then for all $n \in \Z_{\ge 2},~\ell \in \Z_{\ge 0 }$, we have $\triT^{C^{(1)}}_1(n,\ell) = |\mx^+_{C^{(1)}}((\ell-1)\Lambda_0 + \Lambda_1)|$. In particular, as a triangular array, $\triT^{C^{(1)}}_1$ can be described as follows: \fontsize{7}{7}\selectfont
\begin{equation*}
\begin{tikzpicture}
\node[] {
$\begin{array}{ccccccccccccccccccccccccc}
&&&&&&&\scriptstyle{n = 0} && \scriptstyle{\ell = 0}\\
&&&&&&\scriptstyle{n = 1}&&0 && \scriptstyle{\ell = 1} \\
&&&&&\scriptstyle{n = 2}&&0&&0&& \scriptstyle{\ell = 2} \\
&&&&\scriptstyle{n = 3}&&0&&1&&0&& \scriptstyle{\ell = 3} \\
&&&\scriptstyle{n = 4}&&0&&1&&1&&0&& \scriptstyle{\ell = 4} \\
&&\scriptstyle{n = 5}&&0&&2&&2&&2&&0&& \scriptstyle{\ell = 5} \\
&\scriptstyle{n = 6}&&0&&2&&4&&4&&2&&0&& \scriptstyle{\ell = 6} \\
\scriptstyle{n = 7}&&0&&3&&6&&10&&6&&3&&0&& \scriptstyle{\ell = 7} \\
&0&&3&&9&&16&&16&&9&&3&&0 \\
\udots&&\udots&&\udots&&\udots&&\vdots&&\ddots&&\ddots&&\ddots&&\ddots &,
\end{array}$
};
\draw (-6,-.6) -- (-0.03,1.2);
\draw (5.6,-.6) -- (-0.03,1.2);
\end{tikzpicture}
\end{equation*} \fontsize{10}{10}\selectfont
which is known in \cite[A034852]{OEIS} and $\triT^{C^{(1)}}_1 = \text{(Pascal triangle)} - \triT^{C^{(1)}}_0$.

\subsubsection{Level-rank duality} \label{sec: LR-duality}
From closed formulae in the subsection~\ref{subsec: The number of dom max wts} and triangular arrays in the subsection~\ref{subsubsec: inductive}, we observe a very noteworthy symmetry, called \emph{level-rank duality}, between certain sets of dominant maximal weights except for $D_n^{(1)}$ and $A_{2n-1}^{(2)}$.
Here we deal with only the $A_n^{(1)}$ type. For other types, see Appendix B.

Let $\Lambda = (\ell-1)\Lambda_0 + \Lambda_i \in \DRpclp$. From Table~\ref{table: num of dom wts} it follows that
\begin{align*}
    \left|\mx_{A_n^{(1)}}^+(\Lambda)\right| = \sum_{d \mid (n+1,\ell,i)} \dfrac{d}{(n+1)+\ell} \sum_{d'|(\frac{n+1}{d},\frac{\ell}{d})} \mu(d') \matr{((n+1)+\ell)/dd'}{\ell/dd'}.
\end{align*}
Hence, for $n \ge 1$ and $\ell >1$, if $(n+1,\ell,i) = (\ell, n+1,j)$ for some $0 \le i,j \le \min(n,\ell)$, then
\begin{align}\label{eq: An level-rank duality}
    \left|\mx_{A_n^{(1)}}^+((\ell-1)\Lambda_0 + \Lambda_i)\right| = \left|\mx_{A_{\ell-1}^{(1)}}^+(n\Lambda_0 + \Lambda_j)\right|,
\end{align}
i.e., when we exchange $n+1$ with $\ell$, the number of dominant maximal weights remains same.

Let us deal with the relation between our duality and Frenkel's duality in \cite{F}. For a residue $i$ modulo $n$, let $\Lambda_i^{(n)}$ denote the $i$th fundamental weight of $A_{n}^{(1)}$. For $\Lambda = \sum_{i=0}^n m_i\Lambda_i^{(n)} \in \pclp{\ell}$, let $\Lambda'$ be the dominant integral weight of $A_{\ell-1}^{(1)}$ defined by 
\[ \Lambda' =  \sum_{i=0}^{n} \Lambda_{m_i + m_{i+1} + \cdots + m_n}^{(\ell-1)} \in \pclp{n + 1}.\]
With this setting, Frenkel found the following duality between the $q$-specialized characters of $V(\Lambda)$ and $V(\Lambda')$:
\[ \dim_q(V(\Lambda)) \prod_{k = 0}^\infty \dfrac{1}{1 - q^{(n+1)k}} = \dim_q(V(\Lambda')) \prod_{k=0}^\infty \dfrac{1}{1-q^{\ell k}},\]
where $\dim_q(V)$ is the $q$-specialized character of $V$ (see \cite[Theorem 2.3]{F} or \cite[Subsection 4.4]{T}).

Now we will show that
\[ \left|\mx_{A_n^{(1)}}^+(\Lambda)\right| = \left|\mx_{A_{\ell-1}^{(1)}}^+(\Lambda')\right|. \]
Recall $\evS$ in~\eqref{eq: S-evaluation}. Letting $\Lambda \in \pclp{\ell}((\ell-1)\Lambda_0 + \Lambda_{i_0})$ and $\Lambda' \in \pclp{n+1}(n\Lambda_0 + \Lambda_{j_0})$, then $\evS(\Lambda) \equiv_{n+1} i_0$ and $\evSp(\Lambda') \equiv_{\ell} j_0$ by Theorem~\ref{thm: eq rel and sieving}. Here $\svs = (1,2,\ldots, n)$ and $\svs' = (1,2,\ldots,\ell-1)$. On the other hand, by the definition of $\Lambda'$, we have
\[\evSp(\Lambda') \equiv_\ell \sum_{i = 0}^n (i+1) m_i, \]
which implies that
\begin{align}\label{eq: ev La - ev La'}
\evSp(\Lambda') - \evS(\Lambda) \equiv_\ell \sum_{i = 0}^n (i+1) m_i - \sum_{i = 0}^n i m_i = \sum_{i = 0}^n m_i = \ell \equiv_\ell 0.
\end{align}
By~\eqref{eq: ev La - ev La'}, we have
\[ j_0 - i_0 \equiv_{(n+1, \ell)} \evSp(\Lambda') - \evS(\Lambda) \equiv_{(n+1, \ell)} 0. \]
Therefore $(n+1,\ell,i_0) = (\ell,n+1,j_0)$ and our assertion follows from~\eqref{eq: An level-rank duality}.

\appendix

\section{Recursive formulae for $|\mx^+(\Lambda)|$} \label{AppendiX: Recursive}

\noindent{\bf $A_{2n}^{(2)}$ type.}
Define $\triT^{A^{(2)}_{\even}}_0:\Z_{\ge 0} \times \Z_{\ge 0 } \ra \Z_{\ge 0 }$ by
\begin{align}\label{eq: recursive A2n2}
&\triT^{A^{(2)}_{\even}}_0(n,0) = \triT^{A^{(2)}_{\even}}_0(n,1) = 1 ~ (n \ge 0), \quad \triT^{A^{(2)}_{\even}}_0(0,\ell) = 1 ~ (\ell \ge 2),~\text{and} \nonumber \\
&\triT^{A^{(2)}_{\even}}_0(n,\ell) = \triT^{A^{(2)}_{\even}}_0(n,\ell-2) + \triT^{A^{(2)}_{\even}}_0(n-1,\ell).
\end{align}
Then for all $n \in \Z_{\ge 2},~\ell \in \Z_{\ge 0 }$, we have $\triT^{A^{(2)}_{\even}}_0(n,\ell) = |\mx^+_{A_{2n}^{(2)}}(\ell \Lambda_0)|$.  In particular, as a triangular array, $\triT_0$ can be described as follows: \fontsize{7}{7}\selectfont
\begin{equation*}
\begin{array}{ccccccccccccccccccccccccc}
&&&&&&&&1 \\
&&&&&&&1&&1 \\
&&&&&&\mycirc{1}&&1&&1 \\
&&&&&1&&1&&\mycirc{2} &&1 \\
&&&&1&&1&&\mydoublecirc{3}&&2&&1 \\
&&&1&&1&&4&&3&&3&&1 \\
&&1&&1&&5&&4&&6&&3&&1 \\
&1&&1&&6&&5&&10&&6&&4&&1 \\
\udots&&\udots&&\udots&&\udots&&\vdots&&\ddots&&\ddots&&\ddots&&\ddots &,
\end{array}
\end{equation*} \fontsize{10}{10}\selectfont
which is known in \cite[A065941]{OEIS}. It is Pascal's triangle with duplicated diagonals, i.e., $\triT^{A^{(2)}_{\even}}_0(n,2\ell) = \triT^{A^{(2)}_{\even}}_0(n,2\ell + 1) = \matr{n + \ell}{\ell}$ for $n, \ell \ge 0$. 
The recursive condition~\eqref{eq: recursive A2n2} says if we add the circled $1$ and the circled $2$ then we get double circled $3$.

\noindent{\bf $B_n^{(1)}$ type.} Define $\triT^{B^{(1)}}_0:\Z_{\ge 0} \times \Z_{\ge 0 } \ra \Z_{\ge 0 }$ by
\begin{align*}
&\triT^{B^{(1)}}_0(n,0) = 1 ~ (n \ge 1), \quad \triT^{B^{(1)}}_0(n,1) = 2 ~ (n \ge 1), \quad \triT^{B^{(1)}}_0(0,\ell) = 2 ~ (\ell \ge 0),~\text{and}\\
&\triT^{B^{(1)}}_0(n,\ell) = \triT^{B^{(1)}}_0(n,\ell-2) + \triT^{B^{(1)}}_0(n-1,\ell) ~(n\ge 1,~\ell \ge 2).
\end{align*}
Then for all $n \in \Z_{\ge 3},~\ell \in \Z_{\ge 0 }$, we have $\triT^{B^{(1)}}_0(n,\ell) = |\mx^+_{B_n^{(1)}}(\ell \Lambda_0)|$.  In particular, as a triangular array, $\triT_0^{B^{(1)}}$ can be described as follows:
\fontsize{7}{7}\selectfont
\begin{equation*}
\begin{array}{ccccccccccccccccccccccccc}
&&&&&&&&2 \\
&&&&&&&1&&2 \\
&&&&&&1&&2&&2 \\
&&&&&1&&2&&3&&2 \\
&&&&1&&2&&4&&4&&2 \\
&&&1&&2&&5&&6&&5&&2 \\
&&1&&2&&6&&8&&9&&6&&2 \\
&1&&2&&7&&10&&14&&12&&7&&2 \\
\udots&&\udots&&\udots&&\udots&&\vdots&&\ddots&&\ddots&&\ddots&&\ddots & .
\end{array}
\end{equation*} \fontsize{10}{10}\selectfont
This array is the triangular array obtained by removing the left boundary diagonal from the triangular array in \cite[A129714]{OEIS} whose row sums are the Fibonacci numbers.

Define $\triT^{B^{(1)}}_n:\Z_{\ge 0} \times \Z_{\ge 0 } \ra \Z_{\ge 0 }$ by
\begin{align*}
&\triT^{B^{(1)}}_n(n,0) = \redo~(n \ge 0) \quad \text{and} \quad \triT^{B^{(1)}}_n(n,\ell) = \triT^{B^{(1)}}_0(n,\ell - 1) \quad \text{for $n \ge 0, \ell>0$}.
\end{align*}
Then for all $n \in \Z_{\ge 3},~\ell \in \Z_{> 0 }$, we have $\triT^{B^{(1)}}_n(n,\ell) = |\mx^+_{B_n^{(1)}}((\ell-1)\Lambda_0 + \Lambda_n)|$.
In particular, as a triangular array, $\triT_n^{B^{(1)}}$ can be described as follows:
\fontsize{7}{7}\selectfont
\begin{equation*}
\begin{array}{ccccccccccccccccccccccccc}
&&&&&&&&\redo \\
&&&&&&&\redo&&2 \\
&&&&&&\redo&&1&&2 \\
&&&&&\redo&&1&&2&&2 \\
&&&&\redo&&1&&2&&3&&2 \\
&&&\redo&&1&&2&&4&&4&&2 \\
&&\redo&&1&&2&&5&&6&&5&&2 \\
&\redo&&1&&2&&6&&8&&9&&6&&2 \\
\udots&&\udots&&\udots&&\udots&&\vdots&&\ddots&&\ddots&&\ddots&&\ddots & .
\end{array}
\end{equation*} \fontsize{10}{10}\selectfont
Note that Table~\ref{table: num of dom wts_binom} says
\[\left|\mx^+_{B_n^{(1)}}((\ell-1)\Lambda_0 + \Lambda_n)\right| =
\matr{n+\floor{\frac{\ell-1}{2}}}{n}+\matr{n+\floor{\frac{\ell-2}{2}}}{n}
= \left|\mx^+_{B_n^{(1)}}((\ell-1)\Lambda_0)\right|  \quad \text{for $\ell > 0$},\]
which explains the reason why we define $\triT^{B^{(1)}}_n(n,\ell)$ as $\triT^{B^{(1)}}_0(n,\ell - 1)$ for $n \ge 0, \ell > 0$. To emphasize this, we denote by $\redo$ the zeros in the left boundary diagonal. Note also that $\triT^{B^{(1)}}_0(n,\ell)$ and hence $\triT^{B^{(1)}}_n(n,\ell)$ can be obtained from $\triT^{A^{(2)}_{\even}}_0$ as follows:
$$  \triT^{B^{(1)}}_0(n,\ell) =  \triT^{A^{(2)}_{\even}}_0(n,\ell) + \triT^{A^{(2)}_{\even}}_0(n,\ell-1) \quad\text{ for $n,\ell \ge 1$}.$$

\noindent{\bf $D_n^{(1)}$ type.}
Define $\triT^{D^{(1)}} _0:\Z_{\ge 0} \times \Z_{\ge 0 } \ra \Z_{\ge 0 }$ by
\begin{align*}
&\triT^{D^{(1)}} _0(n,0) = \triT^{D^{(1)}} _0(n,1) = 1 ~ (n \ge 0), \triT^{D^{(1)}} _0(0,2) = 3,~ \triT^{D^{(1)}} _0(0,2\ell - 1) = 2 ~ (\ell \ge 2),~ \triT^{D^{(1)}} _0(0,2\ell) = 4~(\ell \ge 2),  \\
&\triT^{D^{(1)}} _0(n,\ell) = \triT^{D^{(1)}} _0(n,\ell-2) + \triT^{D^{(1)}} _0(n-1,\ell)\\
& \hspace{11ex} + (-1)^{n}\delta(\ell \equiv_2 0) (1+\delta(n \equiv_2 1))  \triT^{A^{(2)}_{\even}}_0\left( \left\lfloor\frac{n-1}{2} \right\rfloor, \frac{\ell}{2} - 1 \right) \quad(n \ge 1, \ell \ge 2).
\end{align*}
Then for all $n \in \Z_{\ge 4},~\ell \in \Z_{\ge 0 }$, we have $\triT^{D^{(1)}}_0(n,\ell) = |\mx^+_{D_{n}^{(1)}}(\ell \Lambda_0)|$.  In particular, as a triangular array, $\triT_0$ can be described as follows: \fontsize{7}{7}\selectfont
\begin{equation*}
\begin{array}{ccccccccccccccccccccccccc}
&&&&&&&&1 \\
&&&&&&&1&&1 \\
&&&&&&1&&1&&3 \\
&&&&&1&&1&&2&&2 \\
&&&&1&&1&&4&&3&&4 \\
&&&1&&1&&3&&4&&4&&2 \\
&&1&&1&&5&&5&&9&&5&&4 \\
&1&&1&&4&&6&&10&&9&&6&&2 \\
\udots&&\udots&&\udots&&\udots&&\vdots&&\ddots&&\ddots&&\ddots&&\ddots &.
\end{array}
\end{equation*}\fontsize{10}{10}\selectfont

Define $\triT^{D^{(1)}}_1:\Z_{\ge 0} \times \Z_{\ge 0 } \ra \Z_{\ge 0 }$ by
\begin{align*}
&\triT^{D^{(1)}}_1(n,0) = 0 ~ (n \ge 0), \quad \triT^{D^{(1)}}_1(n,1) = 1 ~ (n \ge 0), \triT^{D^{(1)}}_1(0,2\ell - 1) = 2 ~ (\ell \ge 2),~ \triT^{D^{(1)}}_1(0,2\ell) = 0~(\ell \ge 1), \\
&\triT^{D^{(1)}}_1(n,\ell) = \triT^{D^{(1)}}_1(n,\ell-2) + \triT^{D^{(1)}}_1(n-1,\ell)\\
& \hspace{11ex} + (-1)^{n+1}\delta(\ell \equiv_2 0) (1+\delta(n \equiv_2 1))  \triT^{A^{(2)}_{\even}}_0 \left( \left\lfloor\frac{n-1}{2} \right\rfloor, \frac{\ell}{2} - 1 \right) \quad(n \ge 1, \ell \ge 2).
\end{align*}
Then for all $n \in \Z_{\ge 4},~\ell \in \Z_{\ge 0 }$, we have $\triT^{D^{(1)}}_1(n,\ell) = |\mx^+_{D_{n}^{(1)}}((\ell - 1)\Lambda_0 + \Lambda_1)|$.  In particular, as a triangular array, $\triT^{D^{(1)}}_1$ can be described as follows: \fontsize{7}{7}\selectfont
\begin{equation*}
\begin{array}{ccccccccccccccccccccccccc}
&&&&&&&&0 \\
&&&&&&&0&&1 \\
&&&&&&0&&1&&0 \\
&&&&&0&&1&&2&&2 \\
&&&&0&&1&&1&&3&&0 \\
&&&0&&1&&3&&4&&4&&2 \\
&&0&&1&&2&&5&&4&&5&&0 \\
&0&&1&&4&&6&&9&&9&&6&&2 \\
\udots&&\udots&&\udots&&\udots&&\vdots&&\ddots&&\ddots&&\ddots&&\ddots &.
\end{array}
\end{equation*}\fontsize{10}{10}\selectfont

Set $\triT^{D_n^{(1)}}_n = \triT^{B_n^{(1)}}_n$.
Then, for all $n \in \Z_{\ge 4}$ and $\ell \in \Z_{\ge 0 }$, we have
\begin{align*}
|\mx^+_{B_n^{(1)}}((\ell-1)\Lambda_0 + \Lambda_n)| & = \triT^{B^{(1)}}_n(n,\ell) = \triT^{D^{(1)}}_n(n,\ell) = |\mx^+_{D_{n}^{(1)}}((\ell - 1)\Lambda_0 + \Lambda_{n-\epsilon})| \quad (\epsilon \in \{0,1\}).\end{align*}


\noindent{\bf $A_{2n-1}^{(2)}$ type.} Define $\triT_0:\Z_{\ge 0} \times \Z_{\ge 0 } \ra \Z_{\ge 0 }$ by
\begin{align*}
&\triT^{A^{(2)}_{\odd}}_0(n,0) = \triT^{A^{(2)}_{\odd}}_0(n,1) = 1 ~ (n \ge 0), \quad \triT^{A^{(2)}_{\odd}}_0(0,\ell) = 1 ~ (\ell \ge 2),~\text{and}\\
&\triT^{A^{(2)}_{\odd}}_0(n,\ell) = \triT^{A^{(2)}_{\odd}}_0(n,\ell-2) + \triT^{A^{(2)}_{\odd}}_0(n-1,\ell) - \delta(n \equiv_2 1,\ell \equiv_2 0, n > 1) \triT^{A^{(2)}_{\even}}_0 \left(\frac{n-1}{2}, \frac{\ell}{2}-1 \right)   ~(n\ge 1,~\ell \ge 2).
\end{align*}
Then for all $n \in \Z_{\ge 3},~\ell \in \Z_{\ge 0 }$, we have $\triT^{A^{(2)}_{\odd}}_0(n,\ell) = |\mx^+_{A_{2n-1}^{(2)}}(\ell \Lambda_0)|$.  In particular, as a triangular array, $\triT_0$ can be described as follows: \fontsize{7}{7}\selectfont
\begin{equation*}
\begin{array}{ccccccccccccccccccccccccc}
&&&&&&&&1 \\
&&&&&&&1&&1 \\
&&&&&&1&&1&&1 \\
&&&&&1&&1&&2&&1 \\
&&&&1&&1&&3&&2&&1 \\
&&&1&&1&&3&&3&&3&&1 \\
&&1&&1&&4&&4&&6&&3&&1 \\
&1&&1&&4&&5&&8&&6&&4&&1 \\
\udots&&\udots&&\udots&&\udots&&\vdots&&\ddots&&\ddots&&\ddots&&\ddots &.
\end{array}
\end{equation*}\fontsize{10}{10}\selectfont

Define $\triT^{A^{(2)}_{\odd}}_1:\Z_{\ge 0} \times \Z_{\ge 0 } \ra \Z_{\ge 0 }$ by
\begin{align*}
&\triT^{A^{(2)}_{\odd}}_1(n,0) = 0 ~ (n \ge 0), \quad  \triT^{A^{(2)}_{\odd}}_1(n,1) = 1 ~ (n \ge 0), \quad   \triT^{A^{(2)}_{\odd}}_1(0,\ell) = 1 ~ (\ell \ge 1),~\text{and}\\
&\triT^{A^{(2)}_{\odd}}_1(n,\ell) = \triT^{A^{(2)}_{\odd}}_1(n,\ell-2) + \triT^{A^{(2)}_{\odd}}_1(n-1,\ell) + \delta(n \equiv_2 1,\ell \equiv_2 0,n >1) \triT^{A^{(2)}_{\even}}_0\left(\frac{n-1}{2}, \frac{\ell}{2}-1 \right)   ~(n\ge 1,~\ell \ge 2).
\end{align*}
Then for all $n \in \Z_{\ge 3},~\ell \in \Z_{\ge 0 }$, we have $\triT^{A^{(2)}_{\odd}}_1(n,\ell) = |\mx^+_{A_{2n-1}^{(2)}}((\ell-1)\Lambda_0 + \Lambda_1)|$.  In particular, as a triangular array, $\triT^{A^{(2)}_{\odd}}_1$ can be described as follows: \fontsize{7}{7}\selectfont
\begin{equation*}
\begin{array}{ccccccccccccccccccccccccc}
&&&&&&&&0 \\
&&&&&&&0&&1 \\
&&&&&&0&&1&&1 \\
&&&&&0&&1&&1&&1 \\
&&&&0&&1&&1&&2&&1 \\
&&&0&&1&&2&&3&&2&&1 \\
&&0&&1&&2&&4&&3&&3&&1 \\
&0&&1&&3&&5&&6&&6&&3&&1 \\
\udots&&\udots&&\udots&&\udots&&\vdots&&\ddots&&\ddots&&\ddots&&\ddots &.
\end{array}
\end{equation*}\fontsize{10}{10}\selectfont


\noindent{\bf $D_{n+1}^{(2)}$ type.} Set $\triT^{D^{(2)}}_0 \seteq \triT_0^{A^{(2)}_{\even}}$.
Then for all $n \in \Z_{\ge 2},~\ell \in \Z_{\ge 0 }$, we have $\triT^{D^{(2)}}_0(n,\ell) = |\mx^+_{D_{n+1}^{(2)}}(\ell \Lambda_0)|$.
Note that Table~\ref{table: num of dom wts_binom} says
\[\left|\mx^+_{D_{n+1}^{(2)}}(\ell\Lambda_0)\right| = \matr{n + \floor{\frac{\ell}{2}}}{n} = \left|\mx^+_{A_{2n}^{(2)}}(\ell\Lambda_0)\right|,\]
which explains the reason why we define $\triT^{D^{(2)}}_0$ as $\triT_0^{A^{(2)}_{\even}}$.

Define $\triT_n:\Z_{\ge 0} \times \Z_{\ge 0 } \ra \Z_{\ge 0 }$ by
\begin{align*}
\triT^{D^{(2)}}_n(n,0) = \redo ~ (n \ge 0) \quad \text{and} \quad \triT^{D^{(2)}}_n(n,\ell) = \triT_0^{A^{(2)}_{\even}}(n,\ell-1)~(n \ge 0, \ell > 0).
\end{align*}
Then for all $n \in \Z_{\ge 2},~\ell \in \Z_{\ge 0 }$, we have $\triT^{D^{(2)}}_n(n,\ell) = |\mx^+_{D_{n+1}^{(2)}}((\ell-1)\Lambda_0 + \Lambda_n)|$.
Note that Table~\ref{table: num of dom wts_binom} says
\[ \left|\mx^+_{D_{n+1}^{(2)}}((\ell-1)\Lambda_0 + \Lambda_n)\right| = \matr{n + \floor{\frac{\ell-1}{2}}}{n}  = \left|\mx^+_{A_{2n}^{(2)}}((\ell-1)\Lambda_0)\right|  \quad \text{for $\ell > 0$},\]
which explains the reason why we define $\triT^{D^{(2)}}_n(n,\ell)$ as $\triT_0^{A^{(2)}_{\even}}(n,\ell - 1)$ for $n \ge 0, \ell > 0$. To emphasize this, we denote by $\redo$ the zeros in the left boundary diagonal.

\section{Level-rank duality} \label{AppendiX: Level-rank duality}

\noindent{\bf $B_{n}^{(1)}$ type.} From Table~\ref{table: num of dom wts_binom} it follows that
\begin{align*}
\left|\mx_{B_n^{(1)}}^+(\ell\Lambda_0)\right| = \matr{n+\floor{\frac{\ell}{2}}}{n}+\matr{n+\floor{\frac{\ell-1}{2}}}{n}.
\end{align*}
Hence, for $n \ge 3$, $\ell \ge 7$ and $\ell \equiv_2 1$, we have
\begin{align*}
\left|\mx_{B_n^{(1)}}^+(\ell\Lambda_0)\right| = \left|\mx_{B_{(\ell-1)/2}^{(1)}}^+((2n+1)\Lambda_0)\right|,
\end{align*}
i.e., when we exchange $n$ with $(\ell-1)/2$, the number of dominant maximal weights remains same.

From Table~\ref{table: num of dom wts_binom} it follows that
\begin{align*}
\left|\mx_{B_n^{(1)}}^+((\ell-1)\Lambda_0 + \Lambda_n)\right| = \matr{n+\floor{\frac{\ell-1}{2}}}{n}+\matr{n+\floor{\frac{\ell}{2}}-1}{n}.
\end{align*}
Hence, for $n \ge 3$, $\ell \ge 8$ and $\ell \equiv_2 0$, we have
\begin{align*}
\left|\mx_{B_n^{(1)}}^+((\ell-1)\Lambda_0+\Lambda_n)\right| = \left|\mx_{B_{\ell/2-1}^{(1)}}^+((2n+1)\Lambda_0+\Lambda_{\ell/2 -1 })\right|,
\end{align*}
i.e., when we exchange $n$ with $\ell/2-1$, the number of dominant maximal weights remains same.
\medskip

\noindent{\bf $C_{n}^{(1)}$ type.} From Table~\ref{table: num of dom wts_binom} it follows that for any $\Lambda = (\ell-1)\Lambda_0 + \Lambda_i \in \DRpclp$,
\begin{align*}
\left|\mx_{C_n^{(1)}}^+(\Lambda)\right| = \dfrac{1}{2} \left( \matr{\ell+n}{n} + (-1)^i  \delta(n \ell\equiv_2 0) \matr{ \lfloor \frac{\ell+n}{2} \rfloor }{\lfloor \frac{n}{2} \rfloor}  \right).
\end{align*}
Hence, for $n\ge 2$ and $\ell \ge 2$, we have
\begin{align*}
\left|\mx_{C_n^{(1)}}^+((\ell-1)\Lambda_0 + \Lambda_i)\right| = \left|\mx_{C_\ell^{(1)}}^+((n-1)\Lambda_0 + \Lambda_i)\right| \quad \text{for $i = 0,1$},
\end{align*}
i.e., when we exchange $n$ with $\ell$, the number of dominant maximal weights remains same.
\medskip

\noindent{\bf $D_{n}^{(1)}$ type.}
In case where $\ell \equiv_2 0$, from Table~\ref{table: num of dom wts}, it follows that for $i = n-1,n$
\begin{align*}
\left|\mx_{D_{n}^{(1)}}^+((\ell-1)\Lambda_0 + \Lambda_i)\right| = \frac{1}{4}\left(\sfm(1^{4}, 2^{n-3}) - \sfm(2^{n-1}) \right).
\end{align*}
Using Lemma~\ref{lem: calnum}, one can see that
\[\sfm(1^{4}, 2^{n-3}) = \matr{n + \frac{\ell}{2} - 2}{\frac{\ell}{2}} + 8\matr{n + \frac{\ell}{2} - 1}{\frac{\ell}{2}-1} \quad \text{and} \quad \sfm(2^{n-1}) = \matr{n + \frac{\ell}{2} - 2}{\frac{\ell}{2}}\]
and thus
\[\left|\mx_{D_{n}^{(1)}}^+((\ell-1)\Lambda_0 + \Lambda_i)\right| = 2\matr{n + \frac{\ell}{2} - 1}{\frac{\ell}{2}-1}.\]
Hence, for $n \ge 4$, $\ell \ge 9$ and $\ell \equiv_2 0$, we have
\begin{align*}
\left|\mx_{D_{n}^{(1)}}^+((\ell-1)\Lambda_0 + \Lambda_{n})\right| = \left|\mx_{D_{\ell/2 - 1}^{(2)}}^+((2n+1)\Lambda_0 + \Lambda_{\ell/2 - 1})\right|,
\end{align*}
i.e., for an even integer $\ell$, when we exchange $n$ with $\ell/2 - 1$, the number of dominant maximal weights remains same.
\medskip 

\noindent{\bf $A_{2n-1}^{(2)}$ type.} 
In case where $\ell \equiv_2 1$, from Table~\ref{table: num of dom wts}, it follows that for any $\Lambda = (\ell-1)\Lambda_0 + \Lambda_i \in \DRpclp$,
\begin{align*}
\left|\mx_{A_{2n-1}^{(2)}}^+(\Lambda)\right| = \frac{ \sfm(1^2, 2^{n-1})}{2}  = \frac{1}{2}\left(\matr{n + \floor{\frac{\ell-1}{2}}}{\floor{\frac{\ell-1}{2}}} + \matr{n + \floor{\frac{\ell}{2}}}{\floor{\frac{\ell}{2}}}\right).
\end{align*}
Hence, for $n \ge 3$, $\ell \ge 7$ and $\ell \equiv_2 1$, we have
\begin{align*}
\left|\mx_{A_{2n-1}^{(2)}}^+((\ell-1)\Lambda_0 + \Lambda_i)\right| = \left|\mx_{A_{2((\ell-1)/2)}^{(2)}}^+(2n\Lambda_0 + \Lambda_i)\right| \quad \text{for $i=0,1$},
\end{align*}
i.e., for an odd integer $\ell$, when we exchange $n$ with $(\ell-1)/2$, the number of dominant maximal weights remains same.
\medskip 

\noindent{\bf $A_{2n}^{(2)}$ type.} From Table~\ref{table: num of dom wts_binom} it follows that
\begin{align*}
\left|\mx_{A_{2n}^{(2)}}^+(\ell\Lambda_0)\right| = \matr{\floor{\frac{\ell}{2}}+n}{n}.
\end{align*}
Hence, for $n \ge 2$, $\ell \ge 4$ and $\ell \equiv_2 0$ (resp. $\ell \equiv_2 1$), we have
\begin{align*}
\left|\mx_{A_{2n}^{(2)}}^+(\ell\Lambda_0)\right| = \left|\mx_{A_{2(\ell/2)}^{(2)}}^+(2n\Lambda_0)\right| \quad \left(\text{resp. } \left|\mx_{A_{2n}^{(2)}}^+(\ell\Lambda_0)\right| = \left|\mx_{A_{2((\ell-1)/2)}^{(2)}}^+((2n+1)\Lambda_0)\right| ~\right),
\end{align*}
i.e., for an even (resp. odd) integer $\ell$, when we exchange $n$ with $\ell/2$ (resp. $(\ell-1)/2$), the number of dominant maximal weights remains same.
%
\medskip

\noindent{\bf $D_{n+1}^{(2)}$ type.} From Table~\ref{table: num of dom wts_binom} it follows that
\begin{align*}
\left|\mx_{D_{n+1}^{(2)}}^+(\ell\Lambda_0)\right| = \matr{\floor{\frac{\ell}{2}}+n}{n}.
\end{align*}
Hence, for $n \ge 2$, $\ell \ge 4$ and $\ell \equiv_2 0$ (resp. $\ell \equiv_2 1$), we have
\begin{align*}
\left|\mx_{D_{n+1}^{(2)}}^+(\ell\Lambda_0)\right| = \left|\mx_{D_{\ell/2}^{(2)}}^+(2n\Lambda_0)\right| \quad \left(\text{resp. } \left|\mx_{D_{n+1}^{(2)}}^+(\ell\Lambda_0)\right| = \left|\mx_{D_{(\ell-1)/2}^{(2)}}^+((2n+1)\Lambda_0)\right|~ \right),
\end{align*}
i.e., for an even (resp. odd) integer $\ell$, when we exchange $n$ with $\ell/2$ (resp. $(\ell-1)/2$), the number of dominant maximal weights remains same.

From Table~\ref{table: num of dom wts_binom} it follows that
\begin{align*}
\left|\mx_{D_{n+1}^{(2)}}^+((\ell-1)\Lambda_0 + \Lambda_n)\right| = \matr{\floor{\frac{\ell-1}{2}}+n}{n}.
\end{align*}
Hence, for $n \ge 2$, $\ell \ge 5$ and $\ell \equiv_2 0$ (resp. $\ell \equiv_2 1$),
\begin{align*}
&\left|\mx_{D_{n+1}^{(2)}}^+((\ell-1)\Lambda_0+\Lambda_n)\right| = \left|\mx_{D_{\ell/2-1}^{(2)}}^+((2n+1)\Lambda_0+\Lambda_{\ell/2 -1 })\right|\\
&\hspace{20ex}\left(\text{resp. } \left|\mx_{D_{n+1}^{(2)}}^+((\ell-1)\Lambda_0+\Lambda_n)\right| = \left|\mx_{D_{(\ell-1)/2}^{(2)}}^+(2n\Lambda_0+\Lambda_{(\ell-1)/2})\right|~\right),
\end{align*}
i.e., for an even (resp. odd) integer $\ell$, when we exchange $n$ with $\ell/2-1$ (resp. $(\ell-1)/2$), the number of dominant maximal weights remains same.

\end{document}